\newcommand{\pa}{\partial}
\newcommand{\Z}{\mathbb{Z}}
\newcommand{\C}{\mathbb{C}}
\newcommand{\N}{\mathbb{N}}
\newcommand{\R}{\mathbb{R}}
\newcommand{\T}{\mathbb{T}}
\newcommand{\si}{\sigma}
\newcommand{\ct}{\theta}
\newcommand{\bfa}{\mathbf{\alpha}}
\newcommand{\La}{\Lambda}
\newcommand{\meas}{{\rm meas}}
\newcommand{\bfk}{\mathbf{k}}
\newcommand{\yo}{\iota}
\newcommand{\vp}{\varPsi}
\newcommand{\LakL}{\Lambda_{k\nearrow L}}
\newcommand{\tLakL}{\tilde{\Lambda}_{k\nearrow L}}
\newcommand{\hLakL}{\hat{\Lambda}_{k\nearrow L}}
\newcommand{\nm}{|\!|}
\newcommand{\nmt}{|\!|_{\mathbb{T}}}
\newcommand{\mcA}{\mathcal{A}}
\newcommand{\mcB}{\mathcal{B}}
\newcommand{\mcE}{\mathcal{E}}
\newcommand{\mcF}{\mathcal{F}}
\newcommand{\mcP}{\mathcal{P}}
\newcommand{\mcS}{\mathcal{S}}
\newcommand{\mcI}{\mathcal{I}}
\newcommand{\mcR}{\mathcal{R}}
\newcommand{\mcX}{\mathcal{X}}
\newcommand{\mcO}{\mathcal{O}}
\newcommand{\mcW}{\mathcal{W}}
\newcommand{\LC}{\mathcal{LC}}
\newcommand{\DC}{{\rm DC}_{\gamma,\tau}}
\newcommand{\mcN}{\mathcal{N}}
\newcommand{\tH}{\tilde{H}}
\newcommand{\tN}{\tilde{N}}
\newcommand{\Op}{{\rm Op}}
\newcommand{\amplitude}{\mathbf{a}}
\newcommand{\nomega}{n\cdot\omega}
\newcommand{\jalpha}{j\cdot\alpha}
\renewcommand{\Im}{{\rm Im}}
\renewcommand{\Re}{{\rm Re}}
\newcommand{\dist}{{\rm dist}}
\newcommand{\diam}{{\rm diam}}
\newcommand{\poly}{{\rm Poly}}
\newcommand{\supp}{{\rm supp}}
\newcommand{\disk}{{\rm Disk}}
\newtheorem{thm}{Theorem}[section]
\newtheorem{lem}[thm]{Lemma}
\newtheorem{prop}[thm]{Proposition}
\newtheorem{Def}{Definition}[section]
\newtheorem{assumption}[thm]{Assumption}
\newtheorem{rmk}{\bf Remark}[section]
\newtheorem{state}{Statement}[section]
\theoremstyle{definition}
\newtheorem{ex}[thm]{Example}
\newtheorem{claim}[thm]{\bf Claim}
\numberwithin{equation}{section}
\keywords{Nonlinear Maryland model, Anderson localization, Multi-scale analysis, Craig-Wayne-Bourgain method,  Green's function estimates}
\begin{document}
 \title[Nonlinear Maryland Model]{Anderson Localized States for the nonlinear Maryland model on $\mathbb{Z}^d$}

\author[Liu]{Shihe Liu}
\address[S. Liu] {School of Mathematical Sciences,
Peking University,
Beijing 100871,
China}
\email{2301110021@stu.pku.edu.cn}

\author[Shi]{Yunfeng Shi}
\address[Y. Shi] {School of Mathematics,
Sichuan University,
Chengdu 610064,
China}
\email{yunfengshi@scu.edu.cn}

\author[Zhang]{Zhifei Zhang}
\address[Z. Zhang] {School of Mathematical Sciences,
Peking University,
Beijing 100871,
China}
\email{zfzhang@math.pku.edu.cn}

\begin{abstract}
In this paper, we  investigate  Anderson  localization    for a  nonlinear perturbation of  the  Maryland model $H=\varepsilon\Delta+\cot\pi(\theta+j\cdot\alpha)\delta_{j,j'}$ on $\Z ^d$. Specifically, if $\varepsilon,\delta$ are sufficiently small, we construct a large number of time quasi-periodic and space exponentially decaying solutions (i.e., Anderson localized states) for the equation  $i\frac{\partial u}{\partial t}=Hu+\delta|u|^{2p}u$  with a Diophantine $\alpha$. Our proof combines eigenvalue estimates of the Maryland model with the Craig-Wayne-Bourgain method, which originates from KAM theory for Hamiltonian PDEs.
\end{abstract}

\maketitle

%\MRSubClass{35Q55, 35A01, 35A02}
%%% ----------------------------------------------------------------------
\maketitle
%%% -------------
\tableofcontents
\section{Introduction}

\subsection{The model and main result}
Let 
\[\alpha=(\alpha_1,\alpha_2,\cdots,\alpha_d)\in [0,1]^d,\ \theta\in\T:= [0,1]\]
and consider the Maryland model  (or the quasi-periodic Schr\"odinger operator)
\[H=\varepsilon\Delta+\cot\pi(\theta+\alpha\cdot j)\delta_{j,j'},\ j\in\Z^d, \]
 where  $|\cdot|_1$ is the $\ell^1$-norm,  $\Delta(j,j')=\delta_{|j-j'|_1,1}$ is the Laplacian on $\Z^d$ and $\alpha\cdot j=\sum\limits_{k=1}^d\alpha_kj_k$.  Assume further that $\alpha$ is Diophantine, i.e., for some $\gamma>0,\tau>d$, 
\begin{equation}\label{Diophantine condition}
\bfa\in {\rm DC}_{\gamma,\tau}:=\{\alpha\in [0,1]^d:\ \nm j\cdot \alpha\nmt=\inf_{m\in \Z}|m-j\cdot\alpha|\geq\frac{\gamma}{|j|_1^{\tau}}\ {\rm for}\ {\forall}j\in \Z^d\backslash\{0\}\biggl\}. 
\end{equation}
It is well-known that $H$ exhibits Anderson localization (i.e., pure point spectrum with exponentially decaying eigenfunctions) for almost every $\theta\in\T$ \cite{BLS83, FP84} under the assumption that $\alpha\in {\rm DC}_{\gamma, \tau}$. In addition, if $0< \varepsilon\leq  \varepsilon_0(d,\tau,\gamma)$,  $H$ can be diagonalized via a unitary operator, which is close to the identity operator.  

Assume now  that $H$ exhibits Anderson localization.  Let $\{\phi_j\}_{j\in\Z^d}$ be the complete set of eigenfunctions of $H$. (These eigenfunctions depend on the parameters $\varepsilon,\alpha,\theta$,  although we suppress this dependence.) 
Define  $l_j$  to be  the point where
\[|\phi(l_j)|=\max_{x\in\Z^d}|\phi_j(x)|.\]
We refer to $l_j$ as the localization center of $\phi_j$. In the regime of $0\leq \varepsilon\leq  \varepsilon_0(d,\tau,\gamma)$, we have $l_j=j$ \cite{BLS83}.  Denote by $\mu_j$ the eigenvalue corresponding to $\phi_j$. We know that $\{\mu_j\}_{j\in\Z^d}$ is simple \cite{BLS83}. We consider the linear Schr\"odinger equation 
\begin{equation}\label{LS}
	i\frac{\partial u}{\partial t}=Hu
\end{equation}
and all the solutions (with initial data $u_0(0,x)=\sum\limits_{j\in\Z^d}c_j\phi_j(x)\in \ell^2(\Z^d)$) to $(\ref{LS})$ are of the form 
\[u_0(t,x)=\sum_{j\in\Z^d}c_j e^{-i\mu_j t}\phi_j(x).\]
 If $c_j\neq 0$ for  finitely many $j$, then $u_0(t, x)$ is  quasi-periodic in $t$ and has space exponential  decay  in $x$ (i.e., it is an Anderson localized state). This paper is concerned with the persistency of the Anderson localized states for the nonlinear Maryland model  or the nonlinear Schr\"odinger equation (NLS)
   \begin{equation}\label{NLS}
i\frac{\partial u}{\partial t}=Hu+\delta|u|^{2p}u,\ p\in \mathbb{N}.
\end{equation}

Denote by  $|\cdot|$ the $\ell^{\infty}$-norm on $\Z^d$ and ${\rm meas(\cdot)}$ the Lebesgue measure. Our main result is stated as follows. 

\begin{thm}\label{Main Thm}
Let  $\alpha\in{\rm DC}_{\gamma,\tau}$ and  fix any $b$ distinct lattice sites $\beta_k\in \Z^d,\ k=1,2,\cdots,b$ with $\sup_{k}|\beta_k|\leq B$. Define
\begin{equation}\label{solu to LS}
	u_0(t,x)=\sum_{k=1}^{b}a_k e^{-i\mu_{\beta_k} t}\phi_{\beta_k}(x)
\end{equation}
with $\mathbf{a}=(a_1,\cdots, a_k)\in [1,2]^b$. Then for $0<\delta\leq \tilde{\delta}(\gamma,\tau,b,d,B,p)\ll 1$,  there exist  a constant $\tilde{\varepsilon}=\tilde{\varepsilon}(\delta)>0$ and a set $\Theta=\Theta_{\alpha,\delta}\subset [0,1]$ in $\theta$ satisfying 
$\lim_{\delta\rightarrow 0+}\tilde{\varepsilon}(\delta)=0$ and ${\rm meas}([0,1]\backslash \Theta) \leq |\log \delta|^{-1}$
such that, for any $0<\varepsilon<\tilde{\varepsilon}$ and $\theta\in \Theta$, there exists   a set $\mathbf{\mathcal{R}}=\mathbf{\mathcal{R}}_{\bfa,\delta,\varepsilon,\theta}\subset[1,2]^b$ (in $\mathbf{a}$)  with 
\[{\rm meas}([1,2]^b\backslash \mathbf{\mathcal{R}})\leq e^{-|\log\delta|^{\frac{1}{2}}},\]
such that,  if $\mathbf{a}\in \mathbf{\mathcal{R}}$, then  \eqref{NLS}  has a solution $u(t,x)$  satisfying 
\begin{align*}
u(t,x)&=\sum_{(n,j)\in \Z^b\times\Z^d}\hat{u}(n,j)e^{in\cdot\omega t}\phi_{j}(x), \\
\hat{u}(-e_k,\beta_k)&=a_k\ {\rm for}\ k=1,\cdots,b, \\
\sum_{(n,j)\notin \mathcal{S}_{0}}|\hat{u}(n,j)|e^{\rho(|n|+|j|)}&\leq \sqrt{\varepsilon+\delta}\ {\rm for\ some}\ \rho>0, \\
\omega=(\omega_1,\omega_2,\cdots,\omega_b)&=(\mu_{\beta_1},\mu_{\beta_2},\cdots,\mu_{\beta_k}) +O(\delta),
\end{align*}
where $\mathcal{S}_{0}=\{(-e_k,\beta_k)\}^b_{k=1}$ is the resonant set and $\{e_k\}_{k=1}^b$ is the standard basis for $\Z^b$.
\end{thm}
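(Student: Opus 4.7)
The plan is to construct the solution via the Craig-Wayne-Bourgain (CWB) method, working directly in the Fourier-eigenfunction basis suggested by the statement. Write $u(t,x)=\sum_{(n,j)\in\Z^b\times\Z^d}\hat u(n,j)\,e^{in\cdot\omega t}\phi_j(x)$ with $\omega\in\R^b$ unknown and close to $(\mu_{\beta_k})_{k=1}^b$, substitute into \eqref{NLS}, and use orthonormality of $\{\phi_j\}$ to reduce \eqref{NLS} to the infinite nonlinear system
\[
(n\cdot\omega-\mu_j)\,\hat u(n,j)=\delta\,\widehat{|u|^{2p}u}(n,j),\qquad (n,j)\in\Z^b\times\Z^d.
\]
A Lyapunov-Schmidt splitting separates the $b$ \emph{resonant} equations supported on $\mathcal{S}_0=\{(-e_k,\beta_k)\}$, where the linear part vanishes at $\omega=(\mu_{\beta_k})_k$, from the remaining ``P-equation''. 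On $\mathcal{S}_0$ impose $\hat u(-e_k,\beta_k)=a_k$ and use the $b$ resonant equations to solve for $\omega=\omega(\mathbf{a},\hat u)$; to leading order the map $\mathbf{a}\mapsto\omega$ is diagonal with non-degenerate Jacobian, a key ingredient for later excisions.

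For the P-equation, set up a Newton iteration on dyadic scales $N_s$ (e.g.\ $N_s=N_0^{(3/2)^s}$). At step $s$ one must invert the restriction $T_{N_s}$ of the linearized operator $F'$ to the box $\{|n|+|j|\le N_s\}$ and show that $T_{N_s}^{-1}$ has exponentially decaying off-diagonal entries. This is the standard multi-scale / Green's function estimate: given the inductive bound at scale $N_{s-1}$, combine the resolvent identity with a ``separation of singular sites'' argument to propagate it to scale $N_s$, provided the first and second Melnikov-type conditions
\[
|n\cdot\omega-\mu_j|\ge N_s^{-A},\qquad |n\cdot\omega-\mu_j+\mu_{j'}|\ge N_s^{-A}
\]
hold outside a sparse set of $(n,j)$ (resp.\ $(n,j,j')$). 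Exponential localization of $\phi_j$ at $j$, valid for $\varepsilon\le\varepsilon_0(d,\tau,\gamma)$, together with the Diophantine condition on $\alpha$, yield the required off-diagonal decay of the nonlinear vector field and of $F'$ in the eigenbasis, and permit the Newton step to be closed with a geometric loss per scale.

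Securing the Melnikov conditions is effected by parameter excisions in $\theta$ and $\mathbf{a}$. The crucial input here is the explicit Maryland eigenvalue asymptotic $\mu_j(\theta)=\cot\pi(\theta+\alpha\cdot j)+O(\varepsilon)$, which is meromorphic in $\theta$ with simple poles and whose derivative admits quantitative lower bounds away from these poles. This yields strong transversality in $\theta$ and uniform control on the number of near-resonances per scale, producing a $\theta$-excision set of total measure $\le|\log\delta|^{-1}$ as claimed. For the tangential frequency $\omega=\omega(\mathbf{a})$ extracted from the resonant equation, the non-degeneracy of $\mathbf{a}\mapsto\omega$ (together with the Diophantine condition) converts excisions in $\omega$ into excisions in $\mathbf{a}$, yielding ${\rm meas}([1,2]^b\setminus\mathcal{R})\le e^{-|\log\delta|^{1/2}}$.

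The main technical obstacle is the inductive multi-scale step. Unlike the torus setting, the basis $\{\phi_j\}$ is not given by plane waves, so the structure of $F'$ in the eigenbasis is only controlled indirectly through the exponential localization of $\phi_j$; moreover, the eigenvalues $\mu_j$ are highly irregular in $(\theta,\alpha)$, and the second Melnikov condition couples $\mu_j$ and $\mu_{j'}$ from potentially distant sites, so the excision argument must combine the Maryland-specific meromorphic structure with Cartan/subharmonic-type measure estimates to handle arbitrarily many such small-divisor conditions per scale. Integrating these quantitative eigenvalue bounds into the inductive Green's function estimates, and closing the Newton iteration at the sizes of $\tilde\varepsilon(\delta)$ and $\tilde\delta$ asserted in the theorem, is the heart of the argument.
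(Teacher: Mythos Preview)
Your outline matches the paper's approach closely: Lyapunov--Schmidt in the eigenfunction basis, Newton iteration on growing scales, multi-scale Green's function estimates combining resolvent identities with a matrix-valued Cartan lemma, and parameter excisions driven by the Maryland-specific transversality of shifts of $\cot\pi\theta$ (the paper exploits that the derivatives of $\cot$ produce a Vandermonde determinant, yielding lower bounds on $\max_k |\partial_\theta^k F|$ via Kleinbock--Margulis type estimates).

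Two points are glossed over in your sketch that the paper treats carefully and that are not routine. First, the $\theta$-excision is done \emph{once} at the outset (Theorem~\ref{seperative spectrum}) to secure separation estimates on finite linear combinations of $\mu_j$'s; the scale-by-scale excisions during the Newton scheme are then in $\omega$ (hence $\mathbf a$), and the passage from the LDT (a measure estimate in an auxiliary shift parameter $\sigma$) to excisions in $\omega$ requires semi-algebraic projection arguments (Tarski--Seidenberg and the Bourgain projection lemma), not just the non-degeneracy of $\mathbf a\mapsto\omega$. Second, the relation $\tilde\varepsilon(\delta)\sim\delta^{1/8+}$ is forced by the initial $\theta$-excision: one first proves transversality for the \emph{potential} values $\cot\pi(\theta+j\cdot\alpha)$ and then transfers to the eigenvalues $\mu_j$ via $|\mu_j-\cot\pi(\theta+j\cdot\alpha)|\le 2d\varepsilon$, which loses a factor $N\varepsilon$ against the gap $\delta^{1/8}$; this is where the $\varepsilon$--$\delta$ competition enters, and it cannot be avoided by working directly with the $\mu_j$ since one has no direct variation argument for them here (unlike the i.i.d.\ random case). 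Your proposal does not indicate how either of these steps would be carried out.
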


\begin{rmk}
\begin{itemize}

\item[(1)]  To the best of our knowledge, this is the first result on  Anderson localization for nonlinear Maryland model on $\Z^d$ for  $d>1.$  

\item[(2)]  Shi-Wang \cite{SW24} first proved the existence of Anderson localization for the quasi-periodic NLS on $\Z^d$, where they considered multi-variable trigonometric polynomial potentials. In \cite{SW24}, the desired solutions were constructed perturbatively, starting from decoupled linear equations.
However, in the present work, we first diagonalize the (coupled) Maryland model and treat $\varepsilon\in[0,1]$ as a parameter rather than a perturbative parameter. This approach is more aligned with the spirit of Liu-Wang \cite{LW24}, who dealt with the nonlinear Anderson model. In the quasi-periodic case, due to the lack of parameters, the competition between $\varepsilon$ and $\delta$ happens  and  $\tilde{\varepsilon}(\delta)$  can be  $\delta^{\frac{1}{8}+}$. 

\item[(3)]  It would be very interesting if one could prove the existence of {\bf almost-periodic} solutions for the nonlinear Maryland model. 

\end{itemize}
\end{rmk}

\subsection{Related results}
The Maryland model was first introduced in \cite{FGP82, GFP82} and plays an important role in the study of quantum chaos. Indeed, one of the main models in quantum chaos is the so-called quantum kicked rotor, which can be regarded as the quantum analogue of the usual Chirikov standard map. Typically, the Chirikov map exhibits chaotic behaviors within certain ranges of parameters, while the motion in the quantum kicked rotor is generally almost-periodic for all couplings. This remarkable quantum suppression of chaos was well understood after the work of Fishman et al. in \cite{FGP82}. They first mapped the quantum kicked rotor into the Maryland model, and the almost-periodic motion of the rotor can be explained via the Anderson localization of the Maryland model (cf. \cite{Bou02} for an alternative approach). Since then, the study of Anderson localization in Maryland-type models has attracted great attention in the field of almost-periodic Schrödinger operators. The monotonicity property of Maryland-type potentials results in the absence of resonances, and the perturbative KAM diagonalization method can be employed to prove Anderson localization \cite{Cra83, BLS83, Pos83}. More importantly, it turns out that the exact Maryland model is solvable (cf. \cite{GFP82}), and Anderson localization holds true for all couplings \cite{FP84, Sim85}. Recently, there have been several new developments in the Anderson localization for Maryland-type models (including some quasi-periodic operators with more general Lipschitz monotone potentials) on $\Z^d$ (cf., e.g., \cite{JL17, JK19, Kac19, JY21, HJY22, KZ24, JK24, KPS22, Shi23, CSZ24, KPS24} for a few).   

Contrary to the linear case, very little is known about the Anderson localization for the nonlinear Maryland model. Typically, the Schrödinger operator with random potentials or quasi-periodic ones describes the motion of a single electron in disordered media; it completely ignores the interactions between different electrons. Considering electron-electron interactions and using the Hartree approximation \cite{AF88, AFS88} leads to the study of the nonlinear Schrödinger equations (NLS) with ergodic potentials. In this context, Albanese et al. \cite{AF88} first proved the existence of periodic solutions for NLS on $\Z^d$ with random potentials via some novel bifurcation techniques. The main difficulty encountered in such attempts is that the spectrum of the unperturbed system is dense in some interval. If one tries to extend \cite{AF88, AFS88} to construct quasi-periodic solutions, it becomes significantly more challenging: This problem was resolved by Bourgain and Wang \cite{BW08}. They proved the existence of time quasi-periodic and space exponentially decaying solutions for random NLS on $\Z^d$ in the regime of both high disorder and weak nonlinearity. The proof in \cite{BW08} combines the celebrated Craig-Wayne-Bourgain (CWB) method, initially developed in the KAM theory for Hamiltonian PDEs \cite{CW93, Bou98, Bou05}, with the spectral analysis of the Anderson model. Of particular importance is the use of semi-algebraic geometry arguments and matrix-valued Cartan's estimates in the study of Anderson localization for quasi-periodic Schrödinger operators on $\Z^2$ \cite{BGS02}. An alternative method, based on the Hamiltonian normal form and KAM iterations, for dealing with nonlinear Anderson localization for nonlinear difference equations without the Laplacian, was developed by Fröhlich, Spencer, and Wayne \cite{FSW86} and Yuan \cite{Yua02}. Relying on KAM-type methods, Geng et al. \cite{GYZ14} and Geng-Zhao \cite{GZ13} proved the existence of quasi-periodic solutions for quasi-periodic NLS on $\Z$ with certain Gevrey regular and Maryland potentials, respectively. Very recently, Liu and Wang \cite{LW24} removed the high-disorder condition of \cite{BW08} in the one-dimensional case via the CWB technique and were also able to handle multi-dimensional random NLS in the regime of linear Anderson localization. Compared with \cite{BW08}, \cite{LW24} first diagonalized the linear Anderson model and sought the nonlinear Anderson localized states starting from those of the Anderson model. For multi-dimensional NLS with quasi-periodic multi-variable trigonometric potentials, Shi and Wang \cite{SW24} obtained the existence of Anderson-localized states via the CWB method again. The proof in \cite{SW24} follows the spirit of \cite{BW08} but employs new ingredients, including the deep geometric lemma of Bourgain \cite{Bou07} and certain Diophantine estimates on manifolds \cite{KM98}. For more results on nonlinear Anderson localization, we refer to \cite{GSW23, SW23, KLW24}.

The existence of Anderson-localized solutions for the nonlinear Maryland model on $\Z^d$ for $d>1$ seems, however, to have remained completely open until the present paper.

\subsection{Main new ingredients of the proof}
Our proof is based on the CWB method together with spectral analysis of the Maryland model, and is more in the spirit of Liu-Wang \cite{LW24}: We first diagonalize the Maryland model and construct the desired Anderson-localized states near its eigenfunctions. The general scheme is based on Lyapunov-Schmidt decomposition and the Nash-Moser iteration. Of particular importance is the so-called large deviation theorem (LDT) for Green's functions. In contrast to \cite{LW24, SW24}:

\begin{itemize}
\item  One of the main differences compared with \cite{LW24} is the lack of parameters: Due to the long-range corrections of the quasi-periodic potential, we cannot derive the eigenvalue estimates of linearized operators via potential or eigenvalue variations as in \cite{LW24}. Fortunately, the fine spectral properties of the Maryland model established in \cite{BLS83, CSZ24, KPS24} allow us to handle resonances without potential variations. Moreover, the dependence relation $\varepsilon\leq \tilde\varepsilon(\delta)\sim \delta^{\frac18+}$  emerges during the approximation of eigenvalues with the potential (cf. (\ref{error between varepsilon and delta 1}) and (\ref{error between varepsilon and delta 2}) below). This requirement can be avoided in \cite{LW24}, the i.i.d. random potential case, since the independence of the random potential at different lattice sites enables direct eigenvalue variations. In 
	\cite{LW24},  it can be $\varepsilon=1$ if $d=1$ and  the sites $\beta_1,\cdots, \beta_b$ associated with the  tangential frequency  $\omega^{(0)}$ are carefully selected. In the present case, however, we can choose any distinct $\beta_1,\cdots,\beta_b\in\Z^d$, which could be useful for future work on full-dimensional KAM tori.
			
\item  Since the tangential frequency $\omega$ in the present setting is of order $\mathcal{O}(\delta)$ rather than $\mathcal{O}(1)$ as in \cite{BW08}, we must treat the intermediate scales of the LDT for Green's functions. For this purpose, we use Diophantine estimates on irrational shifts of functions from \cite{SW24}, which rely on the specific properties of the potential. In the proof of the LDT for Green's functions in large scales, the weak second Melnikov condition (\ref{weak second Melnikov}) on $\omega$ is used to show the \textit{smallness} (or sub-linear bound) of the number of bad boxes for normal frequency sites with large norms. In contrast, in \cite{SW24}, the weak second Melnikov condition together with Bourgain's geometric lemma (see \cite{Bou07} and also Lemmas 3.5, 3.12 in \cite{SW24}) provides the sub-linear bound. However, in the Lipschitz monotone case, the variation arguments on eigenvalues of the operator restricted to boxes with general shapes, as used in \cite{SW24}, are difficult to apply.  Additionally, the Rellich functions analysis MSA method can only provide information on eigenvalues for so-called ``n-regular'' blocks (see Appendix A or \cite{CSZ24} for details), whose iterations highly depend on the geometric structures of resonant blocks. Thus, applying the non-diagonalized CWB method of \cite{SW24} to the Lipschitz monotone potential case would complicate the iterations significantly. Another important issue is how to obtain off-diagonal decay after establishing the $\ell^2$-operator norm estimates. In \cite{SW24}, Bourgain's geometric lemma plays a crucial role by providing a good annulus structure of resonant regions, allowing the application of the resolvent identity from \cite{JLS20} (cf. Theorem 3.3) to achieve off-diagonal decay. In this paper and \cite{LW24}, however, we first diagonalize the Schrödinger operator and obtain the exponential decay of eigenfunctions, ensuring off-diagonal decay via the standard Neumann series argument (i.e., Lemma \ref{Neumann expansion}). This exponential decay can be transferred to the nonlinear term $\mathcal W_{u}$ in Section  \ref{NLsect} and then to the linearized operator $\tilde{H}$ as in (\ref{perturbation Op}). From this perspective, we can view the decay of eigenfunctions as a replacement for Bourgain's geometric lemma. This is reasonable because Bourgain's geometric lemma can also be used to prove the exponential decay of eigenfunctions \cite{Bou07}.

\item For the Maryland model, its eigenvalues are unbounded, which is quite different from the situations in \cite{LW24} and \cite{SW24}. Therefore, it requires establishing both upper and lower bounds (cf. (\ref{(2.4)})) on some eigenvalues by removing more $\theta$, which again relies largely on the fine spectral properties of the Maryland model. Note that the lower bound of eigenvalues plays an important role in the initial steps of the Newton iteration in nonlinear analysis: In \cite{SW24}, this can be handled via certain potential translations. We also need an upper bound on the eigenvalues for the application of the matrix-valued Cartan's lemma.
\end{itemize}

\subsection{Structure of the paper}
This paper is organized as follows. Some spectral properties on Maryland type operators are introduced in Section \ref{Eigensect}. The LDT for Green's functions and the proof  are  presented in  Section \ref{LDTsect}. In Section \ref{NLsect}, we prove our main theorem via the CWB method. Some useful arguments are included in appendixes. 

\section{Eigenvalues estimates of the Schr\"odinger operator}\label{Eigensect}

Although we focus on  $V(\theta)=\cot(\pi\theta)$ in our main theorem,  we  deal with more general Lipschitz monotone potentials in this section.  Let $V(\theta):\ [0,1)\rightarrow\R$ be a  continuous function   satisfying,   for some $L>0$ and any $0\leq \theta_1\leq \theta_2<1$, 
\[V(\theta_1)-V(\theta_2)\geq L(\theta_2-\theta_1).\]
Consider the operator
\[H=H(\theta)=\varepsilon\Delta+V(\theta+\alpha\cdot j)\delta_{j,j'}.\]

In the following, we will introduce some eigenvalues estimates for $H$, which will play  an essential role in the nonlinear analysis.  Of particular importance is certain lower bounds on linear combinations of eigenvalues (or the separation property of eigenvalues) of $H$  with $V=\cot(\pi\theta).$

We first recall the Anderson localization result from \cite{KPS24}. 

\begin{lem}[Theorem 1.1, \cite{KPS24}]\label{KPS}
	Let $0<\eta<1$  and $\bfa\in \DC$.  Then there exists some  $\varepsilon_0=\varepsilon_0(\eta,\gamma,\tau,d,L)>0$ such that  for  $0<\varepsilon<\varepsilon_0$,  there exist  a $1$-periodic  Lipschitz monotone function $E(\theta):\ \R\rightarrow \R\cup\{\infty\}$ with Lipschitz constant still $L$, strictly increasing in $[0,1)$, and a $1$-periodic function $\phi(\theta):\ \R\rightarrow \ell^2(\Z^d)$ such that 
	\[H(\theta)\phi(\theta)=E(\theta)\phi(\theta)\ {\rm for}\ {\forall}\theta\in \R.\]
	Moreover, we have 
	\begin{equation}\label{Localization}
		\nm\phi(\theta)\nm_{\ell^2(\Z^d)}=1,\ |\phi(\theta,0)-1|<\varepsilon^{1-\eta},\ |\phi(\theta,j)|<\varepsilon^{(1-\eta)|j|_1} \ {\rm for}\  j\in \Z^d\setminus\{0\}.
	\end{equation}
\end{lem}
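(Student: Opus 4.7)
The plan is to prove Lemma \ref{KPS} via a KAM-type perturbative diagonalization starting from the trivial diagonal structure at $\varepsilon = 0$, where $H(\theta)$ has eigenvalues $E_j^{(0)}(\theta) := V(\theta + \alpha\cdot j)$ and eigenfunctions $\delta_j$. The Lipschitz monotonicity of $V$ combined with $\alpha\in\DC$ yields the initial eigenvalue separation
\[|E_j^{(0)}(\theta) - E_{j'}^{(0)}(\theta)| \geq L \nm (j - j')\cdot\alpha \nmt \geq \frac{L\gamma}{|j - j'|_1^{\tau}}, \quad j \neq j',\]
which is the key non-resonance estimate replacing the second Melnikov condition of the classical KAM scheme.

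Next I would run a Newton iteration conjugating $H_n = D_n + \varepsilon_n R_n$ by a unitary $e^{S_n}$, with skew-Hermitian $S_n$ defined from the truncated homological equation $[D_n, S_n] + \varepsilon_n R_n^{\mathrm{off}} = 0$ restricted to off-diagonal entries with $|j - j'|_1 \leq N_n$. The denominators $D_n(j,j) - D_n(j',j')$ remain controllable provided the Lipschitz monotonicity of the diagonal is preserved along the iteration, and the quadratic smallness of $H_{n+1} - D_{n+1}$ off-diagonal, coupled with the choice of $N_n$ growing polynomially and $\varepsilon_n$ decreasing super-exponentially, yields the exponential estimate $|\phi(\theta, j)| < \varepsilon^{(1-\eta)|j|_1}$ as well as $|\phi(\theta, 0) - 1| < \varepsilon^{1-\eta}$, since the zeroth-order eigenfunction is $\delta_0$ and every correction contributes at the expected scale.

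Once the eigenvalue and eigenfunction $(E_0(\theta), \phi_0(\theta))$ centered at the origin have been produced, the remaining structural claims follow from the translation covariance $U_{-k} H(\theta) U_{-k}^* = H(\theta + \alpha\cdot k)$, which forces $E_j(\theta) = E_0(\theta + \alpha\cdot j)$ and $\phi_j(\theta, i) = \phi_0(\theta + \alpha\cdot j, i - j)$. Since $\{\alpha\cdot j \bmod 1\}$ is dense in $[0,1)$, this symmetry extends $E_0$ to a $1$-periodic function on $\R$ whose Lipschitz monotonicity with constant $L$ is inherited from the monotonicity of $V$ modulo a priori control of the second-order KAM corrections to the diagonal.

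The main obstacle is showing that the Lipschitz monotonicity constant $L$ truly survives the entire iteration. Each KAM step produces a second-order diagonal correction of the schematic form $\sum_{j'\neq j}|R_n(j,j')|^2/(D_n(j,j) - D_n(j',j'))$, whose $\theta$-derivative can in principle degrade the monotonicity because of small denominators. Proving that the degradation is $o(1)$ in $\varepsilon$ \emph{uniformly in} $\theta$, so that a single sharp monotonicity constant is maintained throughout and passes to the Lipschitz monotone limit with the same $L$, is the technical core of \cite{KPS24} and is precisely what allows the conclusion to hold for \emph{every} $\theta\in\R$ rather than merely for almost every $\theta$.
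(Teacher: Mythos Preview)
Your proposal outlines the KAM-type perturbative diagonalization, which is precisely the method of the cited reference \cite{KPS24} itself. As a high-level sketch it is correct: the Lipschitz monotone separation estimate replaces the Melnikov condition, the homological equation is solved off-diagonal with small divisors controlled by $|E_j-E_{j'}|\geq L\gamma|j-j'|_1^{-\tau}$, and translation covariance reduces everything to the eigenpair at the origin. You also correctly isolate the genuinely delicate point, namely that the Lipschitz constant $L$ of the diagonal must survive every KAM step without loss.

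The paper, however, gives a \emph{different} proof in Appendix~\ref{CSZapp}, based not on KAM conjugation but on the multi-scale Rellich-function analysis of \cite{CSZ24}. There one first constructs a sequence of finite-volume Rellich eigenvalue branches $E_n(\theta)$ on growing blocks $B_n$, each already Lipschitz monotone with constant $L$; then one shows via Green's function estimates and the Poisson formula that any infinite-volume eigenfunction localizes near its center, and via a Parseval argument that every lattice site carries exactly one such eigenfunction. The Lipschitz monotonicity of the limiting $E(\theta)$ is obtained by passing to the limit in the uniform Cauchy sequence $E_n(\theta)$, so no delicate control of second-order KAM corrections to the diagonal is needed. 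The trade-off is that the Rellich/MSA route in the appendix yields a weaker explicit decay exponent ($\varepsilon^{|j|_1/8}$ rather than $\varepsilon^{(1-\eta)|j|_1}$), though the authors remark this can be sharpened. Your KAM outline buys the sharp exponent directly; the paper's MSA alternative buys a simpler route to the monotonicity of $E(\theta)$.
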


Based on this theorem, we can prove 

\begin{lem}\label{prop of eigenvectors}
\begin{itemize}
\item[(1)] Define  $(T^{m}\phi)(j)=\phi(j-m),\ m\in \Z^d$. Then $\{\phi_m\}_{m\in\Z^d}=\{(T^m\phi)(\theta+m\cdot\alpha)\}_{m\in \Z^d}$ forms an orthonormal basis of eigenvector of $H(\theta)$ with corresponding  eigenvalues $\{\mu_m(\theta) = E(\theta+m\cdot\bfa)\}_{m\in\Z^d}$. 

\item[(2)]  For a.e.  $\theta\in\T$ and $0<\varepsilon\leq c(\eta)\ll1,$ we have
\begin{equation}\label{Approximate eigenvalue by potential}
	|E(\theta)-V(\theta)|\leq 2d\varepsilon. 
\end{equation}
\end{itemize}
\end{lem}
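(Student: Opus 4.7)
The plan is to exploit the translation covariance of $H(\theta)$. A direct matrix computation gives $H(\theta)T^m=T^mH(\theta+m\cdot\alpha)$, since the discrete Laplacian commutes with $T^m$ and the diagonal potential reshuffles as $V(\theta+\alpha\cdot j)=V((\theta+m\cdot\alpha)+\alpha\cdot(j-m))$. Applying this identity to the eigenpair $(E(\theta+m\cdot\alpha),\phi(\theta+m\cdot\alpha))$ furnished by Lemma \ref{KPS} immediately produces $\phi_m=T^m\phi(\theta+m\cdot\alpha)$ as an eigenfunction of $H(\theta)$ with eigenvalue $\mu_m(\theta)=E(\theta+m\cdot\alpha)$, and $\ell^2$-normalization is preserved because $T^m$ is an isometry. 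For orthogonality I would argue that the eigenvalues are pairwise distinct: $E$ is strictly increasing on $[0,1)$ and $1$-periodic, while $\alpha\in\DC$ forces $(m-m')\cdot\alpha\notin\Z$ whenever $m\neq m'$, so $\mu_m(\theta)\neq\mu_{m'}(\theta)$ once $\theta$ avoids the countable set on which some $\theta+m\cdot\alpha$ lands on a blow-up point of $E$; self-adjointness of $H$ then delivers $\langle\phi_m,\phi_{m'}\rangle=0$.

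For completeness I would convert the pointwise decay \eqref{Localization} into the entrywise bound $|\phi_m(j)-\delta_{j,m}|<\varepsilon^{(1-\eta)\max(|j-m|_1,1)}$, which via the Schur test yields $\|U-I\|_{\ell^2\to\ell^2}\leq C_d\,\varepsilon^{1-\eta}$ for the matrix $U=[\phi_m(j)]_{j,m\in\Z^d}$. A Neumann series then makes $U$ invertible for $\varepsilon$ small; combining invertibility with $U^*U=I$ (which encodes orthonormality of the columns) promotes $U$ to a unitary, so that $\{\phi_m\}_{m\in\Z^d}$ spans $\ell^2(\Z^d)$.

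\textbf{Plan for Part (2).} The idea is to test the eigenvalue identity $H(\theta)\phi(\theta)=E(\theta)\phi(\theta)$ against $\delta_0$, which yields
\begin{equation*}
(E(\theta)-V(\theta))\,\phi(\theta,0)=\varepsilon\sum_{|j'|_1=1}\phi(\theta,j').
\end{equation*}
Substituting $|\phi(\theta,0)|\geq 1-\varepsilon^{1-\eta}$ and $|\phi(\theta,j')|<\varepsilon^{1-\eta}$ at each of the $2d$ nearest neighbors from Lemma \ref{KPS}, and restricting to the full-measure set of $\theta$ on which $V$ is finite and $\phi(\theta,0)\neq 0$, dividing produces $|E(\theta)-V(\theta)|\leq 2d\varepsilon\cdot\varepsilon^{1-\eta}/(1-\varepsilon^{1-\eta})\leq 2d\varepsilon$ provided $c(\eta)$ is chosen so that $\varepsilon^{1-\eta}/(1-\varepsilon^{1-\eta})\leq 1$.

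\textbf{Main obstacle.} The bulk of the work is essentially bookkeeping around Lemma \ref{KPS}; the only delicate step is the completeness claim in Part (1), where one must upgrade the pointwise exponential decay of the eigenfunctions into an $\ell^2$-operator-norm bound on $U-I$ and then check that orthonormality together with invertibility forces unitarity. The ``a.e.\ $\theta$'' qualification in Part (2) is genuinely necessary because $V$ may be unbounded (as for $V=\cot\pi\theta$), but the excluded $\theta$ form only a countable set, namely those where $V(\theta)$ or some $E(\theta+m\cdot\alpha)$ is infinite.
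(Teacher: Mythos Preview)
Your proposal is correct. For Part~(2) your argument is essentially identical to the paper's: both test the eigenvalue equation at $j=0$ and invoke the bounds \eqref{Localization} to obtain $|E(\theta)-V(\theta)|\le 2d\varepsilon\cdot\varepsilon^{1-\eta}/(1-\varepsilon^{1-\eta})\le 2d\varepsilon$. For Part~(1) the paper does not give an argument at all---it simply cites Corollary~1.2 of \cite{KPS24}---whereas you supply the details (covariance $H(\theta)T^m=T^mH(\theta+m\cdot\alpha)$, distinctness of eigenvalues via strict monotonicity of $E$ and the Diophantine condition, and completeness via a Schur-test bound on $U-I$); your argument is correct and is essentially the content behind that citation.
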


\begin{proof}
	(1) This is just Corollary 1.2 in \cite{KPS24}.
	
	(2) Remove a zero-measure set of $\theta\in\T$ to avoid the presence of infinite values of $H(\theta)\phi(\theta).$  Then, expanding $H(\theta)\phi(\theta)=E(\theta)\phi(\theta)$ at $0$ yields  
	\[(E(\theta)-V(\theta))\phi(\theta,0)=\varepsilon\sum_{|j|_1=1}\phi(\theta,j).\]
	Thus, applying  $(\ref{Localization})$ shows that for $0<\varepsilon\leq c(\eta)\ll1,$
	\[|E(\theta)-V(\theta)|=\varepsilon\sum_{|j|_1=1}\frac{|\phi(\theta,j)|}{|\phi(\theta,0)|}\leq 2d\varepsilon\frac{\varepsilon^{1-\eta}}{1-\varepsilon^{1-\eta}}\le 2d\varepsilon.\]
	\end{proof}

Let us make some comments on Lemmas \ref{KPS}, \ref{prop of eigenvectors}. 

\begin{rmk}
	\begin{itemize}
	\item[(1)] Lemma \ref{KPS} holds also for $\alpha$ satisfying weak  Diophantine condition and for more general  H\"older monotone potential (cf.  \cite{KPS24} for details).  Anderson localization for $H$  with  Lipschitz monotone potentials  was first proved in \cite{CSZ24} via multi-scale analysis based on Rellich functions analysis. However, in that paper, the property concerning eigenvalue $E(\theta)$ is not proved. 
	Inspired by   the insight of \cite{KPS24}, we can  prove similar finer results on  eigenvalues and eigenfunctions  as in \cite{KPS24} via Rellich functions analysis (cf.  Appendix \ref{CSZapp} for details).
	
	\item[(2)] As a matter of fact, Lemma \ref{prop of eigenvectors} automatically provides the relabelling of eigenfunctions such that  $\phi_j$ has a unique localization center $j$,  and every  lattice site $j\in\Z^d$  admits  a unique eigenfunction $\phi_{j}$ peaking on it. The uniqueness of the localization center is largely due to the monotonicity of the potentials and will not necessarily hold in general cases, such as the i.i.d. potentials \cite{LW24} or the cosine-like ones \cite{CSZ24b}. In fact, in the general cases,  not only one eigenfunction may have more than one localization centers, but also one lattice site may have no eigenfunctions or more than one eigenfunction peaking on it. However, once one can obtain some semi-uniform localization control of the form
	\[|\phi_j(l)|\leq C (1+|l_j|)^q e^{-c|l-l_j|}\]
	 with $C>0$ being independent of $j$, the relabelling can be constructed in arbitrary dimensions through Hall's marriage theorem. We believe that such a proof is of independent interest and have included it in Appendix \ref{Hallapp}. We hope this result will be helpful in future studies of Anderson localization for NLS with more general quasi-periodic potentials.	
\end{itemize}
\end{rmk}

We are ready to establish the desired lower bounds on the separation property of eigenvalues of  $H$. At present, however, we can only handle the special case $V=\cot(\pi\theta)$. 

We restrict ourselves to $V(\theta)=\cot(\pi\theta)$ and take $\eta=\frac{1}{2}$ in Lemma  $\ref{KPS}$. Then, for all $(\varepsilon,\ct)\in [0,\varepsilon_0(\gamma,\tau,d)]\times\T,$  we can find $\phi(\theta), E(\theta)$ such that 
\begin{align}\label{phiidef}
\{\phi_j(\theta)\}_{j\in\Z^d}=\{(T^j\phi)(\theta+j\cdot\alpha)\}_{j\in \Z^d}
\end{align}
is a complete set of eigenfunctions of $H(\ct)$ with corresponding  eigenvalues  
\begin{align}\label{muidef}\{\mu_j(\theta)=\mu_{0}(\theta+j\cdot\bfa):=E(\theta+j\cdot\bfa)\}_{j\in\Z^d}.\end{align}
 Here, $E(\theta)$ is still nondecreasing and  Lipschitz monotone with a Lipschitz constant $\pi$. Let $0<\delta\ll 1$. This subsection 
aims to establish  separation  property  of eigenvalues  by removing  some ``bad''  $(\varepsilon,\theta), $ in order to get the $\tilde{\varepsilon}=\tilde{\varepsilon}(\gamma,\tau,b,d,B,\delta)$ and $\Theta=\Theta_{\alpha}\subset [0,1]$ as stated in Theorem \ref{Main Thm}.

We begin with a useful lemma on the symmetry  property of  $E(\theta)$, which may inherit from that of $V$.  While we believe  such a result may have been proven in \cite{BLS83}, we provide a proof relying on Lemma \ref{KPS} for completeness.

\begin{lem}\label{center symmertic}
Fix  $V(\theta)=\cot(\pi\theta)$ in Lemma \ref{KPS}. Let  $E(\theta),\psi(\theta)$ be  as in  Lemma \ref{KPS} with $\psi(\theta)$ peaking at $0\in \Z^d$. We have $E(\theta)=-E(1-\theta)$ and $\psi(\theta, j)=e^{i\pi\sum\limits_{k=1}^{d}j_k}\psi(1-\theta, -j)$. Especially, the roots of $E(\theta)$ are given by   $\frac{1}{2}+\Z^d$, which are  independent of $\varepsilon$. 
\end{lem}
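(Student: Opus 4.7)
The plan is to exhibit a unitary involution $U$ on $\ell^2(\Z^d)$ that intertwines $H(\theta)$ with $-H(1-\theta)$, and then to use the uniqueness of the normalized eigenfunction peaking at the origin from Lemma~\ref{KPS} to read off the symmetry on $(E,\psi)$ at once.

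First I would define
\[
(U\phi)(j)=e^{i\pi\sum_{k=1}^d j_k}\phi(-j)=(-1)^{|j|_1}\phi(-j),
\]
which is self-adjoint, unitary, and squares to the identity. A short computation shows $U\Delta U^{-1}=-\Delta$: the reflection $j\mapsto -j$ preserves the nearest-neighbor structure, while the staggered sign contributes a factor $(-1)^{|j|_1+|j'|_1}=-1$ between any two adjacent sites $j,j'$. Conjugation sends the multiplication operator $V(\theta+j\cdot\alpha)\delta_{j,j'}$ to $V(\theta-j\cdot\alpha)\delta_{j,j'}$, and the two parity identities for the cotangent give $\cot(\pi(1-\theta+j\cdot\alpha))=-\cot(\pi(\theta-j\cdot\alpha))$. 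Together these facts collapse into the key intertwining
\[
-UH(\theta)U^{-1}=H(1-\theta).
\]

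Applying $U$ to $H(\theta)\psi(\theta)=E(\theta)\psi(\theta)$ then yields $H(1-\theta)\bigl(U\psi(\theta)\bigr)=-E(\theta)\bigl(U\psi(\theta)\bigr)$. By the definition of $U$ the value of $U\psi(\theta)$ at the origin is $\psi(\theta,0)$, which by \eqref{Localization} with $\eta=\tfrac12$ is within $\varepsilon^{1/2}$ of $1$; in particular $U\psi(\theta)$ is the normalized eigenfunction of $H(1-\theta)$ peaking at $0$ with the correct sign convention. The uniqueness portion of Lemma~\ref{KPS} (and the phase convention $\psi(\cdot,0)\approx 1$) then forces $U\psi(\theta)=\psi(1-\theta)$. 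Matching eigenvalues gives $E(1-\theta)=-E(\theta)$, and unpacking $\psi(1-\theta,j)=(-1)^{|j|_1}\psi(\theta,-j)$ and relabelling $(\theta,j)\mapsto(1-\theta,-j)$ delivers the claimed formula $\psi(\theta,j)=e^{i\pi\sum_{k}j_k}\psi(1-\theta,-j)$.

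For the zero set, substituting $\theta=\tfrac12$ into $E(\theta)=-E(1-\theta)$ immediately produces $E(\tfrac12)=0$, and by the strict monotonicity of $E$ on $[0,1)$ from Lemma~\ref{KPS} this is the unique zero in $[0,1)$; combined with $1$-periodicity, the full zero set in $\R$ is $\tfrac12+\Z$, and this location is manifestly independent of $\varepsilon$ since it is pinned by the symmetry $V(1-\theta)=-V(\theta)$ alone. The only delicate point of the argument is matching the two eigenfunctions \emph{on the nose}, not merely up to an unimodular scalar; this is exactly what the quantitative bound $|\psi(\cdot,0)-1|<\varepsilon^{1-\eta}$ in \eqref{Localization} is engineered to supply, so once one commits to the correct staggered form of $U$ there is no serious obstacle.
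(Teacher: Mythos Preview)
Your proof is correct and follows essentially the same strategy as the paper: construct a unitary involution intertwining $H(\theta)$ with $-H(1-\theta)$, then invoke the uniqueness of the normalized eigenfunction peaking at $0$ to pin down both $E$ and $\psi$. The only difference is cosmetic---the paper builds the same operator $U\phi(j)=(-1)^{|j|_1}\phi(-j)$ via a detour through the Fourier transform (conjugating $\Delta$ to $2\sum_k\cos 2\pi x_k$ and applying a half-period shift), whereas you simply write $U$ down and verify $U\Delta U^{-1}=-\Delta$ directly, which is cleaner.
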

\begin{proof}
	Since 
	\[(\varepsilon\Delta+\cot \pi(\theta+\jalpha)\delta_{j,j'})\psi(\theta)=E(\theta)\psi(\theta),\]
	and replacing  $\theta$ with  $1-\theta$,  we have, due to $\cot(\pi-x)=\cot x$, 
	\[(\varepsilon\Delta-\cot\pi(\theta-\jalpha)\delta_{j,j'})\psi(1-\theta)=E(1-\theta)\psi(1-\theta).\]
Denote $\varPsi=\psi(1-\theta)$ and $\tilde{\varPsi}(j)=\varPsi(-j)$. Direct computation shows that 
	\[\varepsilon\sum_{|m-j|_1=1}\tilde{\vp}(m)-\cot\pi(\theta+\jalpha)\tilde{\vp}(j)=E(1-\theta)\tilde{\vp}(j).\]
	This  implies
	\begin{align}\label{psit}
	(-\varepsilon\Delta+\cot \pi(\theta+\jalpha)\delta_{j,j'})\tilde{\vp}=-E(1-\theta)\tilde{\vp}.
	\end{align}
	
	Now, we define the Fourier transformation operator $U:\ L^2(\T^d)\rightarrow \ell^2(\Z^d)$ as 
	\[U(F(x))=(a_j)_{j\in\Z^d}\]
	with 
	\[F(x)=\sum_{j\in\Z^d}a_j e^{i2\pi j\cdot x},\ x\in\T^d.\]
	If we take $\tilde{\Delta}=U^{-1}\Delta U:\ L^2(\T^d)\rightarrow L^2(\T^d)$, we then get by direct computations  that 
	\[\tilde{\Delta}F(x)=2(\sum_{k=1}^{d}\cos 2\pi x_k)F(x)\]
	Let $T_{\frac{1}{2}}:\ L^2(\Z^d)\rightarrow L^2(\T^d)$ to be the shift operator given by 
	\[T_{\frac{1}{2}}F(x)=F(x+(\frac{1}{2},\frac{1}{2},\cdots,\frac{1}{2})):=F(x+\mathbf{\frac{1}{2}}).\]
	Then  $(T_{\frac{1}{2}})^2=id$ with $id$ denoting the identity operator, and 
	\begin{align*}
		T_{\frac{1}{2}}\tilde{\Delta}T_{\frac{1}{2}} F(x)=-\tilde{\Delta}F(x).
	\end{align*}
	That shows $T_{\frac{1}{2}}\tilde{\Delta}T_{\frac{1}{2}}=-\tilde{\Delta}$. Moreover, for any $\phi\in\ell^2(\Z^d)$, 
	\begin{align*}
		UT_{\frac{1}{2}}U^{-1}\phi&= UT_{\frac{1}{2}}\big( \sum_{j\in \Z^d}\phi(j)e^{i2\pi j\cdot x}     \big) \\
		      &=(\phi(j)e^{i\pi\sum_{k=1}^{d}j_k})_{j\in \Z^d},
	\end{align*}
	which means that $UT_{\frac{1}{2}}U^{-1}:\ \ell^2(\Z^d)\rightarrow\ell^2(\Z^d)$ is a multiplier. By taking $\tilde{\vp}=UT_{\frac{1}{2}}U^{-1}\Phi$ and recalling  \eqref{psit}, we obtain
	\[-\varepsilon UT_{\frac{1}{2}}U^{-1}\Delta UT_{\frac{1}{2}}U^{-1}\Phi+UT_{\frac{1}{2}}U^{-1}\cot \pi(\theta+\jalpha)\delta_{j,j'}UT_{\frac{1}{2}}U^{-1}\Phi=-E(1-\theta)\Phi.\]
	From 
	\[-UT_{\frac{1}{2}}U^{-1}\Delta UT_{\frac{1}{2}}U^{-1}=U(-T_{\frac{1}{2}}\tilde{\Delta}T_{\frac{1}{2}})U^{-1}=U\tilde{\Delta}U^{-1}=\Delta\]
	and the fact $UT_{\frac{1}{2}}U^{-1}$  is a multiplier as shown above,  we get  
	\[UT_{\frac{1}{2}}U^{-1}\cot \pi(\theta+\jalpha)\delta_{j,j'}UT_{\frac{1}{2}}U^{-1}\Phi= \cot \pi(\theta+\jalpha)\Phi,\]
	which means 
	\[(\varepsilon\Delta+\cot \pi(\theta+\jalpha)\delta_{j,j'})\Phi=-E(1-\theta)\Phi, \]
	and $\Phi$ is also an  eigenfunction of $H(\theta)$. 
	
	Now by the construction  of $\Phi$, we have $\Phi(j)=e^{i2\pi j\cdot x}\vp(-j)$ and $|\Phi(j)|=|\vp(-j)|$. Since $\vp=\psi(1-\theta)$ peaks at $0$, $\Phi$ peaks at $0$ as well. Then by (1) of Lemma \ref{prop of eigenvectors}, we must have 
	\[\Phi=\psi(\theta),\ E(\theta)=-E(1-\theta),\]
	which implies $\psi(\theta, j)=e^{i\pi\sum\limits_{k=1}^{d}j_k}\psi(1-\theta, -j)$.  Finally, by Lemma \ref{KPS}, $E(\theta)$ is monotone.  The fact that $E(\theta)$ is centrally symmetrical  about  the point $(\frac{1}{2},0)$ ensures  that $\frac{1}{2}$ is the unique root of $E(\theta)$ in the interval $[0,1)$.  
	\end{proof}

Recall that $\beta_1,\cdots,\beta_b\in\Z^d$  are fixed and distinct,  and $\max\limits_{1\le k\leq b}|\beta_k|\leq B.$ Below is our  main theorem in this section. 

\begin{thm}[Separation  property of eigenvalues]\label{seperative spectrum}
	Fix $\bfa\in \DC$ and denote $\omega^{(0)}=(\mu_{\beta_1},\mu_{\beta_2},\cdots,\mu_{\beta_b})$.  Then  there is a constant $\tilde{\delta}=\tilde{\delta}(\gamma,\tau,b,d,B)$ such that for all $0<\delta\leq \tilde{\delta}$, there exist a constant $\tilde{\varepsilon}(\delta)>0$ depending only  on $\delta$, and a set $\Theta=\Theta_{\bfa,\delta}\subset [0,1]$ depending  on $\bfa,\delta$ with   
	\[{\rm meas}([0,1]\backslash \Theta) \leq |\log\delta|^{-1}\]
	so that the following statements hold true for all $0<\varepsilon\leq \tilde{\varepsilon}$ and $\theta \in\Theta$. 
	\begin{itemize}
	\item[(1)]  $\nm\phi_j\nm_{\ell^2(\Z^d)}=1,\ |\phi_j(j)-1|<\varepsilon^{\frac{1}{2}},\  |\phi_j(x)|<\varepsilon^{\frac{1}{2}|x-j|_1} \  {\rm for}\  x\neq j\in \Z^d$.
	\item[(2)]  For any $N\geq |\log\delta|^2 ,\ |j|,|j'|\leq N$ and $j\neq j',$ 
     \begin{equation}\label{(2.3)}
			|\mu_j-\mu_{j'}|\geq \frac{\pi \gamma}{(2d)^{\tau}N^{\tau}}
	 \end{equation}
	\indent and 
	\begin{equation}\label{(2.4)}
			\frac{1}{2N^{d+2}}\leq |\mu_j|\leq  2N^{d+2}.
	\end{equation}
	\item[(3)]  For any $(n,j)\in [-e^{|\log\delta|^{\frac{3}{4}}},e^{|\log\delta|^{\frac{3}{4}}}]^{b+d}\backslash\{(-e_k,\beta_k)\}_{k=1}^b,$
	\begin{equation}\label{(2.5)}
			|n\cdot\omega^{(0)}+\mu_j|\geq 2\delta^{\frac{1}{8}};
	\end{equation}
	\indent and for any $(n,j)\in [-e^{|\log\delta|^{\frac{3}{4}}},e^{|\log\delta|^{\frac{3}{4}}}]^{b+d}\backslash\{(e_k,\beta_k)\}_{k=1}^b,$
	\begin{equation}\label{(2.6)}
			|-n\cdot\omega^{(0)}+\mu_j|\geq 2\delta^{\frac{1}{8}}.
	\end{equation}
	\item[(4)]  For any $(n,j,j')\in [-3|\log\delta|^K,3|\log\delta|^K]^{b+d+d}\backslash \{(-e_k+e_{k'},\beta_k,\beta_{k'})\cup(0,0,0)\}_{k,k'=1}^b$,	\begin{equation}\label{(2.7)}
			|n\cdot\omega^{(0)}+\mu_j-\mu_{j'}|>2\delta^{\frac{1}{8}}. 
	\end{equation}
Here $K=K(b)>2$ will be chosen to be a large constant depending only on $b$ in Section 3.
\end{itemize}
\end{thm}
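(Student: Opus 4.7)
Parts (1) and (2) should be essentially deterministic. For (1), I would apply Lemma \ref{KPS} with $\eta = 1/2$ together with the translation relation $\phi_j(x) = \phi(\theta+j\cdot\alpha,\, x-j)$ from Lemma \ref{prop of eigenvectors}(1), which transports the localization bounds around $0$ to bounds peaked at $j$. For the separation (\ref{(2.3)}), no $\theta$-removal is required: the Lipschitz monotonicity of $E$ (with constant $\pi$, inherited from $|V'(\theta)| = \pi\csc^2(\pi\theta) \geq \pi$) together with the Diophantine condition immediately gives
$|\mu_j - \mu_{j'}| = |E(\theta+j\cdot\alpha) - E(\theta+j'\cdot\alpha)| \geq \pi\,\|(j-j')\cdot\alpha\|_{\mathbb{T}} \geq \pi\gamma/(2dN)^{\tau}$.
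The boundedness (\ref{(2.4)}) would use (\ref{Approximate eigenvalue by potential}) to replace $E$ by $V = \cot\pi(\cdot)$ up to $O(\varepsilon)$ and then remove those $\theta$ for which some $\theta + j\cdot\alpha$ with $|j| \leq N$ is within $\sim N^{-(d+2)}$ of an integer (upper bound on $|\mu_j|$) or of a half-integer (lower bound, via $V(1/2+\mathbb{Z})=0$ and $|V'|\geq \pi$). This costs $\lesssim N^d \cdot N^{-(d+2)} = N^{-2}$ per scale; summing over $N \geq |\log\delta|^2$ yields $\lesssim |\log\delta|^{-2}$.

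For (3) and (4) my plan is to use the approximation $\mu_j = V(\theta+j\cdot\alpha) + O(\varepsilon)$ from (\ref{Approximate eigenvalue by potential}) to reduce matters to estimating
$F(\theta) = \sum_\ell c_\ell \cot\pi(\theta + \gamma_\ell \cdot \alpha),$
where the $c_\ell$ are integers and the $\gamma_\ell$ are drawn from $\{\beta_1,\ldots,\beta_b, j, j'\}$. The total approximation error is $O((|n|+1)\varepsilon) \leq O(e^{|\log\delta|^{3/4}}\tilde\varepsilon)$, which is $\ll \delta^{1/8}$ once $\tilde\varepsilon \leq \delta^{1/8} e^{-2|\log\delta|^{3/4}}$; this forces the condition $\tilde\varepsilon(\delta) \to 0$ stated in the theorem. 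The crucial observation is that under $z = e^{2\pi i\theta}$, $F$ becomes a rational function of $z$ of degree $m = \#\{\ell : c_\ell \neq 0\} \leq b+2$, and this degree does \emph{not} depend on the magnitudes of $n_k, j, j'$. Hence, provided $F\not\equiv 0$, the sublevel set $\{\theta \in [0,1] : |F(\theta)| < 3\delta^{1/8}\}$ has measure $\lesssim m\,\delta^{1/(8m)} \leq (b+2)\delta^{1/(8(b+2))}$ (accounting for zeros of multiplicity up to $m$). Summing over the $\lesssim e^{(b+d)|\log\delta|^{3/4}}$ tuples in (3) and the polynomially many in (4), the total removed measure is $\lesssim e^{(b+d)|\log\delta|^{3/4}}\delta^{1/(8(b+2))}$, which is far below $|\log\delta|^{-1}$ for $\delta$ small.

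The main obstacle is showing $F \not\equiv 0$ for every tuple outside the specified resonant sets, especially in the second Melnikov regime (4) where partial matches between $(j,j')$ and $(\beta_k,\beta_{k'})$ produce several cases. My approach is to exploit linear independence of the functions $\{\cot\pi(\theta+\gamma\cdot\alpha) : \gamma \in \mathbb{Z}^d\}$: by the Diophantine distinctness of the poles $-\gamma\cdot\alpha \bmod 1$, a residue calculation at each pole shows that any identity $F \equiv 0$ forces every combined coefficient (after collecting like terms in $\gamma$) to vanish. For case (3) with $F = \sum_k n_k \cot\pi(\theta+\beta_k\cdot\alpha) + \cot\pi(\theta+j\cdot\alpha)$: if $j \neq \beta_k$ for all $k$, the coefficient of $\cot\pi(\theta+j\cdot\alpha)$ is $1 \neq 0$; if $j = \beta_{k_0}$, the combined coefficient $n_{k_0} + 1$ together with $n_k = 0$ $(k \neq k_0)$ forces $n = -e_{k_0}$, exactly the excluded tuple $(-e_{k_0}, \beta_{k_0})$. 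An analogous case analysis recovers the excluded set $\{(-e_k+e_{k'},\beta_k,\beta_{k'})\} \cup \{(0,0,0)\}$ in (4); the symmetry $E(\theta) = -E(1-\theta)$ from Lemma \ref{center symmertic} is useful to streamline sign cases. Combined with step (2), the total measure of bad $\theta$ is $\lesssim |\log\delta|^{-2} + o(1)$, well within the required $|\log\delta|^{-1}$ budget.
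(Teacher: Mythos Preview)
Your treatment of parts (1) and (2) matches the paper's. For (3) and (4) you take a genuinely different route: the paper reduces to $F_{\mathbf{k},j_1,\dots,j_s}(\theta)=\sum k_\ell\cot\pi(\theta+j_\ell\cdot\alpha)$ just as you do, but then obtains a lower bound on $\max_{1\le t\le s}|F^{(t)}(\theta)|$ via a Wronskian computation. Writing the derivative vector as $-W\mathbf{k}$ with $W(p,q)=g^{(p)}(\theta+\tfrac12+j_q\cdot\alpha)$, $g=\tan(\pi\cdot)$, and using that $g^{(p)}=\pi^p\sec^2\cdot\mathrm{Poly}_{p-1}(\tan)$, the determinant of $W$ reduces to a Vandermonde in $\tan$, giving (after a cutoff $|\tan|\le X$) the explicit lower bound $A\gtrsim X^{-(s-1)^2}\prod_{p<q}\|(j_p-j_q)\cdot\alpha\|_{\mathbb{T}}\gtrsim X^{-(s-1)^2}N^{-C\tau}$. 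This feeds into the Kleinbock--Margulis sublevel lemma (Lemma~\ref{Margulis}) to produce $\mathrm{meas}\{|F|<\zeta\}\lesssim \zeta^{1/s}X^sN^{C\tau}$, and one optimizes in $X$.

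Your rational-function argument is cleaner in spirit and your residue argument for $F\not\equiv 0$ is correct and elegant (the paper never needs this, since the Vandermonde determinant is visibly nonzero). However, the bound you state, $\mathrm{meas}\{|F|<3\delta^{1/8}\}\lesssim m\,\delta^{1/(8m)}$ with a constant depending only on $m$, is not justified and is in fact false in general. Writing $F=P(z)/Q(z)$ with $z=e^{2\pi i\theta}$ and $\deg Q=m$, the relevant Cartan-type estimate for $\{|P|<2^m\zeta\}$ depends on the leading coefficient of $P$, which has modulus $|\sum_\ell c_\ell|$; when $\sum_\ell c_\ell=0$ (which happens, e.g., in (4) whenever $\sum n_k=0$) one must pass to the next coefficient, and this is bounded below only by $\prod_{k\ne\ell}|e^{2\pi i(\gamma_k-\gamma_\ell)\cdot\alpha}-1|\gtrsim N^{-(m-1)\tau}$ via the Diophantine condition. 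So the honest bound carries a factor $N^{C(m,\tau)}$, exactly what the paper's Vandermonde produces. This is the gap in your write-up. It is harmless for the final budget (polynomial in $N=e^{|\log\delta|^{3/4}}$ is crushed by $\delta^{1/(8(b+2))}$), but you must track it; as stated your sublevel bound cannot be true for, say, two shifts at Diophantine distance $\sim N^{-\tau}$ with coefficients $+1,-1$. The reference to Lemma~\ref{center symmertic} for ``streamlining sign cases'' in (4) is also misplaced: that symmetry is used in the paper only for the lower bound in (\ref{(2.4)}), not for (3)--(4).
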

\begin{rmk}\label{tepsdel}
As we will say below, we can set 
$$\tilde\varepsilon(\delta)=\delta^{\frac18+}.$$
\end{rmk}

\begin{proof}[Proof  of Theorem $\ref{seperative spectrum}$]

	(1) This  follows directly from Lemma \ref{KPS} if we  assume 
	\begin{equation}\label{varepsilon bound 1}
		 0< \varepsilon\leq \varepsilon_0(\gamma,\tau,d). 
	\end{equation}

	(2) By $\bfa\in\DC$ and $E(\theta)$ is  Lipschitz monotone, we obtain 
	\begin{align*}
		|\mu_j-\mu_{j'}| &=|E(\theta+j\cdot\bfa)-E(\theta+j'\cdot\bfa)|\\
		             &\geq \pi\nm(j-j')\cdot\bfa\nmt\geq\frac{\pi\gamma}{|j-j'|_1^{\tau}}\\
					 &\geq \frac{\pi \gamma}{(2d)^{\tau}N^{\tau}}. 
	\end{align*}
	 Moreover,   since $\mu_j=E(\theta+j\cdot\bfa)$ and $E(\theta)$ is  Lipschitz monotone with   a Lipschitz constant $\pi$, we can  use  Lemma \ref{center symmertic}  (i.e., $E(\frac{1}{2})=0$) to show 
	 \[\left\{\theta\in \T:\  |\mu_j(\theta)|<\frac{1}{2N^{d+2}}\right\}\subset \Theta^1_{j,N} =: \left\{\theta\in \T: \ |\theta+\jalpha-\frac{1}{2}|<\frac{1}{2\pi N^{d+2}}\right\}\]
	 with 
	 \[\meas (\Theta^1_{j,N})\leq \frac{1}{\pi N^{d+2}}.\]
	Then it suffices to  take account of all $ N\geq |\log\delta|^2,|j|\leq N$, which  will give a set 
	 \begin{equation}\label{2.8}
			 \Theta^1=\bigcup_{N\geq |\log\delta|^2}\bigcup_{|j|\leq N} \Theta^1_{j,N}
	 \end{equation}
	 with  estimate 
	 \begin{align}\label{2.9}
		\meas(\Theta^1) &\leq \sum_{N\geq |\log\delta|^2}\frac{(2N+1)^d}{\pi N^{d+2}}\\
		   \notag    &\le C(d) \sum_{N\geq |\log\delta|^2}N^{-2}\\
			 \notag &\le C(d) |\log\delta|^{-2} \ll |\log\delta|^{-\frac{3}{2}},
	 \end{align}
	 where the last inequality needs  $0<\delta\le c(d)\ll1$. Hence, for any $\theta\notin\Theta^1$, $|\mu_j|\geq\frac{1}{2N^{d+2}}$ will hold  for all $N\geq |\log\delta|^2,|j|\leq N$. Finally, we will mention  that although $E(\theta)$ depends on $\varepsilon$, its root (i.e.,  $\frac{1}{2}$) does not.   Consequently, $\Theta^1$ does not  depend on $\epsilon$ and  is solely dependent on $\alpha,\delta$.	
	 
The upper bound $2N^{d+2}$ of $|\mu_j|$ can be proved by the following arguments. By (2) of Lemma \ref{prop of eigenvectors}, we know that $|\mu_j(\theta)-\cot\pi(\theta+\jalpha)|\leq 2d\varepsilon$. Hence,
	 \begin{align*}
	 \{\theta\in \T: |\mu_j(\theta)|> 2N^{d+2} \}&\subset \{\theta\in \T:\  |\cot\pi(\theta+\jalpha)|>N^{2+d} \} \\
	 &=\{\theta\in \T: \ \nm \theta+\jalpha\nmt <\frac{2}{\pi} (\frac{\pi}{2}-\arctan N^{d+2}) \}.
	 \end{align*}
	We denote 
	 \[\Theta^0_{j,N}=  \{\theta\in \T:\  \nm \theta+\jalpha\nmt <\frac{2}{\pi} (\frac{\pi}{2}-\arctan N^{d+2}) \}. \]
	 Since 
	 \[\frac{2}{\pi}(\frac{\pi}{2}-\arctan N^{d+2})\sim \frac{2}{\pi N^{d+2}} \  {\rm as} \ N\rightarrow \infty,\]
	 we have 
	 \[\meas (\Theta^0_{j,N})\le C(d) \frac{1}{ N^{d+2}}.\]
	 Taking account of all $ N\geq |\log\delta|^2,|j|\leq N$ will give  a set 
	 \begin{equation}\label{Theta^2}
			 \Theta^0=\bigcup_{N\geq |\log\delta|^2}\bigcup_{|j|\leq N} \Theta^0_{j,N}
	 \end{equation}
	 with 
	 \begin{align}\label{Theta^2 mes}
		\meas(\Theta^0) &\leq \sum_{N\geq |\log\delta|^2}\frac{(2N+1)^d}{\pi N^{d+2}}\\
		   \notag    &\le C(d) \sum_{N\geq |\log\delta|^2}N^{-2}\\
			 \notag &\le C(d) |\log\delta|^{-2} \ll |\log\delta|^{-\frac{3}{2}},
	 \end{align}
	  where the last inequality needs  $0<\delta\le c(d)\ll1$.  Hence, for any $\theta\notin\Theta^0$, $|\mu_j|\leq 2N^{d+2}$ will hold  for all $N\geq |\log\delta|^2,|j|\leq N$. As a result,  \eqref{(2.4)}  hold true for any $\theta\notin\Theta^1\cup\Theta^0$.

\begin{rmk}
	The independence of root of $E(\theta)$ in $\varepsilon$ is largely due to the fact that $V(\theta)=\cot\pi(\theta)$  is centrally symmetric, and makes the set of $\theta$ independent of $\varepsilon$. However, for general potentials $V$,  we do not necessarily have Lemma \ref{center symmertic}, and the root of the eigenvalue  $E(\theta)$ may change as $\varepsilon$ varies.  Consequently, the set $\Theta^1$ may depend on $\varepsilon$. This is quite different from \cite{SW24}. In \cite{SW24}, the potential is bounded, hence one can ensure that the eigenvalues are bounded from below via potential translation (see Remark 1.1 in \cite{SW24})  (cf. $\lambda_l>\frac{1}{2}$ in Lemma 3.12, \cite{SW24}). However, the Maryland potential is unbounded, which leads to the failure of the translation argument! For this reason, we must additionally remove some $\theta$. This is inevitable because we need a positive lower bound on the eigenvalues during the Newton scheme in Section 4.  Indeed, if we assume that the root of $E(\theta)$  undergoes a continuous drift as $\varepsilon$ varies, the feasible initial parameter space would be
	\[\mcP_{\bfa,\delta}=\bigcup_{0<\varepsilon\leq \tilde{\varepsilon}}\{\varepsilon\}\times\Theta_{\bfa,\delta,\varepsilon},\]
   where each sector $\Theta_{\bfa,\delta,\varepsilon}$ would be a dense Cantor set with a drift (this can be seen from the construction of $\Theta^1$). This may result in $([0,1]\times\{\theta\}) \cap \mcP_{\bfa,\delta}$  not containing some interval $[0,\varepsilon_1]$ for any $\theta$.  This implies that, after fixing $\alpha,\theta,\delta$, one cannot expect $\varepsilon$ to be made arbitrarily small. From this perspective, it is better to view $\varepsilon$ as a parameter rather than a perturbative  scale.
	
	\end{rmk}

To prove parts (3) and (4) of Theorem $\ref{seperative spectrum}$, we need some estimates on the transversality of functions in the spirit of Kleinbock-Margulis \cite{KM98}, which can be used to remove resonant parameters. 

\begin{lem}[cf. Lemma A.4 of  \cite{SW24}]\label{Margulis}
Let $I\subset \R$ be a finite interval and $k\geq 1$. If $f\in C^k(I,\R)$ satisfies 
\[\inf_{x\in I}|\frac{d^k}{dx^k}f(x)|\geq A>0\]
then for all $\zeta>0,$ 
\[\meas(\{x\in I:\  |f(x)|\leq \zeta\})\leq C_k (\frac{\zeta}{A})^{\frac{1}{k}},\]
where $C_k=k(k+1)[(k+1)!]^{\frac{1}{k}}$.
\end{lem}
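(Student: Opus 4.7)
The plan is to prove the bound by the classical Kleinbock--Margulis transversality scheme: first reduce to a single connected component of the sublevel set via a divided-difference estimate, then bound the number of components by a Rolle-type argument.

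For the single-component estimate, let $J=[a,b]\subset I$ be any closed interval on which $|f|\leq\zeta$, and place $k+1$ equispaced points $t_i=a+ih$ with $h=(b-a)/k$. The $k$-th divided difference admits the representations
\[
[t_0,\ldots,t_k]f=\frac{f^{(k)}(\xi)}{k!}=\sum_{i=0}^{k}\frac{f(t_i)}{\prod_{j\neq i}(t_i-t_j)}
\]
for some $\xi\in J$. Using $\prod_{j\neq i}|t_i-t_j|=h^k\,i!\,(k-i)!$ together with $|f(t_i)|\leq\zeta$ and $|f^{(k)}|\geq A$, one obtains $A/k!\leq 2^k\zeta/(k!\,h^k)$, so $h\leq 2(\zeta/A)^{1/k}$ and hence $\meas(J)\leq 2k(\zeta/A)^{1/k}$.

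For the component count, the hypothesis $|f^{(k)}|\geq A$ forces $f^{(k)}$ to have constant sign on $I$, so iterated Rolle's theorem shows that each equation $f=c$ has at most $k$ roots in $I$; taking $c=\pm\zeta$ gives at most $2k$ boundary points of $\{|f|\leq\zeta\}$ inside $I$, hence at most $O(k)$ connected components. Combined with the per-component estimate, this already yields $\meas\{|f|\leq\zeta\}\leq c\,k^2(\zeta/A)^{1/k}$.

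The main obstacle is to pin down the precise constant $C_k=k(k+1)[(k+1)!]^{1/k}$. One natural route is to sharpen the per-component estimate by using Chebyshev-type nodes on $J$, for which the product $\prod_{j\neq i}|t_i-t_j|$ naturally generates the factor $(k+1)!^{1/k}$, combined with a refined component count exploiting that sign changes of $|f|-\zeta$ consume boundary points in pairs. An alternative route is an induction on $k$, splitting
\[
\meas\{|f|\leq\zeta\}\leq \meas\{|f'|\leq\eta\}+\meas(\{|f|\leq\zeta\}\cap\{|f'|>\eta\})
\]
and applying the $(k-1)$-level bound to $f'$ (whose $(k-1)$-st derivative satisfies $|f^{(k)}|\geq A$), then optimizing in $\eta$; this produces a recursion whose closed-form solution can be matched to the stated $C_k$ after careful bookkeeping of the Rolle-based bound on the number of monotonicity intervals of $f$ on $\{|f'|>\eta\}$.
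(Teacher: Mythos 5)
Your two-step scheme is sound and is genuinely different from the proof the paper points to (Lemma A.4 of \cite{SW24}, following \cite{KM98}). The cited argument never decomposes $\{|f|\le\zeta\}$ into components: writing $m=\meas\{x\in I:|f(x)|\le\zeta\}$, it selects $k+1$ interpolation nodes \emph{inside the sublevel set itself}, with pairwise separation at least $m/(k(k+1))$ (possible because removing $k$ gaps of that length costs less than measure $m$), and then applies the same divided-difference identity you use, bounding the Lagrange sum crudely by $k+1$ equal terms; this single application is exactly what produces the constant $C_k=k(k+1)[(k+1)!]^{1/k}$, the factor $[(k+1)!]^{1/k}$ coming from $k!\cdot(k+1)$ and not from any special choice of nodes. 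Your route instead uses equispaced nodes on each connected component plus a Rolle count of components; both of those steps are correct as you state them (the per-component bound $2k(\zeta/A)^{1/k}$ and the fact that $f=\pm\zeta$ has at most $k$ roots each since $f^{(k)}$ never vanishes).

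What you call the ``main obstacle'' is, however, not an obstacle, and your final paragraph is where the write-up falls short: you stop at an unspecified $c\,k^{2}$ and then gesture at Chebyshev nodes or an induction in $k$, neither of which is carried out (and the Chebyshev suggestion is a red herring: the $[(k+1)!]^{1/k}$ in $C_k$ does not arise from node placement). The fix is already in your own ingredients. Each endpoint of a component lying in the interior of $I$ is a root of $f=\zeta$ or $f=-\zeta$, distinct components cannot share endpoints, and there are at most $2k$ such roots plus the two endpoints of $I$; hence there are at most $k+1$ nondegenerate components (degenerate ones have measure zero). Combining with your per-component bound gives $\meas\{|f|\le\zeta\}\le 2k(k+1)(\zeta/A)^{1/k}$, and since $(k+1)!\ge 2^{k}$ one has $2k(k+1)\le k(k+1)[(k+1)!]^{1/k}=C_k$, so your argument actually proves the lemma with a constant no worse than the stated one. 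With that observation made explicit, your proof is complete; as submitted, the claim of the exact constant rests on speculative steps, which is the only genuine defect.
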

The proof of Lemma $\ref{Margulis}$ is elementary and can be found in either  \cite{KM98}  or Appendix A of  \cite{SW24}.  

We also need an estimate involving Cramer's rule and Hadamard's inequality. 

\begin{lem}[cf. Lemma A.5 of  \cite{SW24}]\label{Hadamard}
	Let $W$ be a $s\times s$ real invertible matrix with  its entry satisfying $\sup\limits_{1\leq p,q\leq s}|W(p,q)|\leq M$. Then for any $\mathbf{v}\in \R^s,$  we have
	\[\max_{1\leq p\leq s}|(M\cdot \mathbf{v})(p)|\geq s^{-3/2} \frac{|\det(W)|\cdot \nm\mathbf{v}\nm_2}{\sup_{p,q}|W_{p.q}|},\]
	where $W_{p,q}$ is the cofactor of $W$. Moreover,
	\[\sup_{p,q}|W_{p,q}|\leq (s-1)^{s-1}M^{s-1}.\]
\end{lem}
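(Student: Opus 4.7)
The plan is to treat the four items in turn, using the single unifying trick that by \eqref{Approximate eigenvalue by potential} every eigenvalue satisfies $\mu_j(\theta)=\cot\pi(\theta+j\cdot\alpha)+O(\varepsilon)$, so every separation quantity in the theorem can be replaced, up to an error $O((|n|+1)\varepsilon)$, by an algebraic expression in the cotangent function evaluated at Diophantine-shifted points. The excluded set of $\theta$ will then be a union of small sets of the form $\{|F(\theta)|<\zeta\}$, and each one will be estimated by combining the transversality statement Lemma \ref{Margulis} with a Vandermonde-style non-degeneracy produced by Lemma \ref{Hadamard}. Throughout, the budget $\tilde\varepsilon(\delta)\sim\delta^{1/8+}$ is what is needed so that on $|n|\le e^{|\log\delta|^{3/4}}$ the error from the cotangent approximation stays below the $2\delta^{1/8}$ threshold of items (3)–(4).

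Item (1) is immediate: set $\eta=1/2$ in Lemma \ref{KPS} and restrict $\varepsilon\le\varepsilon_0(\gamma,\tau,d)$. For item (2), the separation \eqref{(2.3)} is a direct consequence of $\alpha\in\DC$ combined with the Lipschitz monotonicity of $E$ with constant $\pi$. The two-sided bound \eqref{(2.4)} is where I first need to exclude $\theta$: the lower bound uses Lemma \ref{center symmertic}, which locates the unique zero of $E$ at $\theta=1/2$ \emph{independently of $\varepsilon$}, so I throw out neighborhoods of $\frac{1}{2}-j\cdot\alpha$; the upper bound uses the approximation $|\mu_j-\cot\pi(\theta+j\cdot\alpha)|\le 2d\varepsilon$ to reduce to the cotangent poles, and I exclude a neighborhood of $\Z-j\cdot\alpha$. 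Summing over $|j|\le N$ and $N\ge|\log\delta|^2$ produces weights $N^d\cdot N^{-(d+2)}=N^{-2}$, whose tail is controlled by $|\log\delta|^{-2}\ll|\log\delta|^{-3/2}$, safely within the $|\log\delta|^{-1}$ budget.

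Items (3) and (4) are the heart of the argument. For (3), write
\[
n\cdot\omega^{(0)}+\mu_j=\sum_{k=1}^b n_k\cot\pi(\theta+\beta_k\cdot\alpha)+\cot\pi(\theta+j\cdot\alpha)+O\bigl((|n|+1)\varepsilon\bigr).
\]
To bound the cotangent sum from below, differentiate in $\theta$ up to order $s=s(b)$ (chosen at most $b+1$). Each derivative is a linear combination of $\csc^{\ell}\pi(\theta+\beta_k\cdot\alpha)$ and $\csc^{\ell}\pi(\theta+j\cdot\alpha)$ with coefficient matrix of Vandermonde type in the quantities $\cot\pi(\theta+\beta_k\cdot\alpha)$ and $\cot\pi(\theta+j\cdot\alpha)$. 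Since these arguments are distinct modulo $\Z$ by the Diophantine condition (and by the fact that $(n,j)\ne(-e_k,\beta_k)$), Lemma \ref{Hadamard} applied to this matrix yields $|\partial^s_\theta(\text{cot sum})|\ge A$ with $A$ controlled polynomially in $|n|$ and in the cotangent sizes; on the set $\theta\notin\Theta^0\cup\Theta^1$ from item (2) the cotangent values are polynomially bounded, so $A$ is polynomial in $|\log\delta|$. Lemma \ref{Margulis} then gives $\meas\{|\text{cot sum}|<3\delta^{1/8}\}\le C(\delta^{1/8}/A)^{1/s}$. Summing over the $(2e^{|\log\delta|^{3/4}}+1)^{b+d}$ choices of $(n,j)$ yields a total measure that is a (stretched-exponentially large) factor times $\delta^{1/(8s)}$, which is still much smaller than $|\log\delta|^{-1}$ for $\delta$ small. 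The condition \eqref{(2.6)} is identical up to a sign, and (4) proceeds the same way after writing
\[
n\cdot\omega^{(0)}+\mu_j-\mu_{j'}=\sum_{k=1}^bn_k\cot\pi(\theta+\beta_k\cdot\alpha)+\cot\pi(\theta+j\cdot\alpha)-\cot\pi(\theta+j'\cdot\alpha)+O\bigl((|n|+1)\varepsilon\bigr);
\]
the excluded indices in (4) are exactly the tuples on which the leading cotangent sum degenerates to zero, so all remaining tuples give a Vandermonde matrix to which Lemma \ref{Hadamard} applies.

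The main obstacle I anticipate is obtaining a \emph{quantitative} lower bound $A$ on the chosen $\theta$-derivative in items (3)–(4), because the Vandermonde-type determinant produced by Lemma \ref{Hadamard} is nonzero only by virtue of the Diophantine pairwise separation of the shifts $\beta_k\cdot\alpha$, $j\cdot\alpha$, $j'\cdot\alpha$, and that separation degrades polynomially in $|j|,|j'|,B$; quantifying this while keeping the resulting loss within the $|\log\delta|^{-1}$ measure budget, and simultaneously arranging for the $O((|n|+1)\varepsilon)$ cotangent-approximation error to sit below $\delta^{1/8}$ on the range $|n|\le e^{|\log\delta|^{3/4}}$, is what forces the competition $\tilde\varepsilon(\delta)\sim\delta^{1/8+}$ recorded in Remark \ref{tepsdel}. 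The rest is essentially bookkeeping: accumulate the excluded sets from \eqref{2.8}, \eqref{Theta^2}, and the bad sets produced by (3)–(4), set $\Theta$ to be the complement in $[0,1]$, and verify $\meas([0,1]\setminus\Theta)\le|\log\delta|^{-1}$.
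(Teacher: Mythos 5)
Your proposal does not prove the statement it was supposed to prove. The statement is Lemma \ref{Hadamard}: a purely linear-algebraic inequality asserting that for an invertible $s\times s$ matrix $W$ with entries bounded by $M$ one has $\max_{1\leq p\leq s}|(W\mathbf{v})(p)|\geq s^{-3/2}|\det(W)|\,\nm\mathbf{v}\nm_2/\sup_{p,q}|W_{p,q}|$, together with the cofactor bound $\sup_{p,q}|W_{p,q}|\leq (s-1)^{s-1}M^{s-1}$. What you have written instead is an outline of the proof of Theorem \ref{seperative spectrum} (the separation property of eigenvalues), in which Lemma \ref{Hadamard} is invoked as a black box to control the Vandermonde-type derivative matrix. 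Nowhere in your argument do you establish the lemma itself; you only use it. That is a complete mismatch with the target statement, so as a proof of Lemma \ref{Hadamard} it is vacuous. (In the paper the lemma is quoted from Lemma A.5 of \cite{SW24}; your text reproduces the surrounding application, not the lemma's proof.)

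For the record, the missing argument is short. By Cramer's rule $W^{-1}=\mathrm{adj}(W)/\det(W)$, so every entry of $W^{-1}$ is bounded by $\sup_{p,q}|W_{p,q}|/|\det(W)|$, whence (e.g.\ by the Hilbert--Schmidt or Schur bound) $\nm W^{-1}\nm\leq s\,\sup_{p,q}|W_{p,q}|/|\det(W)|$. Then
\[
\max_{1\leq p\leq s}|(W\mathbf{v})(p)|\;\geq\; s^{-1/2}\nm W\mathbf{v}\nm_2\;\geq\; s^{-1/2}\frac{\nm\mathbf{v}\nm_2}{\nm W^{-1}\nm}\;\geq\; s^{-3/2}\,\frac{|\det(W)|\,\nm\mathbf{v}\nm_2}{\sup_{p,q}|W_{p,q}|}.
\]
The cofactor bound follows because $W_{p,q}$ is (up to sign) the determinant of an $(s-1)\times(s-1)$ submatrix with entries bounded by $M$: either expand crudely to get $(s-1)!\,M^{s-1}\leq (s-1)^{s-1}M^{s-1}$, or use Hadamard's inequality to get the even sharper $(s-1)^{(s-1)/2}M^{s-1}$. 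If you intend your text as a proof of Theorem \ref{seperative spectrum}, it is broadly aligned with the paper's strategy there, but that is not the statement under review.
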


Recall that we take $V(\theta)=\cot(\pi\theta)$. To prove parts (3) and (4) of Theorem $\ref{seperative spectrum}$, we first establish the separation property of shifts of $\cot(\pi\theta)$ and then the eigenvalues by the  perturbation $(\ref{Approximate eigenvalue by potential})$.

In the following, we will prove parts (3) and (4) of Theorem $\ref{seperative spectrum}$,  completing the proof of the entire theorem. 

Consider $\theta\in [0,1)$. Fix  $2\leq s\leq b+2$,  $s$-many lattice sites $j_1,j_2,\cdots,j_s\subset\Z^b$ with $\max\limits_{1\le l\leq s}|j_l|\leq N$ and integer vector $\mathbf{k}=(k_1,k_2,\cdots,k_s)\in \Z^s$ with $0<\min\limits_{1\le l\leq s}|k_l|\le\max\limits_{1\le l\leq s}|k_l|\leq N$.  Denote 
	\[F_{\bfk,j_1,j_2,\cdots,j_s}(\theta)=\sum_{l=1}^{s}k_l\cot\pi(\theta+j_l\cdot \bfa).\]
	Hence, for $1\leq t\leq s$,  we have 
	\[\frac{d^t}{d\theta^t}F_{\bfk,j_1,j_2,\cdots,j_s}(\theta)=\sum_{l=1}^{s}k_l\cot^{(l)}\pi(\theta+j_l\cdot \bfa)\]
    where $V^{(l)}(\cdot)=\frac{d^l}{d\theta^l}V(\cdot)$ is the $l$-th derivative. Moreover, denote $g(\ct)=\tan(\pi\theta)$ and then $\cot(\pi\theta)=-g(\theta+\frac{1}{2})$. Thus, we get  
	\begin{align*}
	\mathbf{D} &=  
	            \begin{pmatrix}
					\frac{d}{d\theta}F_{\bfk,j_1,j_2,\cdots,j_s}(\theta)\\
					\frac{d^2}{d\theta^2}F_{\bfk,j_1,j_2,\cdots,j_s}(\theta)\\
					\vdots\\
					\frac{d^s}{d\theta^s}F_{\bfk,j_1,j_2,\cdots,j_s}(\theta)
				\end{pmatrix}
				:= -W \mathbf{k}, 	
	\end{align*}
	where $W$ is a $s\times s$ matrix with entry given by 
	\[W(p,q)=g^{(p)}(\ct+\frac{1}{2}+j_q\cdot\bfa).\]
	Now we want to apply Lemma $\ref{Hadamard}$. Observe  that  
	\[\frac{d^p}{dx^p}(\tan x)=\frac{1}{\cos^2 x}\poly_{(p)}(\tan x)\] 
	for some deterministic polynomial $\poly_{(p)}(\cdot)$  of degree $p-1$ (whose coefficients  are absolute constants independent of $x$). Hence,
	\[W(p,q)=\frac{\pi^p}{\cos^2 \pi(\theta+\frac{1}{2}+j_q\cdot\bfa)} \poly_{(p)}(g(\ct+\frac{1}{2}+j_q\cdot\bfa)).\]
	Direct calculations  give 
	\begin{align}\label{Vandermonde}
		\det(W)&= C''(s) \prod_{1\leq q\leq s}\frac{1}{\cos^2 \pi(\theta+\frac{1}{2}+j_q\cdot \bfa)} \\
		    \notag        &\ \  \cdot \left|\begin{matrix}
							1 &1 &\cdots &1\\
						\tan\pi(\theta+\frac{1}{2}+j_1\cdot \bfa) &\tan\pi(\theta+\frac{1}{2}+j_2\cdot\bfa) &\cdots &\tan\pi(\theta+\frac{1}{2}+j_s\cdot\bfa)\\
						\vdots &\vdots &\ &\vdots \\
						\tan^{s-1}\pi(\theta+\frac{1}{2}+j_1\cdot \bfa) &\tan^{s-1}\pi(\theta+\frac{1}{2}+j_2\cdot\bfa) &\cdots &\tan^{s-1}\pi(\theta+\frac{1}{2}+j_s\cdot\bfa)
					  \end{matrix}\right| \\
					  \notag &=C''(s) \prod_{1\leq q\leq s}\frac{1}{\cos^2 \pi(\theta+\frac{1}{2}+j_q\cdot \bfa)} \cdot
					 \prod_{1\leq p<q\leq s}(\tan\pi(\theta+\frac{1}{2}+j_p\cdot \bfa)-\tan\pi(\theta+\frac{1}{2}+j_q\cdot \bfa))\\
					  \notag &=C''(s) \prod_{1\leq q\leq s}\frac{1}{\cos^2 \pi(\theta+\frac{1}{2}+j_q\cdot \bfa)} \cdot\prod_{1\leq p<q\leq s}(\cot\pi(\theta+j_q\cdot\bfa)-\cot\pi(\theta+j_p\cdot\bfa))
	\end{align}
	for some constant $C''(s)>0$ depending only on $s$. The second equality is due to the calculation of s Vandermonde determinant. Similarly, one can check  
    \[W_{i,j}=  \prod_{1\leq q\neq j \leq s}\frac{1}{\cos^2 \pi(\theta+\frac{1}{2}+j_q\cdot \bfa)} \cdot \widetilde{W}_{i,j},\]
    where $\widetilde{W}=(\poly_{(p)}(g(\ct+\frac{1}{2}+j_q\cdot\bfa)))_{1\leq p,q\le s}$ and $\widetilde{W}_{i,j}$ is the cofactor of it. Applying  Lemma $\ref{Hadamard}$ implies 
	\begin{align*}
		\max_{1\leq p\leq s}|\mathbf D(p)| &\geq s^{-3/2} \frac{|\det(W)|\cdot \nm\mathbf{k}\nm_2}{\sup_{p,q}|W_{p.q}|}\\
		 &=C''(s) \inf_{p,q}(\frac{|\det(W)|}{|W_{p,q}|})\cdot\nm \mathbf{k}\nm_2\\
		 &=C''(s)\nm\mathbf{k}\nm_2 \inf_{p,q}\left(\frac{1}{|\cos^2\pi(\ct+\frac{1}{2}+j_q\cdot \bfa )|\cdot|\widetilde{W}_{p,q}|}\right)\\
		 &\ \  \cdot\prod_{1\leq p<q\leq s}|\cot\pi(\theta+j_q\cdot\bfa)-\cot\pi(\theta+j_p\cdot\bfa)|\\
		 &\geq C(s)\frac{\prod_{1\leq p<q\leq s}\nm (j_p-j_q)\cdot\bfa\nmt}{\sup_{p,q}|\widetilde{W}_{p,q}|},
	\end{align*}
	where in the last inequality, we have used  $\nm \mathbf k\nm_2\geq 1, \cos^2 x\leq 1$ and $\cot(\pi\theta)$ is  Lipschitz monotone.

	To apply Lemma \ref{Margulis}, we first perform a cut-off on $\tan (\pi \theta)$. Fix any  $X\gg1$. One can check that the measure of  each of two intervals  $I_1\cup I_2\subset [-\frac{1}{2},\frac{1}{2}]$ such that $|\tan(\pi\ct)|>X$ satisfies  
	\[\frac{2}{\pi}(\frac{\pi}{2}-\arctan X)\sim \frac{2}{\pi X} \  {\rm as} \ X\rightarrow \infty.\]
	So,  if we take 
	\[\Theta_{j}=\{\theta:\ \theta+\frac{1}{2}+j\cdot\bfa \in I_1\cup I_2\},\quad \Theta^2_N=\bigcup_{|j|\leq N}\Theta_j, \]
	we have 
	\begin{equation}\label{2.10}
			\meas(\Theta_N^2)\lesssim N^d X^{-1}. 
	\end{equation}
	Then, for $\theta$ outside of $\Theta_N^2$,  all $|g(\ct+\frac{1}{2}+j_q\cdot\bfa)|\leq X$. Consequently,
	\[|\widetilde{W}(p,q)|=|\poly_{(p)}(g(\ct+\frac{1}{2}+j_q\cdot\bfa))|\leq C(s)X^{s-1}=M\ {\rm for}\ {\forall} 1\leq p,q\leq s,\]
	where we used the fact that a finite upper bound of coefficients of $\poly_{(p)},1\leq p\leq s$, depends only on $s$.  Applying  Lemma $\ref{Hadamard}$  yields  $\sup_{p,q}|\widetilde{W}_{p,q}|\lesssim_s M^{s-1}= X^{(s-1)^2}$, and hence,
	 \begin{align}\label{transversal property of cotan}
	 \max_{1\leq p\leq s}|\mathbf D(p)| &\geq C(s) X^{-(s-1)^2}\prod_{1\leq p<q\leq s}\nm (j_p-j_q)\cdot\bfa\nmt\\
                 \notag      &\geq C(s,\gamma,\tau)X^{-(s-1)^2}N^{-\frac{s(s-1)}{2}\tau} := A>0,
	 \end{align}
	which is the lower bound needed in Lemma $\ref{Margulis}$. So, applying Lemma $\ref{Margulis}$  shows that for  
	 \[\Theta_{\mathbf{k},j_1,\cdots,j_s}^3(\zeta):=\{\theta\in \T\backslash \Theta_N^2:\ |F_{\bfk,j_1,j_2,\cdots,j_s}(\theta)|<\zeta\}, \]
	we have the following bound 
	 \[\meas(\Theta_{\mathbf{k},j_1,\cdots,j_s}^3(\zeta))\le C(s,\gamma,\tau)\zeta^{\frac{1}{s}}X^s N^{\frac{s-1}{2}\tau}.\]  
	 
     Finally, taking account of    $2\leq s\leq b+2,\  \bfk,j_1,\cdots,j_s$ leads to  
	 \[\Theta^3_N(\zeta)=\bigcup_{2\leq s\leq b+2,\ }\bigcup_{|\bfk|,|j_1|, \cdots,|j_s|\leq N}\Theta_{\mathbf{k},j_1,\cdots,j_s}^3(\zeta)\]	
     with 
      \begin{equation}\label{2.11}
			 \meas(\Theta^3_N(\zeta))\leq C(b,\gamma,\tau)\zeta^{\frac{1}{b+2}}X^{b+2} N^{C(b,\tau,d)}.
	  \end{equation}
	In conclusion,  we prove that for any $\theta\notin \Theta^3_N(\zeta)\cup \Theta_N^2$,   $|F_{\bfk,j_1,j_2,\cdots,j_s}(\theta)|\geq \zeta$ for any $2\leq s\leq b+2$ and any $\bfk,j_1\cdots,j_s$ with $\ell^{\infty}$-norm less than $N$.

    Now we can prove (3) and (4) of Theorem $\ref{seperative spectrum}$. Choose $0<\delta\leq C(B,b)\ll1 $  such that $|\beta_1|,\cdots,|\beta_b|\leq B\leq 2|\log\delta|^K\ll e^{|\log\delta|^{\frac{3}{4}}}$.

	(3) Take $N=e^{|\log \delta|^{\frac{3}{4}}},\zeta=3\delta^{\frac{1}{8}}$ in the above arguments. If $n=0,$  then by \eqref{(2.4)},
	\[|\mu_j|\geq\frac{1}{2}e^{-(d+2)|\log \delta|^{\frac{3}{4}}}\gg 2\delta^{\frac{1}{8}} \]
	provided  $0<\delta<c(d)\ll1$. Otherwise,  if $n\neq  0$, at least two terms are present, and we can use the above estimates. Let 
	\begin{equation}\label{2.12}
			\Theta'=\Theta^3_N(\zeta)\cup \Theta_N^2.
	\end{equation}
	By \eqref{2.10}  and  \eqref{2.11},  we have 
	\[\meas(\Theta')\le C({b,\gamma,\tau})e^{d|\log\delta|^{\frac{3}{4}}}X^{-1}+\delta^{\frac{1}{8(b+2)}}X^{b+2}e^{C(b,\tau,d)|\log\delta|^{\frac{3}{4}}}.\]
    Taking $X=\delta^{-\frac{1}{16(b+2)^2}}$ and letting $0<\delta<c(b,\tau,\gamma,d)\ll1$  lead to 
	\begin{equation}\label{(2.13)}
		\meas(\Theta')\leq \delta^{\frac{1}{18(b+2)}}, 
	\end{equation}
	and for any $\theta\notin\Theta'$, 
	\[|n\cdot V^{(0)}+V_j|=|F_{(n,1),\beta_1,\cdots,\beta_b,j}(\theta)|\geq \zeta=3\delta^{\frac{1}{8}}\]
	for all $(n,j)\in [-e^{|\log\delta|^{\frac{3}{4}}},e^{|\log\delta|^{\frac{3}{4}}}]^{b+d}\backslash\{(-e_k,\beta_k)\}_{k=1}^b,n\neq 0$, where $V^{(0)},V_j$ are given by 
	\[V_j(\theta)=\cot\pi(\theta+j\cdot\bfa),\ j\in \Z^d,\]
	\[V^{(0)}=(V_{\beta_1},V_{\beta_2},\cdots,V_{\beta_b}).\]
	Now  applying  $(\ref{Approximate eigenvalue by potential})$ implies  
	\begin{equation}\label{approached muj}
		|V_j-\mu_j|=|V(\theta+j\cdot\bfa)-E(\theta+j\cdot\bfa)|\leq 2d\varepsilon\quad {\forall}\theta \in \T, j\in\Z^d.
	\end{equation}
	Hence,
	\begin{align}\label{error between varepsilon and delta 1}
		|n\cdot\omega^{(0)}+\mu_j| &\geq |n\cdot V^{(0)}+V_j|-\sum_{k=1}^{b}|n_k|\cdot |V_{\beta_k}-\mu_{\beta_i}|-|V_j-\mu_j|\\
		\notag  &\geq 3\delta^{\frac{1}{8}}-2d(b+1)\varepsilon e^{|\log \delta|^{\frac{3}{4}}}\\
		\notag  &\geq 2\delta^{\frac{1}{8}},
	\end{align}
	where in the  last inequality, we need
	\begin{equation}\label{varepsilon bound 2}
		0<\varepsilon\leq\frac{1}{2d(b+1)}\delta^{\frac{1}{8}}e^{-|\log\delta|^{\frac{3}{4}}}.
	\end{equation}
	Thus, we have ensured (\ref{(2.5)}) under the assumptions that  $\theta\notin \Theta'$ and  $(\ref{varepsilon bound 2})$.  The bound  $(\ref{(2.6)})$ holds true  via  the same arguments. 
	
	(4) Take now $N=2|\log \delta|^K,\zeta=2\delta^{\frac{1}{8}}$ with  $K=K(b)>2$ only depending  on $b$.  
	If $n=0$ then by $(\ref{(2.3)})$, 
     \[			|\mu_j-\mu_{j'}|\geq \frac{\pi \gamma}{(6d)^{\tau}|\log\delta|^{K\tau}}\gg 2\delta^{\frac{1}{8}}\]
	provided  $0<\delta<c(\gamma,\tau,b)\ll1$. 
	Otherwise,  if $j=j'$ and $\supp(n)=1$, then from $(\ref{(2.4)})$ and similar arguments used in the proof of (3), we can ensure that (\ref{(2.7)}) holds. Here,
	\[\supp(n)=\#\{1\le k\le b:\ n_k\neq 0\}.\]
	Thus,  it suffices to consider  either   $n\neq 0,j\neq j'$ or  $\supp(n)>1,j=j'$. In both cases, at least two terms are present, and we can use the above estimates. Define 
	\begin{equation}\label{2.14}
			\Theta''=\Theta^3_N(\zeta)\cup \Theta_N^2.
	\end{equation}
	By $(\ref{2.10})$ and $(\ref{2.11})$,  we have 
	\[\meas(\Theta'')\le C(b,d,\gamma,\tau) |\log \delta|^{dK}X^{-1}+\delta^{\frac{1}{8(b+2)}}X^{b+2}|\log\delta|^{C(b,\tau,d)K}.\]
    Taking $X=\delta^{-\frac{1}{16(b+2)^2}}$ and $0<\delta<c(b,\tau,\gamma,d)\ll1 $ will ensure 
	\begin{equation}\label{(2.15)}
		\meas(\Theta')\leq \delta^{\frac{1}{18(b+2)}}, 
	\end{equation}
	and for any $\theta\notin\Theta''$,  the estimate 
	\[|n\cdot V^{(0)}+V_j-V_{j'}|=|F_{(n,1,-1),\beta_1,\cdots,\beta_b,j,j'}(\theta)|\geq \zeta=2\delta^{\frac{1}{8}}\]
	holds true for any $(n,j,j')\in [-2|\log\delta|^K,2|\log\delta|^K]^{b+d+d}\backslash \{(-e_k+e_{k'},\beta_k,\beta_{k'})\}_{k,k'=1}^b$ except for  $n=0,j=j'$. Applying   $(\ref{approached muj})$   deduces  
	\begin{align}\label{error between varepsilon and delta 2}
		|n\cdot\omega^{(0)}+\mu_j-\mu_{j'}| &\geq |n\cdot V^{(0)}+V_j-V_{j'}|-\sum_{k=1}^{b}|n_k|\cdot|V_{\beta_k}-\mu_{\beta_k}|-|V_j-\mu_j|-|V_{j'}-\mu_{j'}|\\
		\notag  &\geq 3\delta^{\frac{1}{8}}-4d(b+2)\varepsilon |\log \delta|^K\\
		\notag  &\geq 2\delta^{\frac{1}{8}},
	\end{align}
	where in the last inequality, we need
	\begin{equation}\label{varepsilon bound 3}
		0<\varepsilon\leq\frac{1}{4d(b+2)}\delta^{\frac{1}{8}}|\log\delta|^{-K}
	\end{equation}
	Thus,  we have ensured $(\ref{(2.7)})$ under the assumptions that $\theta\notin \Theta''$ and  $(\ref{varepsilon bound 3})$.

	Combining the above cases motivates the definition of the entire set of parameters
	\begin{equation}\label{feasible theta}
			\Theta=\Theta_{\bfa,\delta}=\T\backslash(\Theta^1\bigcup\Theta^0\bigcup\Theta'\bigcup\Theta'')
	\end{equation}
	with 
	\begin{equation}\label{feasible theta measure}
		\meas([0,1]\backslash \Theta)\leq 2|\log\delta|^{-\frac{3}{2}}+2\delta^{\frac{1}{18(b+2)}} \ll |\log\delta|^{-1}
	\end{equation}
	provided  $0<\delta\leq c(b)\ll1.$ 
	Moreover, the restrictions on $\varepsilon$ coming  from  $(\ref{varepsilon bound 1}),(\ref{varepsilon bound 2}) ,  (\ref{varepsilon bound 3})$  allow us to take 
	\[0\leq \varepsilon \leq \tilde{\varepsilon}(\delta) \leq \min\left\{\varepsilon_0,\frac{1}{2d(b+1)}\delta^{\frac{1}{8}}e^{-|\log\delta|^{\frac{3}{4}}},\frac{1}{4d(b+2)}\delta^{\frac{1}{8}}|\log\delta|^{-K}\right\}.\]
	Thus,  $\tilde{\varepsilon}(\delta)\sim \delta^{\frac{1}{8}+}$ suffices, provided  $0<\delta<c(b,d,\tau,\gamma)\ll1$, as mentioned in Remark \ref{tepsdel}. 
	 
	Finally, by summarizing all smallness restrictions on $\delta$, it suffices to require $0<\delta<\tilde{\delta}(\gamma,\tau,b,d,B)\ll1$.

	This completes the whole proof of Theorem $\ref{seperative spectrum}$.
\end{proof}

\begin{rmk}
\begin{itemize}
	\item[(1)] For fixed  $\bfa\in\DC$ and  $0<\delta<\tilde{\delta}(\gamma,\tau,b,d,B)\ll1$,  we denote by  $\mathcal{P}_{\alpha,\delta}=[0,\tilde{\varepsilon}]\times\Theta_{\bfa,\delta}$  the  feasible initial parameters  space.

	\item[(2)]  Our proof of (\ref{(2.7)}) relies largely on the fact that the specific form of the potential, namely $\cot(\pi\ct)$),  allows for good control over the Wronskian (\ref{Vandermonde}). This is essentially the calculation of the Vandermonde determinant. The desired lower bound on the determinant can be derived using the Diophantine condition. However, in \cite{SW24}, while the authors can handle more general quasi-periodic potentials, they cannot fix the Diophantine $\alpha$ and must remove some values of $\alpha$. 
\end{itemize}
\end{rmk}

\section{Linear analysis: large deviation theorem (LDT)}\label{LDTsect}

In this section, we establish the Large Deviation Theorem (LDT) for the Green's functions of the linearized operators (\ref{mcW}). This result is crucial for the nonlinear analysis in Section 4. The procedure we follow is now quite classical and standard (cf. e.g., \cite{LW24, SW24, KLW24}).

Assume that $\tH$ is an operator on $\ell^2(\{\pm\}\times \Z^b\times\Z^d)$, and that the elements in its matrix representation satisfy 
\[\tH(r,n,j;r',n',j')=h_{r,r'}(n-n',j,j'),r,r'\in\{\pm\},(n,j),(n',j')\in\Z^{b+d},\]
and 
\[(\tH u)_r(n,j)=\sum_{(n',j')\in\Z^{b+d} \atop r'\in\{\pm\}}h_{r,r'}(n-n',j,j')u_{r'}(n',j').\]
Assume further  that there exist  $C_1>1$ and $c_1>0$ such that 
\begin{equation}\label{perturbation Op}
	|h_{r,r'}(n-n',j,j')|\leq C_1 (1+|n|)^{C_1} e^{-c_1|n|-c_1|j-j'|-c_1\max\{|j|,|j'|\}}.
\end{equation}

\begin{rmk}
	The polynomial component in (\ref{perturbation Op}) naturally emerges during the iteration process when we pass from the approximating solution $u^{(r)}$ to its linearized operator $\mcW_{u^{(r)}}$, as in Proposition \ref{generate the nonlinear operator}. One can include these polynomial components into the exponential decay term with a certain loss of the exponential decay rate$c_1$. However, one may not obtain an estimate like 
	\[C_1 (1+|n|)^{C_1} e^{-c_1|n|}\leq C_2 e^{-qc_2|n|}\]
	with some $0<q<1$, because this would cause deterioration of the constant during the Newton scheme. Such a problem was first pointed out in \cite{HSSY24} and was also explained in detail in \cite{LW24}. Therefore, rather than removing the polynomial components, it is more practical to retain these polynomial factors during the iteration. Indeed, in this case,  the constant $C_1$ can be made independent of the iteration scales, and the polynomial factors will not essentially affect the MSA (this will be explained in Appendix \ref{JLSapp}).  
\end{rmk}

Now let $D(\sigma):\ \R\rightarrow \Op [\ell^2(\{\pm\}\times \Z^b\times\Z^d)]$ be a $\sigma$-parameterized family of diagonal operators on $\ell^2(\{\pm\}\times \Z^b\times\Z^d)$: 
\begin{equation}\label{shift diagonal D}
	{D}(\sigma)=\begin{pmatrix}D_+(\sigma)
		&0\\
		0& D_- (\sigma)
	\end{pmatrix}
\end{equation}
with 
\[D_{\pm}={\rm diag}(\pm (n\cdot \omega +\sigma)+\mu_j),\ (n,j)\in \Z^{b+d}.\]
Define $\sigma$-parameterized operator $T=T(\sigma):\ \R\rightarrow \Op [\ell^2(\{\pm\}\times \Z^b\times\Z^d)]$ as 
\begin{equation} \label{shift op T}
	T(\sigma)=D(\sigma)+\delta \tH
\end{equation}

In this section, for any operator $T$, the norm $\nm T \nm$ always stands for the $\ell^2$-operator norm. Next, we  introduce some definitions  regarding subsets  in $\Z^{b+d}$. The following definitions originate from \cite{BGS02, Bou07} and \cite{JLS20} and are constructed for the convenience of performing multi-scale analysis (MSA) using resolvent identities. For more details on MSA, we refer to Appendix \ref{JLSapp}. Denote by $\La_N$ an elementary region of size $N$ centered at 0, which is one of the following regions:
\begin{equation*}
	\La_N=[-N,N]^{b+d}
\end{equation*}
or
\[\La_N=[-N,N]^{b+d}\setminus\{n\in\mathbb{Z}^{b+d}: \ n_k \ \varsigma_k \ 0, 1\leq k\leq b+d\},\]
where  for $ k=1,2,\cdots,b+d$, $ \varsigma_k\in \{<,>,\emptyset\}$ and at least two $ \varsigma_k$  are not $\emptyset$.
Denote by $\mathcal{E}_N^{0}$ the set of all elementary regions of size $N$ centered at 0. Let $\mathcal{E}_N$ be the set of all translations of  elementary regions  with center at 0, namely,
$$\mathcal{E}_N:=\{(n,j)+\La_N:\ (n,j)\in\mathbb{Z}^{b+d},\La_N\in \mathcal{E}_N^{0}\}.$$
For simplicity, we also call  elements in $\mathcal{E}_N$ elementary regions.  

Let $\La_N(j_0)=\{(n,j)\in\Z^b\times \Z^d:\  (n,j-j_0)\in \La_N\}$ denote the shift along the $j$-direction.
The  width  of a subset  $\Lambda\subset \Z^{b+d}$ is defined  as the maximum 
$M\in \N$ such that  for any  $n\in \Lambda$, there exists  $\hat{M}\in \mathcal{E}_M$ such that $	n\in \hat{M} \subset \Lambda$, and 
\begin{equation*}
	\text{ dist }(n,\Lambda\backslash \hat{M})\geq M/2.
\end{equation*}

Furthermore, a generalized elementary region is defined to be a subset $\Lambda\subset \Z^{b+d}$ of the form
\begin{equation*}
	\Lambda:= R\backslash(R+z),
\end{equation*}
where $z\in\Z^{b+d}$ is arbitrary and $R$ is a rectangle with center $y'\in \Z^{b+d}$,
\begin{equation*}
	R=\{(y_1,y_2,\cdots,y_{b+d})\in \Z^d:\  |y_1-y_1^\prime|\leq M_1, \cdots,|y_{b+1}-y_{b+1}^\prime|\leq M_{b+1}\}.
\end{equation*}

For $ \Lambda\subset\mathbb{Z}^{b+d}$,  we introduce its diameter,
\[\mathrm{diam}(\Lambda)=\sup_{n,n'\in \Lambda}|n-n'|.\]
Denote by $\mathcal{R}_N$
all  generalized elementary regions with diameters less than or equal  to $N$. Denote by $\mathcal{R}_N^M$
all   generalized elementary regions in $\mathcal{R}_N$ with width larger than or equal to $M$.

With a slight abuse of the notation, we also  use $\mathcal{E}_N$, $\mathcal{E}_N^{0}$, $\La_N$, $\La_N(j_0)$,  $\mathcal{R}_N$ and  $\mathcal{R}_N^M$  to denote $\{\pm\}\times \mathcal{E}_N$, $\{\pm\}\times\mathcal{E}_N^{0}$, $\{\pm\}\times \La_N$, $\{\pm\}\times \La_N(j_0)$,  $\{\pm\}\times \mathcal{R}_N $ and  $\{\pm\}\times \mathcal{R}_N^M,$ 
respectively. 
Similar, for any $\Lambda\subset \Z^{b+1}$, denote by  $R_{\Lambda}$  the restriction  to $\{\pm\}\times \Lambda$.

\begin{Def}
Let  $T(\sigma)$  be given by \eqref{shift op T}. We say that $T(\sigma)$ satisfies the $(\tilde{c},N)$-LDT  if there exists a subset $\Sigma_{N}\subset \R$ such that 
\[\meas(\Sigma_N)\leq e^{-N^{\frac{1}{30}}},\]
and for $j_0\in [-2N,2N]^{d},\La_N\in \mcE_N^0$, and $\sigma\notin \Sigma_N$,  we have 
\begin{itemize}
	\item 
	  \begin{equation}\label{LDT L2 norm}
		\nm (R_{\La_N(j_0)}T(\sigma)R_{\La_N(j_0)})^{-1} \nm \leq e^{N^{\frac{9}{10}}};
	  \end{equation}
	\item  for any $(n,j)$ and $(n',j')$ satisfying $|n-n'|+|j-j'|\geq N^{\frac{1}{2}}$, 
		 \[|(R_{\La_N(j_0)}T(\sigma)R_{\La_N(j_0)})^{-1}(r,n,j;r',n^\prime,j')| \leq e^{-\tilde{c}(|n-n'|+|j-j'|)} \ {\rm for}\ {\forall}r,r'\in \{\pm\}.\]
		 For simplicity, we just hidden the index $r,r'$ and write 
		 \begin{equation} \label{LDT off diagonal}
			|(R_{\La_N(j_0)}T(\sigma)R_{\La_N(j_0)})^{-1}(n,j;n^\prime,j')| \leq e^{-\tilde{c}(|n-n'|+|j-j'|)}.
		\end{equation} 
\end{itemize}
\end{Def}

Let $K_1=K_1(b)$ be a sufficiently large constant depending only on $b$. Let $$K=K_1^{100},\ K_2=K_1^5,$$ and take $\tilde{\delta}$ small enough to ensure \eqref{(2.7)} in Theorem $\ref{seperative spectrum}$ matching  with $K$. 

We now state the main theorem of LDT in this section. 
\begin{thm}[LDT]\label{LDT}
	Assume that $\omega=(\omega_1,\omega_2,\cdots,\omega_b)\in \R^b$ satisfies 
	\begin{enumerate}
		\item   $|\omega_k-\mu_{\beta_k}|\leq C_2\delta$ for all $k=1,2,\cdots,b$ and for some $C_2>0$;
		\item for any fixed  $N\geq (\log\frac{1}{\delta})^{K}$, 
		and any  $\tilde{N}$  with $(\log\frac{1}{\delta})^{K}\leq \tilde{N}\leq N$  and $0\neq |n|\leq 2\tilde{N}$,  we have    the Diophantine condition
		\begin{equation}\label{Diophantine on omega frequency}
			|n\cdot\omega| \geq e^{-\tilde{N}^{\frac{1}{K_2}}},
		\end{equation}
		and for any $ |j|\leq 3\tilde{N}, |j'|\leq 3\tilde{N}, |n|\leq 2\tilde{N}$ with $(n,j-j')\neq 0$, we have the so called weak second Melnikov's condition 
		\begin{equation}\label{weak second Melnikov}
			| n\cdot\omega-\mu_j+\mu_{j'}|\geq  e^{-\tilde{N}^{\frac{1}{K_2}}}.
		\end{equation}
	\end{enumerate}	
Then for  $0<\delta\le c(\gamma,\tau,b,d,B,p)\ll1$ (so it needs to  take $\tilde{\delta}$ in Theorem $\ref{seperative spectrum}$ smaller), the $(c_{\tilde N},\tN)$-LDT holds for $T(\sigma)$ at any scale $\tN\leq N$ with  $c_{\tilde N}\geq \frac{c_1}{2}$. 
\end{thm}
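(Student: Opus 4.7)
The plan is to prove Theorem \ref{LDT} by multi-scale analysis (MSA) along the Bourgain--Goldstein--Schlag scheme, inducting on the scale $\tilde N$ from an initial scale $\tilde N_0\sim(\log\frac{1}{\delta})^{K}$ up to $N$. At each scale I would construct an exceptional $\sigma$-set $\Sigma_{\tilde N}$ of measure at most $e^{-\tilde N^{1/30}}$, establish the $\ell^2$-operator-norm bound \eqref{LDT L2 norm} on its complement via a matrix-valued Cartan lemma, and then extract the off-diagonal decay \eqref{LDT off diagonal} from a resolvent-identity/Neumann-series argument that exploits the exponential decay of $\tH$ encoded in \eqref{perturbation Op}.

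For the base case $\tilde N=\tilde N_0$, I would use that on boxes of this size the diagonal part $D(\sigma)$ dominates: by items (3) and (4) of Theorem \ref{seperative spectrum} combined with hypotheses (1)--(2) of Theorem \ref{LDT}, the diagonal entries $\pm(n\cdot\omega+\sigma)+\mu_j$ are bounded below, on a $\sigma$-set of large complementary measure, by a quantity much bigger than $\delta$. A standard Neumann series then yields both the norm bound and exponential off-diagonal decay with a rate close to $c_1$; the polynomial factor $(1+|n|)^{C_1}$ in \eqref{perturbation Op} is retained rather than absorbed into the exponential, as discussed in the remark following \eqref{perturbation Op}, so that the constants do not deteriorate with the scale.

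For the inductive step from scale $M$ to $\tilde N$, I would tile $\Lambda_{\tilde N}(j_0)$ by sub-boxes of scale $M$ and apply the inductive $(c_M,M)$-LDT on each. A sub-box centered at $(n,j)$ is declared \emph{bad} either if its relevant $\sigma$-translate falls in $\Sigma_M$ or if the associated small divisors $|n\cdot\omega+\mu_j|$, $|n\cdot\omega+\mu_j-\mu_{j'}|$ are too small. The crucial input is a \emph{sublinear bound} on the number of bad sub-boxes along lines in the $(n,j)$-lattice: for $j$ of moderate norm this follows directly from \eqref{Diophantine on omega frequency} and \eqref{weak second Melnikov}, while for large $|j|$ one uses the separation estimate \eqref{(2.3)} and the two-sided bound \eqref{(2.4)} together with the transversality/shift arguments on $\cot(\pi\cdot)$ that were developed in the proof of Theorem \ref{seperative spectrum} (in the spirit of \cite{SW24}). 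This sublinear control replaces Bourgain's geometric lemma in the present monotone-potential setting.

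Given the sublinear bound, the matrix-valued Cartan lemma produces $\Sigma_{\tilde N}$ and the norm bound \eqref{LDT L2 norm} on its complement, while iterating the resolvent identity on the good sub-boxes and combining with the exponential decay of $\tH$ in \eqref{perturbation Op} yields the off-diagonal decay \eqref{LDT off diagonal}, with the rate degrading only geometrically in the number of scales, so $c_{\tilde N}\geq c_1/2$ after finitely many steps. I expect the main obstacle to be precisely the sublinear bad-box bound for large-norm $j$: because the Maryland potential is unbounded, one cannot restrict resonances by translation as in the bounded case of \cite{SW24}, and one must combine the Lipschitz monotonicity of $E(\theta)$, the Vandermonde-type determinant estimate \eqref{Vandermonde}, and the cutoff controlling $|\tan(\pi\cdot)|$ used in the proof of Theorem \ref{seperative spectrum}, while carefully tracking how the loss depends on the intermediate scale $\tilde N$ in order not to violate the Cartan measure bound $e^{-\tilde N^{1/30}}$.
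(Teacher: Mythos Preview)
Your proposal captures the overall MSA philosophy but misses the paper's essential three-regime decomposition, and this creates a genuine gap.

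The Diophantine and Melnikov conditions \eqref{Diophantine on omega frequency}--\eqref{weak second Melnikov} are only assumed for $\tilde N\geq(\log\frac{1}{\delta})^{K}$, yet the $(c_{\tilde N},\tilde N)$-LDT is asserted for \emph{all} $\tilde N\leq N$. The paper therefore treats three ranges separately. For \emph{small} scales $\tilde N\leq|\log\delta|^{10}$ a pure Neumann argument works, since $\delta\cdot(\text{box factors})\ll e^{-\tilde N^{1/20}}$. For \emph{intermediate} scales $|\log\delta|^{10}<\tilde N\leq|\log\delta|^{K}$, Neumann alone fails (the box is too large relative to $\delta$) but \eqref{Diophantine on omega frequency}--\eqref{weak second Melnikov} are not yet available. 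The paper handles this via Lemma \ref{less resonance intermediate scale}: using only the separation estimates \eqref{(2.5)}--\eqref{(2.7)} of Theorem \ref{seperative spectrum} and the approximation $|\omega-\omega^{(0)}|\leq C_2\delta$, one shows that the resonant sets $\mathcal A_1^\sigma,\mathcal A_2^\sigma$ each have at most $b$ elements, giving a uniform $2b$ bound on bad sub-boxes. Cartan's lemma plus Theorem \ref{MSA1} then finish this range. Your base case at $\tilde N_0\sim|\log\delta|^K$ skips this regime entirely, and your invocation of ``items (3) and (4) of Theorem \ref{seperative spectrum} combined with hypotheses (1)--(2)'' at that scale does not produce a Neumann-type argument, since the diagonal lower bound is only $\delta^{1/8}$ while the box volume is of order $e^{C|\log\delta|^{3/4}}$ or larger.

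You also misidentify where the difficulty lies in the large-scale induction. For $|j_0|\geq 2N_1$ (your ``large $|j|$'' case), the paper shows in Step~2 that there are at most \emph{two} bad $N_1$-boxes by a direct application of \eqref{weak second Melnikov}---no Vandermonde or transversality input is needed. The Vandermonde determinant \eqref{Vandermonde} and the $\tan$-cutoff are used only in the proof of Theorem \ref{seperative spectrum}, not in the LDT itself. The genuinely delicate step for large scales is Step~1, the near-origin case $|j_0|\leq 2N_1$: there one replaces $\|\cdot\|$ by the Hilbert--Schmidt norm to view $\Sigma_{N_1}$ as a semi-algebraic set, applies Lemma \ref{basu} to bound its connected components by $N_1^{C}$, and then uses \eqref{Diophantine on omega frequency} to conclude that along each $n$-line at most one translate $\sigma+n_1\cdot\omega$ can land in a given component, yielding the $N_1^{C}\ll N_3^{1-}$ sublinear bound.
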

\begin{rmk}
	Indeed,  the weak second Melnikov's condition \eqref{weak second Melnikov} for  $n=0$ and $j\neq j'$ can be ensured by \eqref{(2.3)} in Theorem $\ref{seperative spectrum}$:  from  $3\tN\geq 3|\log\delta|^K>|\log\delta|^2$, one has  
	\[|\mu_j-\mu_{j'}|\geq\frac{\pi\gamma}{(2d)^{\tau}(3\tN)^{\tau}}\gg e^{-\tilde{N}^{\frac{1}{K_2}}} \]
	provided   $0<\delta<\tilde{\delta}\ll1$.
\end{rmk}

The following subsections aim to prove Theorem \ref{LDT}. 

\subsection{Some useful lemmas}
To  prove Theorem $\ref{LDT}$,  we need  some useful lemmas which will be used repeatedly during the proof. 

The first lemma is a perturbation argument via Neumann's expansion (cf. \cite{Shi22,Liu22,LW24}).
For any finite set $S\subset\Z^{b+d}$, denote by $|S|=\# S$ the volume of $S$. We have 
\begin{lem}[cf. Lemma 4.2., \cite{LW24}]\label{Neumann expansion}
	Let $S\subset \Z^{d+b}$.  Assume that $A$ and $B$ are two matrices with entries $A_{r,r'}(m, m')$ and $B_{r,r'}(m,m')$, where $(r,m),(r',m')\in \{\pm\}\times S$. Assume further  that $|B_{r,r'}(m,m')|\leq \epsilon_2 (|m-m'|+1)^{C} e^{-c||m-m'||}$, $|(A^{-1})_{r,r'}(m,m')|\leq \epsilon_1^{-1}e^{-c||m-m'||}$ and $\nm A^{-1}\nm\leq \epsilon_1^{-1}$, 
	with $\epsilon_1,\epsilon_2,c>0,C>1$ and $||(n,j)||=|n|+|j|$. Then for 
	\begin{equation}\label{Verification Neumann}
		4|S|^2 (\diam(S)+1)^C \epsilon_2\epsilon_1^{-1}\leq\frac{1}{2}, 
	\end{equation}
	we have 
	\[\nm (A+B)^{-1}\nm\leq 2\epsilon_1^{-1}\]
	and 
	\[|(A+B)^{-1}_{r,r'}(m,m')-(A^{-1})_{r,r'}(m,m')|\leq \epsilon_1^{-1}e^{-c||m-m'||}.\]
\end{lem}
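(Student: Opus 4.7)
The plan is to treat $A+B$ via the Neumann series
\[
(A+B)^{-1} = A^{-1}\sum_{k\ge 0}(-1)^k (BA^{-1})^k,
\]
and to extract both the operator-norm bound and the pointwise decay term by term, using the assumption \eqref{Verification Neumann} as the smallness input forcing convergence. First I would control $\|B\|$ by a Schur-type argument: since $|B_{r,r'}(m,m')|\le \epsilon_2(|m-m'|+1)^C e^{-c\|m-m'\|}\le \epsilon_2(\diam(S)+1)^C$ and rows and columns are indexed by the finite set $\{\pm\}\times S$, summing one row gives $\|B\|\le 2|S|\epsilon_2(\diam(S)+1)^C$. Combined with $\|A^{-1}\|\le \epsilon_1^{-1}$ and the hypothesis, this yields $\|BA^{-1}\|\le 1/(4|S|)\le 1/4$, so the Neumann series converges in operator norm and $\|(A+B)^{-1}\|\le 2\epsilon_1^{-1}$, the first assertion.

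For the pointwise bound, I would write
\[
(A+B)^{-1}(m,m')-A^{-1}(m,m') = \sum_{k\ge 1}(-1)^k ((A^{-1}B)^k A^{-1})(m,m'),
\]
expand the $k$-th summand as an iterated sum over $2k$ intermediate indices in $S$, and estimate each factor separately using $|A^{-1}(\cdot,\cdot)|\le \epsilon_1^{-1}e^{-c\|\cdot\|}$ and the uniform bound $|B(\cdot,\cdot)|\le \epsilon_2(\diam(S)+1)^C e^{-c\|\cdot\|}$. The crucial observation is that by the triangle inequality the product of the $2k+1$ exponentials telescopes to a factor $\le e^{-c\|m-m'\|}$ independently of the intermediate indices. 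The remaining combinatorial sum over $S^{2k}$ contributes at most $|S|^{2k}$, which yields
\[
|((A^{-1}B)^k A^{-1})(m,m')|\le \epsilon_1^{-1}e^{-c\|m-m'\|}\bigl(|S|^{2}\epsilon_1^{-1}\epsilon_2(\diam(S)+1)^C\bigr)^k\le \epsilon_1^{-1}e^{-c\|m-m'\|}\cdot 8^{-k},
\]
where the last inequality uses \eqref{Verification Neumann} in the form $|S|^{2}\epsilon_1^{-1}\epsilon_2(\diam(S)+1)^C\le 1/8$. Summing the resulting geometric series over $k\ge 1$ gives the claimed bound $\epsilon_1^{-1}e^{-c\|m-m'\|}$ on the difference, which is exactly the second assertion.

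The only delicate point is the bookkeeping of the polynomial prefactor $(|m-m'|+1)^C$ in the bound on $B$: folding it into the exponential would cost an arbitrarily small fraction of the rate $c$, which the lemma does not allow and which would eventually deteriorate through the Newton iteration (this is precisely the pitfall flagged earlier in the remark after \eqref{perturbation Op}). The trick of replacing this prefactor by its uniform upper bound $(\diam(S)+1)^C$ over $S$ is what preserves the rate $c$ intact, and this is precisely the factor that appears on the left of the smallness hypothesis \eqref{Verification Neumann}. Aside from this, the argument is a clean telescoping estimate of a Neumann series and contains no KAM-type subtleties.
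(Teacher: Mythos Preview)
Your argument is correct and follows essentially the same route as the paper: Neumann series, Schur-type bound for $\|B\|$, then a pointwise expansion in which the exponentials telescope by the triangle inequality and the polynomial prefactor is replaced by its uniform bound $(\diam(S)+1)^C$. One minor bookkeeping slip: the intermediate indices lie in $\{\pm\}\times S$, not $S$, so the combinatorial sum contributes $(2|S|)^{2k}=4^k|S|^{2k}$ rather than $|S|^{2k}$; this changes your geometric ratio from $1/8$ to exactly the $1/2$ in \eqref{Verification Neumann}, but the series still sums to at most $1$ and the conclusion is unchanged.
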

\begin{proof}
	Let $N=|S|,M=\diam(S)$.  Then applying  Schur's  test yields  $\nm B\nm\leq 2N(M+1)^C \epsilon_2$ and 
	\[\nm BA^{-1}\nm \leq 2N(M+1)^C\epsilon_2\epsilon_1^{-1}\leq 4N^2 (M+1)^C\epsilon_2\epsilon_1^{-1}\leq \frac{1}{2}.\]
	Using Neumann series argument  shows 
	\begin{align*}
		\nm (A+B)^{-1}\nm & =\nm A^{-1}\sum_{s\geq 0}(-BA^{-1})^s\nm \\
		        &\leq \nm A^{-1}\nm \frac{1}{1-\nm BA^{-1}\nm}\leq 2\epsilon_1^{-1}. 
	\end{align*}
	Moreover, direct computations imply  
	\begin{align*}
		&\ \ \ |(A+B)^{-1}_{r,r'}(m,m') - (A^{-1})_{r,r'}(m,m')|\\
		 &\leq \epsilon_1^{-1}\nm  \sum_{s\geq 1 \atop m_k,m'_k\in S,r_k,r'_k\in \{\pm\}} \prod_{1\leq k\leq s}(1+\|m_k-m'_k\|)^C (\epsilon_2\epsilon_{1}^{-1})^s e^{-c||m-m_1||-c||m_1-m'_1||-\cdots -c||m'_s-m'||}\\
		 &\leq \epsilon_1^{-1} e^{-c||m-m'||}\sum_{s\geq 1}(4N^2 (M+1)^C\epsilon_2\epsilon_1^{-1})^s\\
		 &\leq 8N^2 (M+1)^C \epsilon_2\epsilon_1^{-2}e^{-c||m-m'||}\\
		 &\leq \epsilon_1^{-1}e^{-c||m-m'||}.
	\end{align*}
\end{proof}	
\begin{rmk}
	From the proof above and Lemma 4.2 of \cite{LW24}, it is evident that there is essentially no difference (regarding off-diagonal estimates) between matrices on $\Z^{b+d}$ 
	and those on $\{\pm\}\times \Z^{b+d}$. This is why we usually omit the indices  $\{\pm\}$. 
\end{rmk}

The second useful lemma is the Matrix-value Cartan's Lemma originated  from \cite{BGS02}.  

\begin{lem}[cf. Chapter 14, \cite{Bou05}]\label{Cartan lemma}
Let $\mathcal T(x)$ be a   $N\times N$ matrix function of a parameter $x\in[-\xi,\xi]$ satisfying the following conditions:
	\begin{itemize}
		\item[(i)] $T(x)$ is real analytic in $x\in [-\xi,\xi]$ and has a holomorphic extension to
		\begin{equation*}
			\mathcal{D}_{\xi,\xi_1}=\left\{z: \ |\Re z|\leq\xi,\ |\Im{z}|\leq \xi_1\right\}
		\end{equation*}
		satisfying
		\begin{equation}\label{mc1}
			\sup_{z\in \mathcal{D}_{\xi,\xi_1}}\|\mathcal T(z)\|\leq B_1, B_1\geq 1.
		\end{equation}
		\item[(ii)]  For all $x\in[-\xi,\xi]$, there is a subset $\Lambda\subset [1,N]$ with
		\begin{equation*}|
			\Lambda|\leq M,
		\end{equation*}
		and
		\begin{equation}\label{mc2}
			\|(R_{[1,N]\setminus \Lambda}\mathcal T(x)R_{[1,N]\setminus \Lambda})^{-1}\|\leq B_2, B_2\geq 1.
		\end{equation}
		\item[(iii)]
		\begin{equation}\label{mc3}
			\meas\{x\in[-{\xi}, {\xi}]: \ \|\mathcal T^{-1}(x)\|\geq B_3\}\leq 10^{-3}\xi_1(1+B_1)^{-1}(1+B_2)^{-1}.
		\end{equation}
		Let
		\begin{equation}\label{mc4}
			0<\epsilon\leq (1+B_1+B_2)^{-10 M}.
		\end{equation}
	\end{itemize}
	Then
	\begin{equation}\label{mc5}
		\mathrm{meas}\left\{x\in\left[-{\xi}/{2}, {\xi}/{2}\right]:\  \|\mathcal T^{-1}(x)\|\geq \epsilon^{-1}\right\}\leq C \xi e^{-c\left(\frac{\log \epsilon^{-1}}{M\log(B_1+B_2+B_3)}\right)},
	\end{equation}
	where $C$ and $c$ are absolute constants.
\end{lem}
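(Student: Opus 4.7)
The strategy is to reduce the $N$-dimensional matrix-inverse problem to a scalar analytic-function problem via the Schur complement, and then apply the classical one-variable Cartan small-value estimate (of the form $\mathrm{meas}\{|P|\le\eta\}\lesssim n\eta^{1/n}$ for a degree-$n$ polynomial). Each of the three hypotheses plays a specific role: (i) provides the analytic upper bound on the complex bidisk, (ii) allows one to contract the problem to an $M\times M$ Schur complement, and (iii) supplies a reference point at which the scalar function is bounded below.

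First I would fix any $\Lambda\subset[1,N]$ with $|\Lambda|\le M$ and write $\mathcal{T}(z)$ in block form with respect to $\Lambda$ and $\Lambda^c$, with diagonal blocks $A_\Lambda(z)$ and $D_\Lambda(z)$. Set $G_\Lambda=\{x\in[-\xi,\xi]:\ \|D_\Lambda(x)^{-1}\|\le B_2\}$; by hypothesis (ii) the family $\{G_\Lambda\}$ covers $[-\xi,\xi]$ as $\Lambda$ ranges over subsets of size at most $M$. A short Neumann-series argument based on (i) extends $D_\Lambda^{-1}$ analytically from each $G_\Lambda$ into a slight shrinking of $\mathcal{D}_{\xi,\xi_1}$, so the Schur complement
\[S_\Lambda(z)=A_\Lambda(z)-B_\Lambda(z)D_\Lambda(z)^{-1}C_\Lambda(z)\]
is a well-defined analytic matrix of size at most $M$ there. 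The standard block-inversion formula then gives $\|\mathcal{T}^{-1}(x)\|\lesssim(1+B_1+B_2)^{3}\bigl(1+\|S_\Lambda^{-1}(x)\|\bigr)$, while Cramer/Hadamard yields $\|S_\Lambda^{-1}\|\le M\|S_\Lambda\|^{M-1}/|f_\Lambda(x)|$ with $f_\Lambda(z):=\det S_\Lambda(z)$. Thus the original $N$-dimensional problem collapses to a scalar lower-bound problem for $f_\Lambda$.

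Next I would apply one-variable Cartan to $f_\Lambda$. Combining (i) with the bound $\|D_\Lambda^{-1}\|\le B_2$ on $G_\Lambda$ yields the uniform upper bound $|f_\Lambda(z)|\le(1+B_1+B_2)^{CM}$ on the shrunken bidisk, and (iii) furnishes a reference point $x_0\in G_\Lambda$ with $\|\mathcal{T}^{-1}(x_0)\|\le B_3$ which, traced back through the Schur identity, forces $|f_\Lambda(x_0)|\ge(1+B_1+B_2+B_3)^{-CM}$. Jensen's inequality then bounds the number of zeros of $f_\Lambda$ in a smaller bidisk by $n_\Lambda\lesssim M\log(B_1+B_2+B_3)$; factoring these zeros out via a Blaschke product reduces $f_\Lambda$ to a nonvanishing analytic function, to which the classical Cartan estimate applies, producing
\[\mathrm{meas}\{x\in[-\xi/2,\xi/2]:\ |f_\Lambda(x)|\le\eta\}\le C\xi\exp\bigl(-c\log\eta^{-1}/(M\log(B_1+B_2+B_3))\bigr).\]
Choosing $\eta\sim\epsilon(1+B_1+B_2)^{CM}$ so that $\|\mathcal{T}^{-1}(x)\|\ge\epsilon^{-1}$ forces $|f_\Lambda(x)|\le\eta$, and using \eqref{mc4} to guarantee $\log\eta^{-1}\ge\tfrac12\log\epsilon^{-1}$, delivers the desired estimate on the slice of $[-\xi/2,\xi/2]$ indexed by $\Lambda$.

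The final step is a union bound over the $\binom{N}{M}$ subsets $\Lambda$. The principal obstacle I anticipate is pure bookkeeping: the combinatorial factor $\binom{N}{M}\le N^{M}$ together with all the polynomial-in-$M$ losses from Cramer, Schur inversion, and the conversion between $\|\mathcal{T}^{-1}\|$ and $|f_\Lambda|$ must be absorbed by the exponential decay of the Cartan bound. This is precisely what the smallness hypothesis \eqref{mc4} is tuned for: the assumption $\epsilon\le(1+B_1+B_2)^{-10M}$ forces $\log\epsilon^{-1}$ to dominate all such deficits, leaving the bound \eqref{mc5} intact. A secondary subtlety is the existence, for each fixed $\Lambda$, of a good reference point $x_0\in G_\Lambda$ lying outside the bad set of (iii); this is automatic because (iii) bounds that bad set by a quantity much smaller than $\xi_1(1+B_1)^{-1}(1+B_2)^{-1}$, so $G_\Lambda$ intersects its complement nontrivially.
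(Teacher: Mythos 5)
A preliminary remark: the paper itself does not prove this lemma --- it is quoted verbatim from Chapter 14 of Bourgain's book --- so your proposal is measured against the standard argument in that reference. Your core reduction is exactly its engine: block decomposition relative to $\Lambda$, the Schur complement $S_\Lambda$, Cramer/Hadamard to pass to the scalar function $f_\Lambda=\det S_\Lambda$, the upper bound from (i)--(ii), a lower bound at a reference point supplied by (iii), and Jensen plus the one-variable Cartan estimate; your use of \eqref{mc4} to absorb the $(1+B_1+B_2)^{CM}$ normalization when converting $\|\mathcal T^{-1}(x)\|\geq \epsilon^{-1}$ into $|f_\Lambda(x)|\leq \eta$ is also correct.

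The global organization, however, has a genuine gap. You run the Cartan estimate once per subset $\Lambda$ on the set $G_\Lambda$ and then take a union bound over the $\binom{N}{M}\leq N^{M}$ subsets, claiming \eqref{mc4} absorbs this factor. It cannot: both the conclusion \eqref{mc5} and the hypothesis \eqref{mc4} are independent of $N$, while $N^{M}$ is not, so for $N$ enormous compared with $B_1,B_2,\epsilon^{-1}$ the union bound is vacuous. Two subsidiary claims fail for the same reason you work globally in $\Lambda$: a Neumann series extends $D_\Lambda(z)^{-1}$ only to a complex neighborhood of $G_\Lambda$ of radius about $\xi_1(1+B_1)^{-1}(1+B_2)^{-1}$ (Cauchy estimates give $\|\mathcal T'\|\lesssim B_1/\xi_1$ on a shrunken domain), not to a uniform shrinking of $\mathcal D_{\xi,\xi_1}$, and $G_\Lambda$ may be small and disconnected, so it need not meet the complement of the bad set in \eqref{mc3}; hence your ``automatic'' reference point $x_0\in G_\Lambda$ need not exist. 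The cited proof avoids all three difficulties by localizing in $x$ rather than in $\Lambda$: cover $[-\xi/2,\xi/2]$ by intervals of length $\rho\sim \xi_1(1+B_1)^{-1}(1+B_2)^{-1}$; on each interval use the single $\Lambda$ attached to one of its points, which by the Neumann perturbation remains good on the whole complex disk of radius $\rho$ around it; the constant $10^{-3}\xi_1(1+B_1)^{-1}(1+B_2)^{-1}$ in \eqref{mc3} is calibrated precisely so the bad set cannot cover any such interval, producing a reference point with $\|\mathcal T^{-1}\|\leq B_3$ inside each one; and the per-disk Cartan bound is proportional to $\rho$, so summing over the $\sim\xi/\rho$ intervals yields $C\xi e^{-c(\cdot)}$ with no combinatorial loss and no $N$-dependence. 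With that localization, your Schur-complement and Jensen machinery goes through essentially verbatim.
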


To apply Lemma \ref{Cartan lemma}, we also need to introduce semi-algebraic sets. A set $\mathcal{S}\subset \mathbb{R}^d$ is called {\it semi-algebraic} if it is a finite union of sets defined by a finite number of polynomial equalities and inequalities. More precisely, let $\{P_1,\cdots,P_s\}\subset\mathbb{R}[x_1,\cdots,x_d]$ be a family of real polynomials whose degrees are bounded by $\kappa$. A (closed) semi-algebraic set $\mathcal{S}$ is given by an expression
\begin{equation}\label{smd}
	\mathcal{S}=\bigcup\limits_{l}\bigcap\limits_{\ell\in\mathcal{L}_l}\left\{x\in\mathbb{R}^d: \ P_{\ell}(x)\varsigma_{l\ell}0\right\},
\end{equation}
where $\mathcal{L}_l\subset\{1,\cdots,s\}$ and $\varsigma_{l\ell}\in\{\geq,\leq,=\}$.  We say that $\mathcal{S}$ has degree at most $s\kappa$. In fact, the degree of $\mathcal{S}$, denoted by $\deg(\mathcal{S})$, is defined as the smallest $s\kappa$ over all representations as in (\ref{smd}).

Some basic properties of semi-algebraic sets are presented below. These properties are special cases of those in \cite{Bas99} and are restated in \cite{Bou05}.

\begin{lem}[cf. Theorem 9.3, \cite{Bou05}]\label{basu}
	Let $ \mathcal{S}\subset [0,1]^d$ be a semi-algebraic  set of degree $B$. Then  the number of connected components of
	$ \mathcal{S}$ does not exceed $(1+B)^{C(d)}$.
\end{lem}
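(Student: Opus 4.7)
The plan is to invoke the classical Oleinik--Petrovsky--Thom--Milnor--Basu bound on the number of connected components of a real semi-algebraic set, whose proof proceeds via the \emph{critical points method}. Writing $\mathcal{S}$ in the standard form \eqref{smd}, I would regard $\mathcal{S}$ as defined by Boolean combinations of sign conditions on at most $s$ polynomials $P_1,\ldots,P_s$ of degree at most $\kappa$, with $s\kappa\le B$. The goal is to attach to each connected component $\mathcal{C}\subset\mathcal{S}$ a distinguished point of algebraic origin, and then bound the total number of such points by a polynomial in $s$ and $\kappa$.

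The central step would be to realise these distinguished points as real critical points of a generic linear form restricted to a smooth algebraic stratum. After an infinitesimal deformation---replacing each $P_\ell\geq 0$ by $P_\ell\geq -\varepsilon_\ell$, adding a compactifying term of the form $\varepsilon(1+\|x\|^2)^{-N}$, and perturbing so that all gradients become transverse---one may reduce to the case in which $\mathcal{S}$ is a disjoint union of compact smooth manifolds of the type $\{P_{\ell_1}=\cdots=P_{\ell_k}=0\}$ intersected with strict inequalities on the remaining polynomials. On each such manifold, the maximum of a generic linear functional is attained at interior critical points of a Lagrange-type polynomial system, whose number of isolated real solutions is bounded by B\'ezout's theorem by $\kappa^{O(d)}$.

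Summing over the at most $\binom{s}{k}\le s^d$ choices of active equations, and the $2^{s-k}$ possible sign patterns on the remaining polynomials, one obtains a total bound of the form
\[
s^{O(d)}\kappa^{O(d)}\;\le\;(1+s\kappa)^{C(d)}\;\le\;(1+B)^{C(d)},
\]
which is the desired inequality. The main obstacle is the passage from an arbitrary (possibly non-smooth, non-compact, non-generic) semi-algebraic set to one on which Morse theory applies cleanly; this is precisely the role of the infinitesimal-deformation argument, which is the technical heart of \cite{Bas99}. Since the statement we need is exactly Theorem 9.3 of \cite{Bou05}, in the present paper I would simply cite that reference rather than redevelop the full semi-algebraic machinery here.
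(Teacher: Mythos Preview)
Your proposal is correct and aligns with the paper's treatment: the paper gives no proof of this lemma at all, simply citing it as Theorem~9.3 of \cite{Bou05} (which in turn rests on \cite{Bas99}). Your sketch of the underlying critical-points/B\'ezout argument is accurate background, and your conclusion to simply cite the reference is exactly what the paper does.
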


\begin{lem}[cf. Proposition 9.2, \cite{Bou05}]  \label{projection semi algebraic}
	Let $\mathcal{S}\subset [0,1]^{d_1+d_2}$ be a semi-algebraic  set of degree $B$. Let $(x,y)\in \R^{d_1}\times \R^{d_2}$.
	Then the projection ${\rm proj} _{x_1}\mathcal(S)$ is a semi-algebraic set of degree at most 
	$(1+B)^{C(d_1,d_2)}$.
\end{lem}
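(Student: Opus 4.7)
The plan is to prove this effective Tarski–Seidenberg statement by reducing projection along $\R^{d_2}$ to iterated projection along a single coordinate, and then to bound the degree growth at each elimination step via classical subresultant/sign-determination machinery.

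\medskip

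First, I would reduce to the one-variable case. Writing $y=(y_1,\dots,y_{d_2})$, observe that
\[
{\rm proj}_x\,\mathcal S \;=\; {\rm proj}_{(x,y_1,\dots,y_{d_2-1})}\circ{\rm proj}_{(x,y_1,\dots,y_{d_2-2})}\circ\cdots\circ{\rm proj}_{(x,y_1)}(\mathcal S),
\]
so if I can show that projecting out a single coordinate from a semi-algebraic set of degree $B'$ in $\R^{m+1}$ yields a semi-algebraic set of degree at most $(1+B')^{C(m)}$, then iterating $d_2$ times gives the stated bound $(1+B)^{C(d_1,d_2)}$.

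\medskip

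For the single-variable step I would start from the representation \eqref{smd}: $\mathcal S$ is given by a boolean combination of at most $s$ polynomial sign conditions $P_\ell\varsigma_{l\ell}0$ with $\deg P_\ell\le\kappa$ and $s\kappa\le B'$. Viewing the $P_\ell$ as univariate polynomials in the variable $t$ to be eliminated, with coefficients that are polynomials in the remaining variables of degree at most $\kappa$, the point $x$ lies in the projection iff some prescribed sign pattern on $P_1(x,t),\dots,P_s(x,t)$ is realizable in $t$. Realizability of a sign pattern is governed by the signs of the $P_\ell$, of their pairwise subresultants, and of their derivatives' subresultants, together with the signs of the $P_\ell$ at $\pm\infty$. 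By Sturm–Tarski / Thom's encoding of real roots (as, e.g., in Basu–Pollack–Roy or Bochnak–Coste–Roy), the set of $x$ for which a prescribed pattern is realizable is defined by polynomial sign conditions on these subresultants. Since the subresultants of polynomials of degree $\le\kappa$ are themselves polynomials (in the coefficients, hence in $x$) of degree bounded by $O(\kappa^2)$, and the number of such subresultant/derivative combinations is bounded by a polynomial in $s$ and $\kappa$, the projection is a semi-algebraic set of degree at most $(sO(\kappa^2))\le (1+B')^{C}$ for an absolute $C$.

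\medskip

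Combining the single-step bound with the iteration, after eliminating all $d_2$ coordinates one obtains a degree bound of the form $(1+B)^{C^{d_2}}$, which is of the announced shape $(1+B)^{C(d_1,d_2)}$; the dependence on $d_1$ enters only through the ambient dimension appearing in constants of the subresultant bounds. The main technical obstacle is the bookkeeping of the degree blow-up: each elimination step squares (or worse) the degree through the subresultant construction, so one must be careful to absorb all such losses into a single constant $C(d_1,d_2)$ rather than letting them depend on $B$. The cleanest route is to invoke the effective quantifier elimination theorem of Basu–Pollack–Roy directly, which gives explicitly that the projection of a set described by $s$ polynomials of degree $\le\kappa$ in $d_1+d_2$ variables is described by at most $s^{(d_1+1)(d_2+1)}\kappa^{O(d_1 d_2)}$ polynomials of degree at most $\kappa^{O(d_2)}$, yielding the stated bound at once; I would use this as a black box if a self-contained derivation proves cumbersome.
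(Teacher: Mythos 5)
The paper does not actually prove this lemma: it is quoted verbatim (as ``cf.\ Proposition 9.2'' of Bourgain's book) and used as a black box, with the underlying quantitative Tarski--Seidenberg bounds traced back to Basu's work, exactly the result you invoke at the end of your sketch. So there is nothing in the paper to compare against step by step; the relevant question is only whether your sketch is a sound route to the cited statement, and essentially it is. Your reduction to eliminating one coordinate at a time is the standard one, and the resulting bound $(1+B)^{C^{d_2}}$ is indeed of the admissible form $(1+B)^{C(d_1,d_2)}$, since the lemma allows the exponent to depend on $d_1,d_2$. The one place where your self-contained version is thinner than it should be is the single-variable elimination step: realizability of a prescribed sign pattern of $P_1(x,\cdot),\dots,P_s(x,\cdot)$ is not controlled by pairwise subresultants alone; one needs a full sign-determination scheme (Cohen--H\"ormander, or the Ben-Or--Kozen--Reif/Tarski query method applied to the $P_\ell$, their derivatives and suitable products), which you only gesture at via ``Sturm--Tarski / Thom encoding.'' If you carry that out, the count of auxiliary polynomials and their degrees in $x$ are still polynomial in $s\kappa$, so the conclusion survives; alternatively, citing Basu--Pollack--Roy quantifier elimination directly, as you propose as a fallback, is precisely what the paper (via Bourgain) does, and is the cleanest way to close the argument.
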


We are now ready to prove the Large Deviation Theorem (LDT) via multi-scale analysis (MSA) induction. The induction scales can be decomposed into three parts: small scales (via Neumann series argument), intermediate scales (via separation property of eigenvalues in Section 2), and large scales (via semi-algebraic set arguments and matrix-valued Cartan's lemma).

\subsection{LDT in small scales}

We first prove LDT for  small scales 
$$1\ll  N_0(b, d)\leq N\leq |\log \delta|^{10}.$$
Define 
\begin{align*}
&\ \ \ \Sigma_N\\
&:=\bigcup_{(n,j)\in [-N,N]^b\times [-3N,3N]^d}\big\{  \sigma\in\R:\  |(\nomega +\sigma )+\mu_j|\leq 2e^{-N^{\frac{1}{20}}} \ {\rm or} \ |-(\nomega +\sigma )+\mu_j|\leq 2e^{-N^{\frac{1}{20}}}    \big\}.
\end{align*}
It is easy to see that for $N\geq N_0(b,d)\gg1,$
\begin{equation*}
	\meas(\Sigma_N)\leq 4 (2N+1)^b(6N+1)^d e^{-N^{\frac{1}{20}}}\leq e^{-N^{\frac{1}{30}}}.
\end{equation*}

Next,  we will apply  Lemma \ref{Neumann expansion} with  $S=\La_N(j_0), A=R_{S}D(\sigma) R_S, B=\delta R_S \tH R_S,\epsilon_1=2e^{-N^{\frac{1}{20}}},\epsilon_2=C_1 \delta$ and $C=C_1,c=c_1$.  We have 
\begin{align*}
	4|S|^2(\diam(S)+1)^C\epsilon_2\epsilon_1^{-1} &\leq 2 C_1 \delta (2N+1)^{2(b+d)+C_1} e^{N^{\frac{1}{20}}}\\
	       &\le C({b,p,d}) \delta |\log\delta|^{20(b+d)+C_1} e^{|\log\delta|^{\frac{1}{2}}}\leq \frac{1}{2}   
\end{align*}
provided $0<\delta\le c(b,p,d,C_1)\ll 1.$
 Hence, using  Lemma \ref{Neumann expansion} shows 
\[		\nm (R_{\La_N(j_0)}T(\sigma)R_{\La_N(j_0)})^{-1} \nm\leq e^{N^{\frac{1}{20}}} \leq e^{N^{\frac{9}{10}}},\]
and 
\[			|(R_{\La_N(j_0)}T(\sigma)R_{\La_N(j_0)})^{-1}(n,j;n^\prime,j')| \leq \frac{1}{2}e^{N^{\frac{1}{20}}} e^{-c_1(|n-n'|+|j-j'|)} \]
for any $(n,j),(n',j')\in S$. 

Now assume $|n-n'|+|j-j'|\geq N^{\frac{1}{2}}$. Then 
\begin{align*}
	&\ \ \ |(R_{\La_N(j_0)}T(\sigma)R_{\La_N(j_0)})^{-1}(n,j;n^\prime,j')| \\
	&\leq \frac{1}{2}e^{N^{\frac{1}{20}}} e^{-c_1(|n-n'|+|j-j'|)} \\
	                 &\leq  e^{-(c_1-\frac{N^{\frac{1}{20}}}{|n-n'|+|j-j'| })(|n-n'|+|j-j'|)}\\
					 &\leq e^{-(c_1-N^{\frac{1}{20}-\frac{1}{2}})(|n-n'|+|j-j'|)}.
\end{align*}
This shows that we can  take $c_N=c_1-N^{-\frac{9}{20}}>\frac{c_1}{2}$.

\subsection{LDT in intermediate scales} 
Next, we prove LDT  for the intermediate scales $$|\log\delta|^{10}<N\leq |\log\delta|^K.$$

Remember that   $|\omega-\omega^{(0)}|\leq C_2\delta $. Fix  $|\log\delta|^{10}<N\leq |\log\delta|^K$ and let $\Sigma_N$ be the set  of $\sigma\in\R$ such that at least one of \eqref{LDT L2 norm} and \eqref{LDT off diagonal} does not hold. Then for $\sigma\in \Sigma_N$, we must have for some 
$(n,j)\in [-N,N]^b\times[-3N,3N ]^d$, either $|\sigma+\nomega+\mu_j|\leq \delta^{\frac{3}{8}}$ or $|\sigma+\nomega-\mu_j|\leq \delta^{\frac{3}{8}}$. Otherwise, applying Lemma \ref{Neumann expansion} will yield that for any $j_0$ with 
$|j_0|\leq 2N$,  we have 
\[\nm (R_{\La_N(j_0)}T(\sigma)R_{\La_N(j_0)})^{-1} \nm\leq 2\delta^{-\frac{3}{8}}\leq e^{N^{\frac{9}{10}}},\]
and for any $(n,j),(n',j')$ such that $|n-n'|+|j-j'|\geq \sqrt{N},$
\[  |(R_{\La_N(j_0)}T(\sigma)R_{\La_N(j_0)})^{-1}(n,j;n^\prime,j')| \leq e^{-(c_1-N^{\frac{3}{80}-\frac{1}{2}})(|n-n'|+|j-j'|)}.\]
In fact, the condition  \eqref{Verification Neumann} can be  verified  via  $$4C_1(2N+1)^{2(d+b)+C_1}\delta^{\frac{5}{8}}\le C({C_1,b,d})|\log\delta|^{K(2(d+b)+C_1)}\delta^{\frac{5}{8}}\ll\frac{1}{2}$$
provided $0<\delta \le c(C_1, b,d)\ll1$ since  $K$ is a constant depending only on $b$. 
Therefore, one can restrict $\si$ to an  intervals of size $2\delta^{\frac{3}{8}}$ like
 \[I_{(n,j)}=\{\si:\ |\sigma+\nomega+\mu_j|\leq \delta^{\frac{3}{8}}\}\ {\rm for\ some}\ (n,j)\in \Lambda_N(j_0). \]
 The total number of those intervals  are bounded by   
\[2\cdot \#\{(n,j)\in [-N,N]^b\times[-3N,3N ]^d\}\leq 10^{b+d}N^{b+d}.\]
Hence, $\Sigma_N$ can be covered with at most  $10^{b+d}N^{b+d}$ intervals of length $2\delta^{\frac{3}{8}}$. Moreover, by \eqref{(2.4)}, we have $|\mu_j|\lesssim N^{d+2}$, which implies   $I_{(n,j)}\subset [-100N^{d+3},100N^{d+3}]$.  Denote by $\{I_k\}$ all of those intervals  and  we take one of them, say $I_0$,  for consideration. Without loss of generality,    
we may assume that $I_0$ has the form 
\[I_0=\{\si\in\R:\ |\sigma+n_0\cdot \omega+\mu_{j_0}|\leq \delta^{\frac{3}{8}} \ {\rm and} \ (n_0,j_0)\in [-N,N]^b\times[-3N,3N ]^d\}.\]
The  proof for the other case is totally the same. 

Now for any $\si\in\R$, let 
\begin{equation*}
	\mathcal{A}_1^\si=\{(n,j)\in[-N,N] ^{b}\times[-3N,3N]^d :\  |\si+n\cdot \omega^{(0)}+\mu_j|\leq \delta^{\frac{1}{8}}\},
\end{equation*}
and 
\begin{equation*}
	\mathcal{A}_2^\si=\{(n,j)\in[-N,N] ^{b}\times[-3N,3N]^d :\  |\si +n\cdot \omega^{(0)}-\mu_j|\leq \delta^{\frac{1}{8}}\}.
\end{equation*}
We introduce a   lemma based on the separation property  of eigenvalues (cf. Theorem \ref{seperative spectrum}).
\begin{lem}\label{less resonance intermediate scale}
	Let $0<\delta\ll1$  and $0<\varepsilon<\tilde{\varepsilon}(\delta),\theta\in \Theta$  so that Theorem \ref{seperative spectrum} holds true. Then  
	\[\#\mathcal{A}_1^\si\leq b,\ \# \mathcal{A}_2^\si\leq b.\]
\end{lem}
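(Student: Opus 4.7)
The plan is to reduce the bound directly to the eigenvalue separation estimate \eqref{(2.7)} of Theorem~\ref{seperative spectrum}, which was established precisely to handle this kind of ``pigeonhole'' issue. The key observation is that if two distinct pairs $(n_1,j_1),(n_2,j_2)\in\mathcal{A}_1^\sigma$ both satisfy the defining inequality, then subtracting one from the other eliminates $\sigma$ and produces a small divisor of exactly the form controlled in \eqref{(2.7)}.

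Concretely, I would first apply the triangle inequality to obtain
\[
|(n_1-n_2)\cdot\omega^{(0)}+\mu_{j_1}-\mu_{j_2}|\leq 2\delta^{1/8}.
\]
Since in the intermediate regime $N\leq|\log\delta|^K$, the triple $(n_1-n_2,j_1,j_2)$ lies in $[-3|\log\delta|^K,3|\log\delta|^K]^{b+2d}$, so \eqref{(2.7)} applies and forces it into the exceptional set $\{(-e_k+e_{k'},\beta_k,\beta_{k'}):1\leq k,k'\leq b\}\cup\{(0,0,0)\}$.

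A short case analysis then finishes the argument. The case $(0,0,0)$ would force $(n_1,j_1)=(n_2,j_2)$, contradicting distinctness. The case $n_1=n_2$ with $j_1\neq j_2$ requires $(0,j_1,j_2)$ to sit in the exceptional set, which is only possible when $k=k'$ (since $-e_k+e_{k'}=0$), again forcing $j_1=j_2=\beta_k$, a contradiction. Hence necessarily $n_1\neq n_2$ with $j_1=\beta_k,j_2=\beta_{k'}$ for some $1\leq k,k'\leq b$. Applying the same analysis to two elements sharing a common $j$-coordinate shows they must also share the $n$-coordinate. Consequently the projection $(n,j)\mapsto j$ is injective from $\mathcal{A}_1^\sigma$ into $\{\beta_1,\ldots,\beta_b\}$ as soon as $\#\mathcal{A}_1^\sigma\geq 2$, yielding $\#\mathcal{A}_1^\sigma\leq b$; the case $\#\mathcal{A}_1^\sigma\leq 1$ is trivial since $b\geq 1$.

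For $\mathcal{A}_2^\sigma$ the identical argument applies, with the subtracted inequality now reading $|(n_1-n_2)\cdot\omega^{(0)}-\mu_{j_1}+\mu_{j_2}|\leq 2\delta^{1/8}$, i.e.\ the roles of $j_1$ and $j_2$ are swapped before invoking \eqref{(2.7)}. I do not expect any genuine obstacle here: all the delicate arithmetic has already been absorbed into Theorem~\ref{seperative spectrum}, and the present lemma is essentially a direct pigeonhole consequence of \eqref{(2.7)} combined with the hypothesis $N\leq|\log\delta|^K$ which keeps every relevant index within the range where the separation estimate is available.
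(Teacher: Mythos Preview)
Your proof is correct and follows essentially the same approach as the paper: subtract two resonant conditions to eliminate $\sigma$ and invoke the separation estimate \eqref{(2.7)}. The paper argues by contradiction assuming $b+1$ elements and separately invokes \eqref{(2.3)} for the case $n_{i_1}=n_{i_2}$, whereas you phrase it as a direct injectivity argument and use \eqref{(2.7)} uniformly (which is fine since the $n=0,\ j\neq j'$ case of \eqref{(2.7)} is itself proved via \eqref{(2.3)}); the logical content is identical.
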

\begin{proof}[Proof of Lemma \ref{less resonance intermediate scale}]
	The proof is similar to that of Theorem 3.4 in  \cite{SW24}.  We only discuss $\mcA_1^\si$. 
	
	Assume first  that there are $b+1$ distinct $\{(n_k,j_k)\}_{1\leq k\leq b+1}\subset [-N,N] ^{b}\times[-3N,3N]^d$ such that 
	\[|\si+n_k\cdot \omega^{(0)}+\mu_{j_k}|\leq \delta^{\frac{1}{8}}.\]
	Then all those $n_k$ are distinct. Otherwise,  if there are $n_{i_1}=n_{i_2}$ for some $1\leq {i_1}\neq {i_2}\leq b+1$, we have 
	\begin{align*}
		|\mu_{j_{i_1}}-\mu_{j_{i_2}}| &\leq |\si+n_{i_1}\cdot \omega^{(0)}+\mu_{j_{i_1}}|+|\si+n_{i_2}\cdot \omega^{(0)}+\mu_{j_{i_2}}| \\
		    &\leq 2\delta^{\frac{1}{8}}\\
			& < \frac{\pi\gamma}{(2d)^{\tau}|\log\delta|^{K\tau}}\leq \frac{\pi\gamma}{(2d)^{\tau}N^{\tau}}
	\end{align*}
	provided  $0<\delta\ll1 $.  This would  contradict \eqref{(2.3)}.  
	Next,  we claim that all  those $j_k$ are distinct. Otherwise,  if there are $j_{i_1}=j_{i_2}$ for some $1\leq {i_1}\neq {i_2}\leq b+1$, we will have 
	\begin{align*}
		|(n_{i_1}-n_{i_2})\cdot \omega^{(0)}| &\leq |\si+n_{i_1}\cdot \omega^{(0)}+\mu_{j_{i_1}}|+|\si+n_{i_2}\cdot \omega^{(0)}+\mu_{j_{i_2}}| \\
		    &\leq 2\delta^{\frac{1}{8}}. 
	\end{align*}
	This would contradict  \eqref{(2.7)}.   
	Finally, we claim that $\{j_{k}\}_{1\leq k\leq b+1}\subset \{\beta_k\}_{1\leq k\leq b}$. Otherwise,  if $j_{i_1}\notin \{\beta_k\}_{1\leq k\leq b}$ for some $1\leq i_1\leq b+1$,  since we have shown that $j_k$ are all distinct, we take one  $j_{i_2}\neq j_{i_1}$ for arbitrary $i_2\neq i_1$. Hence,
	\begin{align*}
		|(n_{i_1}-n_{i_2})\cdot \omega^{(0)}+\mu_{j_{i_1}}-\mu_{j_{i_2}}| &\leq |\si+n_{i_1}\cdot \omega^{(0)}+\mu_{j_{i_1}}|+|\si+n_{i_2}\cdot \omega^{(0)}+\mu_{j_{i_2}}| \\
		    &\leq 2\delta^{\frac{1}{8}}.
	\end{align*}
	Note that combining   $j_{i_1}-j_{i_2}\neq 0$ and $j_{i_1}\notin\{\beta_k\}_{1\leq k\leq b}$ implies $ (n_{i_1}-n_{i_2},j_{i_1},j_{i_2})\notin \{(-e_k+e_{k'},\beta_k,\beta_{k'})\}_{k,k'=1}^b$. 
	This would contradict  \eqref{(2.7)}. 
	Therfore, we must have $\{j_{i}\}_{1\leq i\leq b+1}\subset \{\beta_k\}_{1\leq k\leq b}$. However,  by the Pigeonhole principle,  there must be  $j_{i_1}=j_{i_2}$ for some $i_1\neq i_2$.  This definitely  contradicts the fact that all $j_k$ are  distinct.
\end{proof}

At this stage,   take a $\tilde{\si}\in I_0$ and define  
\[\mcA_1=\mcA_1^{\tilde{\si}},\ \mcA_2=\mcA_2^{\tilde{\si}}.\]
Since $I_0$ has a length of at most  $2\delta^{\frac{3}{8}}$,  for any $\si\in I_0$, we have $|\si-\tilde{\si}|\leq 2\delta^{\frac{3}{8}}$. If $(n,j)\notin \mcA_1\cup\mcA_2$,  we get 
\[|\tilde{\si}+\nomega^{(0)}\pm\mu_j|\geq \delta^{\frac{1}{8}}.\]
Hence, for all $\si\in I_0$ and $(n,j)\in [-N,N]^b\times[-3N,3N]^d\setminus (\mcA_1\cup\mcA_2)$,  we obtain  
\begin{align}\label{intermediate not small divisor}
	|\si+\nomega\pm\mu_j| &\geq |\tilde{\si}+\nomega^{(0)}\pm\mu_j|-|\si-\tilde{\si}| -|n\cdot(\omega-\omega^{(0)})|\\
	  \notag &\geq \delta^{\frac{1}{8}}-2\delta^{\frac{3}{8}}-b C_2 \delta |\log\delta|^K\geq \frac{1}{2}\delta^{\frac{1}{8}}. 
\end{align}
Moreover,  by Lemma \ref{less resonance intermediate scale},  we have $\#(\mcA_1\cup\mcA_2)\leq 2b$. Now take any $\tilde{\La}\in \mcR_{N_1}^{\sqrt[4]{N}}$ with $N_1\in [\sqrt[4]{N},N]$ and $\tilde{\La}\subset [-N,N]^b\times[-3N,3N]^d$. By Lemma \ref{Neumann expansion},  we have  that for all $\si \in I_0,$ 
\[\nm (R_{\tilde{\La}\setminus (\mcA_1\cup \mcA_2)}T(\si)R_{\tilde{\La}\setminus (\mcA_1\cup \mcA_2)})^{-1}  \nm\leq 4\delta^{-\frac{1}{8}}.\]
\\
Now we are going to apply the matrix-value Cartan's lemma (cf. Lemma \ref{Cartan lemma}). Since $I_0$ is of size $2\delta^{\frac{3}{8}}$, we cover  $I_0$ with disjoint intervals  $\tilde{I}=[-\frac{\xi}{2},\frac{\xi}{2}]+\si'$, and will apply Lemma \ref{Cartan lemma} on each of  enlarged   intervals $\tilde{I}'=[-\xi,\xi]+\si'$. For this, we set in Lemma \ref{Cartan lemma} with  $0<\xi\ll \delta^{\frac{3}{8}},\xi_1=1,\La=\mcA_1\cup\mcA_2,M=2b,B_2=4\delta^{-\frac{1}{8}},B_3=1,\epsilon=e^{-N_1^{\frac{3}{4}}}$. By  $\eqref{(2.4)}$ and $I_0\subset [-100N^{d+3},100N^{d+3}]$, we have 
\[|\si+\nomega^{(0)}\pm\mu_j|\le C({b,d}) N^{d+3}.\]
Hence, using the Schur's  test gives 
\[\sup_{z\in \mathcal{D}_{\xi,\xi_1}}\|\mathcal T(z)\|\le C(b,d) N^{d+3}.\]
So,  we can take $B_1=\mcO(1)|\log\delta|^{K(d+3)}$. We note that since $0<\xi \ll \delta^{\frac{3}{8}}\ll \delta^{\frac{1}{8}}|\log\delta|^{-K(d+3)}$,  the condition \eqref{mc3} holds automatically. Then applying Lemma \ref{Cartan lemma} for all possible $\tilde{\La}\in\mcR_{N_1}^{\sqrt{N}}$ (the number is at most $N^C$) implies  that there is a subset $\tilde{\Sigma}\subset \tilde{I}$ such that 
\[\frac{\meas(\tilde{\Sigma})}{\meas(\tilde{I})}\leq N^C e^{-\frac{N_1^{\frac{3}{4}}}{|\log\delta|^{1.5}}},\]
and for any $\si\notin \tilde{\Sigma}$ and $\tilde{\La}\in \mcR_{N_1}^{\sqrt[4]{N}}$ with $\tilde{\La}\subset [-N,N]^b\times[-3N,3N]^d$, we have 
\[\nm (R_{\tilde{\La}}T(\sigma)R_{\tilde{\La}})^{-1}\nm \leq e^{N_1^{\frac{3}{4}}}.\]
Since  $\tilde{I}$ disjointly pave $I_0$, we get  a subset $\tilde{\Sigma}'\subset I_0$ for each interval $I_0$ with 
\[\meas(\tilde{\Sigma}')\leq \meas(I_0) N^C  e^{-\frac{N_1^{\frac{3}{4}}}{|\log\delta|^{1.5}}}=2\delta^{\frac{3}{8}} N^C e^{-\frac{N_1^{\frac{3}{4}}}{|\log\delta|^{1.5}}}.\]

Finally, since there are approximately  $10^{b+d}N^{b+d}$ many intervals $I_0$, we obtain a subset $\tilde{\Sigma}_{N_1}\subset \R$ with 
\[\meas(\tilde{\Sigma}_{N_1})\lesssim N^{b+d+C}\delta^{\frac{3}{8}}e^{-\frac{N_1^{\frac{3}{4}}}{|\log\delta|^{1.5}}}\leq e^{-\frac{N_1^{\frac{3}{4}}}{|\log\delta|^{2}}}\]
so that, for any $\si\notin \tilde{\Sigma}_{N_1}$ and $\tilde{\La}\in \mcR_{N_1}^{\sqrt[4]{N}}$ with $\tilde{\La}\subset [-N,N]^b\times[-3N,3N]^d$, we have 
\[\nm (R_{\tilde{\La}}TR_{\tilde{\La}})^{-1}\nm \leq e^{N_1^{\frac{3}{4}}}.\]
Let $N_0=\sqrt[4]{N}.$
	We call a box $(n_1,j_1)+\La_{N_0}\in \mathcal{E}_{N_0}$,  $(n_1,j_1)\in [-N,N] ^{b}\times[-3N,3N]^d $ is good if 
	\begin{equation*}
		\nm (R_{(n_1,j_1)+\La_{N_0}}TR_{(n_1,j_1)+\La_{N_0}})^{-1} \nm \leq e^{N_0^{\frac{9}{10}}},
	\end{equation*}
	and for any $(n, j)$ and $(n', j')$ such that $|n-n^\prime|+|j-j'|\geq \sqrt{N_0}$, 
	\begin{equation*}
		|(R_{(n_1,j_1)+\La_{N_0}}TR_{(n_1,j_1)+\La_{N_0}})^{-1} (n,j;n^\prime,j')| \leq e^{-(c_1-N^{-\vartheta})(|n-n^\prime|+|j-j'|)}. 
	\end{equation*}
Otherwise, we call $(n_1,j_1)+\La_{N_0}\in [-N,N] ^{b}\times[-3N,3N]^d $ is bad. By Lemma \ref{less resonance intermediate scale}, \eqref{intermediate not small divisor} and  Lemma \ref{Neumann expansion},  there are at most $2b$ disjoint bad boxes of size $N_0=N^{\frac{1}{4}}$ contained in $[-N,N]^b\times [-3N,3N]^d$, and hence, 
\[2b\leq\frac{N^{\frac{3}{8}}}{N^{\frac{1}{4}}}.\]
This is exactly  the sublinear bound needed  in Theorem \ref{MSA1}, where  we set $\zeta=\frac{3}{8}<\frac{1}{2},\xi=\frac{1}{4}$. Moreover, if we take 
\[\si\notin \bigcup_{N_1\in[\sqrt[4]{N},N]}\tilde{\Sigma}_{N_1},\]
then for any $N_1\in [\sqrt[4]{N},N]$ and any $\tilde{\La}\in\mcR_{N_1}^{\sqrt[4]{N}}$ contained in $[-N,N]^b\times[-3N,3N]^d$,  we have 
\[\nm (R_{\tilde{\La}}TR_{\tilde{\La}})^{-1}\nm \leq e^{N_1^{\frac{3}{4}}}\leq e^{N_1^{\frac{9}{10}}}.\]
Applying  Theorem \ref{MSA1} will ensure that, for any $\si\notin \cup_{N_1\in[\sqrt[4]{N},N]}\tilde{\Sigma}_{N_1}$,  the  $(c_N,N)$-LDT holds for $c_N=c_1-N^{-\vartheta}>\frac{c_1}{2}$. 
Therefore,
\[\Sigma_N\subset \bigcup_{N_1\in[\sqrt[4]{N},N]}\tilde{\Sigma}_{N_1},\]
and hence,
\begin{align*}
	\meas(\Sigma_N)  &\leq \meas(\bigcup_{N_1\in[\sqrt[4]{N},N]}\tilde{\Sigma}_{N_1})\\
	&\leq\sum_{\sqrt[4]{N}\leq N_1\leq N}e^{-\frac{N_1^{\frac{3}{4}}}{|\log\delta|^2}}\\
	&\leq\sum_{\sqrt[4]{N}\leq N_1\leq N}e^{-N_1^{\frac{3}{4}-\frac{1}{5}}}\leq e^{-N^{\frac{1}{30}}}.
\end{align*}

\subsection{LDT in large scales}
Finally,  we prove Theorem \ref{LDT} for the large scales $N\geq |\log\delta|^K$ via the MSA induction. Recall that,  we choose  $K=K_1^{100},K_2=K_1^5$ to be sufficiently large, and  the frequency   $\omega$  satisfies \eqref{Diophantine on omega frequency} and \eqref{weak second Melnikov}.

	For any $N_3>|\log\delta|^K$, we take $N_1, N_2$ satisfying $$N_2=N_1^{K_1},\ N_2^{K_1}\leq N_3\leq N_2^{2K_1}.$$  
	We assume LDT holds for  scales $N_1, N_2$, and aim to prove LDT holds at the scale $N_3.$\smallskip

	\textbf{Step 1.  Number  on bad $N_1$-size boxes near  $j=0$}\\
	We first  consider  the case when $|j_1|\leq 2N_1$.  By the LDT at scale $N_1$,  there exists a set $\Sigma_{N_1}$ with $\meas(\Sigma_{N_1})\leq e^{-N_1^{\frac{1}{30}}}$ such that,  for all $\si\notin\Sigma_{N_1}$ and any $\La_{N_1}\in \mcE_{N_1}^0$, $\La_{N_1}(j_1)$ is good (in the sense of  having good control  on Green's functions as in LDT). Since the operator is T\"oplitz with respect to 
	$n\in\Z^b$,  we have  that if  $\si+n_1\cdot \omega\notin \Sigma_{N_1}$,  then  $(n_1,j_1)+\La_{N_1}$ is  good. Now,  observe that 
	\[\nm \cdot \nm\leq \nm\cdot\nm_{HS}, \]
	where $\nm\cdot\nm_{HS}$ is the standard  Hilbert-Schmidt norm. If we replace the norm \eqref{LDT L2 norm} in LDT with  
	\begin{equation}\label{Hilbert-Schmidt}
		\nm (R_{\La_N(j_0)}T(\sigma)R_{\La_N(j_0)})^{-1} \nm_{HS} \leq e^{N^{\frac{9}{10}}},
	\end{equation} 
	then  
	\[\Sigma_{N_1}=\{\si:\ {\rm  for \ some} \ \La_{N_1}(j_1), \ \eqref{LDT L2 norm} \ {\rm or} \ \eqref{LDT off diagonal} \ {\rm fails\ with}\  N=N_1, j_0=j_1\}\]
	is a subset of 
	\[\Sigma_{N_1}'=\{\si:\ {\rm for \ some} \ \La_{N_1}(j_1), \ \eqref{Hilbert-Schmidt} \ {\rm or} \ \eqref{LDT off diagonal} \ {\rm fails\ with}\ N=N_1, j_0=j_1\}.\]
	Notice that  we also have 
	\[\nm (R_{\La_{N_1}(j_0)}T(\sigma)R_{\La_{N_1}(j_0)})^{-1} \nm_{HS}\leq  (\#\La_{N_1}) \nm (R_{\La_{N_1}(j_0)}T(\sigma)R_{\La_{N_1}(j_0)})^{-1} \nm \]
	and $(\#\La_{N_1})e^{N_1^{\frac{17}{20}}}\leq e^{N_1^{\frac{9}{10}}}$. Thus,  if we replace the upper bound in \eqref{LDT L2 norm} by $e^{N_1^{\frac{17}{20}}}$ and obtain a set  $\Sigma_{N_1}''$ such that
	\[\Sigma_{N_1}'\subset \Sigma_{N_1}'',\quad \meas(\Sigma_{N_1}'')\leq e^{-N_1^{\frac{1}{30}}}.\]
	Therefore, we only need to ensure that $\si+n_1\cdot \omega\notin \Sigma_{N_1}'$. By Cramer's rule,  we  know  that $\Sigma_{N_1}'$ is  a semi-algebraic set of degree at most 
	$N_1^C$. Moreover, Lemma \ref{basu} tells us that $\Sigma_{N_1}'$ has at most $N_1^C$ many connected components, each with measure less than $e^{-N_1^{\frac{1}{30}}}$. In conclusion, there exist $N_1^C$ intervals $I_k$ of size $e^{-N_1^{\frac{1}{30}}}$, such that $\Sigma_{N_1}\subset\cup_k I_k$. 
	
	However, the Diophantine condition \eqref{Diophantine on omega frequency} of $\omega$  implies  that for any 
	nonzero $n$ with $|n|\leq 2N_3$, we have 
	\[|\nomega|\geq e^{-(2N_3)^{\frac{1}{K_2}}}\geq e^{-2N_1^{\frac{2K_1^2}{K_2}}\gg e^{-N_1^{\frac{1}{30}}}}.\]
	Therefore, for any $|j_1|\leq 2N_1$, there is at most one $|n_1|\leq N_3$ such that $\si+n_1\cdot\omega\in I_k$. This shows  that there are  at most $N_1^{C(b)}\ll N_3^{1-}$  many  bad $N_1$-boxes  in this case, provided $K_1(b)\gg1$. \smallskip
  
	\textbf{Step 2. Number on  bad $N_1$-size   boxes away from  $j=0$}\\
	When $|j_1|\geq 2N_1$, we will show that there are most two disjoint bad boxes of size $N_1$. First, if a box $(n,j)+\La_{N_1}$ is bad,  then using  Lemma \ref{Neumann expansion} and \eqref{perturbation Op} (whose $\max\{|j|,|j'|\}$-term ensures $\epsilon_2=C_1\delta e^{-2c_1N_1}$ for applying Lemma \ref{Neumann expansion})  shows  that there must be some $(n_1,j_1)\in (n,j)+\La_{N_1}$, so that either 
	\[|\si+n_1\cdot\omega+\mu_{j_1}|\leq 2e^{-N_1^{\frac{9}{20}}},\]
	or
	\[|\si+n_1\cdot\omega-\mu_{j_1}|\leq 2e^{-N_1^{\frac{9}{20}}}.\]
	Otherwise,   there are indeed more than three bad $N_1$ size boxes. By the Pigeonhole principle and without loss of generality,  we can assume there are two different $(n_1,j_1),(n_2,j_2)\in [-N_3,N_3]^b\times[-3N_3,3N_3]^d$ such that they are all from  the $D_+$ sector, i.e.,  
	\[|\si+n_1\cdot\omega+\mu_{j_1}|\leq 2e^{-N_1^{\frac{9}{20}}},\ |\si+n_2\cdot\omega+\mu_{j_2}|\leq 2e^{-N_1^{\frac{9}{20}}}.\]
	But this gives 
	\begin{align*}
		4e^{-N_1^{\frac{9}{20}}} &\geq |\si+n_1\cdot\omega+\mu_{j_1}|+|\si+n_2\cdot\omega+\mu_{j_2}| \\
		      &\geq |(n_1-n_2)\cdot\omega+\mu_{j_1}-\mu_{j_2}| \geq  e^{-N_3^{\frac{1}{K_2}}},
	\end{align*}
	which contradicts the weak second Melnikov's condition \eqref{weak second Melnikov}.\smallskip

		\textbf{Step 3.  The application of  Lemma \ref{Cartan lemma}}\\
	To apply Lemma \ref{Cartan lemma}, one must verify the prior  estimate condition \eqref{mc3} via  LDT at an additional scale $N_2$.  So,  let $\tilde{\Sigma}_{N_2}\subset \R$ be such that for some $(n,j)\in [-N_3,N_3]^b\times[-3N_3,3N_3]^d$,  either $|\si+\nomega+\mu_j|\leq 2e^{-N_2^{\frac{9}{20}}}$ or $|\si+\nomega-\mu_j|\leq 2e^{-N_2^{\frac{9}{20}}}$. Clearly,  we have 
	\[\meas(\tilde{\Sigma}_{N_2})\leq N_3^C e^{-N_2^{\frac{9}{20}}}\leq \frac{1}{2}e^{-N_2^{\frac{1}{31}}}.\]
	If  $\si\notin\tilde{\Sigma}_{N_2}$, combining Lemma \ref{Neumann expansion} and  the same argument  in  \textbf{Step 2}  shows that  for all $(n_1,j_1)\in [-N_3,N_3]^b\times[-3N_3,3N_3]^d$ with $|j_1|\geq 2N_2$ and all $\La_{N_2}\in \mcE^0_{N_2}$, $(n_1,j_1)+\La_{N_2}$ contained in $[-N_3,N_3]^b\times[-3N_3,3N_3]^d$ are good. 
	
	At the same time, by LDT at the scale $N_2$,  we have the corresponding set $\Sigma_{N_2}$. Let 
	\[\hat{\Sigma}_{N_2}=\{\si:\ {\rm for\ some}\ n\in [-N_3,N_3]^b,\si+\nomega\in \Sigma_{N_2}\}.\]
	Then by  LDT, for any $\si\notin \hat{\Sigma}_{N_2},(n_1,j_1)\in [-N_3,N_3]^b\times[-3N_3,3N_3]^d,|j_1|\leq 2N_2$ and $\La_{N_2}\in \mcE^0_{N_2}$, we have $(n_1,j_1)+\La_{N_2}\subset [-N_3,N_3]^b\times[-3N_3,3N_3]^d$ is good. Thus,   
	\[\meas(\hat{\Sigma}_{N_2})\lesssim N_3^b \meas(\Sigma_{N_2})\leq \frac{1}{2}e^{-N_2^{\frac{1}{31}}}.\]
	Hence, we have proven that for all $\si\notin \tilde{\Sigma}_{N_2}\cup \hat{\Sigma}_{N_2}$, the  $N_2$ size  blocks contained  in $[-N_3,N_3]^b\times[-3N_3,3N_3]^d$ are all good, and 
	\[\meas(\tilde{\Sigma}_{N_2}\cup \hat{\Sigma}_{N_2})\leq e^{-N_2^{\frac{1}{31}}}.\]
	Applying  Lemma \ref{MSA L2} for every scale $N\in [N_3^\frac{1}{4},N_3]$ and $\tilde{\La}\in \mcR_{N}^{N_3^{\frac{1}{4}}}$, we obtain  that for all 
	$\si\notin \tilde{\Sigma}_{N_2}\cup \hat{\Sigma}_{N_2}$,  
	\[\nm (R_{\tilde{\La}}TR_{\tilde{\La}})^{-1}\nm\leq 4(N_2+1)^{b+d}e^{N_2^{\frac{9}{10}}}.\]
	
	Now we can apply Lemma \ref{Cartan lemma} on $\tilde{\La}$.  Due to \eqref{(2.4)} abd $|\nomega+\mu_j|\lesssim N_3^{d+3}$,  we only need to consider $\si\in [-N_3^{d+4},N_3^{d+4}]$. Pave it with size $0<\xi\ll e^{-N_3^{\frac{1}{30}}}$ intervals. Take $\xi_1=1,B_1=N^{d+4}$. The conclusions of \textbf{Step 1, Step 2} show  that there are at most $N_1^{C(b)}$ many disjoint $N_1$-size bad boxes in $\tilde{\La}$, and applying Lemma \ref{MSA L2}  gives  us $M=N_1^C$ and $B_2=4(N_1+1)^{b+d}e^{N_1^{\frac{9}{10}}}$. Finally, 
	take $B_3=4(N_2+1)^{b+d}e^{N_2^{\frac{9}{10}}}$
	and note that
	\[e^{-N_2^{\frac{1}{31}}}\ll 10^{-3}\xi_1 (1+B_1)^{-1}(1+B_2)^{-1}.\]
	Then the condition \eqref{mc3} holds true. Taking further  $\epsilon=e^{N^{\frac{9}{10}}}$ yields  a set $\tilde{\Sigma}_{\tilde{\La}}$ with 
	\[\meas(\tilde{\Sigma}_{\tilde{\La}})\leq N^{d+4} e^{-\mcO(1)\frac{N^{\frac{9}{10}}}{N_1^C N_2^{\frac{9}{10}}}}\leq e^{-N_3^{\frac{1}{5}}}.\]
	Taking account of  all $\tilde{\Sigma}_{\tilde{\La}}$ for all $N$ and all $\tilde{\La}$ leads to a set $\tilde{\Sigma}_{N_3}$ with 
	\[\meas(\tilde{\Sigma}_{N_3})\leq N_3^Ce^{-N_3^{\frac{1}{5}}}\leq e^{-N_3^{\frac{1}{4}}},\]
    and for any $\si\notin \tilde{\Sigma}_{N_3}$,  $N\in [N_3^{\frac{1}{4}},N_3]$, each $\tilde{\La}\in \mcR_{N}^{N_3^{\frac{1}{4}}}$ contained  in $[-N_3,N_3]^b\times[-3N_3,3N_3]^d$ is good. \smallskip
    
    	\textbf{Step 4. Off-diagonal exponential decay  estimate}\\
	We have already established the sub-exponential growth bound required in Theorem  \ref{MSA1} for $\sigma$ outside the set $\tilde{\Sigma}_{N_3}$.  To establish the off-diagonal decay, it remains to verify the sublinear bound. For this purpose, let $N_0=N_3^{\frac{1}{4}}$. Since there are at most $N_1^{C(b)}$ many bad boxes of size $N_1$ and $N_0\gg N_1$, applying the resolvent identity implies that there are at most $N_1^C$ many  size $N_0$ disjoint bad boxes (with exponential decay rate being  $c_{N_1}-N_1^{-\vartheta}$, because the resolvent identity needs  the off-diagonal exponential decay of Green's function at the  scale $N_1$) contained in $[-N_3,N_3]^b\times[-3N_3,3N_3]^d$. Moreover, we have 
	\[N_1^C\leq \frac{N_3^{\frac{3}{8}}}{N_3^{\frac{1}{4}}}\]
	by the choice of $N_1,N_2,N_3$ and $K_1$. This inequality provides the sublinear bound required by Theorem \ref{MSA1}. Applying  Theorem \ref{MSA1} then leads to $\Sigma_{N_3}\subset \tilde{\Sigma}_{N_3}$ with  
	\[\meas(\Sigma_{N_3})\leq\meas(\tilde{\Sigma}_{N_3})\leq e^{-N_3^{\frac{1}{30}}},\]
	and the $(c_{N_3},N_3)$-LDT holds for $c_{N_3}=c_{N_1}-N_1^{-\vartheta}$.\smallskip
	
	\textbf{Step 5. Completion of the induction}\\
	By the above arguments, we initially take $N_2^{K_1}=\hat{N}=|\log\delta|^K$ and prove LDT in scale intervals  $[\hat{N},\hat{N}^2]$, then in  $[\hat{N}^2,\hat{N}^4]$, in  $[\hat{N}^4,\hat{N}^8]$, and so on. Since  the  induction scales  grow  like $\hat{N}^{2^s}$ and $N_1\sim N_3^{\frac{1}{K_1^2}}$,  direct  calculation implies  that for $N_3\in [\hat{N}^{2^s},\hat{N}^{2^{s+1}}]$,  the decay rate  $c_{N_3}$ can be 
	\begin{equation}\label{c_N_3 decay}
		c_{N_3}=c_1-\mcO(1)\sum_{l=1}^{s}\frac{1}{|\log\delta|^{\vartheta K 2^l}}\geq c_1-\mcO(1)\sum_{s=1}^{\infty}\frac{1}{|\log\delta|^{\vartheta K 2^s}} \geq \frac{c_1}{2} 
	\end{equation}
	We finish the proof of Theorem \ref{LDT} in large scales, and thus the whole LDT.

\begin{rmk}
	
	(1) We summarize the constant $c_N$ in $(c_N,N)$-LDT as follows.
	   \begin{itemize}
		\item In small scales,  $N\leq |\log\delta|^{10}$, take $c_N=\frac{9}{10}c_1\leq c_1-N^{\frac{9}{20}}$.
		\item In intermediate scales,  $|\log\delta|^{10}<N\leq |\log\delta|^K$, take $c_N= c_1-N^{-\vartheta}$.
		{\item In large scales $N> |\log\delta|^{K}$,   $c_N=c_1-\sum_{j>|\log\delta|^K}^{N}\gamma_j$ with $\sum_{j>|\log\delta|^K}^{\infty}\gamma_j<\frac{c_1}{2}$. (Many $\gamma_j$ may be equal to zero by \eqref{c_N_3 decay}.)}
	   \end{itemize}
	(2)  From \eqref{Diophantine condition} and \eqref{weak second Melnikov} in Theorem \ref{LDT}, in order for  LDT to hold at all scales, it suffices to remove a set of  $\omega$ with measure  at most  
	\[e^{-\frac{1}{2}|\log\delta|^{\frac{K}{K_2}}}\leq e^{-|\log\delta|^{K_1^{50}}}\ll \delta.\]
	which suffices for the following nonlinear analysis.
\end{rmk}

\section{Nonlinear analysis:  Proof of Theorem \ref{Main Thm}}\label{NLsect}
Fix $\bfa\in\DC$.  In the following, we always assume  $0< \delta\leq \tilde{\delta} $,   $0<\varepsilon\leq \tilde{\varepsilon}(\delta)$ and $\theta\in \Theta_{\bfa}$  such that both Theorem $\ref{seperative spectrum}$ and Theorem \ref{LDT} hold true.  Recalling  \eqref{phiidef} and \eqref{muidef}, and since  we set $\eta=\frac12$ in Theorem \ref{KPS}, we have 
\begin{equation}\label{Eigenfunction in Nonlinear analysis}
	\nm\phi_j\nm_{\ell^2(\Z^d)}=1,\ |\phi_j(j)-1|<\varepsilon^{\frac{1}{2}},\ |\phi_j(x)|<\varepsilon^{\frac{1}{2}|x-j|_1}=e^{-\kappa|x-j|_1} \ {\rm for}\  {\forall}j,x\in \Z^d,
\end{equation}
where  $\kappa=\frac{1}{2}|\log\varepsilon|$.

In the following, we will introduce the CWB scheme in the present context, and prove our main theorem. 

\subsection{Lyapunov-Schmidt decomposition}
We aim to  construct  solutions  for  NLS \eqref{NLS}  of  the  form 
\begin{equation}\label{QP solu}
	u(t,x)=\sum_{(n,j)\in \Z^b\times\Z^d}\hat{u}(n,j)e^{in\cdot\omega t}\phi_{j}(x),
\end{equation}
which should be perturbations  of  solutions  \eqref{solu to LS}  for  \eqref{LS} with  the  amplitude  $\mathbf{a}=(a_k)_{k=1}^b$.  
Substituting  $(\ref{QP solu})$ into NLS $(\ref{NLS})$ will lead to the  system of nonlinear equations on $\Z^b\times\Z^d$
\begin{equation}\label{lattice nonlinear equation}
	(n\cdot\omega+\mu_j)\hat u(n, j)+ \delta W_{\hat u}(n, j)=0, \  (n, j)\in\mathbb Z^{b}\times\mathbb Z^d, 
\end{equation}
where 
\begin{align*}
	W_{\hat u} (n, j)=&\sum_{n'+\sum_{m=1}^p (n_m-n_m')=n \atop {n', n_m,n_m'\in\Z^b}} \sum_{\ j', j_m,j_m'\in\Z^d } \hat{u}(n',j')\prod_{m=1}^p\hat {u}(n_m,j_m) \overline{\hat{u}(n'_m,j'_m) }\nonumber\\
	     &\ \ \ \times\left(\sum_{x\in\Z^d}\phi_{j}(x)\phi_{j'}(x)\prod_{m=1}^p\phi_{j_m} (x) {\phi_{j_m'}(x)}\right),
\end{align*}
with $\overline{(\cdot)}$ denoting the complex conjugation. 
Note that  the form $(\ref{QP solu})$ are closed under multiplication and complex conjugation.
To simplify the notation, we write $u$ for $\hat u$, namely,   $u(n, j)=\hat u(n, j)$.  Let $v$ be the complex conjugation  of $u$:   $v(n, j)=\overline{{\hat u}(-n, j)}$. Then  we can rewrite   $W_u$  as 
\begin{align}\label{Lattice nonlinear term}
	W_{u} (n, j)=&\sum_{n'+\sum_{m=1}^p (n_m+n_m')=n \atop {n', n_m,n_m'\in\Z^b}} \sum_{\ j', j_m,j_m'\in\Z^d } u(n',j')\prod_{m=1}^p u(n_m,j_m) v(n'_m,j'_m) \nonumber\\
	     &\ \ \ \times\left(\sum_{x\in\Z^d}\phi_{j}(x)\phi_{j'}(x)\prod_{m=1}^p\phi_{j_m} (x) {\phi_{j_m'}(x)}\right).
\end{align}
Taking the conjugate of system (\ref{lattice nonlinear equation}) yields
\begin{equation}\label{conjugation lattice nonlinear equation}
	(-n\cdot\omega+\mu_j) v(n, j)+ \delta \widetilde{W}_{u}(n, j)=0, \, (n, j)\in\mathbb Z^{b}\times\mathbb Z^d,
\end{equation}
where 
\begin{align}\label{conjugation Lattice nonlinear term}
	\widetilde{W}_{u} (n, j)=\overline{W_u(-n,j)}=&\sum_{n'+\sum_{m=1}^p (n_m+n_m')=n \atop {n', n_m,n_m'\in\Z^b}} \sum_{\ j', j_m,j_m'\in\Z^d } v(n',j')\prod_{m=1}^p v(n_m,j_m) u(n'_m,j'_m) \nonumber\\
	     &\ \ \ \times \left(\sum_{x\in\Z^d}\phi_{j}(x)\phi_{j'}(x)\prod_{m=1}^p\phi_{j_m} (x) {\phi_{j_m'}(x)}\right).
\end{align}
In fact, $W_u(n, j),\widetilde{W}_{u} (n, j)$ are functions of $u$ and $v$ (can view them as vectors in $\C^{\Z^{b+d}}$).  Since  $v$ is the conjugation of $u$,  we only indicate the dependence on $u$ for simplicity.
Combining  $(\ref{lattice nonlinear equation})$ and $(\ref{conjugation lattice nonlinear equation})$ shows 
\begin{align}\label{Lattice equation system}
	\begin{split}
		\left\{
			\begin{array}{l}
				(D_+u)(n,j)+\delta W_u(n,j)=0, \\
				(D_-v)(n,j)+\delta \widetilde{W}_u(n,j)=0,
			\end{array}\right.
	\end{split}
\end{align}
where $D_{\pm}$ are the diagonal  matrices with entries
\[D_{\pm}(n,j;n',j')=(\pm n\cdot \omega +\mu_j)\delta_{(n,j),(n',j')}.\]

Now view $y=(u,v)$ as functions on $\{\pm\}\times \Z^{b}\times\Z^d$, with 
\[y(+,n,j)=u(n,j),\ y(-,n,j)=v(n,j).\]
Define the resonant set 
\begin{equation}\label{resonant set}
	\mcS=\{(+,-e_k,\beta_k),(-,e_k,\beta_k):\ k=1,2,\cdots,b \}
\end{equation}
and denote 
\[\mcS^c=(\{\pm\}\times\Z^{b+d})\setminus \mcS. \]
Write $D$ for the diagonal matrix  with   diagonal blocks being  $D_\pm$ and write \eqref{Lattice equation system} in the form
$F(y)=F(u,v)=0$. Since $v$ is the conjugation  of $u$, we simply write $F(u)=0$ for $F(u,v)=0$. 

By using   Lyapunov-Schmidt decomposition (for more details, cf. Sect. 2 of \cite{Bou98}), we can divide   $F(u)=0$ into the P-equation 
\begin{equation}\label{P equation}
	F(u)\big|_{\mcS^c}=0,
\end{equation}
and the Q-equations 
\begin{equation}\label{Q equation}
	F(u)\big|_{\mcS}=0.
\end{equation}
The remaining content of this section is devoted to solving both  P-equation and Q-equation to get the solution $u$ (and $\omega$). Here,  we emphasize that, we  always fix the values  of $(u,v)$ on the resonant set to be  
\begin{equation}\label{Fixed amplitude on resonant set}
	u(+,-e_k,\beta_k)=v(-,e_k,\beta_k)=a_k,\ 1\leq k\leq b. 
\end{equation}

\subsubsection{The P-equations}
The P-equation \eqref{P equation} is of  infinite dimensional and will be solved by a Newton scheme as in \cite{Bou98}, starting from the initial approximation $u^{(0)}=u_0$. To do this, we first linearize the system $F(u)$ %by Taylor's expansion 
\[F(u+\Delta_{cor}\begin{pmatrix} u\\v\end{pmatrix})\big|_{\mcS^c}=F(u)\big|_{\mcS^c}+F'_{\mcS^c}(u)\cdot \Delta_{cor}\begin{pmatrix} u\\v\end{pmatrix} +\mcO(\nm \Delta_{cor}\begin{pmatrix} u\\v\end{pmatrix}\nm^{2-})\]
%Omit the high order term and take linear part to be the approximate equation, and one will get the correction between $(u,v)$ and next step's approximation to be 
Then the linearized equation will become
\[\Delta_{cor} \begin{pmatrix} u\\v\end{pmatrix}=-[F'_{\mcS^c}(u)]^{-1} F(u)|_{\mcS^c},\]
where the linearized operator $F'(u)$ at $u$  defined on $\ell^2(\{\pm\}\times \Z^{b+d})$ has the form
\[F'(u)=D+\delta \mcW_{u}\]
with 
\begin{equation}\label{mcW}
	\mathcal W_u= \begin{pmatrix} \frac{ \partial{W_u}}{\partial u}&  \frac{ \partial{W_u}}{\partial v}\\  \frac{ \partial{\widetilde{W}_u}}{\partial u} & \frac{ \partial{\widetilde{W}_u}}{\partial v}  \end{pmatrix}=\begin{pmatrix} \mcW_{+,+} &  \mcW_{+,-}\\  \mcW_{-,+} & \mcW_{-,-}  \end{pmatrix}.
\end{equation}
For example, direct computation shows 
\begin{align} \label{Explicit mcW}
	\mcW_{+,+} (n,j;n',j') &= \frac{ \partial{W_u}}{\partial u}(n,j;n',j') \\
	&=(p+1) \sum_{\sum_{m=1}^p (n_m+n_m')=n-n' \atop { n_m,n_m'\in\Z^b}} \sum_{\  j_m,j_m'\in\Z^d } \prod_{m=1}^p u(n_m,j_m) v(n'_m,j'_m) \nonumber\\
	\notag &\ \ \ \times \left(\sum_{x\in\Z^d}\phi_{j}(x)\phi_{j'}(x)\prod_{m=1}^p\phi_{j_m} (x) {\phi_{j_m'}(x)}\right).
\end{align}

\begin{prop}\label{generate the nonlinear operator}
	\begin{itemize}
	\item[(1)]    For any $j,j'\in \Z^d,k,n,n'\in \Z^b,r,r'\in\{\pm\}$,  we have 
	\[\mcW_{r,r'}(n,j;n',j')=\mcW_{r,r'}(n+k,j;n'+k,j').\]
	\item [(2)]  Assume that $|u(n,j)|\leq e^{-c(|n|+|j|)}$ and $0<c\ll\kappa=\frac{1}{2}|\log\varepsilon|$. Then 
	\[|\mcW_{r,r'}(n,j;n',j')|\leq C_1(|n-n'|+1)^{C_1} e^{-c|n-n'|-\frac{\kappa}{2}|j-j'|-2pc\max\{|j|,|j'|\}},\ C_1=C_1(p,b)>1.\]
\end{itemize}
\end{prop}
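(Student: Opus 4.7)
My plan is the following. For (1), the statement follows immediately from the structure of each block of $\mathcal{W}_u$. Inspecting \eqref{Explicit mcW}, the row/column indices $n,n'$ enter only through the single summation constraint $\sum_{m=1}^p(n_m+n_m') = n-n'$, while the spatial kernel $\sum_x \phi_j(x)\phi_{j'}(x)\prod_m\phi_{j_m}(x)\phi_{j_m'}(x)$ is independent of $n, n'$. Hence $\mathcal{W}_{+,+}(n,j;n',j')$ depends on $n,n'$ only through $n-n'$, and is therefore invariant under $(n,n')\mapsto(n+k,n'+k)$. The remaining three blocks $\mathcal{W}_{+,-}, \mathcal{W}_{-,+}, \mathcal{W}_{-,-}$, computed analogously from \eqref{Lattice nonlinear term} and \eqref{conjugation Lattice nonlinear term}, share the same T\"oplitz-in-$n$ structure, so translation invariance holds for all of them.

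For (2), I treat $\mathcal{W}_{+,+}$; the other blocks are handled identically since $|v(n,j)| = |\overline{u(-n,j)}| \leq e^{-c(|n|+|j|)}$. Taking absolute values in \eqref{Explicit mcW} and using $|u|,|v|\leq e^{-c(|\cdot|+|\cdot|)}$ together with the eigenfunction decay $|\phi_k(x)| \leq e^{-\kappa|x-k|_1}$ from \eqref{Eigenfunction in Nonlinear analysis}, the estimate splits into three pieces: (a) a convolution in $n$ of $2p$ copies of $e^{-c|\cdot|}$ restricted by $\sum(n_m+n_m')=n-n'$, which by a standard calculation on $\Z^b$ is bounded by $C(b,p)(|n-n'|+1)^{b(2p-1)} e^{-c|n-n'|}$; (b) an inner sum over each $j_m$ (or $j_m'$), where the triangle inequality $|j_m|+|x-j_m|_1 \geq |x|$ together with the splitting $\kappa=c+(\kappa-c)$ yields $\sum_{j_m} e^{-c|j_m|-\kappa|x-j_m|_1} \leq C(d)\,e^{-c|x|}$ (using $\kappa\gg c$ to sum the geometric tail); and (c) after multiplying the $2p$ inner sums to obtain a factor $e^{-2pc|x|}$, a final sum over $x$ against $|\phi_j(x)\phi_{j'}(x)|$.

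To handle piece (c) and extract both $e^{-\frac{\kappa}{2}|j-j'|}$ and $e^{-2pc\max\{|j|,|j'|\}}$, I first use $|x-j|_1+|x-j'|_1 \geq |j-j'|_1 \geq |j-j'|$ to split
$$|\phi_j(x)\phi_{j'}(x)| \leq e^{-\frac{\kappa}{2}|j-j'|}\cdot e^{-\frac{\kappa}{2}(|x-j|_1+|x-j'|_1)}.$$
Then, assuming WLOG $|j|\geq |j'|$ and using $|x|\geq |j|-|x-j|$, the combination $e^{-2pc|x|}e^{-\frac{\kappa}{2}|x-j|_1}$ is bounded by $e^{-2pc|j|}\,e^{-(\frac{\kappa}{2}-2pc)|x-j|_1}$ (valid since $\kappa \gg c$), and the remaining sum $\sum_x e^{-(\frac{\kappa}{2}-2pc)|x-j|_1-\frac{\kappa}{2}|x-j'|_1}$ is bounded uniformly in $j,j'$. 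Collecting these factors yields the stated estimate with $C_1 = C_1(b,p)$. The main subtleties are the bookkeeping needed to keep all $\kappa$-exponents positive (forced by $c \ll \kappa$, i.e.\ by $\varepsilon$ small) and the loading of the \emph{entire} $e^{-2pc|x|}$ decay onto the coordinate of larger norm, in order to produce $e^{-2pc\max\{|j|,|j'|\}}$ rather than the weaker symmetric bound $e^{-pc(|j|+|j'|)}$.
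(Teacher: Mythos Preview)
Your proposal is correct and follows essentially the same route as the paper: part (1) via the T\"oplitz-in-$n$ structure visible in \eqref{Explicit mcW}, and part (2) via the same $A\times B$ factorization into an $n$-convolution estimate and a $j$-sum handled by first collapsing each $\sum_{j_m}e^{-c|j_m|-\kappa|x-j_m|_1}\lesssim e^{-c|x|}$, then extracting $e^{-\frac{\kappa}{2}|j-j'|}$ from $|\phi_j\phi_{j'}|$ and loading the full $e^{-2pc|x|}$ onto the $|x-j|$ factor (with $|j|=\max\{|j|,|j'|\}$) to obtain $e^{-2pc\max\{|j|,|j'|\}}$. The only cosmetic difference is that the paper spells out the $n$-convolution bound via an explicit case split on whether all $|n_m|,|n_m'|\le |n-n'|$, whereas you invoke it as a standard estimate; both yield the same $(1+|n-n'|)^{C_1}e^{-c|n-n'|}$ bound.
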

\begin{proof}
	(1) This follows directly from the definition of $\mcW_u$, see e.g., the form \eqref{Explicit mcW}.

	(2)  Since $v(n,j)=\overline{u(-n,j)}$, we also have $|v(n,j)|\leq e^{-c(|n|+|j|)}$. From the definition of  $\mcW_{r,r'}$ (cf. e.g.,   \eqref{Explicit mcW}), we have
	\begin{align*}
		|\mcW_{r,r'}(n,j;n',j')| &\le C(p) \sum_{\sum_{m=1}^p (n_m+n_m')=n-n' \atop { n_m,n_m'\in\Z^b}} \sum_{\  j_m,j_m'\in\Z^d }  e^{-c\sum_{m=1}^{p}(|n_m|+|j_m|+|n'_m|+|j'_m|)}\nonumber\\
		 &\ \ \ \times \left|\sum_{x\in\Z^d}\phi_{j}(x)\phi_{j'}(x)\prod_{m=1}^p\phi_{j_m} (x) {\phi_{j_m'}(x)}\right|:=A\times B, 
	\end{align*}
	where 
	\[A=\sum_{\sum_{m=1}^p (n_m+n_m')=n-n' \atop { n_m,n_m'\in\Z^b}} e^{-c\sum_{m=1}^{p}(|n_m|+|n'_m|)} \]
	and 
	\[B=\sum_{ j_m,j_m'\in\Z^d }  e^{-c\sum_{m=1}^{p}(|j_m|+|j'_m|)} \left|\sum_{x\in\Z^d}\phi_{j}(x)\phi_{j'}(x)\prod_{m=1}^p\phi_{j_m} (x) {\phi_{j_m'}(x)}\right|.\]
    
    Indeed, for $A$, we have   
	\begin{align*}
		A &= \sum_{n_1,\cdots,n_p,n'_1, \cdots,  n'_{p-1}\in\Z^b} e^{-c(|n_1|+\cdots +|n_p|+|n'_1|+\cdots|n'_{p-1}|+|n-n'-n_1-\cdots-n'_{p-1}|)}\\
		  &\leq \bigg(\sum_{|n_1|,\cdots,|n'_{p-1}|\leq |n-n'|} + \sum_{|n_1|> |n-n'| \atop n_2,\cdots, n'_{p-1}\in\Z^b}+\sum_{|n_2|> |n-n'| \atop n_1,\cdots, n'_{p-1}\in\Z^b}+\cdots +\sum_{|n'_{p-1}|> |n-n'| \atop n_1,\cdots, n'_{p-2}\in\Z^b} \bigg)\\
		    & \qquad\cdot e^{-c(|n_1|+\cdots +|n_p|+|n'_1|+\cdots|n'_{p-1}|+|n-n'-n_1-\cdots-n'_{p-1}|)}\\
		  &\leq (2|n-n'|+1)^{(2p-1)b}e^{-c|n-n'|} +(2p-1) \big(  \sum_{ k\in \Z^b}e^{-c|k|} \big)^{2p-2}\cdot\sum_{|k|>|n-n'|}e^{-c|k|} \\
		  &\leq C(1+|n-n'|)^{C}e^{-c|n-n'|}
	\end{align*}
	with some $C=C(p,b)>1$. Moreover, by \eqref{Eigenfunction in Nonlinear analysis}, we have 
	\begin{align*}
		B &\leq \sum_{ j_m,j_m'\in\Z ^d}  e^{-c\sum_{m=1}^{p}(|j_m|+|j'_m|)}    \sum_{x\in\Z^d }e^{-\kappa|x-j|-\kappa|x-j'|-\kappa\sum_{m=1}^{p}(|x-j_m|+|x-j'_m|)}
	\end{align*}
	and 
	\begin{align*}
		\sum_{j\in \Z^d} e^{-c|j|-\kappa|x-j|} &\leq \sum_{j\in \Z^d} e^{-c(|j|+|x-j|)-\frac{\kappa}{2}|x-j|}\\
		   &\leq \sum_{j\in \Z^d} e^{-c|x|-\frac{\kappa}{2}|x-j|}\\
		   & = C(\varepsilon,d)e^{-c|x|}.
	\end{align*}
	The first inequality follows from  $c\ll\kappa$; the last equality is due to $\kappa=\frac{1}{2}|\log\varepsilon|,C(\varepsilon,d)=\sum_{j\in\Z^d}e^{-\frac{\kappa}{2}|j|}>1$. In addition,  since we have  already assumed  $0<\varepsilon\leq \tilde{\varepsilon}(\delta)$ for $0<\delta\leq \tilde{\delta}\ll1$, we can further reduce  $\tilde{\delta}$ to ensure that $\varepsilon$ is sufficiently small, such that $C(\varepsilon,d)\leq 2$. Hence,  we have   (without loss of generality, assume $|j|=\max\{|j|,|j'|\}$)
	\begin{align*}
		B &\leq 2 \sum_{x\in\Z^d}e^{-2pc|x|-\kappa|x-j|-\kappa|x-j'|} \\
		 &\leq 2  e^{-\frac{\kappa}{2}|j-j'|} \sum_{x\in\Z^d}e^{-2pc|x|-\frac{\kappa}{2}|x-j|-\frac{\kappa}{2}|x-j'|}\\
		 & \leq 2 e^{-\frac{\kappa}{2}|j-j'|} \sum_{x\in\Z^d}e^{-2pc|x|-3pc|x-j|}\\
		 &\leq 2 e^{-\frac{\kappa}{2}|j-j'|-2pc|j|}\sum_{x\in\Z^d}e^{-pc|x-j|}\\
		 &\leq 2 e^{-\frac{\kappa}{2}|j-j'|-2pc|j|}.
	\end{align*}
	
Combining the above estimates  for $A$ and $B$ completes the proof.
\end{proof}

When solving the P-equation, we must keep in mind that the unknown variables are $\mathbf{a}$ and $\omega$, and our goal is to determine $u^{(r+1)}$ from  $u^{(r)}$. In this Newton scheme, we initially choose $\omega$ close to $\omega^{(0)}$, and the $0$-th approximation $u^{(0)}$ satisfies
\[u^{(0)}(-e_k,\beta_k)=a_k,k=1,2,\cdots,b;\  u^{(0)}(n,j)=0\ {\rm otherwise}.\]
 Since we seek solutions close to $(u^{(0)},v^{(0)})$, which has compact support in $\{\pm\}\times\Z^{b+d} $, we adopt a multi-scale Newton scheme as follows. 
Let $M$ be a large constant which will be determined later. By the choice of $(u^{(0)},v^{(0)})$, we already have $(u^{(0)},v^{(0)})\big|_{\mcS^c}=0$. Assume we have already obtained the $r$-th approximation $u^{(r)}=u^{(r)}(\omega,\mathbf{a})$, which is supported in
\[(\{\pm\}\times[-M^r,M^r]^{b+d})\setminus \mcS\subset \{\pm\}\times\Z^{b+d}.\]
At iteration step $(r+1)$, choose an appropriate scale $N=M^{r+1}$ and denote by $F'_N$  the restriction of $F'$  on   
\[(\{\pm\}\times[-N,N]^{b+d})\setminus \mcS\subset \{\pm\}\times\Z^{b+d},\]
where  $F'_N=F'_N(u^{(r)})$ is  evaluated at $(u^{(r)},v^{(r)})$. Moreover, denote by  $F_N$ the restriction of  $F$  on 
\[(\{\pm\}\times[-N,N]^{b+d})\setminus \mcS\subset \{\pm\}\times\Z^{b+d}.\]
Then  the $(r+1)$-th correction is 
\[\Delta_{cor} \begin{pmatrix} u^{(r+1)}\\v^{(r+1)}\end{pmatrix}=-[F'_{N}(u^{(r)})]^{-1} F_N(u^{(r)}),\ N=M^{r+1}\]
and the $(r+1)$-th approximation is
\[u^{(r+1)}(\omega,\mathbf{a})=u^{(r)}(\omega,\mathbf{a})+\Delta_{cor} u^{(r+1)}(\omega,\mathbf{a}),\]
\[v^{(r+1)}(\omega,\mathbf{a})=v^{(r)}(\omega,\mathbf{a})+\Delta_{cor} v^{(r+1)}(\omega,\mathbf{a}), \]
for all $r=0,1,2, \cdots$.

At  the $(r+1)$-th step,  $\mathcal W_{u^{(r)} (\omega,\mathbf{a})}$ (depending on $u^{(r)}(\omega,\amplitude)$) is a function of $\omega$ and $\amplitude$.  We write $T_{u^{(r)}}(\si,\omega,\amplitude)$ for the operator 
$F'(u^{(r)})=D(\si)+\delta \mathcal W_{u^{(r)}(\omega,\amplitude)}, $
and $\tilde{T}_{u^{(r)}}(\si,\omega,\amplitude)$ for 
$F'(u^{(r)})=D(\si)+\delta \mathcal W_{u^{(r)}(\omega,\amplitude)}$  restricted to $(\{\pm\}\times \Z^{b+d})\backslash \mcS$,  where
\begin{equation}\label{glinearD2}
	{D}(\si)=\begin{pmatrix}  \text {diag }(\si+n\cdot\omega+\mu_j)
		&0\\
		0&  \text {diag }(-\si-n\cdot\omega+\mu_j)
	\end{pmatrix}, \  (n,j)\in\Z^{b+d}.
\end{equation}
For simplicity, write  $\tilde{T}_{u^{(r)}}(\omega,\amplitude)$ for $\tilde{T}_{u^{(r)}}(0,\omega,\amplitude)$ and ${T}_{u^{(r)}}(\omega,\amplitude)$ for ${T}_{u^{(r)}}(0,\omega,\amplitude)$.
 
The analysis of the linearized operators $F'_N$ uses Theorem \ref{seperative spectrum} for small scales; for large scales, it
 also uses Theorem \ref{LDT} and semi-algebraic type  projection arguments to convert estimates on $\si$ into that on $\omega$, and finally on  $\amplitude$.

\subsubsection{The Q-equation}
The Q-equation  is of  $2b$ dimensions. The symmetry information  leads to $b$ equations only. These equations are used to relate $\omega$ to $\mathbf{a}$, so that the removal of $\omega$-parameters ultimately turns into the removal of $\mathbf{a}$-parameters. Recall that we have fixed the amplitude $u(n,j)$ on resonant set $\mcS$ as \eqref{Fixed amplitude on resonant set}. These $b$ equations are then seen as equations for the frequencies $\omega$ instead, and we have 
\begin{align}\label{Q-equation Explicit}
	\omega_k=\mu_{\beta_k}+\delta\frac{W_u(-e_k,\beta_k)}{a_k}, \ k=1,2,\cdots,b.
\end{align}
When $u=u^{(0)}$, let us compute the terms in the Q-equation \eqref{Q-equation Explicit}. For $k=1,2,\cdots,b$, we have 
\begin{align}\label{Q-equation nonlinear term}
	W_{u^{(0)}} (-e_k,\beta_k)=&\sum_{n'+\sum_{m=1}^p (n_m+n_m')=-e_k \atop {n', n_m,n_m'\in\Z^b}} \sum_{\ j', j_m,j_m'\in\Z^d } u^{(0)}(n',j')\prod_{m=1}^p u^{(0)}(n_m,j_m) v^{(0)}(n'_m,j'_m) \nonumber\\
	     &\ \ \ \times \left(\sum_{x\in\Z^d}\phi_{ \beta_k }(x)\phi_{j'}(x)\prod_{m=1}^p\phi_{j_m} (x) {\phi_{j_m'}(x)}\right).
\end{align}
Since $u^{(0)}$ has the support $\{(-e_k,\beta_k)\}_{k=1}^b$, %in order to contribute to \eqref{Q-equation nonlinear term} not to be zero, 
one has 
\[j',j_m,j'_m\in\{\beta_k \}_{k=1}^b\ {\rm for}\ m=1,2,\cdots,b.\]
In fact, by \eqref{Q-equation nonlinear term}, $W_{u^{(0)}} (-e_k,\beta_k)={\rm Poly}(\mathbf{a})$ is a polynomial in $\mathbf{a}$ with coefficients
\[\sum_{x\in\Z^d}\phi_{\beta_k}(x)\phi_{j'}(x)\prod_{m=1}^p\phi_{j_m} (x) {\phi_{j_m'}(x)}.\]
%Due to the eigenbasis $\{\phi_j \}_{j\in\Z^d} $ is similar to standard basis, we assume $j'=j_m=j'_m=\beta_k,m=1,\cdots,b$. 
It is  easy to see  (by \eqref{Eigenfunction in Nonlinear analysis})
\begin{align}\label{Main coefficient}
1\geq \sum_{x\in\Z^d}\phi_{\beta_k}(x)\phi_{j'}(x)\prod_{m=1}^p\phi_{j_m} (x) {\phi_{j_m'}(x)} &=\sum_{x\in\Z^d}|\phi_{\beta_k}(x)|^{2p+2}\\
 \notag    &\geq |\phi_{\beta_k}(\beta_k)|^{2p+2}\\
\notag	 &\geq (1-\varepsilon^{\frac{1}{2}})^{2p+2}\geq 1-(2p+2)\varepsilon^{\frac{1}{2}}. 
\end{align}
Moreover, except for the case $j'=j_m=j'_m=\beta_k,m=1,\cdots,b$,  we assume without loss of generality $j'\neq \beta_k$ and we have still by \eqref{Eigenfunction in Nonlinear analysis}  
\begin{align}\label{not main coefficient}
	\sum_{x\in\Z^d}\phi_{\beta_k}(x)\phi_{j'}(x)\prod_{m=1}^p\phi_{j_m} (x) {\phi_{j_m'}(x)} &\leq \sum_{x\in\Z^d}e^{-\kappa(|x-\beta_k|_1+|x-j'|_1+|x-j_1|_1+\cdots+|x-j'_b|_1)}\\
\notag		 &\leq \sum_{x\in\Z^d}e^{-\kappa(|x-\beta_k|_1+|x-j'|_1)}\\
\notag		 &\leq e^{-\frac{\kappa}{2}|j'-\beta_k|_1} \sum_{x\in\Z^d}e^{-\frac{\kappa}{2}(|x-\beta_k|_1+|x-j'|_1)}\\
	\notag	 &\leq 2e^{-\frac{\kappa}{2}}=2\varepsilon^{\frac{1}{4}}.
\end{align}
Denote $A_k=\sum_{x\in\Z^d}|\phi_{\beta_k}(x)|^{2p+2}$. From \eqref{Q-equation nonlinear term}, \eqref{Main coefficient} and \eqref{not main coefficient}, the leading contribution in the sum of \eqref{Q-equation nonlinear term} is when 
\[(n',j'),(n_m,j_m),(n'_m,j'_m)=(-e_k,\beta_k).\]
Then by \eqref{Q-equation Explicit}, we have 
\begin{equation}\label{approimation omega^1}
	(\omega^{(1)}_k=)\ \omega_k=\mu_{\beta_k}+\delta(A_k a_{k}^{2p}+\mcO(1)\varepsilon^{\frac{1}{4}}).
\end{equation}
Since $1-(2p+2)\varepsilon^{\frac{1}{2}}\leq A_k\leq 1,{\bf a}\in [1,2]^b$, the above expression gives 
\[\omega\in \Omega_0:= [\mu_{\beta_1},\mu_{\beta_1}+2^{2p+1}\delta]\times[\mu_{\beta_2},\mu_{\beta_2}+2^{2p+1}\delta]\times\cdots\times[\mu_{\beta_b},\mu_{\beta_b}+2^{2p+1}\delta].\]
This indicates  that we can take the constant $C_2=2^{2p+1}$ in Theorem \ref{LDT}.  

Assume that after $r$ iteration steps, we obtain a $C^1$-function $u^{(r)}(\omega,\mathbf{a})$ on $\Omega_0\times [1,2]^b$. Substituting $(u^{(r)}(\omega,\mathbf{a}),v^{(r)}(\omega,\amplitude))$ into \eqref{Q-equation Explicit} and applying the implicit function theorem yield  the next $(r+1)$-approximation about frequency $\omega$ 
\[\omega=\omega^{(r+1)}(\amplitude)\]
and its reverse
\[\amplitude=\amplitude^{(r+1)}(\omega).\]
Again,  since   $u^{(r)}|_{\mcS}=\bf a$,  similar argument leading to \eqref{approimation omega^1}  implies  that, for some $C^1$-functions $f_k,k=1,\cdots,b$, we have 
\begin{equation}\label{approimation omega^r+1}
	\omega^{(r+1)}_k(\amplitude)=\mu_{\beta_k}+\delta(A_k a_k^{2p}+f_k(a_1,a_2,\cdots,a_b)). 
\end{equation} 
Denote  by $\Gamma_r={\rm graph}(\omega^{(r+1)}(\amplitude))\subset \Omega_0\times[1,2]^b$  the graph of $(\omega,\amplitude)$ at step $r$. Denote by $P_x$ the projection onto the $x$-variable, where $x=\amplitude,\omega$ or $(\omega,\amplitude)$.

\subsection{The inductive hypothesis and its proof}
Let $M$ be a large integer, and denote by $B(0, R)$ the
 $\ell^\infty$ ball on $\mathbb Z^{b+d}$ centered  at the origin with radius $R$.
 Set 
 $$r_0=\left\lfloor \frac{|\log\delta|^{\frac{3}{4}} } {\log M}\right\rfloor.$$

We first state the induction hypothesis, which can be verified if $0< \delta\leq \tilde{\delta} $,   $0<\varepsilon\leq \tilde{\varepsilon}(\delta)$ and $\theta\in \Theta_{\bfa}$.  Given its significance, we refer to it as the induction theorem. In the following,  $C$ represents a large constant and $c>0$   a small constant,  both of which  depend only on $b,d$. These constants may vary in different parts of the context.

\begin{thm}\label{inductive}
	For $r\geq 1$, we assume that the following holds:
\begin{itemize}
	\item[\bf Hi.] ${u}^{(r)}( \omega, \amplitude)$  is a $C^1$ map on  $\Omega_0\times [1,2]^b$, and
	$\text{supp } u^{(r)}\subseteq B(0, M^r)$ (note that $\text{supp } u^{(0)}\subset B(0, \frac{M}{10})$).
	\item[\bf Hii.] $\Vert \Delta_{cor}  u^{(r)}\Vert\leq \delta_r$,
	$\Vert \partial \Delta_{cor}  u^{(r)}\Vert\leq \bar\delta_r$,
	where $\partial$ denotes $\partial_x$, $x$ stands for $\omega_k$, $a_k$,  $k=1, 2, \cdots, b$
	and $\Vert\,\Vert:=\sup_{(\omega, a)}\Vert\,\Vert_{\ell^2(\Z^{b+d})}$.
	\item[\bf Hiii.] $|  {u^{(r)}}(n,j)|\leq  C  e^{ -c(|n|+|j|)}$.
       \begin{rmk}\label{gammar}
		The constant $c>0$ will decrease slightly along the iterations,  but remain  bounded away from zero. This will become clear in the proof (cf. also  \cite[Sect. V. A]{KLW24}). We 
		also remark that, to put $u^{(r)}$ in Q-equation  and apply implicit function theorem  solving  $\omega^{(r+1)}$, the $u^{(r)}$ will be extended to the entire parameter space $(\omega,\amplitude)\in\Omega_0\times [1,2]^b$ by using the  standard dilation argument (cf. \cite{Bou98},(10.33)--(10.37)). Hence,  by the relationship 
		\[\omega_k^{(r+1)}=\mu_{\beta_k}+\delta\frac{W_{u^{(r)}}(-e_k,\beta_k)}{a_k},\  k=1,2,\cdots,b\]
		together with hypothesis \textbf{Hii.}, one  has  
		\[|\omega^{(r+1)}(\amplitude)-\omega^{(r)}(\amplitude)|\lesssim \delta \Vert u^{(r)}-u^{(r-1)}\Vert \lesssim \delta\cdot\delta_r.\]
		That is to say $\Vert\Gamma_r-\Gamma_{r-1}   \Vert\lesssim \delta\cdot\delta_r$.
	   \end{rmk}
	\item[\bf Hiv.] 
	There exists  a collection $\mcI_r$ of disjoint open sets $I$ (in $(\omega, \amplitude)$) of size $M^{-r^{10C}}$ when $r\geq r_0$ (the total number of open sets is therefore bounded above by $M^{r^{10C}}$),
	 such that:    for any  $(\omega, \amplitude)\in \bigcup_{I\in \mcI_r}I$ when $r\geq r_0$,  and  $(\omega, \amplitude)\in \Omega_0\times [1,2]^b$ when $1\leq r\leq r_0-1$), we have 
	 	\begin{enumerate}
		\item  $u^{(r)}(\omega,\amplitude)$ is a  rational function in $(\omega, \amplitude)$  of degree at most $M^{r^3}$;
		\item 
		\begin{equation}\label{F}
		\Vert F( u^{(r)})\Vert\leq \kappa_r,\\
		\Vert \partial F( u^{(r)})\Vert\leq \bar\kappa_r; \end{equation}
		\item Let $r\leq r_0-1$. Then 
		 \begin{equation}\label{small r Green norm}
					       \nm(R_{[-M^r,M^r]^{b+d}}\tilde{T}_{u^{(r-1)}}(\omega,\amplitude)R_{[-M^r,M^r]^{b+d}})^{-1}\nm\leq 2\delta^{-\frac{1}{8}}, 
		    \end{equation} 
			and for $s,s'\in \{\pm\}$, 
			\begin{align}\label{small r Green off-diagonal}
				&\ \ \ |\big((R_{[-M^r,M^r]^{b+d}}\tilde{T}_{u^{(r-1)}}R_{[-M^r,M^r]^{b+d}})^{-1}\big)_{s,s'}(n,j;n',j')-\big(D(0)^{-1}\big)_{s,s'}(n,j;n',j')|\\
				 \notag  &\leq \delta^{\frac{1}{2}}e^{-c(|n-n'|+|j-j'|)},
			\end{align}
			where $D(0)$ is the diagonal component of $\tilde{T}_{u^{(r-1)}}$ restricted to  $B(0,M^r)$, which is given by \eqref{glinearD2}; 
        \item Let $r\geq r_0$. Then  
		\begin{equation}\label{big r Green norm}
			\nm(R_{[-M^r,M^r]^{b+d}}\tilde{T}_{u^{(r-1)}}(\omega,\amplitude)R_{[-M^r,M^r]^{b+d}})^{-1}\nm\leq M^{r^C},
         \end{equation} 
            and for $|n-n'|+|j-j'|>r^C, s,s'\in\{\pm\}$,  
            \begin{align}\label{big r Green off-diagonal}
           |\big((R_{[-M^r,M^r]^{b+d}}\tilde{T}_{u^{(r-1)}}(\omega,\amplitude) R_{[-M^r,M^r]^{b+d}})^{-1}\big)_{s,s'}(n,j;n',j')  | &\leq e^{-c(|n-n'|+|j-j'|)}; 
               \end{align} 
		\item 
				
				 Each $I\in {\mcI}_r$ is contained in some  $I'\in {\mcI}_{r-1}$ for  $r\geq r_0$,
				and 
					\begin{equation}\label{gme1b1111}
					\meas (P_{\amplitude}(\Gamma_{r-1}\cap (\bigcup_{I'\in {\mcI}_{r-1}} I'\backslash \bigcup_{I\in {\mcI}_{r}}I)))\leq e^{-|\log \delta|^{ K_1^{80}}} +M^{-\frac{r_0}{2^b}},\  r=r_0;
				\end{equation} 
			and 
				\begin{equation}\label{gme1b11}
				\meas (P_{\amplitude}(\Gamma_{r-1}\cap (\bigcup_{I'\in {\mcI}_{r-1}} I'\backslash \bigcup_{I\in {\mcI}_{r}}I)))\leq M^{-\frac{r}{2^b}},\  r\geq r_0+1;
				\end{equation} 
					\item For  $(\omega, \amplitude)\in \bigcup_{I\in \mcI_r}I$ with $r\geq r_0$, $\omega$  satisfies  the conditions \eqref{Diophantine on omega frequency} and \eqref{weak second Melnikov} for $n\neq 0$ and  $\tilde{N}\in [(\log\frac{1}{\delta})^K, M^{r}]$; 
	
\end{enumerate}
		\item[\bf Hv.]
		The iterations  hold  with 
		\begin{equation}\label{conv}
		\delta_r= \delta^{\frac{1}{2}}M^{-(\frac{4}{3})^r}, \, \bar\delta_r= \delta^{\frac{1}{8}} M^{-\frac{1}{2}(\frac{4}{3})^r}; \ \kappa_r= \delta^{\frac{3}{4}} M^{-(\frac{4}{3})^{r+2}}, \, \bar\kappa_r= \delta^{\frac{3}{8}} M^{-\frac{1}{2}(\frac{4}{3})^{r+2}}. \end{equation}
\end{itemize}

\end{thm}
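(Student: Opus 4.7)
The proof proceeds by induction on $r$. The base case amounts to choosing $u^{(0)}$ supported on $\mathcal{S}$ with values fixed by \eqref{Fixed amplitude on resonant set}, taking $\omega^{(1)}$ from \eqref{approimation omega^1}, and verifying \textbf{Hiv} at the initial scale via a direct Neumann expansion. Concretely, for $r=0$, the linearized operator $\tilde T_{u^{(0)}}$ restricted to $B(0,M)$ is $D(0)+\delta\mathcal W_{u^{(0)}}$, where $D(0)$ has entries $\pm n\cdot\omega+\mu_j$ on $\mcS^c$. The separation estimates \eqref{(2.5)}--\eqref{(2.7)} of Theorem \ref{seperative spectrum} guarantee $|D(0)(m,m)|\geq \delta^{1/8}$ on $\mcS^c$ for $\omega\in\Omega_0$, while Proposition \ref{generate the nonlinear operator} ensures $\|\mathcal W_{u^{(0)}}\|\leq C$. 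Lemma \ref{Neumann expansion} with $\epsilon_1=\delta^{1/8}$ then yields \eqref{small r Green norm}--\eqref{small r Green off-diagonal}, and \textbf{Hi}--\textbf{Hiii}, \textbf{Hv} follow from the definition of the Newton correction at scale $M$.

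For the inductive step $r\mapsto r+1$, the key object is the linearized operator $\tilde T_{u^{(r)}}(\omega,\mathbf a)=D+\delta\mathcal W_{u^{(r)}}$ restricted to $B(0,M^{r+1})\setminus\mcS$. By Proposition \ref{generate the nonlinear operator} applied to $u^{(r)}$ (whose decay is controlled by \textbf{Hiii}), the perturbation $\mathcal W_{u^{(r)}}$ satisfies \eqref{perturbation Op} with a constant $c_1$ close to $c$. When $r+1\leq r_0-1$, one again uses the separation bounds \eqref{(2.5)}--\eqref{(2.7)} directly, combined with the telescoping estimate $|\omega^{(r+1)}-\omega^{(0)}|\lesssim\delta$ from Remark \ref{gammar}, to verify \eqref{small r Green norm}--\eqref{small r Green off-diagonal} via Lemma \ref{Neumann expansion}. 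When $r+1\geq r_0$, one invokes Theorem \ref{LDT} at scale $\tilde N=M^{r+1}$, whose hypotheses are exactly \eqref{Diophantine on omega frequency}--\eqref{weak second Melnikov} supplied by item (7) of \textbf{Hiv} at the previous step. The LDT gives a $\sigma$-exceptional set $\Sigma_{M^{r+1}}$ of measure $\leq e^{-M^{(r+1)/30}}$.

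To complete \textbf{Hiv}, one must convert this $\sigma$-exclusion into an $(\omega,\mathbf a)$-exclusion. The plan is to cover the bad $\sigma$-set by $M^{(r+1)^C}$ semi-algebraic pieces of degree $\leq M^{(r+1)^C}$ (using Cramer's rule on $\tilde T_{u^{(r)}}$), invoke Lemma \ref{projection semi algebraic} to project onto the $\omega$-variable, and use the Diophantine condition \eqref{Diophantine on omega frequency} together with $\sigma=n\cdot\omega$, $|n|\leq M^{r+1}$, to ensure that each $\sigma$-cell contains at most one relevant $n\cdot\omega$. This yields an $\omega$-set of measure $\leq M^{-(r+1)/2^{b-1}}$ to be removed. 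The map $\omega^{(r+1)}(\mathbf a)$ from \eqref{approimation omega^r+1} is a small $C^1$ perturbation of the non-degenerate diagonal map $a_k\mapsto \mu_{\beta_k}+\delta A_k a_k^{2p}$, whose Jacobian determinant is $\sim (2p\delta)^b\prod_k A_k a_k^{2p-1}\gtrsim \delta^b$. Inverting and restricting to $\Gamma_r$ transfers the $\omega$-measure estimate to the $\mathbf a$-measure bound \eqref{gme1b11}, at the cost of a factor $\delta^{-b}$ which is absorbed by choosing $M$ large. One then packs the remaining good parameters into open boxes $I$ of size $M^{-(r+1)^{10C}}$ to define $\mcI_{r+1}$.

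The main obstacle is the preservation of exponential decay in \textbf{Hiii} together with the quadratic convergence rate in \textbf{Hv}. The Newton step gives $\Delta_{cor} u^{(r+1)}=-(F'_N)^{-1}F_N(u^{(r)})$, so the off-diagonal estimates \eqref{small r Green off-diagonal} or \eqref{big r Green off-diagonal} must be combined with the exponential decay of $F(u^{(r)})=D u^{(r)}+\delta W_{u^{(r)}}$, which itself inherits decay from $u^{(r)}$ via a computation analogous to Proposition \ref{generate the nonlinear operator}. The decay rate degrades from $c$ to $c-\gamma_r$ with $\gamma_r$ summable (controlled by the LDT output rate $c_{M^{r+1}}\geq c_1-\sum \gamma_j\geq c_1/2$), so the limiting rate stays bounded below. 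The bound $\|F(u^{(r+1)})\|\leq \kappa_{r+1}$ follows from the Taylor expansion $F(u^{(r+1)})=F(u^{(r)})+F'(u^{(r)})\Delta_{cor}u^{(r+1)}+O(\|\Delta_{cor}u^{(r+1)}\|^2)$, where the linear term cancels by definition of the correction, leaving a quadratic remainder $O(\delta_r^2)$; choosing $M$ large enough that $\delta_r^2\leq \kappa_{r+1}$ closes the induction on \textbf{Hv}. The parameter-derivative bounds in \textbf{Hii} and \eqref{F} are handled by differentiating the Newton step and using that $\partial F'(u^{(r)})$ gains a factor at most $M^{r^C}$, still compatible with the prescribed $\bar\delta_r,\bar\kappa_r$.
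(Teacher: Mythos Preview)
Your outline captures the Neumann-series treatment of the small scales $r\leq r_0-1$ correctly, and the heuristic about transferring $\omega$-measure to $\mathbf a$-measure via the Jacobian of \eqref{approimation omega^r+1} is right. But the large-scale step $r\geq r_0$ has a genuine gap.

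You propose to invoke Theorem~\ref{LDT} directly at scale $\tilde N=M^{r+1}$ and then convert the resulting $\sigma$-exceptional set into an $\omega$-set. This does not work as stated, for two linked reasons. First, the LDT controls shifted elementary boxes $\Lambda_N(j_0)$ with $|j_0|\leq 2N$ as a function of the \emph{auxiliary} parameter $\sigma$, whereas the operator you must invert is $\tilde T_{u^{(r)}}(0,\omega,\mathbf a)$ restricted to $[-M^{r+1},M^{r+1}]^{b+d}\setminus\mathcal S$ at $\sigma=0$. The T\"oplitz identity lets you trade a shift $(n_0,j_0)+\Lambda_{N_1}$ at $\sigma=0$ for $\Lambda_{N_1}(j_0)$ at $\sigma=n_0\cdot\omega$, but only for \emph{sub}-boxes of a smaller scale $N_1$; it says nothing about the single big region with $\mathcal S$ excised. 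Second, near $\mathcal S$ the diagonal of $D$ vanishes, so no Neumann or LDT argument applies there: that central block must come from the inductive hypothesis \eqref{big r Green norm}--\eqref{big r Green off-diagonal} at step $r$, and then be glued to the outer region by a resolvent identity. Your proposal omits both the decomposition $[-N,N]^{b+d}=[-M^r,M^r]^{b+d}\cup\mathcal R_1\cup\mathcal R_2$ (center block, large-$|j|$ region where $\mathcal W$ is negligible, and large-$|n|$/small-$|j|$ region where LDT is used) and the gluing step.

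There is a second, more technical gap in the $\sigma\to\omega$ conversion. Even granting the right decomposition, the semi-algebraic set one projects has degree governed by $u^{(r)}$, i.e.\ by $M^{r^3}$. To keep this degree compatible with Lemma~\ref{projection lemma} the paper freezes the approximation at an earlier step $r_1\sim\log N_1$, works on the graph $\Gamma_{r_1}$, and applies LDT only at the logarithmic scale $N_1=(\log M^{r+1})^C$; the resulting bad set $\mathcal X_{N_1}$ then has degree $N_1^CM^{Cr_1^3}$, small enough for the projection lemma. Moreover the hyperplanes $L=\{(\omega,k\cdot\omega)\}$ with $|k|\geq M^r/2$ need not satisfy the transversality hypothesis \eqref{transversality projection} in all coordinates simultaneously (only $|k|_\infty$ is large, not $\min_i|k_i|$), so a $b$-step cascade over the sizes of the $k_i$ is required to get the exponent $1/2^b$ in \eqref{gme1b11}. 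Your single invocation of Lemma~\ref{projection semi algebraic} (Tarski--Seidenberg) does not supply any measure bound; it is Lemma~\ref{projection lemma} together with this cascade that does the work.
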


In the following,  we will prove Theorem \ref{inductive}: The general scheme is as follows:  for small scales, we use \eqref{(2.5)}--\eqref{(2.6)} of Theorem \ref{seperative spectrum} and Theorem \ref{Neumann expansion} to establish \eqref{small r Green norm} and \eqref{small r Green off-diagonal}; for larger scales,  we employ \eqref{(2.4)} of Theorem \ref{seperative spectrum}, Theorem \ref{LDT} and semi-algebraic type  projection lemma.

\begin{proof}[Proof of Theorem \ref{inductive}]

We begin with an important projection lemma on semi-algebraic sets, which has played an essential role in the study of both linear and nonlinear Anderson localization. 

\begin{lem}[cf. Lemma 9.9, \cite{Bou05}]\label{projection lemma}
	%(\cite[Lemma 9.9]{bourgain2005green})
	Let $\mathcal{S}\subset [0, 1]^{d_1}\times [0, 1]^{d_2}:=[0, 1]^{d}$, be a semi-algebraic set of degree $B$ and $\text{\rm meas}_{d} S <\eta, \log B\ll
	\log 1/\eta$.
	Denote by $(x, y)\in [0, 1]^{d_1}\times [0, 1]^{d_2}$ the product variable.
	Fix $\epsilon>\eta^{1/d}$.
	Then there is a decomposition
	$$
	\mathcal{S} =\mathcal{S}_1 \bigcup \mathcal{S}_2
	$$
	with $\mathcal{S}_1$ satisfying
	$$
	\text{\rm meas}_{d_1}({\rm Proj}_x \mathcal{S}_1)\leq B^C\epsilon,
	$$
	and $\mathcal{S}_2$ the transversality property
	$$
	\text{\rm meas}_{L}(\mathcal{S}_2\cap L)\leq  B^C\epsilon^{-1} \eta^{1/d} 
	$$
	for any $d_1$-dimensional hyperplane $L$ in  $[0, 1]^{d_1+d_2}$ such that
	\begin{equation}\label{transversality projection}
			\max_{1\leq l\leq d_1}|{\rm Proj}_L (e_l)|\leq  \frac 1{100}\epsilon,
	\end{equation}
	where $e_l$ are the basis vectors for the $x$-coordinates.
\end{lem}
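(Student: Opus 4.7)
First, fix the fiber threshold $\tau := \eta/\epsilon$ and split $\mathcal{S}$ according to whether fibers over $[0,1]^{d_1}$ exceed $\tau$. Set
\begin{equation*}
X_1 := \{x \in [0,1]^{d_1}:\ \meas_{d_2}(\mathcal{S}_x) > \tau\}, \quad \mathcal{S}_1 := \mathcal{S} \cap (X_1 \times [0,1]^{d_2}), \quad \mathcal{S}_2 := \mathcal{S} \setminus \mathcal{S}_1,
\end{equation*}
where $\mathcal{S}_x := \{y \in [0,1]^{d_2} : (x,y) \in \mathcal{S}\}$ is the $x$-fiber. By Fubini, $\int \meas_{d_2}(\mathcal{S}_x)\,dx = \meas_d(\mathcal{S}) < \eta$, and so by Markov's inequality $\meas_{d_1}(X_1) \leq \eta/\tau = \epsilon$. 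This gives $\meas_{d_1}({\rm Proj}_x \mathcal{S}_1) \leq \epsilon \leq B^C\epsilon$. By construction every $x$-fiber of $\mathcal{S}_2$ has $d_2$-measure at most $\tau$.

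The nontrivial part is bounding $\meas_L(\mathcal{S}_2 \cap L)$ for $L$ satisfying \eqref{transversality projection}. The transversality condition guarantees that, after a permutation of coordinates if necessary, $L$ can be written as the graph of an affine map $\phi: [0,1]^{d_1} \to [0,1]^{d_2}$ whose differential has operator norm $\lesssim \epsilon$, and that the parametrization $x \mapsto (x, \phi(x))$ of $L$ has Jacobian bounded above and below by absolute constants. Thus, up to a constant factor,
\begin{equation*}
\meas_L(\mathcal{S}_2 \cap L) \asymp \meas_{d_1}\bigl(\{x \in [0,1]^{d_1} :\ (x, \phi(x)) \in \mathcal{S}_2\}\bigr),
\end{equation*}
and the set on the right is semi-algebraic of degree at most $B^C$ by Tarski--Seidenberg.

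To convert the fiber bound $\tau$ into the required slice bound, I would apply a Yomdin--Gromov type cell decomposition: $\mathcal{S}_2$ is partitioned into at most $B^C$ semi-algebraic cells on each of which $\mathcal{S}_2$ is, after a change of variables with bounded Jacobian, a graph over a sub-rectangle. The construction uses a quantitative version of Tarski--Seidenberg, controlled at each stage by Lemma \ref{basu} bounding the number of connected components. Within each cell, a Fubini argument exploiting the fiber bound $\tau$ controls the slice by $\tau^{1/d}$ up to polynomial factors in the degree. Aggregating across the $B^C$ cells yields
\begin{equation*}
\meas_L(\mathcal{S}_2 \cap L) \leq B^C \tau^{1/d} = B^C \epsilon^{-1/d} \eta^{1/d} \leq B^C \epsilon^{-1} \eta^{1/d},
\end{equation*}
where the last inequality uses $\epsilon \leq 1$ (and $d \geq 2$).

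The principal obstacle is the quantitative cell decomposition step: the number of cells must depend only polynomially on the degree $B$, and the Jacobian bounds on each cell must be absolute (rather than depending on the defining polynomials of $\mathcal{S}$). This is a deep result of quantitative real semi-algebraic geometry, namely the Yomdin--Gromov algebraic lemma in the form needed for localization theorems. Once this input is accepted, the remaining steps are routine Fubini-type averaging together with degree bookkeeping for the set $\{x : (x,\phi(x)) \in \mathcal{S}_2\}$ via Tarski--Seidenberg. The complete argument is that of \cite[Lemma 9.9]{Bou05}, to which we refer for full details.
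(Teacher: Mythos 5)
There is a genuine gap, and in fact two of your intermediate claims are wrong. First, you misread the transversality condition \eqref{transversality projection}: $\max_{1\leq l\leq d_1}|{\rm Proj}_L(e_l)|\leq \frac{1}{100}\epsilon$ says that $L$ is nearly \emph{orthogonal} to the $x$-directions (a nearly ``vertical'' plane), not that $L$ is a graph over $x$ with differential of norm $\lesssim\epsilon$. In the paper's application $L=\{(\omega,k\cdot\omega)\}$ with $\min_i|k_i|\geq 100\,\epsilon^{-1}$, so if $L$ is written as a graph $x\mapsto(x,\phi(x))$ then $\|D\phi\|\gtrsim\epsilon^{-1}$, the area element on $L$ is huge rather than bounded by absolute constants, and your identity $\meas_L(\mathcal{S}_2\cap L)\asymp\meas_{d_1}\{x:(x,\phi(x))\in\mathcal{S}_2\}$ fails (the two differ by powers of $\epsilon^{-1}$, which is exactly the quantity being tracked). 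Second, your $\mathcal{S}_1,\mathcal{S}_2$ are defined through the fiber-volume function $x\mapsto\meas_{d_2}(\mathcal{S}_x)$, which is not semi-algebraic of controlled degree in general; the lemma itself does not require the pieces to be semi-algebraic, but your own argument does, since you then invoke Tarski--Seidenberg to claim $\{x:(x,\phi(x))\in\mathcal{S}_2\}$ has degree $B^C$. With your decomposition that step has no justification.

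More fundamentally, the heart of the lemma is precisely the step you label ``routine'' and then outsource: converting a bound on the vertical fibers into a bound on the intersection with a nearly vertical $d_1$-plane cannot be done by Fubini alone (when $d_1<d_2$ the plane meets each fiber in a set of positive codimension, so fiber measure gives no direct information), and the correct mechanism is the semi-algebraic one --- bounded numbers of connected components of sections, cell decomposition, and an induction on the $y$-dimension --- with all losses polynomial in the degree $B$. You assert this via a ``Yomdin--Gromov type'' decomposition with absolute Jacobian bounds and a per-cell estimate $\tau^{1/d}$ without proof, and then refer to \cite[Lemma 9.9]{Bou05} for ``full details.'' Since the paper itself states this lemma only by citation to Bourgain and gives no proof, your write-up does not constitute an independent proof: the decisive semi-algebraic step is missing, and the reduction that precedes it is based on an incorrect description of the admissible planes $L$.
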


%\begin{proof}[Proof of Theorem \ref{inductive}.]
	Assume that   each induction hypothesis holds for all scales up to $r$. We will prove that it holds for $r+1$. 
	
	The Newton scheme gives 
	\begin{equation}\label{Newton multiscale}
	\Delta_{cor} \begin{pmatrix} u^{(r+1)}\\v^{(r+1)}\end{pmatrix}=-[F'_{N}(u^{(r)})]^{-1} F_N(u^{(r)}),\quad 
	 N=M^{r+1},
	\end{equation}
    and by \textbf{Hiv.}(1), $u^{(r)}(\omega,\amplitude)$ is  a rational function of degree at most $M^{r^3}$ about $(\omega,\amplitude)$. By the convolution structure of $W_u$, we have 
	\[\deg F(u^{(r)})\leq (2p+1)M^{r^3},\ \deg F'(u^{(r)})\leq 2p M^{r^3}.\]
	Then by  Cramer's rule, we  have 
	\[[F'_{N}(u^{(r)})]^{-1}=\frac{A}{\det([F'_{N}(u^{(r)})]^{-1})},\]
	where $A$ is the adjacent matrix of $[F'_{N}(u^{(r)})]^{-1}$. Hence,
	\[\deg [F'_{N}(u^{(r)})]^{-1}\leq 2N\deg F'(u^{(r)})\leq (4p+2)M^{r+1}M^{r^3}.\]
	Thus,   using \eqref{Newton multiscale} shows 
	\[\deg\big( \Delta_{cor} \begin{pmatrix} u^{(r+1)}\\v^{(r+1)}\end{pmatrix}  \big)\leq (4p+2)M^{r+1}M^{r^3}+(2p+1)M^{r^3} \leq M^{(r+1)^3}  \]
	This proves the \textbf{Hiv.}(1) for $r+1$. 
	
	For $r\leq r_0-1$, we directly check those hypotheses hold  by Neumann series argument. Now $M^{r+1}\leq M^{r_0}\leq e^{|\log\delta|^{\frac{3}{4}}}$. Recall that \eqref{(2.5)} and \eqref{(2.6)} ensure  that the diagonals  of $\tilde{T}_{u^{(r)}}$ are bounded from below by $2\delta^{\frac{1}{8}}$.  Then  by induction  hypothesis \textbf{Hiii.} together with Proposition \ref{generate the nonlinear operator}. (2), we have 
	\[|\mcW_{r,r'}(n,j;n',j')|\leq C_1C^p(|n-n'|+1)^{C_1}e^{-c(|n-n'|+|j-j'|)}.\]
	Choose $\epsilon_1=2\delta^{\frac{1}{8}},\epsilon_2=C_1C^p\delta,S=[-N,N]^{b+d}$ in Theorem \ref{Neumann expansion}. Then  \eqref{Verification Neumann}  can be verified by 
	\[4|S|^2 (\diam(S)+1)^C \epsilon_2\epsilon_1^{-1}\lesssim\delta^{\frac{7}{8}}e^{(2b+2d+C_1)|\log\delta|^{\frac{3}{4}}}\leq \delta^{\frac{3}{4}}\ll \frac{1}{2}\]
	provided  $0<\delta\ll1.$ Then applying Theorem \ref{Neumann expansion} will ensure  that \eqref{small r Green norm} and \eqref{small r Green off-diagonal} hold  for $r+1$  for any $(\omega,\amplitude)\in \Omega_0\times[1,2]^b$. Clearly, for $r+1=r_0$, we have 
	\[r_0^C\sim |\log\delta|^{\frac{3}{4}C}\Rightarrow 2\delta^{\frac{1}{8}}\ll M^{r_0^C}.\]
	Therefore,  when $r=r_0$,  \eqref{big r Green norm} and \eqref{big r Green off-diagonal} automatically hold since  \eqref{small r Green norm} and \eqref{small r Green off-diagonal}. 
	
	Thus, we are  in a position to treat the case $r\geq r_0$.
	
	For $r\geq r_0$, it suffices  to establish \textbf{Hiv.}(4) and \textbf{Hiv.}(5), and the other induction  assumptions can be verified in the standard way as in  \cite[Sect. V]{KLW24} and \cite{Bou05}. For the constants relations \textbf{Hv.},  we refer to \cite[Appendix E]{LW24}. The strategy of the proof is as follows.  Take $N=M^{r+1}$ and $N_1=(\log N)^C$ with a  large constant $C>1$. By $r\geq r_0$, we have $\log N_1\geq |\log\delta|^{\frac{3}{4}C} $. Take  
	\[r_1=\lfloor \frac{2\log N_1}{\log\frac{4}{3}}\rfloor+1,\]
	which is the minimum integer such that $(\frac{4}{3})^{r_1}\geq N_1^2$, and hence $\delta_{r_1}<e^{-N_1^2}$. We have a decomposition 
	\[[-N,N]^{b+d}=[-M^r,M^r]^{b+d}\cup \mcR_1\cup\mcR_2,\]
	where 
	\[\mcR_1=\{(n,j)\in [-N,N]^{b+d}:\ |j|> 2N_1\},\]
	\[\mcR_2=\{(n,j)\in [-N,N]^{b+d}:\ |n|\geq \frac{M^r}{2},|j|\leq 2N_1\}.\]
	Now pick one $I\in \mcI_{r_1}$ of size $M^{-r_1^{10C}}$ (with at most $M^{r_1^{10C}}$ many). Let $I_1=P_{\omega}(I\cap \Gamma_{r_1})$ with $\Gamma_{r_1}={\rm graph} (\omega^{(r_1+1)}(\amplitude))$. For any $k\geq |\log\delta|^K$, denote by $DC(k)$ all $\omega\in \Omega_0$  satisfying \eqref{Diophantine on omega frequency} and \eqref{weak second Melnikov} for any $|\log\delta|^K\leq \tilde{N}\leq k$. Denote $DC^{r+1}=DC([(r+1)\log M ]^C)\setminus DC((r\log M)^C)$. Let $I_2=I_1\cap DC^{r+1}$.\smallskip
   
 \textbf{Analysis in $[-M^r,M^r]^{b+d}$}\\
	By the induction  hypothesis at step $r$, we have that \eqref{big r Green off-diagonal} and \eqref{big r Green norm} hold  at step $r$. Also, by \textbf{Hii.}, we have, if $|n-n'|+|j-j'|\leq (r+1)^{5C}\log M,$ then 
	\begin{align}\label{perturbation 1}
	&\ \ \ 	|(\tilde{T}_{u^{r}}(\omega,\amplitude))_{s,s'}(n,j;n',j')- (\tilde{T}_{u^{r-1}}(\omega,\amplitude))_{s,s'}(n,j;n',j')| \\
		\notag    &\leq C\Vert \Delta_{cor}u^{(r)}\Vert\\
		\notag	&\leq C\delta_r \leq CM^{-(r+1)^{10C}} \\
		\notag	&\leq M^{-10(r+1)^C}e^{-c(|n-n'|+|j-j'|)}.
	\end{align}
If $|n-n'|+|j-j'|>(r+1)^{5C}\log M$, by \textbf{Hiii.} and Proposition \ref{generate the nonlinear operator}.  (2), we have 
	\begin{align}\label{perturbation 2}
		&\ \ \ |(\tilde{T}_{u^{r}}(\omega,\amplitude))_{s,s'}(n,j;n',j')-(\tilde{T}_{u^{r-1}}(\omega,\amplitude))_{s,s'}(n,j;n',j')| \\
		 \notag   &\leq C|n-n'|^Ce^{-c(|n-n'|+|j-j'|)}\\
		\notag	&\leq M^{-10(r+1)^C}e^{-(c-(r+1)^{-3C})(|n-n'|+|j-j'|)}. 
	\end{align}
	The above estimates  imply  (for simplicity,  we hide the changes  of exponential decay rate)
	\begin{equation}\label{perturbation by approimation to operator 1}
		|(\tilde{T}_{u^{r}})_{s,s'}(n,j;n',j')- (\tilde{T}_{u^{r-1}})_{s,s'}(n,j;n',j')|\leq M^{-10(r+1)^C}e^{-c(|n-n'|+|j-j'|)}. 
	\end{equation}
	Hence,  by \eqref{big r Green off-diagonal}--\eqref{big r Green norm} at step $r$ and using Lemma \ref{Neumann expansion}, we have that for any $(\omega,\amplitude)\in \cup_{I\in \mcI_r}I$,  one can replace $u^{(r-1)}$ in \eqref{big r Green norm}--\eqref{big r Green off-diagonal} by $u^{(r)}$. That is to say,
	\begin{equation}\label{center block r+1 Green norm}
					\nm(R_{[-M^r,M^r]^{b+d}}\tilde{T}_{u^{(r)}}(\omega,\amplitude)R_{[-M^r,M^r]^{b+d}})^{-1}\nm\leq M^{r^C},
	\end{equation}
	and for $|n-n'|+|j-j'|>r^C,$
	\begin{equation}\label{center block r+1 Green off-diag}
			 |\big((R_{[-M^r,M^r]^{b+d}}\tilde{T}_{u^{(r)}}(\omega,\amplitude) R_{[-M^r,M^r]^{b+d}})^{-1}\big)_{s,s'}(n,j;n',j')  | \leq e^{-c(|n-n'|+|j-j'|)}.
	\end{equation}
	Thus, we  have obtained  the desired  Green's function estimates on  $[-M^r,M^r]^{b+d}$.\smallskip

	\indent \textbf{Analysis in $\mcR_1$}\\
	Assume $|j_0|>2N_1$. In this case, we denote 
	\[\tilde{I}_2^r=\{\omega\in I_2:\ {\exists}(n,j)\in [-N,N]^{b+d}\ {\rm s.t.,}\ |\nomega+\mu_j|\leq 2e^{-N_1^{\frac{9}{10}}} \ {\rm or} \ |\nomega-\mu_j|\leq 2e^{-N_1^{\frac{9}{10}}}\}.\]
	Recall that \eqref{(2.4)} ensures that $|\mu_j|\geq \frac{1}{2N^{d+2}}> 2e^{-N_1^{\frac{9}{10}}} $. Thus, the condition in the definition of $\tilde{I}_2^r$ does not hold  if $n=0$. It's easy to check that 
	\begin{equation}\label{meas tildeI_2^r}
		\meas(\tilde{I}_2^r)\leq N^C e^{-N_1^{\frac{9}{10}}}\ll \delta^b M^{-\frac{r}{2}}. 
	\end{equation}
	Suppose that $\omega\notin \tilde{I}_2^r$. Due to $|j_0|>2N_1$, the off-diagonal part of $R_{(n_0,j_0)+\La_{N_1}}T_{u^{(r_1)}}R_{(n_0,j_0)+\La_{N_1}}$ is sufficiently small. Applying Theorem \ref{Neumann expansion}  yields that  
	for any $\omega\in I_2\setminus \tilde{I}_2^r,(\omega,\amplitude)\in \Gamma_{r_1}$, $(n_0,j_0)\in \mcR_1$ and $\La_{N_1}\in \mcE^0_{N_1}$ with $(n_0,j_0)+\La_{N_1}\subset [-N,N]^{b+d}$, we have good Green's function estimates in the sense of 
	\begin{equation}\label{mcR_1 r_1 Green norm}
					\nm(R_{(n_0,j_0)+\La_{N_1}}\tilde{T}_{u^{(r_1)}}(\omega,\amplitude)R_{(n_0,j_0)+\La_{N_1}})^{-1}\nm\leq e^{N_1^{\frac{9}{10}}}, 
	\end{equation}
	and for $|n-n'|+|j-j'|>\sqrt{N_1},$
	\begin{equation}\label{mcR_1 r_1 Green off-diag}
			 |\big[(R_{(n_0,j_0)+\La_{N_1}}\tilde{T}_{u^{(r_1)}}(\omega,\amplitude) R_{(n_0,j_0)+\La_{N_1}})^{-1}\big]_{s,s'}(n,j;n',j')  | \leq e^{-c(|n-n'|+|j-j'|)}. 
	\end{equation}
Thus, we have established desired  Green's function estimates  on boxes of   size $N_1$  contained in $\mcR_1$.\smallskip
	
	\indent \textbf{Analysis in $\mcR_2$}\\
	Assume now $|j_0|\leq 2N_1$ and  $|n_0|\geq \frac{M^r}{2}$. We fix $\omega\in I_2, (\omega,\amplitude)\in \Gamma_{r_1} $, i.e.,  $\amplitude=\amplitude^{(r_1+1)}(\omega)$. Since  $I_2\in \Omega_0\cap DC([(r+1)\log M]^C)=\Omega_0\cap DC(N_1)$, we  apply LDT Theorem \ref{LDT} to obtain a set 
	$X_{N_1}=X_{N_1}(\omega)$ (depending on $\omega$)  about $\si$, such that 
	\[\meas(X_{N_1})\leq e^{-N_1^{\frac{1}{20}}},\]
	and for all $\si\notin X_{N_1}$, $|j_0|\leq 2N_1$ and $\La_{N_1}\in \mcE^0_{N_1}$, we have 
    \begin{equation}\label{mcR_2 LDT Green norm}
		\nm(R_{\La_{N_1}(j_0)}\tilde{T}_{u^{(r_1)}}(\si,\omega,\amplitude^{(r_1+1)}(\omega))R_{\La_{N_1}(j_0)})^{-1}\nm\leq e^{N_1^{\frac{9}{10}}}, 
    \end{equation}
    and for $|n-n'|+|j-j'|>\sqrt{N_1},$
   \begin{equation}\label{mcR_2 LDT Green off-diag}
       |\big((R_{\La_{N_1}(j_0)}\tilde{T}_{u^{(r_1)}}(\si,\omega,\amplitude^{r_1+1}(\omega)) R_{\La_{N_1}(j_0)})^{-1}\big)_{s,s'}(n,j;n',j')  | \leq e^{-c(|n-n'|+|j-j'|)}. 
   \end{equation}
    Moreover, if we take
    \begin{align*}
	&\ \ \  I_{N_1}(\omega)\\
	&=\left\{\si:\ {\exists}|j|\leq 3N_1,|n|\leq N_1\ {\rm s.t.,}\ |\si+\nomega+\mu_j|\leq N_1^{10(b+d+C_1)}\delta \ {\rm or} \ |\si+\nomega+\mu_j|\leq N_1^{10(b+d+C_1)}\delta \right\},
	\end{align*}
	where $C_1=C_1(p,b)>1$ is given in Proposition \ref{generate the nonlinear operator}.  (2).   If  $\si\notin I_{N_1}$, then applying  Theorem \ref{Neumann expansion}  shows , for any $|j_0|\leq 2N_1$ and $\La_{N_1}\in \mcE^0_{N_1}$,  we have 
	\[		\nm(R_{\La_{N_1}(j_0)}\tilde{T}_{u^{(r_1)}}(\si,\omega,\amplitude^{(r_1+1)}(\omega))R_{\La_{N_1}(j_0)})^{-1}\nm\leq \frac{1}{\delta}\leq e^{N_1^{\frac{9}{10}}},\]
	and for $|n-n'|+|j-j'|>\sqrt{N_1},$
	\begin{align*}
			&\ \ \ |\big((R_{\La_{N_1}(j_0)}\tilde{T}_{u^{(r_1)}}(\si,\omega,\amplitude^{r_1+1}(\omega)) R_{\La_{N_1}(j_0)})^{-1}\big)_{s,s'}(n,j;n',j') |\\
			&\leq \frac{4}{\delta} e^{-c(|n-n'|+|j-j'|)}\\
			&\leq e^{-(c-\frac{|\log\delta|}{\sqrt{N_1}})(|n-n'|+|j-j'|)}\\
			&\leq e^{-(c-N_1^{-\frac{1}{4}})(|n-n'|+|j-j'|)}\\ 
			&\leq e^{-(c-(r+1)^{-\frac{C}{4}})(|n-n'|+|j-j'|)}. 
	\end{align*}
	For simplicity, we also hide the changes  of  the exponential rate. Therefore, \eqref{mcR_2 LDT Green norm} and \eqref{mcR_2 LDT Green off-diag} hold, which  implies that $X_{N_1}(\omega)\subset I_{N_1}(\omega)$. Since $\Omega_0$ is of size $C_2\delta$ and $I_{N_1}(\omega)$ only involves $|n|\leq N_1$, we can assume that $X_{N_1}(\omega)$ is in a union of a collection of intervals (independent of $\omega$) of size $\delta$ with total number $N_1^C$. We just pick one  interval $\Sigma$ of them. Let $\mcX_{N_1}(\omega,\si)\in I_2\times \Sigma$ be such that,  there exist some $\La_{N_1}\in \mcE^0_{N_1}$ and $|j_0|\leq 2N_1$ such that either \eqref{mcR_2 LDT Green norm} or \eqref{mcR_2 LDT Green off-diag} is not true. 
	It's easy  to see that 
	\[\mcX_{N_1}(\omega,\si)=\bigcup_{\omega\in I_2}\{\omega\}\times (I_{N_1}(\omega)\cap \Sigma),\]
	and by Fubini's  theorem,  
	\begin{equation}\label{fubini}
		\meas(\mcX_{N_1})\leq C_2^b \delta^b e^{-N_1^{\frac{1}{30}}}\leq \delta^b e^{-N_1^{\frac{1}{31}}}. 
	\end{equation}
	Moreover, we claim that $\mcX_{N_1}$ is a semi-algebraic set of degree at most $N_1^C M^{Cr_1^3}$. Indeed, this can be seen as follows. Let $\tilde{X}_{N_1}\subset \Omega_0\times [1,2]^b \times\R$ be such that there exist some $\La_{N_1}\in \mcE^0_{N_1},|j_0|\leq 2N_1$ such that,  one of the following is not true: 
	\begin{equation*}
		\nm(R_{\La_{N_1}(j_0)}\tilde{T}_{u^{(r_1)}}(\si,\omega,\amplitude)R_{\La_{N_1}(j_0)})^{-1}\nm\leq e^{N_1^{\frac{9}{10}}},
    \end{equation*}
    or  for $|n-n'|+|j-j'|>\sqrt{N_1}$, 
   \begin{equation*}
       |\big((R_{\La_{N_1}(j_0)}\tilde{T}_{u^{(r_1)}}(\si,\omega,\amplitude) R_{\La_{N_1}(j_0)})^{-1}\big)_{s,s'}(n,j;n',j')  | \leq e^{-c(|n-n'|+|j-j'|)}. 
   \end{equation*}
   Then $\mcX_{N_1}=P_{(\omega,\si)}(\tilde{X}_{N_1}\cap(\Gamma_{r_1}\times \R))$. By the similar  truncation arguments  as in Section 3.4, \textbf{Step 1} and \textbf{Hiv.} (1), we know that $\tilde{X}_{N_1}$ is a semi-algebraic set of degree at most $N_1^C M^{Cr_1^3}$. Simultaneously,    $\Gamma_{r_1}$ is given by the Q-equation 
   \[\omega_k^{(r_1+1)}=\mu_{\beta_k}+\delta\frac{W_{u^{(r_1)}}(-e_k,\beta_k)}{a_k}, \ k=1,2,\cdots,b,\]
   which are rational equations of degree at most $CM^{r_1^3}$. Then the degree of $\tilde{X}_{N_1}\cap(\Gamma_{r_1}\times \R)$ will not exceed $N_1^C M^{Cr_1^3}$. Hence,  by Lemma \ref{projection semi algebraic} (i.e., the Tarski-Seidenberg principle),  we have that  $\mcX_{N_1}$ is of degree at most $N_1^C M^{Cr_1^3}$.\\
   \\ 
   Next we will apply Lemma \ref{projection lemma} to remove $\omega$. The  argument  is similar to  that, such as  (3.26) in \cite {Bou07} and  (142) in \cite{LW24}. Our aim is to estimate $\meas(P_{\omega}(\mcX_{N_1}\cap L))$ for hyperplane $L=\{(\omega,k\cdot \omega)\}$ for all $k=(k_1,k_2,\cdots,k_b)\in [-N,N]^b,|k|\geq \frac{M^r}{2}$. We remark that, although $|k|$ is large, we not necessarily have the transversality \eqref{transversality projection}, because $|\cdot|$ is the infinity norm 
   but  the transversality \eqref{transversality projection} needs 
   \[\min_{1 \leq i\leq b}|k_i|\geq 100 \epsilon^{-1}.\]
   So, we need to discuss all possibilities via $b$ steps of inductions. Without loss of generality,   we  assume that $k_1\leq k_2\leq \cdots \le k_b$. {Moreover, since all parameters are of $\mathcal{O}(\delta)$ order, we remark  that Lemma \ref{projection lemma} admits the rescaling. Thus, if we rescale  the semi-algebraic set by size $\delta$, then we can obtain a  measure  bound  involving  the  $\delta^b$ factor. }
   From \eqref{fubini}, we just take $\eta=e^{-N_1^{\frac{1}{31}}}$. 
   Let 
     \[\epsilon_1=M^{-\frac{r}{2^{b-1}}},\epsilon_2=M^{-\frac{r}{2^{b-2}}},\cdots,\epsilon_{b-1}=M^{-\frac{r}{2}},\epsilon_b=2M^{-r}.\] 
  If $k_1\geq \epsilon_1^{-1}$, then \eqref{transversality projection} holds for $L$ with $\epsilon=100\epsilon_1$.  Applying  Lemma \ref{projection lemma} enables us to decompose $\mcX_{N_1}$ into $\mcS_1^{(1)}$ with 
	                    \[\meas(P_{\omega}\mcS_1^{(1)})\leq 100 \delta^b N_1^C M^{Cr_1^3} \epsilon_1 \]
						and $\mcS_2^{(1)}$ with 
						\[{\rm meas}_L(\mcS_2^{(2)}\cap L)\leq \frac{1}{100} \delta^b N_1^C M^{Cr_1^3} e^{-\frac{1}{(b+d)}N_1^{\frac{1}{31}}} \epsilon_1^{-1}. \]
						Again  by  \eqref{transversality projection}, we have 
						\[\meas(P_{\omega}(\mcS_2^{(2)}\cap L))\leq \frac{1}{100} \delta^b N_1^C M^{Cr_1^3} e^{-\frac{1}{(b+d)}N_1^{\frac{1}{31}}}.\]
  % \item [\bf (Case 2)] 
   If $k_1<\epsilon_1^{-1}$, but $k_2\geq \epsilon_2^{-1}$, then we fix $\omega_1$ in $\omega$ and consider  $\tilde{\omega}=(\omega_2,\omega_3,\cdots,\omega_b)$.  Since the restriction on  $\omega_1$ just gives an additional equation of degree $1$, we have that $\mcX_{N_1}\big|_{\omega_1}$ is a semi-algebraic set of degree at most $ N_1^C M^{Cr_1^3}$. Moreover, 
            since  $\meas(X_{N_1}(\omega))\leq e^{-N_1^{\frac{1}{30}}}$,  we have by Fubini's  theorem,  
			\[\meas(\mcX_{N_1}\big|_{\omega_1}) \leq \delta^{b-1} e^{-N_1^{\frac{1}{31}}}.\]   
			Then  we can apply   Lemma \ref{projection lemma}   for the $\tilde{\omega}$-coordinate. Then \eqref{transversality projection} will hold for $L$ with $\epsilon=100\epsilon_2$. Thus, using  Lemma \ref{projection lemma} enables  us to decompose $\mcX_{N_1}\big|_{\omega_1}$ into $\mcS_1^{(2)}$ with  
			\begin{equation}\label{proj S_1^2}
				\meas(P_{\tilde{\omega}}\mcS_1^{(2)})\leq 100 \delta^{b-1} N_1^C M^{Cr_1^3} \epsilon_2,
			\end{equation}
			and $\mcS_2^{(2)}$ with 
			\[\meas(P_{\tilde{\omega}}(\mcS_2^{(2)}\cap L))\leq \frac{1}{100} \delta^{b-1} N_1^C M^{Cr_1^3} e^{-\frac{1}{(b+d)}N_1^{\frac{1}{31}}}. \]   
	        Finally, by Fubini's  theorem, we integrate \eqref{proj S_1^2} on $\omega_1$ and count all possibilities  of $k_1$ (totally $\epsilon_1^{-1}$ many). Then we get a set 
			\[\meas(\cup_{\omega_1}P_{\tilde{\omega}}\mcS_1^{(2)})\leq 100 \delta^b N_1^C M^{Cr_1^3} \epsilon_2\epsilon_1^{-1}.\]
	The iterations can continue and will  stop at $b$-th step, since $k_b=|k|\geq \frac{M^r}{2}=\epsilon_b^{-1}$. 
Moreover, the above estimates have already accounted for all possible $k$, i.e., all possible hyperplane $L$. Combining  all estimates across all cases, and also over all possible open sets $\Sigma$ (totally $N_1^C$), we obtain a set  (in  $\omega$)  $I^{r}_2\subset I_2$ such that 
\begin{align}\label{meas I_2^r}
	\meas(I_2^r) &\leq \delta^b N_1^C M^{Cr_1^3}(\epsilon_1+\epsilon_1^{-1}\epsilon_2+\cdots+\prod_{l=1}^{b-1}\epsilon_l^{-1}\epsilon_b)+\delta^b N^b N_1^C M^{Cr_1^3} e^{-\frac{1}{(b+d)}N_1^{\frac{1}{31}}}\\
	\notag &\leq \delta^b M^{-\frac{r}{2^{b-1}}}N^C_1 M^{Cr_1^3}, 
\end{align}
and for any $\omega\in I_2\setminus I_2^r,(\omega,\amplitude)\in \Gamma_{r_1}$, and  $(n_0,j_0)\in \mcR_2,\La_{N_1}\in \mcE^0_{N_1}$ with $(n_0,j_0)+\La_{N_1}\subset [-N,N]^{b+d}$, we have 
\begin{equation}\label{mcR_2 r_1 Green norm}
	\nm(R_{(n_0,j_0)+\La_{N_1}}\tilde{T}_{u^{(r_1)}}(\omega,\amplitude^{(r_1+1)}(\omega))R_{(n_0,j_0)+\La_{N_1}})^{-1}\nm\leq e^{N_1^{\frac{9}{10}}},
\end{equation}
and for $|n-n'|+|j-j'|>\sqrt{N_1},$
\begin{equation}\label{mcR_2 r_1 Green off-diag}
   |\big((R_{(n_0,j_0)+\La_{N_1}}\tilde{T}_{u^{(r_1)}}(\omega,\amplitude^{r_1+1}(\omega)) R_{(n_0,j_0)+\La_{N_1}})^{-1}\big)_{s,s'}(n,j;n',j')  | \leq e^{-c(|n-n'|+|j-j'|)}. 
\end{equation}
Therefore, we have established  the desired estimates on  Green's function on boxes of  size $N_1$ contained  in $\mcR_2$.\smallskip

 By summarizing  the above estimates: \eqref{mcR_1 r_1 Green norm}--\eqref{mcR_1 r_1 Green off-diag} in  \textbf{$\mcR_1$} and \eqref{mcR_2 r_1 Green norm}--\eqref{mcR_2 r_1 Green off-diag}   \textbf{$\mcR_2$}, we claim  that, for any $\La_{N_1}\in \mcE^0_{N_1}$,  $(n_0,j_0)\in [-N,N]^{b+d}$ with $\max\{|n_0|,|j_0|\}\geq \frac{M^r}{2}$, and  $\omega\in I_2\setminus(I_2^r\cup \tilde{I}_2^r),(\omega,\amplitude)\in \Gamma_{r_1}$, we have 
\begin{equation}\label{mcR r_1 Green norm}
	\nm(R_{(n_0,j_0)+\La_{N_1}}\tilde{T}_{u^{(r_1)}}(\omega,\amplitude^{(r_1+1)}(\omega))R_{(n_0,j_0)+\La_{N_1}})^{-1}\nm\leq e^{N_1^{\frac{9}{10}}},
\end{equation}
and for $|n-n'|+|j-j'|>\sqrt{N_1},$
\begin{equation}\label{mcR r_1 Green off-diag}
   |\big((R_{(n_0,j_0)+\La_{N_1}}\tilde{T}_{u^{(r_1)}}(\omega,\amplitude^{(r_1+1)}(\omega)) R_{(n_0,j_0)+\La_{N_1}})^{-1}\big)_{s,s'}(n,j;n',j')  | \leq e^{-c(|n-n'|+|j-j'|)}.
\end{equation}
From  $\Vert \Gamma_r-\Gamma_{r-1}\Vert\lesssim \delta\cdot\delta_{r}$  in Remark \ref{gammar}, we  have  
\begin{align*}
	\Vert \Gamma_r-\Gamma_{r_1} \Vert & \lesssim \delta\sum_{s=r_1+1}^{r}\delta_{s}\lesssim \delta_{r_1}\leq Ce^{-N_1^2}.
\end{align*}
And $\Vert \Delta_{cor}u^{(r)}\Vert \leq \delta_r$ gives 
\[\Vert u^{(r)}-u^{(r_1)}\Vert \leq \sum_{s=r_1+1}^{r}\delta_s\lesssim \delta_{r_1}\leq Ce^{-N_1^2}.\]
Then for any $\omega\in I_2\setminus (I_2^r\cup \tilde{I}_2^r)$ and $(\omega,\amplitude)\in \Gamma_r ,$ 
we have  for some $\tilde{\omega}\in I_2\setminus(I_2^r\cup\tilde{I}_2^r),(\tilde{\omega},\tilde{\amplitude})\in \Gamma_{r_1},$
\[|(\omega,\amplitude)-(\tilde{\omega},\tilde{\amplitude})|\leq \Vert \Gamma_r-\Gamma_{r_1}\Vert\leq Ce^{-N_1^2}.\]
So,  if we change $u^{(r_1)}$ to $u^{(r)}$, and change parameter from $\Gamma_{r_1}$ to $\Gamma_r$ in \eqref{mcR r_1 Green norm}--\eqref{mcR r_1 Green off-diag}, we have 
\begin{align*}
	|\big(\tilde{T}_{u^{(r)}}(\omega,\amplitude)-\tilde{T}_{u^{(r_1)}}(\tilde{\omega},\tilde{\amplitude})\big)_{s,s'}(n,j;n',j')| &\lesssim |u^{(r)}(\omega,\amplitude)-u^{(r_1)}(\tilde{\omega},\tilde{\amplitude})|\\ 
	    &\lesssim \Vert u^{(r)}-u^{(r_1)}\Vert+\Vert\partial u^{(r)}\Vert\cdot |(\omega,\amplitude)-(\tilde{\omega},\tilde{\amplitude})|\\ 
		&\leq Ce^{-N_1^2}.
\end{align*}
Hence, by the same argument as in  \eqref{perturbation 1}, \eqref{perturbation 2} and \eqref{perturbation by approimation to operator 1}, we obtain
\begin{equation}\label{mcR r Green norm}
	\nm(R_{(n_0,j_0)+\La_{N_1}}\tilde{T}_{u^{(r)}}(\omega,\amplitude^{(r+1)}(\omega))R_{(n_0,j_0)+\La_{N_1}})^{-1}\nm\leq e^{N_1^{\frac{9}{10}}},
\end{equation}
and for $|n-n'|+|j-j'|>\sqrt{N_1},$
\begin{equation}\label{mcR r Green off-diag}
   |\big[(R_{(n_0,j_0)+\La_{N_1}}\tilde{T}_{u^{(r)}}(\omega,\amplitude^{r+1}(\omega)) R_{(n_0,j_0)+\La_{N_1}})^{-1}\big]_{s,s'}(n,j;n',j')  | \leq e^{-c(|n-n'|+|j-j'|)}
\end{equation}
hold  for any $\La_{N_1}\in \mcE^0_{N_1}$, $(n_0,j_0)\in [-N,N]^{b+d}$ with $\max{|n_0|,|j_0|}\geq \frac{M^r}{2}$, and  $\omega\in I_2\setminus(I_2^r\cup \tilde{I}_2^r),(\omega,\amplitude)\in \Gamma_{r}$.\smallskip

Now,  we know that for any $\omega\in I_2\setminus (I_2^r\cup \tilde{I}^r_2),(\omega,\amplitude)\in \Gamma_r$,  Green's functions estimates   (involving $u^{(r)}$) \eqref{center block r+1 Green norm}, \eqref{center block r+1 Green off-diag}, \eqref{mcR r Green norm} and \eqref{mcR r Green off-diag} hold true. By applying the  resolvent identity as  in \cite{HSSY24} (cf. Lemma 3.6), 
we have that for all $(\omega,{\bf a})\in \cup_{I\in \mcI_r}I$ and $\omega\in I_2\setminus(I_2^r\cup \tilde{I}_2^r)$,
\begin{equation}\label{total r+1 Green norm}
	\nm(R_{[-M^{r+1},M^{r+1}]^{b+d}}\tilde{T}_{u^{(r)}}(\omega,\amplitude)R_{[-M^{r+1},M^{r+1}]^{b+d}})^{-1}\nm\leq\frac{1}{2} M^{(r+1)^C},
\end{equation}
and for $|n-n'|+|j-j'|>r^C,$
\begin{equation}\label{total r+1 Green off-diag}
|\big((R_{[-M^{r+1},M^{r+1}]^{b+d}}\tilde{T}_{u^{(r)}}(\omega,\amplitude) R_{[-M^{r+1},M^{r+1}]^{b+d}})^{-1}\big)_{s,s'}(n,j;n',j')  | \leq \frac{1}{2} e^{-c(|n-n'|+|j-j'|)}.
\end{equation}
Moreover, if we perturb  $(\omega,\amplitude)$ to $|(\omega_1,\amplitude_1)-(\omega,\amplitude)|\leq M^{-(1+r)^{10C}}$, the same argument as in  \eqref{perturbation 1}, \eqref{perturbation 2} and \eqref{perturbation by approimation to operator 1} will ensure that the Green's function estimates \eqref{total r+1 Green norm}--\eqref{total r+1 Green off-diag} still hold for $(\omega_1,\amplitude_1)$ {(with no  factor $\frac12$  on the right hand side)}. This implies that estimates  \eqref{total r+1 Green norm}--\eqref{total r+1 Green off-diag} essentially remain the same in a $M^{-(1+r)^{10C}}$-neighborhood of $(\omega,\amplitude)$. Finally, it suffices to  take account of  all such neighborhoods  and all possible $I_2$ to generate $\mcI_{r+1}$. We have thus proved \textbf{Hiv.} (4) for $r+1$.\smallskip

Finally, we turn to  \textbf{Hiv.} (5). On one hand, the Diophantine condition required by  Theorem \ref{LDT} makes that, for  $r=r_0-1$,
\begin{equation}\label{Diophantine begin step meas}
	\meas(DC((r_0\log M )^C)\setminus DC(|\log\delta|^K))\leq \delta^2 e^{-|\log\delta|^{K_1^{90}}},
\end{equation}
and for $r\geq r_0$, 
\begin{equation}\label{Diophantine r step meas}
	\meas(DC[(r+1)\log M ]^C)\setminus DC((r\log M)^C)\leq e^{-r^C}.
\end{equation}
By counting all possible $I_1$ (or $I_2$) (the total number is bounded by $M^{r_1^{10C}}$),  combining  measure estimates \eqref{meas tildeI_2^r},\eqref{meas I_2^r},\eqref{Diophantine begin step meas} and \eqref{Diophantine r step meas}, and $\cup_{I'\in \mcI_r}I' \subset \cup_{I_1\in \mcI_{r_1}}I_1 $,  we have that for $r\geq r_0$, 
\begin{equation}\label{proj omega r}
	\text{meas} (P_{\omega}(\Gamma_{r}\cap (\bigcup_{I'\in {\mcI}_{r}} I'\backslash \bigcup_{I\in {\mcI}_{r+1}}I)))\leq \delta^b M^{r_1^C} N_1^C M^{-\frac{r}{2^{b-1}}}+ e^{-r^C}\leq \delta^b M^{r_1^C} N_1^C M^{-\frac{r}{2^{b-1}}},
\end{equation}
and for $r=r_0-1,$
\begin{equation}\label{proj omega r_0}
	\text{meas} (P_{\omega}(\Gamma_{r}\cap (\bigcup_{I'\in {\mcI}_{r}} I'\backslash \bigcup_{I\in {\mcI}_{r+1}}I)))\leq \delta^b M^{r_1^C}N_1^C M^{-\frac{r}{2^{b-1}}}+ \delta^2 e^{-|\log \delta|^{K_1^{90}}}.
\end{equation}
Since  $\Gamma_r$ is obtained by solving the Q-equation  \eqref{Q-equation Explicit} and satisfies \eqref{approimation omega^r+1},  it's easy to get that $\det(\frac{\partial \omega}{\partial \amplitude})\sim\delta^b$. Hence,  we can transfer the estimates  \eqref{proj omega r} and \eqref{proj omega r_0} into projection on $\amplitude$: for $r\geq r_0,$
\begin{equation}\label{proj amplitude r}
	\text{meas} (P_{\amplitude}(\Gamma_{r}\cap (\bigcup_{I'\in {\mcI}_{r}} I'\backslash \bigcup_{I\in {\mcI}_{r+1}}I))) \leq \delta^{-b}\delta^b M^{r_1^C} N_1^C M^{-\frac{r}{2^{b-1}}}\leq M^{-\frac{r}{2^b}},
\end{equation}
and for $r=r_0-1,$
\begin{align}\label{proj amplitude r_0}
	\text{meas} (P_{\amplitude}(\Gamma_{r}\cap (\bigcup_{I'\in {\mcI}_{r}} I'\backslash \bigcup_{I\in {\mcI}_{r+1}}I))) & \leq \delta^{-b}\delta^b M^{r_1^C} N_1^C M^{-\frac{r}{2^{b-1}}} +\delta^{-b+2}e^{-|\log\delta|^{K_1^{90}}}\\
	 \notag &\leq M^{-\frac{r}{2^b}}+ e^{-|\log\delta|^{K_1^{80}}}.
\end{align}
That is \textbf{Hiv.} (5) for $r+1$.

Thus, we have finished the proof of  induction theorem. 
\end{proof}

Based on Theorem \ref{inductive}, we can prove our main theorem on the existence of Anderson localized states.

\begin{proof}[Proof of Theorem \ref{Main Thm}]

By \textbf{Hiv.} (5),  we can construct the set $\mcR=\mcR_{\bfa,\delta,\varepsilon,\theta}$ by taking intersections on \eqref{proj amplitude r}--\eqref{proj  amplitude r_0} for all $r\geq 1$ in Theorem \ref{inductive},  and 
\begin{align*}
	\meas([1,2]^b\setminus \mcR) &\leq \sum_{r\geq r_0-1}M^{-\frac{r}{2^b}}+e^{-|\log\delta|^{K_1^{80}}} \\
	          &\leq 2M^{-\frac{r_0-1}{2^b}}+e^{-|\log\delta|^{10}}\\
			  &\leq e^{-|\log\delta|^{\frac{1}{2}}}.
\end{align*}
Thus, Theorem \ref{Main Thm} follows immediately from the  Theorem \ref{inductive}.
\end{proof}

\appendix{}
\section{Lemma \ref{KPS} via  Rellich function analysis}\label{CSZapp}
In this section, we provide a proof of Lemma \ref{KPS}  (cf. \cite{KPS24}) based on MSA type Rellich function estimates from  \cite{CSZ24}. 
Roughly speaking, Rellich functions are certain eigenvalue functions (in $\theta$) of finite volume restrictions of $H(\theta).$
 We begin with some results proven in \cite{CSZ24}.   First, we consider  the induction parameters: 
\begin{itemize}
	\item $l_0=1,\delta_0=\varepsilon^{\frac{1}{20}},l_1=[|\log \delta_0|^4],\gamma_0=\frac{1}{2}|\log \varepsilon|.$
	\item For $n\geq 1$, $l_{n+1}=l_{n}^2,\delta_n=e^{-l_n^{\frac{2}{3}}},\gamma_n=\prod_{k=1}^{n}(1-l_k^{-\frac{1}{80}})\gamma_0.$
	\item  The above settings  imply  there is some absolute constant $\gamma_\infty$ such that 
	        \[\gamma_n\geq \gamma_{\infty}=\prod_{k=1}^{\infty}(1-l_k^{-\frac{1}{80}})\gamma_0\geq \frac{1}{2}\gamma_0.\]
\end{itemize}

 Recall that $Q_l\subset\Z^d$ denotes a ball in the $\ell^\infty$-norm of center $0$ and radius $l>0.$ We have 
\begin{state}[Rellich function estimates, \cite{CSZ24}]
Let  $H(\theta)=\varepsilon\Delta+V(\theta+j\cdot\bfa)\delta_{j,j{'}}$ with a  Lipschitz monotone potential  $V(\theta)$ (here $L$ is the Lipschitz constant). Then we can construct Rellich functions  and  establish quantitative Green's function estimates as follows. 
\begin{itemize}
	\item \textbf{(Induction  blocks)} At the $n$-th step, we can construct    a  block $B_n\subset \Z^d$ satisfying 
	       \[ B_0=\{0\}, \ B_{1}=Q_{l_1},\]
	       \[Q_{l_n}\subset B_n\subset Q_{l_n+50l_{n-1}}\ {\rm for}\ n\geq 2.\]

	\item \textbf{(Rellich functions)} The Dirichlet restriction $H_{B_n}(\theta)$ has a $1$-periodic, real-valued Rellich function $E_n(\theta)$ satisfying the following properties:\\
	       (1) $E_0(\theta)=V(\theta)$;\\
		   (2) For $n\geq 1$,  
		      \[|E_n(\theta)-E_{n-1}(\theta)|\leq e^{-l_{n-1}},\]
			  where $e^{-l_0}:=\varepsilon$. In fact, $E_n(\theta)$ is the unique eigenvalue of $H_{B_n}(\theta)$ in $\disk(E_{n-1}(\theta),10\delta_{m-1}):=\{E:\ |E-E_{n-1}(\theta)|\leq 10\delta_{m-1}\};$\\
		   (3) For all $n\geq 0, E_n(\theta)$ is non-decreasing  and Lipschitz monotone with a  Lipschitz constant $L$, i.e.,
		     \[E_n(\theta_1)-E_n(\theta_2)\geq L(\theta_1-\theta_2),\ 0\leq \theta_2\leq \theta_1<1.\] 
	\item \textbf{(Good and bad sets.)} For $j\in \Z^d$,  denote $B_n(j):=B_n+j$. Fix $\theta\in \R, E\in \mathbb{R}$. Define the $n$-th  step resonant  set as 
	   \[S_n(\theta,E)=\big\{ j\in\Z^d:\ |E_n(\theta+j\cdot \bfa)-E|<\delta_n\big\}.\]
	   We have  $S_{n+1}(\theta,E)\subset S_n(\theta,E)$.  Let $\La\subset \Z^d$ be a finite set. Related to $(\theta,E)$, we say\\
	   (1) $\La$ is $n$-nonresonant if $\La\cap S_n(\theta,E)=\emptyset$;\\
	   (2) $\La$ is $n$-regular if $j\in \La\cap S_k(\theta,E)\Rightarrow B_{k+1}(j)\subset\La$ for $0\leq k\leq n-1$;\\
	   (3) $\La$ is $n$-good if it is both $n$-nonresonant and $n$-regular. \\
	   Particularly,  if $j\in S_n(\theta,E)$, then $B_n(j)\backslash\{j\}$ is $(n-1)$-good.
	\item \textbf{(Green's function estimates)} Fix $\theta\in \R, E\in \mathbb{R}$. Assuming  that a finite set $\La$ is $n$-good related to $(\theta,E)$, then for  the  Green's function $G_{\La}(\theta,E)=(H_{\La}(\theta)-E)^{-1}$ we have 
	   \begin{align}
		\label{Green l2 estimate}\nm G_{\La}(\theta,E)\nm&\leq 10\delta_n^{-1},\\
	\label{Green off-diag estimate}|G_{\La}(\theta,E)(x,y)|&\leq e^{-\gamma_n |x-y|_1}  \ {\rm for}\quad |x-y|_1\geq l_n^{\frac{5}{6}}.
	   \end{align}
	   We remark that the above estimates hold for all $n\geq 0$.
\end{itemize}
\end{state}

The above statements hold for $\alpha\in\DC, \theta\in\T$ and $0<\varepsilon\leq \varepsilon_0(\gamma,\tau,d,L)$. In the following,  an argument for ``${\forall} \theta\in\R$'' typically represents  an almost everywhere one,  because we  will remove  those $\theta$ such that $|V(\theta+j\cdot\bfa)|=\infty$. 
We have the following steps. \smallskip

\textbf{Step 1. Approximating  the eigenvalue with Rellich function}\\
Now fix any $\ct \in \R$.  Assume that $\phi(\ct)$ is an eigenfunction of $H(\ct)$ with an eigenvalue $\mu(\ct)$.  Since  $\theta$ has been fixed, we just omit the dependence on $\theta$ of $H,\phi,\mu$. Choose one localization center $l\in \Z^d$ of $\phi(\ct)$. That is,
\[|\phi(l)|=\nm \phi\nm_{\infty}.\]
Without loss of generality,  we assume $\phi(l)=1$ and hence $|\phi(j)|\leq 1\ {\rm for}\ {\forall}j\in\Z^d$. Consider 
\[(H-\mu)\phi=0.\]
From $[(H-\mu)\phi](l)=0$, we obtain
\[(V(\theta+l\cdot\bfa)-\mu(\theta))\phi(l)+\varepsilon\sum_{|j-l|_1=1}\phi(j)=0.\]
Thus, we get  
\begin{align*}
		|V(\theta+l\cdot\bfa)-\mu(\theta)| &\leq \varepsilon \sum_{|j-l|_1=1}\phi(j)\\
		                                   &\leq 2d\varepsilon \ll \delta_0=\varepsilon^{\frac{1}{20}}
\end{align*}
provided $0<\varepsilon<c(d)\ll1$. Hence, $l\in S_0(\ct,\mu)$. We will  prove inductively  that 
\begin{equation}\label{localization center is bad}
	l\in S_n(\ct,\mu) \ {\rm for}\ {\forall } n\geq 0.  
\end{equation}
Now $(\ref{localization center is bad})$  holds for $n=0$ as shown above.

Assume $l\in S_n(\ct,\mu)$. Since  $l$ is the localization center,  the same argument  as in Claim 4.7 in  \cite{CSZ24} shows 
\begin{equation}\label{CSZ Claim 4.7}
	S_{n+1}(\ct,\mu)\cap Q_{50l_{n+1}}(l)\neq \emptyset.
\end{equation}
Moreover,  the Diophantine condition on $\bfa$ ensures the \textbf{separation  property} of $S_n(\ct,\mu)$:  for $j\neq j{'}\in S_n(\ct,\mu)$, we have 
\begin{align*}
	2\delta_n  &>|E_n(\ct+j\cdot \bfa)-\mu|+|E_n(\ct+j{'}\cdot \bfa)-\mu|\\
	           &\geq|E_n(\ct+j\cdot \bfa)-E_n(\ct+j{'}\cdot \bfa)| \\
			   &\geq L\nm (j-j{'})\cdot \bfa\nmt\\
			   &\geq \frac{L\gamma}{|j-j{'}|^{\tau}} \geq \frac{L\gamma}{|j-j{'}|_1^{\tau}}.
\end{align*}
This shows 
\[|j-j{'}|>(\frac{L\gamma}{2\delta_n})^{\frac{1}{\tau}}=(\frac{L\gamma}{2})^{\frac{1}{\tau}}e^{\frac{1}{\tau}l_n^{\frac{2}{3}}}\gg 100l_{n+1}=100l_n^2\]
provided  $0<\varepsilon<c(L,\gamma,\tau)\ll1$. Thus, from the  induction assumption, we have 
\[S_n(\theta,\mu)\cap Q_{50l_{n+1}}(l) = \{l\}.\]
By $S_{n+1}(\theta,\mu)\subset S_n(\theta,\mu)$ and $(\ref{CSZ Claim 4.7})$, we  obtain
\[S_{n+1}(\theta,\mu)\cap Q_{50l_{n+1}}(l) = \{l\}.\]
This  means $l\in S_{n+1}(\ct,\mu)$, and $(\ref{localization center is bad})$ is proved.\smallskip

\textbf{Step 2. Exponential decay of eigenfunctions}\\
We fix any $\ct\in\R$ and assume that the eigenfunction $\phi$ has a localization center $l$ (with the eigenvalue $\mu$). We further assume that $\nm\phi\nm_{\ell^2(\Z^d)}=1$, which implies $\nm \phi\nm_{\infty}\leq  1$.  For any $x\neq l\in \Z^d$, we have

\textbf{(Case 1)} $1\leq |x-l|_1\leq \frac{1}{2}l_1$. Then  $|x-l|\leq |x-l|_1\leq \frac{1}{2}l_1$. Let $\La=B_1(l)\backslash\{l\}$. Since  $(\ref{localization center is bad})$,  we have  $l\in S_1(\ct,\mu)$. Then $\La$ is 0-good.  Moreover, 
\begin{align}
	\label{1-far 1}|x-w|_1&\geq |x-w|>\frac{1}{2}l_1\gg l_0^{\frac{5}{6}}\  {\rm for}\ {\forall} w\in \pa_- B_1(l),\\
	\label{1-far 2}|x-l|_1&\geq 1= l_0^{\frac{5}{6}},
\end{align}
where  for a set $B\subset\Z^d,$
\[\pa_+B=\big\{w\in \Z^d\setminus B:\ \exists w'\in B\ {\rm s.t.,} \ |w'-w|_1=1\big\},\]
\[\pa_-B=\big\{w\in B:\ \exists w'\in \Z^d\setminus B\ {\rm s.t.,} \ |w'-w|_1=1\big\}.\]
For simplicity, we denote $G_{\La}=G_{\La}(\theta,\mu)$. Then alppying the Poisson's  formula and  $(\ref{Green off-diag estimate})$ yields 
\begin{align*}
	|\phi(x)| &=\varepsilon \bigg| \sum_{\substack{w\in \pa_-\La,w'\in \pa_+\La \\ |w-w'|_1=1}}G_{\La}(x,w)\phi(w')\bigg| \\
              &\leq \varepsilon \sum_{|w-l|_1=1} |G_{\La}(x,w)|+\varepsilon\sum_{\substack{w\in \pa_- B_1(l),w'\in \pa_+ B_1(l) \\ |w-w'|_1=1}}|G_{\La}(x,w)||\phi(w')| \\
			  &\leq \varepsilon \sum_{|w-l|_1=1} e^{-\gamma_0 |x-w|_1}+ 2d\varepsilon  \sum_{w\in \pa_- B_1(l)}e^{-\gamma_0|x-w|_1}\\
			  &\leq 10\delta_0^{-1} \varepsilon \sum_{|w-l|_1=1} e^{-\gamma_0 (|x-l|_1-1)}+ 2d\varepsilon  \sum_{w\in \pa_- B_1(l)}e^{-\gamma_0|x-l|_1}\\
			  &\leq (20d \delta_0^{-1}\varepsilon e^{\gamma_0}+ 2d\varepsilon\cdot \# (\pa_- B_1))e^{-\gamma_0|x-l|_1}\\
			  &\leq e^{-\frac{1}{2}\gamma_0|x-l|_1}\leq e^{-\frac{1}{2}\gamma_{\infty}|x-l|_1}.
\end{align*}

\textbf{(Case 2)}  $\frac{1}{2}l_n<|x-l|_1<\frac{1}{2}l_{n+1}$ for some $n\geq 1$. In this case, we  have $|x-l|\leq |x-l|_1\leq \frac{1}{2}l_{n+1}$. Let $\La= B_{n+1}(l)\backslash\{l\}$. Since  $(\ref{localization center is bad})$,  we know $l\in S_{n+1}(\ct,\mu)$. Then $\La$ is $n$-good. Moreover, we have 
\begin{align*}
|x-w|_1&\geq |x-w|>\frac{1}{2}l_{n+1} \gg l_n^{\frac{5}{6}}\ {\rm for}\ {\forall} w\in \pa_- B_{n+1}(l),\\
 |x-l|_1&\geq \frac{1}{2}l_n \Rightarrow |x-w|_1\geq \frac{1}{2}l_n-1\gg l_n^{\frac{5}{6}}\ {\rm for}\ {\forall} |w-l|_1=1.
\end{align*}
Hence, by applying Poisson's formula and using similar estimates  as in {\bf  (Case 1)},  we obtain  
\begin{align*}
	|\phi(x)| &=\varepsilon \bigg| \sum_{\substack{w\in \pa_-\La,w'\in \pa_+\La \\ |w'-w|_1=1}}G_{\La}(x,w)\phi(w')\bigg| \\
              &\leq \varepsilon \sum_{|w-l|_1=1} |G_{\La}(x,w)|+\varepsilon\sum_{\substack{w\in \pa_- B_{n+1}(l),w'\in \pa_+ B_{n+1}(l) \\ |w'-w|_1=1}}|G_{\La}(x,w)||\phi(w')| \\
			  &\leq (2d\varepsilon e^{\gamma_n}+ 2d\varepsilon\cdot \# (\pa_- B_n))e^{-\gamma_n|x-l|_1}\\
			  &\leq e^{-\frac{1}{2}\gamma_n|x-l|_1}\leq e^{-\frac{1}{2}\gamma_{\infty}|x-l|_1}
\end{align*}

Combining estimates in the above two cases yields 
\[|\phi(x)|\leq e^{-\frac{1}{2}\gamma_{\infty}|x-l|_1}\leq e^{-\frac{1}{4}\gamma_0|x-l|_1}=\varepsilon^{\frac{1}{8} |x-l|_1}\ {\rm for}\ {\forall} x\neq l.\]
Recall that  $\phi$ is normalized. We have 
\begin{align*}
	1-|\phi(l)|^2  &=\sum_{x:\ x\neq l}|\phi(x)|^2\\
	            &\leq \sum_{x:\ x\neq l}\varepsilon^{\frac{1}{4}|x-l|_1}\\
				&\leq \sum_{k=1}^{\infty} (2k+1)^d\varepsilon^{\frac{k}{4}} \leq \varepsilon^{\frac{1}{5}}
\end{align*}
provided $0<\varepsilon\leq c(d)\ll1$.  Thus, $|\phi(l)-1|\leq \varepsilon^{\frac{1}{5}}$.\smallskip

\textbf{Step 3. The completeness of eigenfunctions}\\
We have proven that for any fixed $\theta$ and any normalized  eigenfunction $\phi(\theta)$ (with corresponding  eigenvalue $\mu(\theta)$) with localization center $l$,  
\begin{align}
\label{Approximate By Rellich}l\in S_n(\theta,\mu(\theta))&\Rightarrow |\mu(\theta)-E_n(\theta+l\cdot\bfa)|<\delta_n\ {\rm for}\ {\forall}n\geq 0,\\
	\label{Explicit localization} |\phi(\theta, l)-1|&\leq \varepsilon^{\frac{1}{5}} ,\  |\phi(\theta, x)|\leq \varepsilon^{\frac{1}{8} |x-l|_1}\ {\rm for}\ {\forall} x\neq l\in \Z^d,
\end{align}
and hence, $l$ is the unique localization center of $\phi(\theta) $. Below we are  intended to show %the following fact:
\begin{claim}\label{Simple relabeling.}
	For any $\theta \in \R$ and each lattice site $j \in \Z^d$, there exists   a  unique eigenfunction $\phi_j(\theta)$ of $H(\theta)$ peaking at $j$, namely,  $j$ is the unique localization center of $\phi_j(\theta)$ and if there is another eigenfunction $\psi(\theta)$ peaking at $j$, then $\psi(\theta)=\phi_j(\theta)$. 
\end{claim}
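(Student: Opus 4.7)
The plan is to separately establish existence and uniqueness of $\phi_j(\theta)$, both using the Rellich function machinery of Steps~1--2 together with the strict monotonicity of the limit $E(\theta):=\lim_n E_n(\theta)$.

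For existence, I first construct $\phi_0(\theta)$ as the $\ell^2$-limit of the normalized Dirichlet eigenfunctions $\phi^{(n)}(\theta)$ of $H_{B_n}(\theta)$ associated with $E_n(\theta)$. Since by construction $0\in S_n(\theta,E_n(\theta))$, the set $B_n\setminus\{0\}$ is $(n-1)$-good, so Poisson's formula on $B_n\setminus\{0\}$ combined with the Green's function bounds \eqref{Green l2 estimate}--\eqref{Green off-diag estimate} yields, exactly as in Step~2, the uniform localization $|\phi^{(n)}(x)|\leq\varepsilon^{|x|_1/8}$ for $x\in B_n\setminus\{0\}$ and $|\phi^{(n)}(0)-1|\leq\varepsilon^{1/5}$ (after fixing a phase). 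A resolvent identity between $(H_{B_{n+1}}-E_{n+1})^{-1}$ and $(H_{B_n}-E_n)^{-1}$, combined with $|E_{n+1}-E_n|\leq e^{-l_n}$ and the uniform exponential tails, then shows $\{\phi^{(n)}\}$ is Cauchy in $\ell^2(\Z^d)$; its limit $\phi_0(\theta)$ is an eigenfunction of $H(\theta)$ with eigenvalue $E(\theta)$, satisfying \eqref{Explicit localization} with $l=0$. For arbitrary $j\in\Z^d$ I then set $\phi_j(\theta):=(T^j\phi_0)(\theta+j\cdot\bfa)$; the covariance computation underlying Lemma~\ref{prop of eigenvectors}(1) shows $\phi_j(\theta)$ is an eigenfunction of $H(\theta)$ with eigenvalue $E(\theta+j\cdot\bfa)$ that peaks at $j$.

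For uniqueness, let $\psi(\theta)$ be any normalized eigenfunction of $H(\theta)$ peaking at $j$, with eigenvalue $\mu$. By \eqref{Approximate By Rellich} together with $E_n\to E$, I get $\mu=E(\theta+j\cdot\bfa)$, and after a unimodular phase adjustment \eqref{Explicit localization} yields $|\psi(j)-1|\leq\varepsilon^{1/5}$ (and likewise for $\phi_j(j)$). Setting $\tilde\psi:=\psi-(\psi(j)/\phi_j(j))\phi_j$, one has $\tilde\psi(j)=0$ and $\tilde\psi\in\ell^2(\Z^d)$ lies in the $\mu$-eigenspace. If $\tilde\psi\neq 0$, Step~2 applied to $\tilde\psi/\nm\tilde\psi\nm_{\ell^2}$ produces a unique localization center $l$ with eigenvalue $E(\theta+l\cdot\bfa)=\mu=E(\theta+j\cdot\bfa)$. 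The Lipschitz monotonicity of $E$ and $\bfa\in\DC$ force
\[|E(\theta+l\cdot\bfa)-E(\theta+j\cdot\bfa)|\geq L\nm (j-l)\cdot\bfa\nmt\geq L\gamma/|j-l|_1^\tau>0\quad\text{whenever}\quad l\neq j,\]
so $l=j$; but then \eqref{Explicit localization} applied to $\tilde\psi/\nm\tilde\psi\nm_{\ell^2}$ gives $|\tilde\psi(j)/\nm\tilde\psi\nm_{\ell^2}-1|\leq\varepsilon^{1/5}$, contradicting $\tilde\psi(j)=0$. Hence $\tilde\psi=0$ and $\psi=(\psi(j)/\phi_j(j))\phi_j$, proving uniqueness.

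The main obstacle will be the quantitative Cauchy estimate for $\{\phi^{(n)}\}$, where one must compare Rellich eigenfunctions on nested blocks with slightly different eigenvalues. The plan is to split $\phi^{(n+1)}-\phi^{(n)}$ into a bulk contribution on $B_n$, controlled via the resolvent identity using $\nm G_{B_n\setminus\{0\}}\nm\leq 10\delta_n^{-1}$ and $|E_{n+1}-E_n|\leq e^{-l_n}\ll\delta_n^2$, and a tail contribution on $B_{n+1}\setminus B_n$ bounded by $\sum_{|x|_1>l_n}\varepsilon^{|x|_1/4}$ from the uniform exponential decay; both pieces sum to a summable series, delivering $\phi_0$.
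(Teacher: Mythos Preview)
Your proof is correct but takes a genuinely different route from the paper's.

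The paper's argument is a short Parseval pigeonhole: it invokes the completeness of the eigenfunction basis $\{\psi_s\}$ (imported from \cite{CSZ24}), so that $\sum_s|\psi_s(x)|^2=1$ for every $x$. If two distinct $\psi_{s_1},\psi_{s_2}$ peaked at the same $x$, \eqref{Explicit localization} would force the sum to exceed $1$; if no $\psi_s$ peaked at $x$, the fact (already established) that distinct eigenfunctions have distinct localization centers makes the sum bounded by $\sum_{l\neq x}\varepsilon^{|l-x|_1/4}\ll 1$. Both contradict Parseval, and that is the entire proof.

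Your approach, by contrast, is constructive and spectral. For existence you build $\phi_0(\theta)$ as an $\ell^2$-limit of the finite-volume Rellich eigenfunctions $\phi^{(n)}$, then translate; for uniqueness you exploit that \eqref{Approximate By Rellich} pins down the eigenvalue as $E(\theta+j\cdot\bfa)$ and that the strict monotonicity of $E$ together with $\bfa\in\DC$ forces the eigenspace to be simple. The trade-off is clear: the paper's proof is three lines but consumes the full Anderson-localization/completeness theorem of \cite{CSZ24} as a black box, whereas your argument avoids completeness altogether (indeed it would yield completeness as a corollary once the $\phi_j$'s are shown orthonormal) at the cost of the Cauchy estimate for $\{\phi^{(n)}\}$. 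Your outlined control of $\phi^{(n+1)}-\phi^{(n)}$ via the resolvent on $B_n\setminus\{0\}$ and the bound $|E_{n+1}-E_n|\leq e^{-l_n}\ll\delta_{n-1}^2$ is standard and will go through; note also that for the Dirichlet eigenfunction $\phi^{(n)}$ the Poisson identity on $B_n\setminus\{0\}$ has no outer boundary term, so the localization estimate is in fact easier than in Step~2.
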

\begin{proof}[Proof of Claim A.1.]
	Denote by $\{\psi_s(\theta)\}_{s\in \Z^d}$  the complete set of orthogonal  normalized eigenfunctions  (without relabelling)  of $H(\theta)$ in $\ell^2(\Z^d)$ (this is a consequence of main theorem in \cite{CSZ24}). For simplicity,  we hide the $\ct$ dependence. By Parseval's  equality, if $\{e_x\}_{x\in\Z^d}$ is the standard basis of $\ell^2(\Z^d)$, we have  for any $x\in\Z^d,$
	\begin{align}\label{parse}
	\nm e_x\nm_{\ell^2(\Z^d)}^2=\sum_{s\in \Z^d}|\psi_s(x)|^2=1.
	\end{align}
	Suppose that there are two different $\psi_{s_1},\psi_{s_2}$ with  the same localization center $x$. Then by $(\ref{Explicit localization}),$
	\[ \sum_{s\in \Z^d}|\psi_s(x)|^2\geq |\psi_{s_1}(x)|^2+|\psi_{s_2}(x)|^2\geq 2-2\varepsilon^{\frac{1}{5}}>1,\]\
	which contradicts \eqref {parse}. 
	
	Next, suppose there is no $\psi_s$ peaking at $x$.  Then by $(\ref{Explicit localization})$ and the fact  that different eigenfunctions cannot  peak at  the same lattice site, 
	\[\sum_{s\in \Z^d}|\psi_s(x)|^2 \leq \sum_{l\in \Z^d:\ l\neq x}\varepsilon^{\frac{1}{8}|l-x|_1}\leq \sum_{k=1}^{\infty}(2k+1)^d\varepsilon^{\frac{k}{8}}\ll 1,\]
	which also contradicts \eqref{parse}.
\end{proof}

Thus, Claim $\ref{Simple relabeling.}$, together with $(\ref{Explicit localization})$, ensures the completeness of $\{\phi_j(\theta)\}_{j\in\Z^d}$. In other words, we construct a relabelling of the orthogonal eigenfunctions basis of $H(\theta)$ such that the index matches with the unique localization center.

Finally, for each $\theta$,  pick  the unique eigenfunction $\phi(\theta)$ whose localization center is exactly $0$, and denote its eigenvalue as $E(\theta)$. Then $(\ref{Explicit localization})$ becomes 
\[	|\phi(\theta, 0)-1|\leq \varepsilon^{\frac{1}{5}} , |\phi(\theta, x)|\leq \varepsilon^{\frac{1}{8} |x|_1}\ {\rm for}\ 
 {\forall} x\in \Z^d\setminus\{0\}. \]
Moreover, by $(\ref{Approximate By Rellich})$, 
\begin{align*}
	E(\theta_1)-E(\theta_2) &\geq E_n(\ct_1)-E_n(\ct_2)-|E(\theta_1)-E_n(\ct_1)|-|E(\theta_2)-E_n(\ct_2)|\\
                         &\geq L(\theta_1-\ct_2)-2\delta_n \ {\rm for}\  0\leq\theta_2\leq \ct_1<1. 
\end{align*}
Letting $n\rightarrow\infty$ in the above inequality,  we have $E(\theta_1)-E(\theta_2)\geq L(\theta_1-\ct_2)$. Therefore, $\phi(\theta),E(\theta)$ are  what we want to find in Lemma $\ref{KPS}$.

\begin{rmk}
	The only difference between what we prove in this section and that in Lemma  $\ref{KPS}$ is  the exponential decay rate. Here, it  is $\frac{1}{8}$, while  in Lemma $\ref{KPS}$, it  is $(1-\eta)$ and can be arbitrarily  close to $1$ provided that $0<\varepsilon\ll1$). This minor issue, however, can also  be resolved (cf.  Remark 1.5,   \cite{CSZ24}). 
	\end{rmk}

\section{Relabeling the localized eigenfunctions}\label{Hallapp}

This section presents a more general method for relabelling the localized eigenfunctions of  Schr\"odinger operators in  higher dimensions.  First, assume that the Schr\"odinger operator $H$ on $\Z^d$ exhibits Anderson localization, meaning it has a pure point spectrum with exponentially decaying eigenfunctions $\phi=\{\phi(j)\}_{j\in\Z^d}\in \ell^2(\Z^d)$).  For $\phi\in\ell^2(\Z^d)$, denote
 the set of localization centers  of $\phi$ by 
 \[\LC(\phi):=\left\{l\in \Z^d:\ |\phi(l)|=\max_{j\in\Z^d}|\phi(j)|\right\}.\] 
 
We emphasize that in the case of Lipschitz monotone potentials, Lemma $\ref{KPS}$ shows  that each eigenfunction has  only one localization center. However,  for Schr\"odinger operators with general potentials,  the set $\LC(\phi)$ can contain many elements.  Therefore, we require the eigenfunctions to have a semi-uniform localization control:
 
\begin{assumption}[Semi-uniform Localization Eigenfunctions (SLUE)]\label{SULEass}
We say that $H$ has SLUE property, if  there are  some constants  $C,c,q>0$ such that  
\begin{equation}\label{uniform localization control}
	|\phi(x)|\leq C(1+|l|)^q e^{-c|x-l|}\ {\rm for}\ {\forall}l\in \LC(\phi)\ {\rm and }\ \forall x\in\Z^d
\end{equation} 
holds for all eigenfunctions  $\phi$ (of $H$) with $\nm \phi\nm_{\ell^2(\Z^d)}=1$, where $C,c,q$ are all independent of $\phi$.
\end{assumption}
 If  the bound \eqref{uniform localization control} becomes  $Ce^{-c|x-l|}$,  we call such property the  ``uniform localization eigenfunctions" (ULE).  Both SULE and ULE were  discussed in \cite{dJLS96}. 
 
Some important models satisfying SULE include: 

\begin{ex} 
	\begin{itemize}
		\item \textbf{The i.i.d. random potentials case} 
		  (cf. sect.1.6,  \cite{GK14}).  Assume that the potential $\{V_j\}_{j\in \Z^d}$consists of i.i.d. random variables with a bounded distribution density $g$. Consider 
		 \[H=\varepsilon\Delta+V_j\delta_{j,j'}.\]
		For any fixed small $varepsilon$(in fact, in one dimension, such a restriction is not necessary), Anderson localization holds. Consequently, there exist some $c,q>0$ such that, with probability one,
		            \[|\phi(x)|\leq C_V(1+|l|)^q e^{-c|x-l|}\quad {\forall}l\in \LC(\phi)\ {\rm and}\ \forall x\in\Z^d\]
		for any normalized eigenfunction $\phi$,  where  $\mathbb{E}(C_V)<\infty$. 		
		
		\item \textbf{The Lipschitz monotone potentials case} (cf. \cite{KPS24}).    
		In fact, Lemma $\ref{KPS}$ shows that even ULE property  holds in this case.  	
		\end{itemize}
\end{ex}

Below, we aim to construct a relabelling of eigenfunctions(cf. Theorem \ref{relabelling thm})  under the assumption that the SULE property holds true.
The construction of the relabelling is inspired by the one-dimensional relabelling argument in \cite{LW24} (cf. Appendix A), where the authors used the standard ordering on $\Z$ to define the relabelling. However, for $d>1$,  there is no such  an ordering  on  $\Z^d$.  Instead, we will use the annulus structure to construct the relabelling map.\smallskip

\textbf{Step 1.  Equidistribution of localization centers}\\
For scale $L\in \mathbb{N}_+$ and $k\in\Z^d$, let  
\[\La_{k\nearrow L}=(k_1,k_1+L]\times(k_2,k_2+L]\times \cdots \times(k_d,k_d+L]\subset \Z^d.\]
Denote by $\{\psi_s\}_{s\in \Z^d}$  the   orthogonal  basis given by normalized eigenfunctions of $H$ (without relabelling). We have 
\begin{lem}\label{equi-distri of LC}
	Assume $H$ satisfies  Assumption \ref{SULEass}. Then for any small $0<\epsilon<1$, there exists  some $L_{\epsilon}$ sufficiently large, depending  only on $\epsilon,C,c,q,d$, such that,  for any $L\geq L_{\epsilon}$ and $k\in [-L^4,L^4]^d$,  we have 
	\begin{equation}
		(1-\epsilon)^d L^d\leq \#\left\{s\in\Z^d:\ \LC(\psi_s)\cap \LakL\neq \emptyset\right\}\leq (1+\epsilon)^d L^d. 
	\end{equation}
\end{lem}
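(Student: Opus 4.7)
My plan is to combine Parseval's identity $\sum_s|\psi_s(x)|^2=1$ (valid for every $x\in\Z^d$ because $\{\psi_s\}$ is an orthonormal basis of $\ell^2(\Z^d)$) with the SULE bound \eqref{uniform localization control}, using an $\ell^\infty$-buffer of width $r:=C_0\log L$, where $C_0=C_0(C,c,q,d)$ is chosen large enough that $(1+L^4)^{2q}\,e^{-2cr}\le L^{-100d}$. This is the right choice because $k\in[-L^4,L^4]^d$ forces any localization center lying in (or at polynomial distance from) $\Lambda:=\LakL$ to satisfy $|l|\le 2L^4$, so the polynomial weight in \eqref{uniform localization control} is harmlessly absorbed by the exponential. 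Let $\Lambda^{\mathrm{out}}$ and $\Lambda^{\mathrm{in}}$ denote, respectively, the $r$-enlargement and the $r$-shrinking of $\Lambda$ in the $\ell^\infty$ metric, with $|\Lambda^{\mathrm{out}}|=(L+2r)^d$ and $|\Lambda^{\mathrm{in}}|=(L-2r)^d$, and set $N:=\#\{s:\LC(\psi_s)\cap\Lambda\neq\emptyset\}$.

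For the upper bound, I would attach to each $s$ counted by $N$ one center $l_s\in\LC(\psi_s)\cap\Lambda$, so that $|l_s|\le 2L^4$; SULE then gives $\sum_{x\notin\Lambda^{\mathrm{out}}}|\psi_s(x)|^2\le C^2(1+|l_s|)^{2q}\sum_{|y|\ge r}e^{-2c|y|}\le L^{-50d}$, whence $\sum_{x\in\Lambda^{\mathrm{out}}}|\psi_s(x)|^2\ge 1-L^{-50d}$. Summing over such $s$ and applying Parseval at each $x\in\Lambda^{\mathrm{out}}$ yields
\[(1-L^{-50d})\,N\;\le\;\sum_s\sum_{x\in\Lambda^{\mathrm{out}}}|\psi_s(x)|^2\;=\;|\Lambda^{\mathrm{out}}|\;=\;(L+2r)^d,\]
which is at most $(1+\epsilon)^dL^d$ once $L\ge L_\epsilon$, since $r=O(\log L)$.

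For the lower bound I would apply Parseval on $\Lambda^{\mathrm{in}}$ to get $(L-2r)^d=\sum_s\sum_{x\in\Lambda^{\mathrm{in}}}|\psi_s(x)|^2$, and split the outer sum into ``good'' eigenfunctions ($\LC\cap\Lambda\neq\emptyset$, contributing at most $N$) and ``bad'' ones (with no $\LC$ in $\Lambda$). For a bad $s$ and $x\in\Lambda^{\mathrm{in}}$, choosing $l_s$ to be a nearest localization center to $x$ gives $t:=|x-l_s|\ge r$ and $|l_s|\le|x|+t\le 2L^4+t$, so SULE yields $|\psi_s(x)|^2\le C^2(1+2L^4+t)^{2q}e^{-2ct}$. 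The cumulative count $\#\{\text{bad }s:|x-l_s|\le T\}$ is bounded by $N(B(x,T))\le 2(2T)^d$ via the upper bound already established, applied to the $\ell^\infty$-box $B(x,T)$ (which lies in a polynomially bounded region of $\Z^d$). An Abel-summation argument then delivers $\sum_{\text{bad}\;s}|\psi_s(x)|^2\le L^{-10d}$ uniformly in $x\in\Lambda^{\mathrm{in}}$, so that the total bad contribution is at most $L^d\cdot L^{-10d}\ll 1$. Thus $N\ge(L-2r)^d-1\ge(1-\epsilon)^dL^d$ for $L\ge L_\epsilon$.

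The main obstacle will be the lower bound, specifically the control of the bad contribution: the polynomial factor $(1+|l|)^q$ in SULE is not uniformly small, so bad eigenfunctions whose localization centers lie far from the origin cannot be dismissed by a naive SULE tail bound. The way I would get around this is the bootstrap above: use the upper bound (already proved in the first step) to count bad eigenfunctions in successive $\ell^\infty$-annuli about $x$; provided $C_0$ is chosen so that $2cC_0>8q+20d$, the SULE exponential decay dominates the polynomial count and the bad mass becomes negligible. A minor technical point that needs to be checked is that the upper bound applies not only to $\Lambda$ but to all $\ell^\infty$-balls $B(x,T)$ with $T$ of polynomial size in $L$; the same proof goes through verbatim after replacing $C_0$ with a slightly larger constant.
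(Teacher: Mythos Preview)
Your proposal is correct and follows essentially the same strategy as the paper: Parseval's identity plus the SULE bound, using an enlarged/shrunk box to get the upper bound and then bootstrapping that upper bound to count the ``bad'' eigenfunctions in the lower bound. The only differences are implementation details---the paper uses a buffer of width $\epsilon L$ rather than your $C_0\log L$, and in the lower bound it picks $l_s$ of minimal $|l_s|$ and groups bad eigenfunctions by annuli $mL\le|l_s|\le(m+1)L$ about the origin (applying the upper bound at scales $20L^5$ and $(m+1)L$), whereas you pick $l_s$ nearest to $x$ and group by $t=|x-l_s|$ via Abel summation---but the underlying idea is identical, and your ``minor technical point'' about extending the upper bound to boxes $B(x,T)$ at polynomial distance is exactly what the paper also invokes.
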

\begin{proof}[Proof of Lemma B.3.]
	For each $s\in\Z^d$,  if $\LC(\psi_s)\cap\LakL\neq \emptyset$,  we take one $l_s\in \LC(\psi_s)\cap\LakL$; otherwise,  take an arbitrary one  $l_s\in \LC(\psi_s)$ so that $l_s\notin\LakL$.  By Parseval's  equality, we have 
	\[\sum_{l\in \Z^d}|\psi_s(l)|^2=1\ {\rm for}\ {\forall} s\in\Z^d, \ \sum_{s\in\Z^d}|\psi_s(l)|^2=1\ {\rm for}\ {\forall}l\in\Z^d.\]
	Assume $L\gg1$  and  denote by 
	\[\tLakL=(k_1-\epsilon L,k_1+(1+\epsilon)L]\times \cdots \times (k_d-\epsilon L,k_d+(1+\epsilon)L]\]
	the extension  of $\LakL$ by   $\epsilon L$. Then for $s\in \{s\in\Z^d:\ \LC(\psi_s)\cap \LakL\neq \emptyset\}$ such that $l_s\in \LakL\subset Q_{2L^4}$, we have by \eqref{uniform localization control} 
	\begin{align*}
		\sum_{l \notin \tLakL }|\psi_s(l)|^2 &\lesssim_{C,q} \sum_{\substack{ m\in \Z^d \\ |m|\geq \epsilon L}}L^{8q} e^{-2c|m|}\\
		                            &\lesssim_{C,q,d} L^{8q} \sum_{\substack{m\in \Z \\ m\geq \epsilon L}}m^{d-1}e^{-2cm}\\
									&\lesssim_{C,q,d} L^{8q} \sum_{\substack{m\in \Z \\ m\geq \epsilon L}}m^{d-1}e^{-\frac{2}{3}cm-\frac{4}{3}c\epsilon L}\\
									&\lesssim_{C,c,q,d} L^{8q}e^{-\frac{4}{3}c\epsilon L} \leq e^{-c\epsilon L}
	\end{align*}
provided  $L>C(\epsilon,C,c,q,d)\gg1$. Hence,
	\[\sum_{l\in \tLakL}|\psi_s(l)|^2=1-\sum_{l\notin \tLakL}|\psi_s(l)|^2\geq 1-e^{-c\epsilon L}\]
	holds for any $s\in \{s\in\Z^d:\ \LC(\psi_s)\cap \LakL\neq \emptyset\}$. Taking the  summation on  those  $s$ leads to 
	\begin{align*}
		(1-e^{-c\epsilon L}) &\#\{s\in\Z^d:\ \LC(\psi_s)\cap \LakL\neq \emptyset\}\\
		 &\leq \sum_{\substack{l\in\tLakL \\ s:\ \LC(\psi_s)\cap \LakL\neq \emptyset}}|\psi_s(l)|^2\\
		 &\leq \sum_{\substack{l\in \tLakL \\ s\in\Z^d}}|\psi_s(l)|^2 =\# \tLakL \\
		 &\leq (1+2\epsilon)^d L^d.
	\end{align*}
	That is,
	\begin{equation}\label{equi-distri upper bound}
		\#\{s\in\Z^d:\ \LC(\psi_s)\cap \LakL\neq \emptyset\}\leq \frac{(1+2\epsilon)^d}{1-e^{-c\epsilon L}}L^d\leq (1+3\epsilon)^d L^d
	\end{equation}
	provided   $L>C(\epsilon,c,d)\gg1$.Next, denote by 
	\[\hLakL=(k_1+\epsilon L,k_1+(1-\epsilon)L]\times \cdots \times (k_d+\epsilon L,k_d+(1-\epsilon)L]\]
	 the shrink of $\LakL$ by  $\epsilon L$. Then for any fixed $l\in \hLakL$,  we have since  \eqref{uniform localization control}
	\begin{align}\label{before lower bound}
		\sum_{s:\ \LC(\psi_s)\cap \LakL =\emptyset} |\psi_s(l)|^2 %&= \sum_{s:\ l_s\notin\LakL} |\psi_s(l)|^2\\
		      &\lesssim_C \sum_{s:\ \LC(\psi_s)\cap \LakL =\emptyset} (1+|l_s|)^{2q}e^{-2c|l-l_s|} \\
			\notag	 &\lesssim_C \sum_{m=0}^{\infty}\sum_{\substack{\ s:\ mL\leq |l_s|\leq (m+1)L \\ |l_s-l|\geq \epsilon L}} (1+|l_s|)^{2q}e^{-2c|l-l_s|}\\
			\notag	 &\lesssim_{C,q} \sum_{m\leq 10L^4} \sum_{\ s:\ |l_s|\leq 20 L^5} L^{10q}e^{-2c\epsilon L} \\
			\notag	 &   \ \  +\sum_{m>10L^4}\sum_{\ s:\ mL\leq |l_s|\leq (m+1)L} (1+|l_s|)^{2q}e^{-2c|l-l_s|}
	\end{align}
	In the above summation, $l_s\in \mathcal{LC}(\psi_s)$ is the element having the smallest length. The summation of second term after the last inequality doesn't involve $|l_s-l|>\epsilon L$ because
	\[m>10L^4,l\in \hLakL,|k|\leq L^4,|l_s|\geq mL\Rightarrow |l_s-l|\geq \frac{1}{2}mL> \epsilon L.\]
	So, this summation index is automatically holds. %To continue the esitimate, on one hand, 
	By \eqref{equi-distri upper bound}  (with scale replaced by $20L^5$),  we  have shown  
	\[\#\{s:\ |l_s|\leq 20L^5\}\leq (1+3\epsilon)^d(20L^5)^d.\]
	Hence, 
	\begin{align*}
		\sum_{m\leq 10L^4} \sum_{\ s:\ |l_s|\leq 20 L^5} L^{10q}e^{-2c\epsilon L} &\lesssim L^4\cdot (1+3\epsilon)^d(20L^5)^d L^{10q}e^{-2c\epsilon L}\\
		      &\lesssim_{q,d}L^{4+5d+10q}e^{-2c\epsilon L}\\
			  &\leq e^{-c\epsilon L}.
	\end{align*}
	The last inequality needs that $L>C(\epsilon,c,q,d)\gg1$. On the other hand, applying again  \eqref{equi-distri upper bound}  (with scale replaced by $(m+1)L$) implies  
	\begin{align*}
		\sum_{m>10L^4} & \sum_{\ s:\ mL\leq |l_s|\leq (m+1)L} (1+|l_s|)^{2q}e^{-2c|l-l_s|} \\
		           &\leq\sum_{m> 10L^4}(1+3\epsilon)^d((m+1)L)^d(1+(m+1)L)^{2q}e^{-cmL}\\
				   &\lesssim_{q,d}\sum_{m>10L^4}(mL)^{d+2q}e^{-cmL}\\
				   &\leq e^{-\frac{1}{2}c\cdot 10L^5}=e^{-5cL^5}
	\end{align*}
	provided  $L>C(c,q,d)\gg1$. So,  \eqref{before lower bound}  satisfies  
	\[ \sum_{s:\ \LC(\psi_s)\cap \LakL =\emptyset} |\psi_s(l)|^2\lesssim_{C,q} e^{-c\epsilon L}+e^{-5cL^5}\leq e^{-\frac{1}{2}c\epsilon L}. \]
    Taking  summation on all $l\in \hLakL$ yields 
	\begin{align*}
		e^{-\frac{1}{2}\epsilon L}\#\hLakL &\geq \sum_{\substack{s:\ \LC(\psi_s)\cap \LakL =\emptyset \\ l\in\hLakL}} |\psi_s(l)|^2 \\
		  &= \#\hLakL- \sum_{\substack{s:\ \LC(\psi_s)\cap \LakL \neq \emptyset \\ l\in\hLakL}} |\psi_s(l)|^2\\
		  &\geq \#\hLakL- \sum_{\substack{s:\ \LC(\psi_s)\cap \LakL \neq \emptyset \\ l\in\Z^d}} |\psi_s(l)|^2\\ 
		  &= \#\hLakL -\#\{s\in\Z^d:\ \LC(\psi_s)\cap \LakL\neq \emptyset\}.
	\end{align*}
	That is,
	\begin{align}\label{equi-distri lower bound}
		\#\{s\in\Z^d:\ \LC(\psi_s)\cap \LakL\neq \emptyset\} &\geq (1-e^{-\frac{1}{2}c\epsilon L})\#\hLakL \\
		       \notag          &\geq (1-e^{-\frac{1}{2} c \epsilon L})(1-2\epsilon)^d L^d\\
		       \notag &\geq (1-3\epsilon)^d L^d
	\end{align}
    provided  $L>C(\epsilon,c)\gg1$. Finally, combining \eqref{equi-distri upper bound} and   \eqref{equi-distri lower bound}, and just replacing $\epsilon$ with $\frac{\epsilon}{3}$  will  finish the proof. Summarizing  all restrictions  on the largeness of $L$ shows  that $L_\epsilon$  can depend only  on $\epsilon,C,c,q,d$.
\end{proof}

\begin{rmk}
\begin{itemize}
\item[]{(1)} From  the above proof,  one  has  that the $L^4$ can  be replaced by $C(\epsilon) L$ for any large constant $C(\epsilon)$ depending  only on $\epsilon$ (for example $\frac{100}{\epsilon}$).   
 Also, It suffices to choose $L_{\epsilon}\gg1$   to ensure 
	\[(C(\epsilon)L_{\epsilon})^{2q} e^{-\frac{1}{4}c\epsilon L_{\epsilon}}<1.\]
	This  will be helpful in \textbf{Step 2} in the following. In fact,  largeness of  $L_{\epsilon}$ causes no influence in Remark B.1 (2) below, because we always fix $\epsilon$ and let $c\to+\infty$. 
	\item[]{(2)} In fact, it's easy to see that $\lim_{c\to +\infty}L_{\epsilon}(\epsilon,C,c,q,d)=0$. This can be intuitively understood. If $c\gg 1$, then the eigenfunctions will decay very fast and become more like the Dirac function $e_l$. The localization centers of the standard basis $\{e_l\}_{l\in\Z^d}$ are  strictly equidistributed on $\Z^d$, which indicates  the $L_{\epsilon}$ can be very small. Usually, in \cite{CSZ24} and \cite{KPS24}, $c\sim |\log\varepsilon|\to\infty$ as $\varepsilon\to0$, where $H=\varepsilon\Delta+V\delta_{n,n'}$.
    \item[]{(3)}  Moreover, we can choose $L_{\epsilon}\gg1$  such that,  if $\psi_s$ has a  localization center $|x_s|\leq \frac{10}{\epsilon}L_{\epsilon}$, then $\LC(\psi_s)\subset Q_{L_{\epsilon}}(x_s)$. 
    Indeed,  since  $\nm\psi_s\nm_{\ell^2(\Z^d)}=1$, consider %the weight out side $Q_{L_{\epsilon}}(x_s)$ 
	\[\sum_{x\notin Q_{L_{\epsilon}}(x_s)} |\psi_s(x)|^2:=G.\]
	If we can prove
	\[G< \frac{\sum_{x\in Q_{L_{\epsilon}}(x_s)} |\psi_s(x)|^2}{\# Q_{L_{\epsilon}}(x_s)}=\frac{1-G}{(2L_{\epsilon}+1)^d},\]
	then this together with  the fact that  $x_s$ is the maximum point,  shows for any $x\notin Q_{L_{\epsilon}}(x_s),$
	\[|\psi_s(x)|^2\leq G< \frac{\sum_{x\in Q_{L_{\epsilon}}(x_s)} |\psi_s(x)|^2}{\# Q_{L_{\epsilon}}(x_s)}\leq |\psi_s(x_s)|^2\]
	which implies that $x$ cannot be a maximum  point. Thus, it suffices  to prove  $G< \frac{1}{1+(1+2L_{\epsilon})^d}$. 
	In fact,  applying  \eqref{uniform localization control} deduces 
	\begin{align*}
		\sum_{x\notin Q_{L_{\epsilon}}(x_s)} |\psi_s(x)|^2 &\lesssim_{C,\epsilon,q}L^{2q}_{\epsilon}\sum_{x:\ |x-x_s|> L_{\epsilon}}e^{-2c|x-x_s|}\\
		&\lesssim_{C,\epsilon,q,d,c}L^{2q}_{\epsilon}e^{-cL_{\epsilon}}\\
		&\ll \frac{1}{1+(1+2L_{\epsilon})^d}
	\end{align*}
	provided  $L_{\epsilon}\gg1.$ This will ensure $\LC(\psi_s)\subset Q_{L_{\epsilon}}(x_s)$.
\end{itemize}
\end{rmk}
Note that  Lemma \ref{equi-distri of LC} reveals that the localization centers are nearly equidistributed on $\Z^d$.\smallskip

 \textbf{Step 2. Relabelling eigenfunctions}\\ 
In this step, for small $\epsilon$, denote  by $L_{\epsilon}$   the scale given in Lemma \eqref{equi-distri of LC}. The aim of this step is to prove

\begin{thm}[{\bf Relabelling Theorem}]\label{relabelling thm}
Suppose  that  Assumption \ref{SULEass} holds true and  $0\leq \epsilon\ll 1$. Then there is a relabelling map% (i.e., a bijection)
	\[f:\{\psi_s\}_{s\in \Z^d}\leftrightarrow \{\phi_j\}_{j\in \Z^d}\]
such that
\[ {\forall}j\in\Z^d \ {\rm with}\  |j|\geq L''_{\epsilon}, \ \exists \iota_j\in \LC(\phi_j) \  {\rm s.t.,}\  |j-\yo_j|\leq \epsilon |j|,\]
where $L''_{\epsilon}\geq 500$ is another large scale depending only on $\epsilon,C,c,q,d$. That is to say, after relabelling, $\phi_j$ has some localization center close to  $j$. 
\end{thm}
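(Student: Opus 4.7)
The plan is to build the relabelling map $f$ by invoking a countable version of Hall's marriage theorem. I would set up a bipartite graph $G$ with vertex classes $\Z^d$ and $\{\psi_s\}_{s\in\Z^d}$, connecting $j\sim\psi_s$ exactly when: either $|j|<L''_\epsilon$ and $\LC(\psi_s)\cap Q_{10L''_\epsilon/\epsilon}(0)\neq\emptyset$, or $|j|\geq L''_\epsilon$ and $\LC(\psi_s)\cap Q_{\epsilon|j|/2}(j)\neq\emptyset$. Any perfect matching in $G$ automatically satisfies the conclusion, since each edge with $|j|\geq L''_\epsilon$ provides a localization center $\yo_j$ with $|j-\yo_j|\leq \epsilon|j|/2\leq \epsilon|j|$.

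Next I would verify Hall's condition $|N_G(S)|\geq |S|$ for every finite $S\subset\Z^d$. Split $S$ into dyadic shells $S_m=\{j\in S:2^m\leq |j|<2^{m+1}\}$ for $m\geq\lceil\log_2 L''_\epsilon\rceil$, together with a small-norm remainder $S_\ast=\{j\in S:|j|<L''_\epsilon\}$. For each nonempty $S_m$, I would cover the enlarged set $U_m:=\bigcup_{j\in S_m}Q_{\epsilon 2^m/4}(j)$ by disjoint tiles $\LakL$ of side $L=\lfloor\epsilon 2^m/(8d)\rfloor$, which exceeds $L_{\epsilon/4}$ once $L''_\epsilon$ is chosen sufficiently large (depending only on $\epsilon,C,c,q,d$). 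The lower bound in Lemma \ref{equi-distri of LC} then gives at least $(1-\epsilon/4)^d L^d$ eigenfunctions per tile, hence $|N_G(S_m)|\geq (1-\epsilon)^d|U_m|\geq |S_m|$ when $\epsilon$ is small. The shell $S_\ast$ is handled by observing that the fixed window $Q_{10L''_\epsilon/\epsilon}(0)$ contains, by Lemma \ref{equi-distri of LC} applied at scale comparable to $L''_\epsilon/\epsilon$, many more eigenfunctions than $|S_\ast|\leq (2L''_\epsilon)^d$. Summing across shells yields the global Hall condition.

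The technical hurdle will be converting the Hall estimates into an actual perfect matching on this infinite graph. For this I rely on local finiteness on both sides. The $\Z^d$-side is locally finite because an edge $j\sim\psi_s$ forces a localization center of $\psi_s$ to lie in $Q_{\epsilon|j|/2}(j)$, and the upper bound in Lemma \ref{equi-distri of LC} caps the count of such $\psi_s$. For the $\psi_s$-side, an edge $j\sim\psi_s$ with $|j|\geq L''_\epsilon$ and $\yo\in\LC(\psi_s)$ forces $|j|\leq 2|\yo|/(2-\epsilon)$, so $\psi_s$ has bounded degree as well. With both sides locally finite and Hall's condition in force, the classical countable marriage theorem, proved by exhausting $\Z^d$ with balls $Q_{R_n}$, applying finite Hall at each scale, and extracting a diagonal limit via K\"onig's lemma, produces a perfect matching $f$. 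Bijectivity onto $\{\psi_s\}$ is automatic: the two sides have matching asymptotic densities by Lemma \ref{equi-distri of LC}, so any missed eigenfunction would generate a finite augmenting path contradicting the saturation of $\Z^d$.

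I expect the main obstacle to be calibrating the constants: one must choose $L''_\epsilon$ so that at the lowest admissible scale $|j|\sim L''_\epsilon$ the padding $\epsilon|j|/2$ already supports a full tile of side $\geq L_{\epsilon/4}$ from Lemma \ref{equi-distri of LC}, while simultaneously ensuring that the near-origin cluster $S_\ast$ can be absorbed into $Q_{10L''_\epsilon/\epsilon}(0)$. This forces $L''_\epsilon\gtrsim L_{\epsilon/4}/\epsilon$, which is the quantitative content of the theorem; the rest of the argument is bookkeeping.
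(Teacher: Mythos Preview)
Your strategy shares the key idea with the paper---invoke Hall's marriage theorem with the equidistribution Lemma~\ref{equi-distri of LC}---but your Hall verification has a genuine gap. Consider the worst case where $S_m$ is an entire dyadic shell, say a cube of side $R\le 2\cdot 2^m$. Then $U_m=\bigcup_{j\in S_m}Q_{\epsilon 2^m/4}(j)$ is essentially $S_m$ thickened by $\epsilon 2^m/4$, so $|U_m|\le (1+\epsilon/4)^d|S_m|$. Feeding this into the equidistribution lower bound gives at most $(1-\epsilon/4)^d(1+\epsilon/4)^d|S_m|=(1-\epsilon^2/16)^d|S_m|<|S_m|$, and your claimed inequality $|N_G(S_m)|\ge|S_m|$ fails. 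The same obstruction prevents ``summing across shells'': even if per-shell bounds held, the neighborhoods $N_G(S_m)$ for adjacent $m$ overlap on strips of relative width $\epsilon$, and you give no argument absorbing this overlap.

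The paper circumvents exactly this difficulty, and the mechanism is worth internalizing. It works not with one infinite graph but with a sequence of \emph{finite} bipartite graphs, one per annulus $S_{L_{k+1}}\setminus S_{L_k}$ with $L_{k+1}=3L_k$. Crucially, the index set $\tilde S_{K_{k+1}}\setminus \tilde S_{K_k}$ is trimmed so that $\#\mathcal A=\#\mathcal B$ \emph{exactly}. This enables a two-case Hall argument: either the neighborhood $\bigcup_{j\in U}Q_{|\log\epsilon|^{-1}|j|}(j)$ covers the full annulus, in which case $\mathcal N(U)=\mathcal B$ and $|\mathcal N(U)|=|\mathcal B|=|\mathcal A|\ge|U|$ trivially; or it misses a point $x$, and then a box around $x$ of side $\sim(\epsilon|\log\epsilon|)^{-1}$ lies in $\mathcal F_2\setminus\mathcal F_1$, supplying enough slack to beat the $(1-\epsilon)^d$ deficit. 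Your global setup has no analogue of $\#\mathcal A=\#\mathcal B$, so the first case of this dichotomy is unavailable to you, and the second case alone does not close the gap. Your bijectivity sketch via augmenting paths is also too loose, but this is secondary; the Hall step is where the argument actually breaks.
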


\begin{proof}[Proof of Theorem \ref{relabelling thm}.]
	Assume $\epsilon\ll 1$ and take $L'_{\epsilon}=\frac{10}{\epsilon}L_{\epsilon}\geq 201$.  For    $\psi_s$,  choose one element $x_s$ such that 
	\[|x_s|=\min\{|x|:\ x\in\LC(\psi_s)\}.\]
	Then  for any $K>0,$
	\[|x_s|\leq K\Leftrightarrow \LC(\psi_s)\cap Q_K\neq \emptyset.\]
	We construct the relabelling map at different scales  step by step. For convenience, for an odd scale $L$, denote $S_L=Q_{\frac{L-1}{2}}\subset \Z^d$. In particular, $\# S_L=L^d$. Additionally, for  $r>0$, denote by  $\lfloor r\rfloor_o$ the largest odd integer that does not exceed $r$. Without loss of generality, we can assume that both $L_{\epsilon}$ and $L_{\epsilon}'$ are odd. 
	
	(1) First,  take $L_1=L'_{\epsilon}\geq 201$. By Lemma \ref{equi-distri of LC}, we have 
	\[(1-\epsilon)^d L^d_1\leq \#\{\psi_s:\ |x_s|\leq \frac{L_1-1}{2}\}:=N_1\leq (1+\epsilon)^d L_1^d.\]
	Take  $K_1=\lfloor N_1^{\frac{1}{d}}\rfloor_o$  to be the biggest odd scale such that $\# S_{K_1}= K_1^d \leq N_1$. Then 
	\begin{equation}\label{estimate on K_1}
			(1-\epsilon)L_1-2<\lfloor (1-\epsilon)L_1\rfloor_o\leq K_1\leq \lfloor (1+\epsilon)L_1\rfloor_o\leq (1+\epsilon)L_1 
	\end{equation}
	We can  construct (adding in  sites near the boundary of $S_{K_1}$)  a set  with 
	\[S_{K_1}\subset \tilde{S}_{K_1} \subset S_{K_1+2},\ \#\tilde{S}_{K_1}=N_1.\]
	Then there is a  relabelling for    $j\in \tilde{S}_{K_1}$ to match with $\{\psi_s:\ |x_s|\leq \frac{L_1-1}{2}\}$. That is,
	\[f:\{j:\ j\in\tilde{S}_{K_1}\}\leftrightarrow\{\psi_s:\ |x_s|\leq \frac{L_1-1}{2}\}.\]
	
	(2) Next, take $L_2=3L_1$. By Lemma \ref{equi-distri of LC}, we have 
	\[(1-\epsilon)^d L^d_2\leq \#\{\psi_s:\ |x_s|\leq \frac{L_2-1}{2}\}:=N_2\leq (1+\epsilon)^d L_2^d\]
	Take $K_2=\lfloor N_2^{\frac{1}{d}}\rfloor_o$ to be the biggest odd scale such that $\# S_{K_2}= K_2^d \leq N_2$. Then 
	\begin{equation}\label{estimate on K_2}
			(1-\epsilon)L_2-2<\lfloor (1-\epsilon)L_2\rfloor_o\leq K_2\leq \lfloor (1+\epsilon)L_2\rfloor_o\leq (1+\epsilon)L_2.  
	\end{equation}
	We can again  construct a set 
	\[S_{K_2}\subset \tilde{S}_{K_2} \subset S_{K_2+2},\ \#\tilde{S}_{K_2}=N_2\]
	From  \eqref{estimate on K_1} and  \eqref{estimate on K_2}, we have 
	\begin{align}\label{diff scale boundary dist}
		\dist(\partial_-\tilde{S}_{K_2},\partial_-\tilde{S}_{K_1} ) &\geq \frac{1}{2}((1-\epsilon)L_2-2-(1+\epsilon)L_1)\\
		 \notag   &=(1-2\epsilon)L_1-1\geq (1-\frac{2}{10})\times 201-1>10.
	\end{align}
	This indicates that the trim near the boundary of $S_{K_2}$ has no influence on the previous that of  $S_{K_1}$. For a better geometric understanding,  we refer to FIGURE 1.
	
	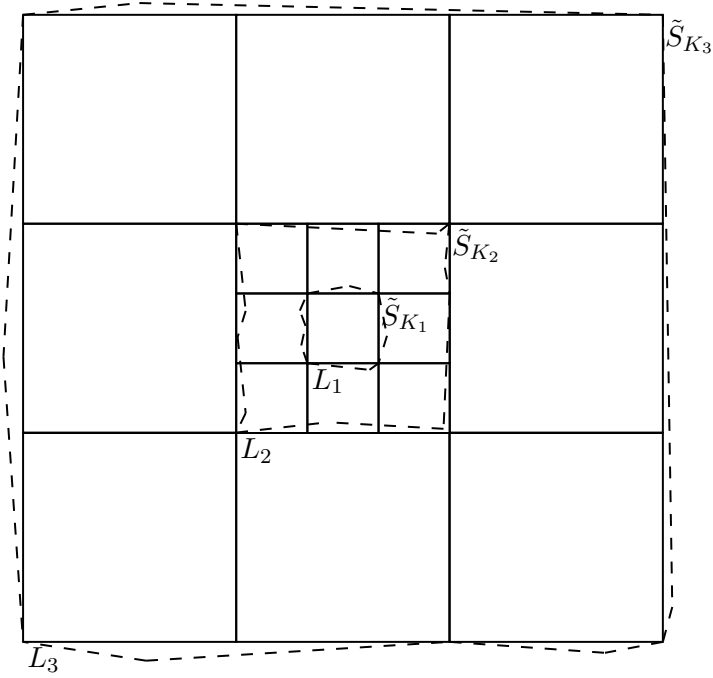
\begin{figure}[htbp]
		\centering
		
	\tikzset{every picture/.style={line width=0.75pt}} %set default line width to 0.75pt        

\begin{tikzpicture}[x=0.75pt,y=0.75pt,yscale=-0.8,xscale=0.8]
%uncomment if require: \path (0,638); %set diagram left start at 0, and has height of 638

%Shape: Rectangle [id:dp6066775835826128] 
\draw   (252.44,201.51) -- (297.31,201.51) -- (297.31,245.52) -- (252.44,245.52) -- cycle ;
%Shape: Rectangle [id:dp20678562034252135] 
\draw   (252.44,245.52) -- (297.31,245.52) -- (297.31,289.54) -- (252.44,289.54) -- cycle ;
%Shape: Rectangle [id:dp8070625395259796] 
\draw   (252.44,289.54) -- (297.31,289.54) -- (297.31,333.55) -- (252.44,333.55) -- cycle ;
%Shape: Rectangle [id:dp2399908669956572] 
\draw   (297.31,201.51) -- (342.18,201.51) -- (342.18,245.52) -- (297.31,245.52) -- cycle ;
%Shape: Rectangle [id:dp3352088091144336] 
\draw   (297.31,245.52) -- (342.18,245.52) -- (342.18,289.54) -- (297.31,289.54) -- cycle ;
%Shape: Rectangle [id:dp16534667784228252] 
\draw   (297.31,289.54) -- (342.18,289.54) -- (342.18,333.55) -- (297.31,333.55) -- cycle ;
%Shape: Rectangle [id:dp7171349497906172] 
\draw   (342.18,201.51) -- (387.05,201.51) -- (387.05,245.52) -- (342.18,245.52) -- cycle ;
%Shape: Rectangle [id:dp2849697116340697] 
\draw   (342.18,245.52) -- (387.05,245.52) -- (387.05,289.54) -- (342.18,289.54) -- cycle ;
%Shape: Rectangle [id:dp09162970536149162] 
\draw   (342.18,289.54) -- (387.05,289.54) -- (387.05,333.55) -- (342.18,333.55) -- cycle ;
%Shape: Rectangle [id:dp3037883497830287] 
\draw   (387.05,201.51) -- (521.41,201.51) -- (521.41,333.3) -- (387.05,333.3) -- cycle ;
%Shape: Rectangle [id:dp668612550369208] 
\draw   (387.05,333.55) -- (521.41,333.55) -- (521.41,465.34) -- (387.05,465.34) -- cycle ;
%Shape: Rectangle [id:dp9546841397200099] 
\draw   (252.44,333.55) -- (386.8,333.55) -- (386.8,465.34) -- (252.44,465.34) -- cycle ;
%Shape: Rectangle [id:dp3936709613195257] 
\draw   (118.08,333.55) -- (252.44,333.55) -- (252.44,465.34) -- (118.08,465.34) -- cycle ;
%Shape: Rectangle [id:dp9965633872768529] 
\draw   (118.08,201.51) -- (252.44,201.51) -- (252.44,333.3) -- (118.08,333.3) -- cycle ;
%Shape: Rectangle [id:dp8372302670244589] 
\draw   (387.05,69.72) -- (521.41,69.72) -- (521.41,201.51) -- (387.05,201.51) -- cycle ;
%Shape: Rectangle [id:dp048263937632581744] 
\draw   (252.44,69.72) -- (386.8,69.72) -- (386.8,201.51) -- (252.44,201.51) -- cycle ;
%Shape: Rectangle [id:dp36653691194122695] 
\draw   (118.08,69.72) -- (252.44,69.72) -- (252.44,201.51) -- (118.08,201.51) -- cycle ;
%Straight Lines [id:da5084062467229418] 
\draw  [dash pattern={on 4.5pt off 4.5pt}]  (292.57,256.34) -- (297.31,245.52) ;
%Straight Lines [id:da7820335966297796] 
\draw  [dash pattern={on 4.5pt off 4.5pt}]  (296.16,266.02) -- (292.57,256.34) ;
%Straight Lines [id:da7858831996008782] 
\draw  [dash pattern={on 4.5pt off 4.5pt}]  (293.46,278.35) -- (296.16,266.02) ;
%Straight Lines [id:da49015260683585193] 
\draw  [dash pattern={on 4.5pt off 4.5pt}]  (297.31,289.54) -- (293.46,278.35) ;
%Straight Lines [id:da7664704808335161] 
\draw  [dash pattern={on 4.5pt off 4.5pt}]  (297.31,245.52) -- (323.74,240.82) ;
%Straight Lines [id:da064252647353086] 
\draw  [dash pattern={on 4.5pt off 4.5pt}]  (323.74,240.82) -- (342.18,245.52) ;
%Straight Lines [id:da49135984646254305] 
\draw  [dash pattern={on 4.5pt off 4.5pt}]  (342.18,245.52) -- (347.5,272.23) ;
%Straight Lines [id:da9519569450864276] 
\draw  [dash pattern={on 4.5pt off 4.5pt}]  (347.5,272.23) -- (342.18,289.54) ;
%Straight Lines [id:da5223941910638252] 
\draw  [dash pattern={on 4.5pt off 4.5pt}]  (297.31,289.54) -- (336.47,293.72) ;
%Straight Lines [id:da6170276792941394] 
\draw    (342.18,289.54) -- (336.47,293.72) ;
%Straight Lines [id:da9197933787052566] 
\draw  [dash pattern={on 4.5pt off 4.5pt}]  (252.44,201.51) -- (258.42,256.52) ;
%Straight Lines [id:da27079397005888417] 
\draw  [dash pattern={on 4.5pt off 4.5pt}]  (258.42,256.52) -- (253.33,273.05) ;
%Straight Lines [id:da06212224935613109] 
\draw  [dash pattern={on 4.5pt off 4.5pt}]  (253.33,273.05) -- (258.42,320.99) ;
%Straight Lines [id:da5170334398751173] 
\draw  [dash pattern={on 4.5pt off 4.5pt}]  (258.42,320.99) -- (252.44,333.3) ;
%Straight Lines [id:da7750691366222835] 
\draw  [dash pattern={on 4.5pt off 4.5pt}]  (252.44,201.51) -- (378.89,207.76) ;
%Straight Lines [id:da45961075076160185] 
\draw  [dash pattern={on 4.5pt off 4.5pt}]  (386.8,201.51) -- (378.89,207.76) ;
%Straight Lines [id:da10393977593828274] 
\draw  [dash pattern={on 4.5pt off 4.5pt}]  (386.8,201.51) -- (383.98,229.25) ;
%Straight Lines [id:da9932060940829852] 
\draw  [dash pattern={on 4.5pt off 4.5pt}]  (383.98,229.25) -- (387.05,245.52) ;
%Straight Lines [id:da00792641405547756] 
\draw  [dash pattern={on 4.5pt off 4.5pt}]  (387.05,245.52) -- (383.13,330.91) ;
%Straight Lines [id:da8574376837676956] 
\draw  [dash pattern={on 4.5pt off 4.5pt}]  (383.13,330.91) -- (310.17,326.78) ;
%Straight Lines [id:da15440429529607136] 
\draw  [dash pattern={on 4.5pt off 4.5pt}]  (252.44,333.3) -- (310.17,326.78) ;
%Straight Lines [id:da9927466861282206] 
\draw  [dash pattern={on 4.5pt off 4.5pt}]  (118.08,69.72) -- (105.71,285.45) ;
%Straight Lines [id:da6016901602451714] 
\draw  [dash pattern={on 4.5pt off 4.5pt}]  (105.71,285.45) -- (118.08,465.34) ;
%Straight Lines [id:da9439515191470909] 
\draw  [dash pattern={on 4.5pt off 4.5pt}]  (118.08,69.72) -- (191.4,62.29) ;
%Straight Lines [id:da5865678185848124] 
\draw  [dash pattern={on 4.5pt off 4.5pt}]  (191.4,62.29) -- (521.41,69.72) ;
%Straight Lines [id:da009969142045804347] 
\draw  [dash pattern={on 4.5pt off 4.5pt}]  (195.64,477.21) -- (118.08,465.34) ;
%Straight Lines [id:da42073177912125925] 
\draw  [dash pattern={on 4.5pt off 4.5pt}]  (195.64,477.21) -- (386.8,465.34) ;
%Straight Lines [id:da574540012937647] 
\draw  [dash pattern={on 4.5pt off 4.5pt}]  (387.05,465.34) -- (484.93,472.25) ;
%Straight Lines [id:da8240098464561776] 
\draw  [dash pattern={on 4.5pt off 4.5pt}]  (521.41,465.34) -- (484.93,472.25) ;
%Straight Lines [id:da9756186282039283] 
\draw  [dash pattern={on 4.5pt off 4.5pt}]  (521.41,69.72) -- (527.35,443.32) ;
%Straight Lines [id:da24532329213999837] 
\draw  [dash pattern={on 4.5pt off 4.5pt}]  (527.35,443.32) -- (521.41,465.34) ;

% Text Node
\draw (298.18,291.22) node [anchor=north west][inner sep=0.75pt]   [align=left] {$L_1$};
% Text Node
\draw (253.31,335.23) node [anchor=north west][inner sep=0.75pt]   [align=left] {$L_2$};
% Text Node
\draw (118.95,467.02) node [anchor=north west][inner sep=0.75pt]   [align=left] {$L_3$};
% Text Node
\draw (341.76,246.62) node [anchor=north west][inner sep=0.75pt]   [align=left] {$\tilde{S}_{K_1}$};
% Text Node
\draw (386.63,202.6) node [anchor=north west][inner sep=0.75pt]   [align=left] {$\tilde{S}_{K_2}$};
% Text Node
\draw (520.99,70.82) node [anchor=north west][inner sep=0.75pt]   [align=left] {$\tilde{S}_{K_3}$};

\end{tikzpicture}

		\caption{The construction of relabelling map in each annulus.} 
	\end{figure}

Now we have $\tilde{S}_{K_1}\subset\tilde{S}_{K_2}$, and by the above steps, in scale $L_1$,  we have already relabelled  the eigenfunctions  in $\{\psi_s:\ |x_s|\leq \frac{L_1-1}{2}\}$ with  the set $\tilde{S}_{K_1}$. So,  we need to construct a relabelling map 
\[\{j:\ j\in \tilde{S}_{K_2}\backslash \tilde{S}_{K_1} \}\leftrightarrow \{\psi_s:\ \frac{L_1-1}{2}<|x_s|\leq \frac{L_2-1}{2}\}.\]
Notice that, by the construction of $\tilde{S}_{K_1}$ and $\tilde{S}_{K_2}$,  we  have %cardinality relationship 
\[\# (\tilde{S}_{K_2}\backslash \tilde{S}_{K_1})=\# \{\psi_s:\ \frac{L_1-1}{2}<|x_s|\leq \frac{L_2-1}{2}\}=N_2-N_1.\]
Moreover, the target relabelling map must have the property that for each $s$ such that $|x_s|>\frac{L_1-1}{2}=L'_{\epsilon}$,  we can relabel $\psi_s$ with index $j\in \tilde{S}_{K_2}\backslash \tilde{S}_{K_1}$ to become $\phi_j$ and   
{\[\exists \iota_j\in \LC(\psi_s),|\iota_j-j|\leq \frac{1}{|\log\epsilon|}|j|\Leftrightarrow \LC(\psi_s)\cap Q_{|\log\epsilon|^{-1}|j|}(j)\neq \emptyset.\]}
In fact, this requirement provides an "accessible edge" between the set of eigenfunctions  $\{\psi_s\}_{s\in\Z^d}$ and index set $\{j\}_{j\in\Z^d}$. This motivates the use of  {\bf Hall's marriage theorem} to prove the existence of the target relabelling map. This technique was first used in \cite{EK16}. However, to verify Hall's condition, \cite{EK16} established estimates on the  $\ell^2$-operator norm of the product of two orthogonal projections of finite dimensions, which provides a comparison of the dimensions of the corresponding subspaces. Unfortunately, this method becomes invalid because we aim to relabel eigenfunctions of an {\bf infinite-dimensional system}, rather than a finite one. In this paper, we verify the Hall's condition directly by the equidistribution argument (cf. Lemma \ref{equi-distri of LC})  and  certain geometric considerations. We believe that this approach is novel. The choice of $\frac{1}{|\log\epsilon|}$ is for some technical reasons.

Denote 
\begin{align*}
&\tilde{S}_{K_2}\backslash \tilde{S}_{K_1}=\{j:\ j\in \tilde{S}_{K_2}\backslash \tilde{S}_{K_1}\}:= \mcA,\\
&\left\{\psi_s:\ \frac{L_1-1}{2}<|x_s|\leq \frac{L_2-1}{2}\right\} :=  \mcB.
\end{align*}
Define the edges  set to be 
\[\mathbb{E}:=\{(j,\psi_s):\ \LC(\psi_s)\cap Q_{|\log\epsilon|^{-1}|j|}(j)\neq \emptyset\}.\]
Then $(\mcA,\mcB;\mathbb{E})$ generates  a bipartite graph, with $\#\mcA=\#\mcB$. We only need to verify the Hall's condition. For any  $j\in\mcA$ and $U\subset \mcA$, we define  %the neighbors  of them as 
\begin{align*}
&\mcN(j):=\{\psi_s\in\mcB:\ (j,\psi_s)\in \mathbb{E}\},\\
&\mcN(U):=\bigcup_{j\in U}\mcN(j)=\{\psi_s\in \mcB:\  \LC(\psi_s)\cap\big(\cup_{j\in U} Q_{|\log\epsilon|^{-1}|j|}(j) \big)\neq \emptyset\}.
\end{align*}

\begin{lem}[{\bf Hall's condtion}]
	%\textbf{(.)}We have  
	\[\#\mcN(U)\geq \# U\ {\rm for}\ \forall  U\subset \mathcal A.\]
\end{lem}
\begin{proof}[Proof of Lemma B.5. ]
By \eqref{estimate on K_1}, \eqref{estimate on K_2}, the choices  of $L_1,L_2$ and $0\leq \epsilon\ll 1$, we have 
\[\mcA=\tilde{S}_{K_2}\backslash \tilde{S}_{K_1}\subset S_{(1+\epsilon)L_2+2}\backslash S_{(1-\epsilon)L_1-2}\subset S_{(\frac{30}{\epsilon}+50)L_{\epsilon}}\backslash S_{(\frac{10}{\epsilon}-20)L_{\epsilon}}.\]
%Since $\epsilon$ is small, without loss of generality,  we  can  set both  $\frac{1}{\epsilon}$ and $L_{\epsilon}$ to be integers.  
Pave the annulus $S_{(\frac{30}{\epsilon}+50)L_{\epsilon}}\backslash S_{(\frac{10}{\epsilon}-20)L_{\epsilon}}$ with$\#(S_{(\frac{30}{\epsilon}+50)}\backslash S_{(\frac{10}{\epsilon}-20)})$ many $L_{\epsilon}$-size cubes $C_k$ with the homothetic transformation (cf. FIGURE 2): % and make the index homothetic lie in $S_{(\frac{30}{\epsilon}+50)}\backslash S_{(\frac{10}{\epsilon}-20)}\subset \Z^d$. That is to say 
\[S_{(\frac{30}{\epsilon}+50)L_{\epsilon}}\backslash S_{(\frac{10}{\epsilon}-20)L_{\epsilon}} =\bigcup_{k\in S_{(\frac{30}{\epsilon}+50)}\backslash S_{(\frac{10}{\epsilon}-20)}}C_k.\]
\begin{figure}[htbp]
	\centering

\tikzset{every picture/.style={line width=0.75pt}} %set default line width to 0.75pt        

\begin{tikzpicture}[x=0.75pt,y=0.75pt,yscale=-0.7,xscale=0.7]
%uncomment if require: \path (0,616); %set diagram left start at 0, and has height of 616

%Shape: Rectangle [id:dp48066562559746906] 
\draw   (101.19,125.97) -- (146.15,125.97) -- (146.15,168.18) -- (101.19,168.18) -- cycle ;
%Shape: Rectangle [id:dp5913925604881773] 
\draw   (101.19,168.18) -- (146.15,168.18) -- (146.15,210.39) -- (101.19,210.39) -- cycle ;
%Shape: Rectangle [id:dp8868337338210008] 
\draw   (101.19,210.39) -- (146.15,210.39) -- (146.15,252.6) -- (101.19,252.6) -- cycle ;
%Shape: Rectangle [id:dp1784662511062367] 
\draw   (146.15,210.39) -- (191.12,210.39) -- (191.12,252.6) -- (146.15,252.6) -- cycle ;
%Shape: Rectangle [id:dp682820280934898] 
\draw   (191.12,210.39) -- (236.09,210.39) -- (236.09,252.6) -- (191.12,252.6) -- cycle ;
%Shape: Rectangle [id:dp6992817067554058] 
\draw   (101.19,83.75) -- (146.15,83.75) -- (146.15,125.97) -- (101.19,125.97) -- cycle ;
%Shape: Rectangle [id:dp18618370587021227] 
\draw   (56.22,125.97) -- (101.19,125.97) -- (101.19,168.18) -- (56.22,168.18) -- cycle ;
%Shape: Rectangle [id:dp5920382305843608] 
\draw   (56.22,168.18) -- (101.19,168.18) -- (101.19,210.39) -- (56.22,210.39) -- cycle ;
%Shape: Rectangle [id:dp8312425402871877] 
\draw   (56.22,210.39) -- (101.19,210.39) -- (101.19,252.6) -- (56.22,252.6) -- cycle ;
%Shape: Rectangle [id:dp6556499414044727] 
\draw   (101.19,252.6) -- (146.15,252.6) -- (146.15,294.81) -- (101.19,294.81) -- cycle ;
%Shape: Rectangle [id:dp9728752422982412] 
\draw   (236.09,210.39) -- (281.06,210.39) -- (281.06,252.6) -- (236.09,252.6) -- cycle ;
%Shape: Rectangle [id:dp16430255495279877] 
\draw   (146.15,252.6) -- (191.12,252.6) -- (191.12,294.81) -- (146.15,294.81) -- cycle ;
%Shape: Rectangle [id:dp375357260398439] 
\draw   (191.12,252.6) -- (236.09,252.6) -- (236.09,294.81) -- (191.12,294.81) -- cycle ;
%Shape: Rectangle [id:dp8015893408239396] 
\draw   (56.22,83.75) -- (101.19,83.75) -- (101.19,125.97) -- (56.22,125.97) -- cycle ;
%Shape: Rectangle [id:dp3104523346670971] 
\draw   (56.22,252.6) -- (101.19,252.6) -- (101.19,294.81) -- (56.22,294.81) -- cycle ;
%Shape: Rectangle [id:dp009216125373444406] 
\draw   (236.09,252.6) -- (281.06,252.6) -- (281.06,294.81) -- (236.09,294.81) -- cycle ;
%Shape: Rectangle [id:dp9841481480094332] 
\draw   (11.25,83.75) -- (56.22,83.75) -- (56.22,125.97) -- (11.25,125.97) -- cycle ;
%Shape: Rectangle [id:dp05884300333499648] 
\draw   (56.22,41.54) -- (101.19,41.54) -- (101.19,83.75) -- (56.22,83.75) -- cycle ;
%Shape: Rectangle [id:dp814949273944491] 
\draw   (11.25,125.97) -- (56.22,125.97) -- (56.22,168.18) -- (11.25,168.18) -- cycle ;
%Shape: Rectangle [id:dp528427161088447] 
\draw   (11.25,168.18) -- (56.22,168.18) -- (56.22,210.39) -- (11.25,210.39) -- cycle ;
%Shape: Rectangle [id:dp7076207300604274] 
\draw   (11.25,210.39) -- (56.22,210.39) -- (56.22,252.6) -- (11.25,252.6) -- cycle ;
%Shape: Rectangle [id:dp7905318723703674] 
\draw   (11.25,252.6) -- (56.22,252.6) -- (56.22,294.81) -- (11.25,294.81) -- cycle ;
%Shape: Rectangle [id:dp820802976977133] 
\draw   (56.22,294.81) -- (101.19,294.81) -- (101.19,337.02) -- (56.22,337.02) -- cycle ;
%Shape: Rectangle [id:dp4050404637812597] 
\draw   (101.19,294.81) -- (146.15,294.81) -- (146.15,337.02) -- (101.19,337.02) -- cycle ;
%Shape: Rectangle [id:dp5688204947294868] 
\draw   (146.15,294.81) -- (191.12,294.81) -- (191.12,337.02) -- (146.15,337.02) -- cycle ;
%Shape: Rectangle [id:dp5827571068166082] 
\draw   (191.12,294.81) -- (236.09,294.81) -- (236.09,337.02) -- (191.12,337.02) -- cycle ;
%Shape: Rectangle [id:dp13570497884793764] 
\draw   (236.09,294.81) -- (281.06,294.81) -- (281.06,337.02) -- (236.09,337.02) -- cycle ;
%Shape: Rectangle [id:dp4738002025460537] 
\draw   (101.19,41.54) -- (146.15,41.54) -- (146.15,83.75) -- (101.19,83.75) -- cycle ;
%Shape: Rectangle [id:dp15718920362374034] 
\draw   (57.67,422.94) -- (102.64,422.94) -- (102.64,465.15) -- (57.67,465.15) -- cycle ;
%Shape: Rectangle [id:dp8012644527658934] 
\draw   (102.64,422.94) -- (147.61,422.94) -- (147.61,465.15) -- (102.64,465.15) -- cycle ;
%Shape: Rectangle [id:dp09729697115027558] 
\draw   (147.61,422.94) -- (192.58,422.94) -- (192.58,465.15) -- (147.61,465.15) -- cycle ;
%Shape: Rectangle [id:dp6253910896663963] 
\draw   (11.25,41.54) -- (56.22,41.54) -- (56.22,83.75) -- (11.25,83.75) -- cycle ;
%Shape: Rectangle [id:dp04136525317957118] 
\draw   (11.25,294.81) -- (56.22,294.81) -- (56.22,337.02) -- (11.25,337.02) -- cycle ;
%Shape: Rectangle [id:dp027665082699278143] 
\draw   (12.71,422.94) -- (57.67,422.94) -- (57.67,465.15) -- (12.71,465.15) -- cycle ;
%Right Arrow [id:dp8777357385652356] 
\draw   (333.77,238.17) -- (372.85,238.17) -- (372.85,229.77) -- (398.9,246.57) -- (372.85,263.37) -- (372.85,254.97) -- (333.77,254.97) -- cycle ;
%Shape: Ellipse [id:dp4441183100149808] 
\draw   (542.59,63.84) .. controls (542.59,61.17) and (544.76,59) .. (547.44,59) .. controls (550.12,59) and (552.29,61.17) .. (552.29,63.84) .. controls (552.29,66.51) and (550.12,68.67) .. (547.44,68.67) .. controls (544.76,68.67) and (542.59,66.51) .. (542.59,63.84) -- cycle ;
%Shape: Ellipse [id:dp2751229870544796] 
\draw   (542.59,106.05) .. controls (542.59,103.38) and (544.76,101.21) .. (547.44,101.21) .. controls (550.12,101.21) and (552.29,103.38) .. (552.29,106.05) .. controls (552.29,108.72) and (550.12,110.88) .. (547.44,110.88) .. controls (544.76,110.88) and (542.59,108.72) .. (542.59,106.05) -- cycle ;
%Shape: Ellipse [id:dp2885664328671935] 
\draw   (542.59,148.26) .. controls (542.59,145.59) and (544.76,143.42) .. (547.44,143.42) .. controls (550.12,143.42) and (552.29,145.59) .. (552.29,148.26) .. controls (552.29,150.93) and (550.12,153.1) .. (547.44,153.1) .. controls (544.76,153.1) and (542.59,150.93) .. (542.59,148.26) -- cycle ;
%Shape: Ellipse [id:dp14355024442507802] 
\draw   (543.12,190.47) .. controls (543.12,188.1) and (545.06,186.18) .. (547.44,186.18) .. controls (549.82,186.18) and (551.76,188.1) .. (551.76,190.47) .. controls (551.76,192.84) and (549.82,194.77) .. (547.44,194.77) .. controls (545.06,194.77) and (543.12,192.84) .. (543.12,190.47) -- cycle ;
%Shape: Ellipse [id:dp09214616711555212] 
\draw   (542.59,232.68) .. controls (542.59,230.01) and (544.76,227.85) .. (547.44,227.85) .. controls (550.12,227.85) and (552.29,230.01) .. (552.29,232.68) .. controls (552.29,235.35) and (550.12,237.52) .. (547.44,237.52) .. controls (544.76,237.52) and (542.59,235.35) .. (542.59,232.68) -- cycle ;
%Shape: Ellipse [id:dp5992563936558659] 
\draw   (587.56,232.68) .. controls (587.56,230.01) and (589.73,227.85) .. (592.41,227.85) .. controls (595.08,227.85) and (597.25,230.01) .. (597.25,232.68) .. controls (597.25,235.35) and (595.08,237.52) .. (592.41,237.52) .. controls (589.73,237.52) and (587.56,235.35) .. (587.56,232.68) -- cycle ;
%Shape: Ellipse [id:dp3175580401733398] 
\draw   (632.53,232.68) .. controls (632.53,230.01) and (634.7,227.85) .. (637.38,227.85) .. controls (640.05,227.85) and (642.22,230.01) .. (642.22,232.68) .. controls (642.22,235.35) and (640.05,237.52) .. (637.38,237.52) .. controls (634.7,237.52) and (632.53,235.35) .. (632.53,232.68) -- cycle ;
%Shape: Ellipse [id:dp8777171697554169] 
\draw   (677.5,232.68) .. controls (677.5,230.01) and (679.67,227.85) .. (682.34,227.85) .. controls (685.02,227.85) and (687.19,230.01) .. (687.19,232.68) .. controls (687.19,235.35) and (685.02,237.52) .. (682.34,237.52) .. controls (679.67,237.52) and (677.5,235.35) .. (677.5,232.68) -- cycle ;
%Shape: Ellipse [id:dp7006888822892963] 
\draw   (677.5,274.89) .. controls (677.5,272.22) and (679.67,270.06) .. (682.34,270.06) .. controls (685.02,270.06) and (687.19,272.22) .. (687.19,274.89) .. controls (687.19,277.57) and (685.02,279.73) .. (682.34,279.73) .. controls (679.67,279.73) and (677.5,277.57) .. (677.5,274.89) -- cycle ;
%Shape: Ellipse [id:dp4064477164832718] 
\draw   (632.53,274.89) .. controls (632.53,272.22) and (634.7,270.06) .. (637.38,270.06) .. controls (640.05,270.06) and (642.22,272.22) .. (642.22,274.89) .. controls (642.22,277.57) and (640.05,279.73) .. (637.38,279.73) .. controls (634.7,279.73) and (632.53,277.57) .. (632.53,274.89) -- cycle ;
%Shape: Ellipse [id:dp19029627146129946] 
\draw   (587.56,274.89) .. controls (587.56,272.22) and (589.73,270.06) .. (592.41,270.06) .. controls (595.08,270.06) and (597.25,272.22) .. (597.25,274.89) .. controls (597.25,277.57) and (595.08,279.73) .. (592.41,279.73) .. controls (589.73,279.73) and (587.56,277.57) .. (587.56,274.89) -- cycle ;
%Shape: Ellipse [id:dp22934879053740187] 
\draw   (542.5,273.89) .. controls (542.5,271.22) and (544.67,269.06) .. (547.34,269.06) .. controls (550.02,269.06) and (552.19,271.22) .. (552.19,273.89) .. controls (552.19,276.57) and (550.02,278.73) .. (547.34,278.73) .. controls (544.67,278.73) and (542.5,276.57) .. (542.5,273.89) -- cycle ;
%Shape: Ellipse [id:dp09899412349387626] 
\draw   (497.53,273.89) .. controls (497.53,271.22) and (499.7,269.06) .. (502.38,269.06) .. controls (505.05,269.06) and (507.22,271.22) .. (507.22,273.89) .. controls (507.22,276.57) and (505.05,278.73) .. (502.38,278.73) .. controls (499.7,278.73) and (497.53,276.57) .. (497.53,273.89) -- cycle ;
%Shape: Ellipse [id:dp5783662460433545] 
\draw   (452.56,273.89) .. controls (452.56,271.22) and (454.73,269.06) .. (457.41,269.06) .. controls (460.08,269.06) and (462.25,271.22) .. (462.25,273.89) .. controls (462.25,276.57) and (460.08,278.73) .. (457.41,278.73) .. controls (454.73,278.73) and (452.56,276.57) .. (452.56,273.89) -- cycle ;
%Shape: Ellipse [id:dp9705275431545082] 
\draw   (543.5,318.89) .. controls (543.5,316.22) and (545.67,314.06) .. (548.34,314.06) .. controls (551.02,314.06) and (553.19,316.22) .. (553.19,318.89) .. controls (553.19,321.57) and (551.02,323.73) .. (548.34,323.73) .. controls (545.67,323.73) and (543.5,321.57) .. (543.5,318.89) -- cycle ;
%Shape: Ellipse [id:dp2326821121743865] 
\draw   (498.53,318.89) .. controls (498.53,316.22) and (500.7,314.06) .. (503.38,314.06) .. controls (506.05,314.06) and (508.22,316.22) .. (508.22,318.89) .. controls (508.22,321.57) and (506.05,323.73) .. (503.38,323.73) .. controls (500.7,323.73) and (498.53,321.57) .. (498.53,318.89) -- cycle ;
%Shape: Ellipse [id:dp05196724024546251] 
\draw   (453.56,318.89) .. controls (453.56,316.22) and (455.73,314.06) .. (458.41,314.06) .. controls (461.08,314.06) and (463.25,316.22) .. (463.25,318.89) .. controls (463.25,321.57) and (461.08,323.73) .. (458.41,323.73) .. controls (455.73,323.73) and (453.56,321.57) .. (453.56,318.89) -- cycle ;
%Shape: Ellipse [id:dp06915057169365868] 
\draw   (677.5,318.89) .. controls (677.5,316.22) and (679.67,314.06) .. (682.34,314.06) .. controls (685.02,314.06) and (687.19,316.22) .. (687.19,318.89) .. controls (687.19,321.57) and (685.02,323.73) .. (682.34,323.73) .. controls (679.67,323.73) and (677.5,321.57) .. (677.5,318.89) -- cycle ;
%Shape: Ellipse [id:dp10776605337270229] 
\draw   (632.53,318.89) .. controls (632.53,316.22) and (634.7,314.06) .. (637.38,314.06) .. controls (640.05,314.06) and (642.22,316.22) .. (642.22,318.89) .. controls (642.22,321.57) and (640.05,323.73) .. (637.38,323.73) .. controls (634.7,323.73) and (632.53,321.57) .. (632.53,318.89) -- cycle ;
%Shape: Ellipse [id:dp5370976344464278] 
\draw   (587.56,318.89) .. controls (587.56,316.22) and (589.73,314.06) .. (592.41,314.06) .. controls (595.08,314.06) and (597.25,316.22) .. (597.25,318.89) .. controls (597.25,321.57) and (595.08,323.73) .. (592.41,323.73) .. controls (589.73,323.73) and (587.56,321.57) .. (587.56,318.89) -- cycle ;
%Shape: Ellipse [id:dp06180958929210334] 
\draw   (544.5,444.89) .. controls (544.5,442.22) and (546.67,440.06) .. (549.34,440.06) .. controls (552.02,440.06) and (554.19,442.22) .. (554.19,444.89) .. controls (554.19,447.57) and (552.02,449.73) .. (549.34,449.73) .. controls (546.67,449.73) and (544.5,447.57) .. (544.5,444.89) -- cycle ;
%Shape: Ellipse [id:dp24767211444627057] 
\draw   (499.53,444.89) .. controls (499.53,442.22) and (501.7,440.06) .. (504.38,440.06) .. controls (507.05,440.06) and (509.22,442.22) .. (509.22,444.89) .. controls (509.22,447.57) and (507.05,449.73) .. (504.38,449.73) .. controls (501.7,449.73) and (499.53,447.57) .. (499.53,444.89) -- cycle ;
%Shape: Ellipse [id:dp7756174757295602] 
\draw   (454.56,444.89) .. controls (454.56,442.22) and (456.73,440.06) .. (459.41,440.06) .. controls (462.08,440.06) and (464.25,442.22) .. (464.25,444.89) .. controls (464.25,447.57) and (462.08,449.73) .. (459.41,449.73) .. controls (456.73,449.73) and (454.56,447.57) .. (454.56,444.89) -- cycle ;
%Shape: Ellipse [id:dp657497983690307] 
\draw   (591.53,444.89) .. controls (591.53,442.22) and (593.7,440.06) .. (596.38,440.06) .. controls (599.05,440.06) and (601.22,442.22) .. (601.22,444.89) .. controls (601.22,447.57) and (599.05,449.73) .. (596.38,449.73) .. controls (593.7,449.73) and (591.53,447.57) .. (591.53,444.89) -- cycle ;
%Shape: Ellipse [id:dp7499069095921862] 
\draw   (499.59,62.84) .. controls (499.59,60.17) and (501.76,58) .. (504.44,58) .. controls (507.12,58) and (509.29,60.17) .. (509.29,62.84) .. controls (509.29,65.51) and (507.12,67.67) .. (504.44,67.67) .. controls (501.76,67.67) and (499.59,65.51) .. (499.59,62.84) -- cycle ;
%Shape: Ellipse [id:dp5163778132969912] 
\draw   (499.59,105.05) .. controls (499.59,102.38) and (501.76,100.21) .. (504.44,100.21) .. controls (507.12,100.21) and (509.29,102.38) .. (509.29,105.05) .. controls (509.29,107.72) and (507.12,109.88) .. (504.44,109.88) .. controls (501.76,109.88) and (499.59,107.72) .. (499.59,105.05) -- cycle ;
%Shape: Ellipse [id:dp327898074046155] 
\draw   (499.59,147.26) .. controls (499.59,144.59) and (501.76,142.42) .. (504.44,142.42) .. controls (507.12,142.42) and (509.29,144.59) .. (509.29,147.26) .. controls (509.29,149.93) and (507.12,152.1) .. (504.44,152.1) .. controls (501.76,152.1) and (499.59,149.93) .. (499.59,147.26) -- cycle ;
%Shape: Ellipse [id:dp2560988172475338] 
\draw   (500.12,189.47) .. controls (500.12,187.1) and (502.06,185.18) .. (504.44,185.18) .. controls (506.82,185.18) and (508.76,187.1) .. (508.76,189.47) .. controls (508.76,191.84) and (506.82,193.77) .. (504.44,193.77) .. controls (502.06,193.77) and (500.12,191.84) .. (500.12,189.47) -- cycle ;
%Shape: Ellipse [id:dp5250699017567795] 
\draw   (499.59,231.68) .. controls (499.59,229.01) and (501.76,226.85) .. (504.44,226.85) .. controls (507.12,226.85) and (509.29,229.01) .. (509.29,231.68) .. controls (509.29,234.35) and (507.12,236.52) .. (504.44,236.52) .. controls (501.76,236.52) and (499.59,234.35) .. (499.59,231.68) -- cycle ;
%Shape: Ellipse [id:dp5225166183323469] 
\draw   (452.59,61.84) .. controls (452.59,59.17) and (454.76,57) .. (457.44,57) .. controls (460.12,57) and (462.29,59.17) .. (462.29,61.84) .. controls (462.29,64.51) and (460.12,66.67) .. (457.44,66.67) .. controls (454.76,66.67) and (452.59,64.51) .. (452.59,61.84) -- cycle ;
%Shape: Ellipse [id:dp7356181864038143] 
\draw   (452.59,104.05) .. controls (452.59,101.38) and (454.76,99.21) .. (457.44,99.21) .. controls (460.12,99.21) and (462.29,101.38) .. (462.29,104.05) .. controls (462.29,106.72) and (460.12,108.88) .. (457.44,108.88) .. controls (454.76,108.88) and (452.59,106.72) .. (452.59,104.05) -- cycle ;
%Shape: Ellipse [id:dp2642232748746738] 
\draw   (452.59,146.26) .. controls (452.59,143.59) and (454.76,141.42) .. (457.44,141.42) .. controls (460.12,141.42) and (462.29,143.59) .. (462.29,146.26) .. controls (462.29,148.93) and (460.12,151.1) .. (457.44,151.1) .. controls (454.76,151.1) and (452.59,148.93) .. (452.59,146.26) -- cycle ;
%Shape: Ellipse [id:dp20698866761825996] 
\draw   (453.12,188.47) .. controls (453.12,186.1) and (455.06,184.18) .. (457.44,184.18) .. controls (459.82,184.18) and (461.76,186.1) .. (461.76,188.47) .. controls (461.76,190.84) and (459.82,192.77) .. (457.44,192.77) .. controls (455.06,192.77) and (453.12,190.84) .. (453.12,188.47) -- cycle ;
%Shape: Ellipse [id:dp42934860041878364] 
\draw   (452.59,230.68) .. controls (452.59,228.01) and (454.76,225.85) .. (457.44,225.85) .. controls (460.12,225.85) and (462.29,228.01) .. (462.29,230.68) .. controls (462.29,233.35) and (460.12,235.52) .. (457.44,235.52) .. controls (454.76,235.52) and (452.59,233.35) .. (452.59,230.68) -- cycle ;

% Text Node
\draw (104.52,46.61) node [anchor=north west][inner sep=0.75pt]   [align=left] {1};
% Text Node
\draw (104.52,131.03) node [anchor=north west][inner sep=0.75pt]   [align=left] {3};
% Text Node
\draw (104.52,173.24) node [anchor=north west][inner sep=0.75pt]   [align=left] {4};
% Text Node
\draw (104.52,215.46) node [anchor=north west][inner sep=0.75pt]   [align=left] {5};
% Text Node
\draw (149.49,215.46) node [anchor=north west][inner sep=0.75pt]   [align=left] {6};
% Text Node
\draw (194.45,215.46) node [anchor=north west][inner sep=0.75pt]   [align=left] {7};
% Text Node
\draw (239.42,215.46) node [anchor=north west][inner sep=0.75pt]   [align=left] {8};
% Text Node
\draw (261.93,180.97) node [anchor=north west][inner sep=0.75pt]   [align=left] {$(\frac{10}{\epsilon}-20)L_{\epsilon}$};
% Text Node
\draw (104.52,88.82) node [anchor=north west][inner sep=0.75pt]   [align=left] {2};
% Text Node
\draw (82.17,372.64) node [anchor=north west][inner sep=0.75pt]   [align=left] {$\vdots$};
% Text Node
\draw (-4.58,489.05) node [anchor=north west][inner sep=0.75pt]   [align=left] {$(\frac{30}{\epsilon}+50)L_{\epsilon}$};
% Text Node
\draw (-24.53,306.11) node [anchor=north west][inner sep=0.75pt]   [align=left] {$L_{\epsilon}$};
% Text Node
\draw (528.29,47.8) node [anchor=north west][inner sep=0.75pt]   [align=left] {1};
% Text Node
\draw (528.29,132.22) node [anchor=north west][inner sep=0.75pt]   [align=left] {3};
% Text Node
\draw (528.29,174.43) node [anchor=north west][inner sep=0.75pt]   [align=left] {4};
% Text Node
\draw (528.29,216.64) node [anchor=north west][inner sep=0.75pt]   [align=left] {5};
% Text Node
\draw (573.26,216.64) node [anchor=north west][inner sep=0.75pt]   [align=left] {6};
% Text Node
\draw (618.22,216.64) node [anchor=north west][inner sep=0.75pt]   [align=left] {7};
% Text Node
\draw (663.19,216.64) node [anchor=north west][inner sep=0.75pt]   [align=left] {8};
% Text Node
\draw (528.29,90.01) node [anchor=north west][inner sep=0.75pt]   [align=left] {2};
% Text Node
\draw (666.93,177.97) node [anchor=north west][inner sep=0.75pt]   [align=left] {$\frac{10}{\epsilon}-20$};
% Text Node
\draw (538.93,368.97) node [anchor=north west][inner sep=0.75pt]   [align=left] {$\vdots$};
% Text Node
\draw (437,478.05) node [anchor=north west][inner sep=0.75pt]   [align=left] {$\frac{30}{\epsilon}+50$};

\end{tikzpicture}

\caption{Homothetic transform from cubes to vertices.}

\end{figure}
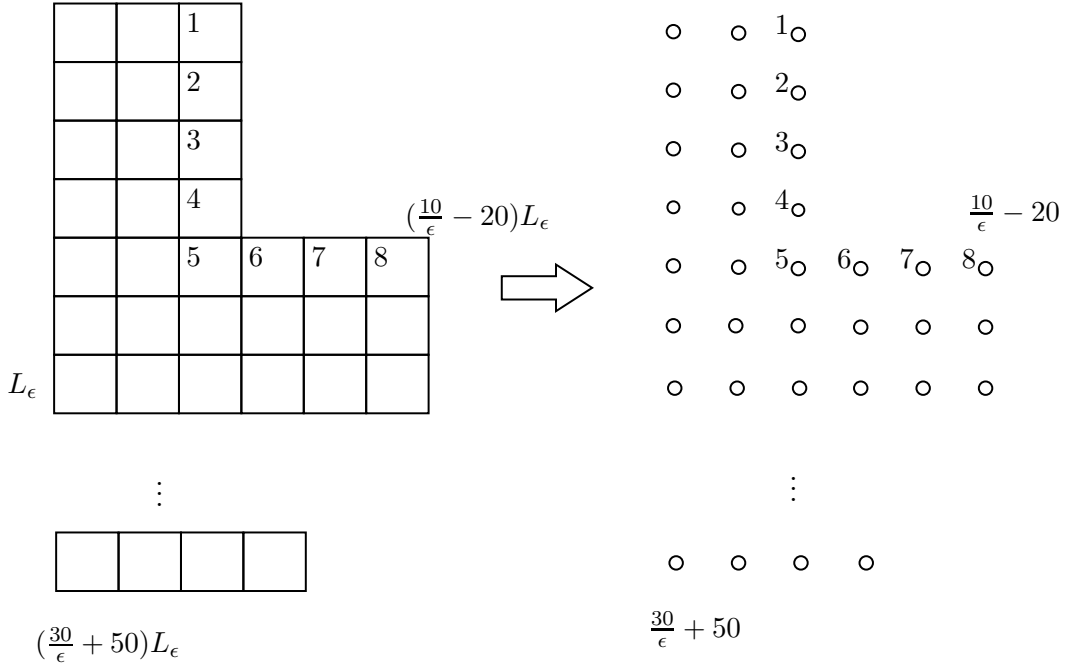
Now denote 
\[\mcF_1=\{k\in S_{(\frac{30}{\epsilon}+50)}\backslash S_{(\frac{10}{\epsilon}-20)}:\ C_k\cap U \neq\emptyset\}.\]
Then  
\[\# U\leq L_{\epsilon}^d \cdot (\#\mcF_1) \]
Moreover,  $j\in \mcA\Rightarrow|\log \epsilon|^{-1}|j|\geq \frac{4}{\epsilon|\log \epsilon|}L_{\epsilon}$. So,  for each $j\in C_k$, %the set $Q_{|\log \epsilon|^{-1}|j|}(j)$ will contain about $\#Q_{\frac{4}{\epsilon|\log\epsilon|}}(k)$ many cubes distributed arround $C_k$. That is to say  
\[Q_{|\log \epsilon|^{-1}|j|}(j)\supset \bigcup_{k'\in Q_{\frac{2}{\epsilon|\log\epsilon|}}(k)}C_{k'}.\]
If  we denote 
\[\mcF_2=\bigcup_{k\in \mcF_1}\big( Q_{\frac{2}{\epsilon|\log\epsilon|}}(k) \cap (S_{\frac{30}{\epsilon}}\backslash S_{\frac{10}{\epsilon }})\big),\]
then  
\[\bigcup_{j\in U}Q_{|\log \epsilon|^{-1}|j|}(j) \cap (S_{L_2}\backslash S_{L_1})\supset \bigcup_{k\in \mcF_2}C_k.\]
Here the involvement of $S_{\frac{30}{\epsilon}}\backslash S_{\frac{10}{\epsilon }}$ is because that it's the homothetic transform of $S_{L_2}\backslash S_{L_1}$.  By Lemma \ref{equi-distri of LC}  and Remark B.1.(1),  for each $C_k$, the number of eigenfunctions  which have a localization center in it is bigger than $(1-\epsilon)^d L_{\epsilon}^d$. Hence,  if one chooses  $\La= \cup_{k\in \mcF_2}C_k$ and take $\tilde{\La}$ to be the extension  of $\La$ by  $\epsilon L_{\epsilon}$, $\hat{\La}$ to be the shrink of $\La$ by  $\epsilon L_{\epsilon}$,  respectively, then Remark B.1.(1) and the same argument as in the proof of Lemma \ref{equi-distri of LC}  will ensure  that 
\begin{equation}\label{La equi-distri}
	\#\{\psi_s:\ \LC(\psi_s)\cap\La\neq \emptyset\}\geq (1-\epsilon)^d\cdot\# \La. 
\end{equation}
In fact, the only difference between the proof of \eqref{La equi-distri} and  that of  Lemma \ref{equi-distri of LC} is the estimate of cardinality of $\tilde{\La}$ and $\hat{\La}$.  Since  $\La$ is the union of a  large number  of $L_{\epsilon}$-size blocks  and the extension  (or shrink) size is $\epsilon L_{\epsilon}$,  the cardinality of $\tilde{\La}$ (or $\hat{\La}$) is smaller (or larger) than  that of the set  given by extensions  (or shrinks) of  all cubes $C_k\in \La$ by $\epsilon L_{\epsilon}$. Thus,
\[\#\tilde{\La}\leq \#\mcF_2 \cdot (1+2\epsilon)^d L^d_{\epsilon}=(1+2\epsilon)^d \#\La, \]
\[\#\hat{\La}\geq \#\mcF_2 \cdot (1-2\epsilon)^d L^d_{\epsilon}=(1-2\epsilon)^d \#\La.  \]
Moreover, by our construction of $\mcF_2$ and $x_s$, we have $\La\subset S_{L_2}\backslash S_{L_1}$ and hence,
\[\psi_s:\ \LC(\psi_s)\cap\La\neq \emptyset\Rightarrow |x_s|\leq \frac{L_2-1}{2}.\]
However, we mention that  there is the  possibility that $|x_s|\leq \frac{L_1-1}{2}$.
Hence by \eqref{La equi-distri} (noting  that $\La= \cup_{k\in \mcF_2}C_k$), 
\begin{align*}
	\#\mcN(U)  &=\#\{\psi_s\in \mcB:\ \LC(\psi_s)\cap\big(\cup_{j\in U} Q_{|\log\epsilon|^{-1}|j|}(j) \big)\neq \emptyset\}\\
	  &\geq \#\{\psi_s\in \mcB:\  \LC(\psi_s)\cap \La\neq \emptyset \}\\
	  &=  \#\{\psi_s:\  \LC(\psi_s)\cap \La\neq \emptyset \}-\#\{\psi_s:\ |x_s|\leq \frac{L_1-1}{2} ,\LC(\psi_s)\cap \La\neq \emptyset \}\\
	  &\geq (1-\epsilon)^d\cdot\#(\bigcup_{k\in\mcF_2}C_k)- \Delta\\
	  & = (1-\epsilon)^d L^d_{\epsilon} \cdot(\#\mcF_2)-\Delta,
\end{align*} 
where 
\[\Delta= \#\{\psi_s:\ |x_s|\leq \frac{L_1-1}{2} ,\LC(\psi_s)\cap \La\neq \emptyset \}=\#\{\psi_s:\ |x_s|\leq \frac{5}{\epsilon}L_{\epsilon} -\frac12,\LC(\psi_s)\cap \La\neq \emptyset \}.\] 
It suffices to  estimate $\Delta$. Indeed,  by the argument in Remark B.1.(3),  we get 
\[ \{\psi_s:\ |x_s|\leq \frac{5}{\epsilon}L_{\epsilon}-\frac12 ,\LC(\psi_s)\cap \La\neq \emptyset \}\subset \{\psi_s:\ (\frac{5}{\epsilon}-2)L_{\epsilon}-\frac12< |x_s|\leq  \frac{5}{\epsilon}L_{\epsilon}-\frac12\}. \] 
Hence, by Lemma  \ref{equi-distri of LC}, we have 
\begin{align}\label{error Delta}
	\Delta &\leq \# \{\psi_s:\ (\frac{5}{\epsilon}-2)L_{\epsilon}-\frac12< |x_s|\leq  \frac{5}{\epsilon}L_{\epsilon}-\frac12\}\\
	\notag &\leq (\frac{5}{\epsilon}L_{\epsilon})^d(1+\epsilon)^d-(\frac{5}{\epsilon}L_{\epsilon}-3L_{\epsilon})^d(1-\epsilon)^d \\
	\notag &\lesssim_d L^d_{\epsilon}\epsilon^{-d+1}.
\end{align}
Now we discuss 
\begin{itemize}
	\item If $\mcF_2=S_{\frac{30}{\epsilon}}\backslash S_{\frac{10}{\epsilon}} $, i.e.,  the union $\cup_{k\in \mcF_1} Q_{\frac{2}{\epsilon|\log\epsilon|}}(k)$ covers the annulus, then   $\cup_{j\in U}Q_{|\log \epsilon|^{-1}|j|}(j)$ covers $S_{L_2}\backslash S_{L_1}$ and thus $\mcN(U)=\mcB$. So,
	     \[\#U\leq \#\mcA=\#\mcB=\#\mcN(U). \]
    \item Otherwise,  $\mcF_2$ does not cover the annulus $S_{\frac{30}{\epsilon}}\backslash S_{\frac{10}{\epsilon}}$. In this case, we just need to show
	\[(1-\epsilon)^d L^d_{\epsilon}\cdot \#\mcF_2-\Delta \geq L_{\epsilon}^d\cdot \#\mcF_1.\]
	Thus,  it suffices   to show 
	\begin{equation}\label{surfficient Hall}
			\#\mcF_2-\#\mcF_1\geq 2d\epsilon \cdot \#\mcF_1+\frac{2\Delta}{L_{\epsilon}^d} >(\frac{1}{(1-\epsilon)^d}-1)\cdot \#\mcF_1+\frac{\Delta}{(1-\epsilon)^dL_{\epsilon}^d}
	\end{equation}
	since  $0<\epsilon\ll1$.  Now choose an $x\in (S_{\frac{30}{\epsilon}}\backslash S_{\frac{10}{\epsilon}})\backslash \mcF_2$, and denote the closest point near $x$ in $\mcF_1$ is $k_0$. Note  
	\[D:=\dist(x,k_0)> \frac{2}{\epsilon|\log\epsilon|}.\]
	Hence, if we take 
	\[\Sigma=Q_D(x)\cap Q_{\frac{2}{\epsilon|\log\epsilon|}}(k_0)\cap (S_{\frac{30}{\epsilon}}\backslash S_{\frac{10}{\epsilon}}),\]
	then 
	\[\#\Sigma\geq (\frac{2}{\epsilon|\log\epsilon|}-50)^d\geq (\frac{1}{\epsilon|\log\epsilon|})^d.\]
	Notice that by the construction of $x,\Sigma$, we have 
	\[\mcF_1\subset S_{(\frac{30}{\epsilon}+50)}\backslash S_{(\frac{10}{\epsilon}-20)}, \ 
	\mcF_2\backslash ( \mcF_1\cap S_{\frac{30}{\epsilon}}\backslash S_{\frac{10}{\epsilon}})\supset \Sigma,\]
	which  gives 
	\[\#\mcF_1\lesssim_{d}\epsilon^{-d}.\] 
	This along with \eqref{error Delta}  implies  
	\[2d\epsilon \cdot \#\mcF_1+\frac{2\Delta}{L_{\epsilon}^d}\lesssim_d \epsilon^{-d+1}\]
	and (see FIGURE 3)
	\begin{align*}
			\#\mcF_2 -\#\mcF_1&\geq \#\Sigma-\#(S_{\frac{30}{\epsilon}+50}\backslash S_{\frac{30}{\epsilon}})-\#(S_{\frac{10}{\epsilon}}\backslash S_{\frac{10}{\epsilon}-20})\\
			 &\gtrsim_d(\epsilon|\log\epsilon|)^{-d}-C(d)\epsilon^{-(d-1)}\\
			 &\gtrsim_d \epsilon^{-d+1}\\
			 &\geq 2d\epsilon \cdot \#\mcF_1+\frac{2\Delta}{L_{\epsilon}^d} 
	\end{align*}
provided  $0<\epsilon<c(d)\ll 1$ and $\frac{1}{|\log\epsilon|}\gg \epsilon^{\frac{1}{d}}$. Hence,  the condition  \eqref{surfficient Hall}  is verified and  we prove the Hall's condition   
	\[\#\mcN(U)\geq \# U.\]
	\begin{figure}[htbp]
		\centering

		\tikzset{every picture/.style={line width=0.75pt}} %set default line width to 0.75pt        

		\begin{tikzpicture}[x=0.75pt,y=0.75pt,yscale=-0.7,xscale=0.7]
		%uncomment if require: \path (0,680); %set diagram left start at 0, and has height of 680
		
		%Shape: Rectangle [id:dp596219784229673] 
		\draw   (210.74,159.9) -- (498.81,159.9) -- (498.81,443.57) -- (210.74,443.57) -- cycle ;
		%Shape: Rectangle [id:dp6365835697057782] 
		\draw   (306.81,254.5) -- (402.74,254.5) -- (402.74,348.97) -- (306.81,348.97) -- cycle ;
		%Shape: Rectangle [id:dp9700079128409551] 
		\draw  [dash pattern={on 0.84pt off 2.51pt}] (197.83,147.19) -- (511.71,147.19) -- (511.71,456.29) -- (197.83,456.29) -- cycle ;
		%Straight Lines [id:da879544261678576] 
		\draw    (196.91,399.49) -- (210.54,399.49) ;
		%Shape: Rectangle [id:dp33558752095068645] 
		\draw   (172.74,126) -- (262.17,126) -- (262.17,214.07) -- (172.74,214.07) -- cycle ;
		%Shape: Free Drawing [id:dp48943160811273967] 
		\draw  [line width=3] [line join = round][line cap = round] (217.71,169.28) .. controls (217.23,168.81) and (216.75,168.34) .. (216.27,167.87) ;
		%Shape: Free Drawing [id:dp36035965804104864] 
		\draw  [line width=3] [line join = round][line cap = round] (217.71,166.45) .. controls (215.87,166.45) and (216.27,167.74) .. (216.27,169.28) ;
		%Shape: Free Drawing [id:dp15459302863764535] 
		\draw  [line width=3] [line join = round][line cap = round] (217.71,167.16) .. controls (219.13,168.56) and (216.27,168.63) .. (216.27,166.45) ;
		%Shape: Rectangle [id:dp7063723657793344] 
		\draw   (167,188.14) -- (233.48,188.14) -- (233.48,253.61) -- (167,253.61) -- cycle ;
		%Shape: Free Drawing [id:dp7487801737263715] 
		\draw  [line width=3] [line join = round][line cap = round] (199.78,219.42) .. controls (199.78,219.08) and (200.2,218.56) .. (200.5,218.71) .. controls (200.98,218.95) and (199.6,219.42) .. (199.06,219.42) ;
		%Shape: Free Drawing [id:dp6419089877078039] 
		\draw  [line width=3] [line join = round][line cap = round] (200.5,218.71) .. controls (200.5,219.07) and (201.35,221.41) .. (201.22,221.53) .. controls (200.53,222.21) and (201.12,219.42) .. (200.5,219.42) ;
		%Shape: Free Drawing [id:dp9982436722095553] 
		\draw  [line width=3] [line join = round][line cap = round] (200.5,221.53) .. controls (198.36,221.53) and (197.73,218.71) .. (199.78,218.71) ;
		%Shape: Rectangle [id:dp6074760294076293] 
		\draw  [dash pattern={on 0.84pt off 2.51pt}] (318.79,266.3) -- (390.76,266.3) -- (390.76,337.17) -- (318.79,337.17) -- cycle ;
		%Straight Lines [id:da6230584562747896] 
		\draw    (356.11,337.35) -- (356.11,347.94) ;
		%Shape: Rectangle [id:dp6554305824008824] 
		\draw  [fill={rgb, 255:red, 225; green, 25; blue, 25 }  ,fill opacity=1 ] (211.46,189.05) -- (232.77,189.05) -- (232.77,213.77) -- (211.46,213.77) -- cycle ;
		%Straight Lines [id:da12765257432168187] 
		\draw  [dash pattern={on 4.5pt off 4.5pt}]  (227.75,204.59) -- (329.57,208.82) ;
		\draw   (319.17,202.83) -- (330.28,208.83) -- (318.49,213.4) ;
		
		% Text Node
		\draw (195.29,383.52) node [anchor=north west][inner sep=0.75pt]   [align=left] {$25$};
		% Text Node
		\draw (469.97,332.68) node [anchor=north west][inner sep=0.75pt]   [align=left] {$\frac{30}{\epsilon}$};
		% Text Node
		\draw (354.99,337.11) node [anchor=north west][inner sep=0.75pt]   [align=left] {$10$};
		% Text Node
		\draw (217.47,169.8) node [anchor=north west][inner sep=0.75pt]   [align=left] {$x$};
		% Text Node
		\draw (187.84,223.93) node [anchor=north west][inner sep=0.75pt]   [align=left] {$k_0$};
		% Text Node
		\draw (328.28,202.03) node [anchor=north west][inner sep=0.75pt]   [align=left] {$\Sigma$};

		\end{tikzpicture}

		\caption{Control of  $\#\mcF_2-\#\mcF_1$.}

	\end{figure}

\end{itemize}
\end{proof}

Finally,  applying the  {\bf Hall's marriage theorem} to bipartite graph $(\mcA,\mcB;\mathbb{E})$ leads to the relabelling:  there is a bijection 
\[f:\ \{j:\ j\in\tilde{S}_{K_2}\backslash\tilde{S}_{K_1}\}\leftrightarrow\{\psi_s:\ \frac{L_1-1}{2}<|x_s|\leq \frac{L_2-1}{2}\}\]
such that,  for all $j\in \tilde{S}_{K_2}\backslash \tilde{S}_{K_1}$ and $\psi_s$ matching  with $j$, one has  
\[\exists \iota_j\in\LC(\psi_s)\ {\rm s.t.,}\ |\iota_j-j|\leq |\log\epsilon|^{-1}|j|.\]

(3)  We can prove the relabelling theorem inductively, i.e., for $k\geq 2,L_{k+1}=3L_{k}$, similar to  that  in (2).  Thus, repeating  the procedures in (2)   gives  the relabelling map to eigenfunctions  having some  localization center in $S_{L_{k+1}}\backslash S_{L_k}$. The only difference is that one should use $3^{k-1}L_{\epsilon}$-size cubes to pave the corresponding annulus.

Now we have constructed  a relabelling map such that for all $j\notin \tilde{S}_{K_1}$, we have 
\[\exists \iota_j\in\LC(\psi_s)\ {\rm s.t., }\ |\iota_j-j|\leq |\log\epsilon|^{-1}|j|.\]
Moreover,  by \eqref{estimate on K_1},  we know  that $|j|\geq 5L'_{\epsilon}\geq 2L_1\Rightarrow j \notin \tilde{S}_{K_1}$.
Finally,  replace   $\epsilon$ with  $e^{-\frac{1}{\epsilon}}$, so that $|\log\epsilon|^{-1}$ is replaced by $\epsilon$. Thus, by taking $0<\epsilon<c(d)\ll 1$  and letting $L''_{\epsilon}=5L'_{e^{-\frac{1}{\epsilon}}}$,  we  complete  the proof of Theorem \ref{relabelling thm}.
\end{proof}

\section{Resolvent identities}\label{JLSapp}
In this section, we introduce two types of resolvent identities, which are repeatedly used to derive exponential off-diagonal decay estimates for Green's functions. 

Let $A$ be a matrix on $\ell^2(\Z^d)$ satisfying
	\[|A(n,n')|\leq C_1 (1+|n-n'|)^{C_1}e^{-c_1|n-n'|},\ C_1,c_1>0.\]
We say that an elementary region $\La\in \mcE_{N'}$ is in the class $G$ (good) if 
\begin{equation}\label{range1}
	|(R_{\La}AR_{\La})^{-1}(n,n')|\leq e^{-c_2|n-n'|}\  {\rm for} \ |n-n'|\geq \sqrt{N'},
\end{equation}
where $\frac{1}{2}c_1< c_2\leq c_1$.
\begin{rmk}
	In linear spectral problems (cf. \cite{Bou07, JLS20}),  one  typically  deals  with operators such as
	\[|A(n,n')|\leq C_1 e^{-c_1|n-n'|},\ C_1,c_1\geq 0,\]
	without the polynomial factor $(|n-n'|+1)^{C_1}$. However, the extra polynomial factor does not significantly affect the proofs in \cite{JLS20} (cf. Lemma 3.2 and Theorem 3.3) and \cite{Liu22} (cf. Theorem 2.1), as one can replace, at each step, the estimate
	\[\sum_{n_1\in W \atop n_2\in \La\setminus W}e^{-c_1|n-n'|} |G_{\La}(n_2,n')|\leq (2N+1)^{2d}\sup_{n_2\in \La\setminus W}|G_{\La}(n_2,n')|\]
	with 
	\[\sum_{n_1\in W \atop n_2\in \La\setminus W}(|n-n'|+1)^{C_1}e^{-c_1|n-n'|} |G_{\La}(n_2,n')|\leq (2N+1)^{2d+C_1}\sup_{n_2\in \La\setminus W}|G_{\La}(n_2,n')|.\]
	\end{rmk}

\begin{thm}[cf. Theorem 2.1, \cite{Liu22}]\label{MSA1}
	 Let $ 0<\xi\leq \zeta<\frac{1}{2}$. Let $\tilde{\La}_0\in\mcE_N$ be an elementary region with the property that  for all $\La\subset\tilde{\La}_0,\La\in \mcR_{L}^{N^{\xi}}$ with $N^{\xi}\leq L\leq N$, the 
	Green's function $(R_{\La}A R_{\La})^{-1}$ satisfies 
	\[\nm (R_{\La}AR_{\La})^{-1}\nm \leq e^{L^{\frac{9}{10}}}.\]
	Assume that for any family $\mcF$ of pairwise disjoint elementary regions of size  $M=\lfloor N^{\xi}\rfloor$ contained in $\tilde{\La}_0,$  
	\[\#\{\La\in \mcF:\ \La \ {\rm is \ not \ in \ class} \ G\}\leq \frac{N^{\zeta}}{N^{\xi}}.\]
	Then for large $N$ (depending on $C_1,\zeta,\xi,c_1$),  we have 
	\begin{equation}\label{range2}
			|(R_{\tilde{\La}_0}AR_{\tilde{\La}_0})^{-1}(n,n')|\leq e^{-(c_2-N^{-\vartheta})|n-n'|}\ {\rm for} \ |n-n'|\geq \sqrt{N},
	\end{equation}
    where $\vartheta=\vartheta(\xi,\zeta)>0$.
\end{thm}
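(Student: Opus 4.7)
\medskip

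\noindent\textbf{Proof proposal for Theorem \ref{MSA1}.}

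The plan is to follow the standard MSA ``paving and resolvent identity'' scheme, adapted to the generalized elementary region framework of \cite{JLS20}. Fix $n, n' \in \tilde{\La}_0$ with $|n - n'| \geq \sqrt{N}$, and let $M = \lfloor N^\xi \rfloor$. The idea is to chain good boxes of size $M$ across $\tilde{\La}_0$ and to absorb passages through ``bad'' regions by invoking the sub-exponential $\ell^2$ bound on generalized elementary regions at some intermediate scale $L \in [N^\xi, N]$. Write $G = (R_{\tilde{\La}_0} A R_{\tilde{\La}_0})^{-1}$ throughout.

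First, I would classify the sites. Say $n_0 \in \tilde{\La}_0$ is \emph{good} if there exists $W \in \mcE_M$ with $n_0 \in W$, $\dist(n_0, \tilde{\La}_0 \setminus W) \geq M/2$, $W \subset \tilde{\La}_0$, and $W$ in class $G$. Otherwise call $n_0$ \emph{bad}. Using the covering/geometric lemmas for elementary regions from \cite{JLS20} together with the hypothesis that any family of pairwise disjoint $M$-elementary regions in $\tilde{\La}_0$ contains at most $N^{\zeta-\xi}$ many non-$G$ regions, one shows that the bad set $\mathcal{B} \subset \tilde{\La}_0$ has the property that for every $n_0$ there is some \emph{generalized} elementary region $\Lambda(n_0) \in \mathcal{R}_{L(n_0)}^{N^\xi}$ (of some adaptive scale $L(n_0) \lesssim N^\xi \cdot (\#\text{bad neighbors})$) containing $n_0$, contained in $\tilde{\La}_0$, and isolating $n_0$ from $\tilde{\La}_0 \setminus \Lambda(n_0)$ by distance at least $L(n_0)/2$. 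Crucially, the sublinear bound $N^\zeta / N^\xi$ with $\zeta < 1/2$ forces $L(n_0) \leq N^{\zeta + \xi} \ll N$, so the hypothesis $\nm (R_{\La} A R_{\La})^{-1} \nm \leq e^{L^{9/10}}$ applies on $\Lambda(n_0)$.

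Next, I would iterate the resolvent identity. For either kind of site $n_0$, if $W \ni n_0$ is the chosen (good or generalized-elementary) region and $n' \notin W$, then
\[
|G(n_0, n')| \leq \sum_{\substack{w \in \partial_- W \\ w' \in \partial_+ W, |w-w'|=1}} |(R_W A R_W)^{-1}(n_0, w)|\cdot |A(w,w')|\cdot |G(w', n')|.
\]
For good $n_0$, the bound \eqref{range1} on $W$ gives a factor $e^{-c_2 |n_0 - w|}$ with $|n_0 - w| \geq M/2$, contributing genuine exponential decay. For bad $n_0$, the same identity on $\Lambda(n_0)$ contributes only a sub-exponential factor $e^{L(n_0)^{9/10}}$, but advances the argument by at least $L(n_0)/2$, and by construction the bad step is immediately followed (or preceded) by good steps. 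Iterating this procedure produces a chain $n_0 = n, n_1, n_2, \dots, n_s$ with $|n_{k+1} - n'| \leq |n_k - n'| - M/2$ in the good case and the analogous gain in the bad case, stopping once the remaining distance drops below $\sqrt{M}$ (where one invokes $\nm G \nm$ on a small residual region via the $L = N^\xi$ hypothesis). The off-diagonal decay rate over the full chain is at least $c_2 - \mathcal{O}(M^{-1/10})$ times $|n - n'|$, provided the number of bad steps along the chain is $o(|n-n'|/M)$.

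The crux of the argument, and what I expect to be the main obstacle, is the combinatorial bookkeeping: one must show that along every potential chain from $n$ to $n'$, the fraction of steps landing in $\mathcal{B}$ (and thus costing a sub-exponential factor $e^{L^{9/10}}$) is small enough that the total accumulated cost $\prod_k e^{L(n_k)^{9/10}}$ is absorbed into $e^{N^{-\vartheta}|n-n'|}$ for some $\vartheta = \vartheta(\xi, \zeta) > 0$. This is exactly where the strict inequality $\zeta < \tfrac12$ is used: the worst-case scenario (a few bad boxes conspiring to produce one large generalized elementary region of scale $\sim N^\zeta$) yields at most one bad step per chain with cost $e^{N^{9\zeta/10}}$, and since $9\zeta/10 < 1/2 \leq $ exponent of the main decay $c_2|n-n'| \gtrsim c_2 \sqrt{N}$, this cost is negligible. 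Routine (but careful) exponent chasing, together with the factor $(1+|n-n'|)^{C_1}$ in the a priori bound on $A$ contributing only a polynomial loss absorbable into $N^{-\vartheta}$, then yields \eqref{range2}.
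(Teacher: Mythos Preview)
The paper does not give its own proof of this theorem. It is stated as ``cf.\ Theorem~2.1, \cite{Liu22}'' and the surrounding remark explicitly says ``With these modifications, the proof of Theorem~\ref{MSA1} in \cite{Liu22} remains valid, and we omit the details here.'' So there is nothing in the paper to compare your proposal against beyond the reference to \cite{Liu22} and the observation that replacing the threshold $|n-n'|\geq N/10$ by $|n-n'|\geq\sqrt{N}$ forces the stronger hypothesis $\zeta<\tfrac12$ (this modification is attributed to \cite{HSSY24}).

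Your sketch is in the right spirit---it is the standard MSA paving scheme underlying \cite{Liu22,JLS20}---but a couple of points are underspecified relative to what the actual proof requires. First, the construction of the generalized elementary region $\Lambda(n_0)$ around a bad site is not a one-shot choice: in \cite{Liu22} one runs an \emph{exhaustion} procedure at a sequence of intermediate scales, enlarging the region until its boundary is entirely good, and the bound $L(n_0)\lesssim N^{\zeta}$ comes out of this iteration rather than from a single geometric lemma. Second, your claim that ``the worst-case scenario \dots\ yields at most one bad step per chain'' is not quite how the bookkeeping works: the chain may visit several bad clusters, and what one actually controls is the \emph{total} scale of all bad regions encountered, bounded by $N^{\zeta}$; the condition $\zeta<\tfrac12$ then ensures that the accumulated sub-exponential loss $e^{O(N^{9\zeta/10})}$ is dominated by $e^{c_2\sqrt{N}}\leq e^{c_2|n-n'|}$, which is the correct mechanism. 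These are refinements rather than gaps, and your overall strategy matches the cited argument.
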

\begin{rmk}
	There is an important difference between the original form of  Theorem \ref{MSA1} in \cite{Liu22} and the present one. In \cite{Liu22},  \eqref{range1}--\eqref{range2} hold under the condition  of  $|n-n'|\geq\frac{N}{10}$,  $0<c_2\leq \frac{4}{5}c_1$ and $\zeta<1$. However, the relation $0<c_2\leq \frac{4}{5}c_1$ leads to the deterioration of decay rates in the Newton iteration when solving the P-equation. To address this issue, we impose a stronger restriction  $|n-n'|>\sqrt{N}$(this idea was first introduced in \cite{HSSY24}), which ensures that $0<c_2\leq c_1$. This refinement, however, requires the constant $0<\zeta<\frac{1}{2}$ instead of that in Theorem \ref{MSA1}. With these modifications, the proof of Theorem \ref{MSA1} in \cite{Liu22} remains valid, and we omit the details here.\end{rmk}

We also need

\begin{lem}[cf. Lemma 3.2, \cite{JLS20}]\label{MSA L2}
	Let $M_0\geq (\log N)^{\frac{10}{9}},\ \frac{c_1}{2}<c_2\leq c_1$ and $M_1\leq N$. Suppose that  $\La\subset\Z^d$ is connected and $\diam(\La)\leq 2N+1$. Suppose that for any $n\in\La,$ there exists some 
	$W=W(n)\in \mcE_M$ with $M_0\leq M\leq M_1$ such that $n\in W\subset \La,\dist(n,\La\setminus W)\geq \frac{M}{2}$ and 
	\[\nm (R_{W} A R_{W})^{-1} \nm\leq e^{M^{\frac{9}{10}}},\]
	\[	|(R_{W}AR_{W})^{-1}(n,n')|\leq e^{-c_2|n-n'|},\ {\rm for} \ |n-n'|\geq \sqrt{M}.\]
    Assume further that $N$ is large enough such that 
	\[\sup_{M_0\leq M\leq M_1} e^{M^{\frac{9}{10}}}(2M+1)^{d+C_1}e^{c_2\sqrt{N}}\sum_{j=0}^{\infty}(M+2j+1)^d e^{-c_2(j+M/2)}\leq \frac{1}{2}.\]
	Then we  have 
	\[\nm (R_{\La} A R_{\La})^{-1} \nm\leq 4(2M_1+1)^d e^{M_1^{\frac{9}{10}}}.\]
\end{lem}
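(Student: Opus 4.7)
The plan is to build a right parametrix out of the local Green's functions and invert via a Neumann series. For each $n' \in \Lambda$, I would select $W(n') \in \mathcal{E}_{M(n')}$ with $M_0 \leq M(n') \leq M_1$ as guaranteed by the hypothesis, set $G_{W(n')} = (R_{W(n')} A R_{W(n')})^{-1}$, and define
\[ B(n, n') = \mathbf{1}_{\{n \in W(n')\}} \, G_{W(n')}(n, n'). \]
Since $R_{W(n')} A R_{W(n')} \cdot G_{W(n')} = I_{W(n')}$, a direct calculation gives $(A_\Lambda B)(n, n') = \delta_{n,n'}$ whenever $n \in W(n')$, and
\[ (A_\Lambda B)(n, n') = \sum_{m \in W(n')} A(n, m)\, G_{W(n')}(m, n') =: E(n, n') \]
when $n \in \Lambda \setminus W(n')$. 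Thus $A_\Lambda B = I + E$, and the buffer condition $\mathrm{dist}(n', \Lambda \setminus W(n')) \geq M(n')/2$ forces $|n - n'| \geq M(n')/2$ on the support of $E$.

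The heart of the proof is bounding $\|E\|$ via Schur's test. For each entry I would split the $m$-sum into two regions: if $|m - n'| < \sqrt{M}$, I use the norm bound $|G_{W(n')}(m, n')| \leq e^{M^{9/10}}$ together with $|n - m| \geq |n - n'| - \sqrt{M}$ (paying a factor $e^{c_1 \sqrt{M}}$); if $|m - n'| \geq \sqrt{M}$, I use the off-diagonal bound $|G_{W(n')}(m, n')| \leq e^{-c_2|m - n'|}$ together with the decay of $A$ and the triangle inequality $|n - m| + |m - n'| \geq |n - n'|$, invoking $c_2 \leq c_1$ to bundle both factors as $e^{-c_2 |n-n'|}$. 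Summing the resulting bound over the relevant $n$ (or $n'$) through Schur's test and using $|n - n'| \geq M/2$, the standing hypothesis
\[ \sup_{M_0 \leq M \leq M_1} e^{M^{9/10}} (2M+1)^{d + C_1} e^{c_2 \sqrt{N}} \sum_{j \geq 0} (M + 2j + 1)^d e^{-c_2(j + M/2)} \leq \tfrac{1}{2} \]
is exactly what is needed to conclude $\|E\| \leq 1/2$.

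Once $\|E\| \leq 1/2$, the operator $I + E$ is invertible with $\|(I + E)^{-1}\| \leq 2$, so $A_\Lambda$ is invertible and $A_\Lambda^{-1} = B(I + E)^{-1}$. To finish, I would apply Schur's test a second time to $B$: each column is supported in $W(n')$ and satisfies
\[ \sum_n |B(n, n')| \leq (2\sqrt{M_1}+1)^d e^{M_1^{9/10}} + \sum_{|n - n'| \geq \sqrt{M_1}} e^{-c_2 |n - n'|} \leq 2(2M_1+1)^d e^{M_1^{9/10}}, \]
and an analogous estimate, using that $\{n' : n \in W(n')\} \subset B(n, 2M_1)$, controls the row sums. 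This gives $\|B\| \leq 2(2M_1 + 1)^d e^{M_1^{9/10}}$ and hence the claimed bound $\|A_\Lambda^{-1}\| \leq 4(2M_1 + 1)^d e^{M_1^{9/10}}$.

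The main obstacle will be the bookkeeping in the Schur estimate of $\|E\|$: the bound on $|E(n, n')|$ must be sharp enough that the sum over the unknown coordinate, taken uniformly over all admissible choices of the box size $M(n') \in [M_0, M_1]$, matches the exact weight $e^{c_2 \sqrt{N}}$ in the hypothesis. The factor $e^{c_2 \sqrt{N}}$ is precisely the cost of passing from $|n - m|$ to $|n - n'|$ in the region $|m - n'| < \sqrt{M}$; the polynomial prefactor $(2M+1)^{d + C_1}$ absorbs both the polynomial correction $(1 + |n-m|)^{C_1}$ in the decay of $A$ and the volume of the near-diagonal slice of $W(n')$. Keeping these accounting constants under control — especially not losing in the decay rate — is the delicate part.
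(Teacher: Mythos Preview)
The paper does not give its own proof of this lemma; it is stated in Appendix~\ref{JLSapp} with the citation ``cf.\ Lemma~3.2, \cite{JLS20}'' and no argument. Your parametrix-plus-Neumann-series scheme is exactly the standard proof of that reference result, so there is nothing in the paper to compare against beyond noting that your outline matches the approach of \cite{JLS20}.
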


\section*{Acknowledgement}
The authors would like to  thank Hongyi Cao for discussions  about  spectral properties of quasi-periodic Schr\"odinger operators with Lipschitz monotone potentials. Y. Shi is supported by the NSFC
(12271380).   Z. Zhang is  supported by  the National Key R\&D Program
of China under Grant 2023YFA1008801 and  the NSFC (12288101).

\section*{Data Availability}
		The manuscript has no associated data.
		\section*{Declarations}
		{\bf Conflicts of interest} \ The authors  state  that there is no conflict of interest.

\bibliographystyle{alpha}
%\bibliography{nonMaryland}

\end{document}